\def\Kh{K\"{a}hler~}
\def\Khr{K\"{a}hlerian~}
\def\rOmega{\mathrm{\Omega}}
\def\rGamma{\mathrm{\Gamma}}
\def\rH{\mathrm{H}}
\def\rR{\mathrm{R}}
\colorlet{linkequation}{blue}
\newcommand*{\SavedEqref}{}
\let\SavedEqref\eqref
\renewcommand*{\eqref}[1]{%
  \begingroup
    \hypersetup{
      linkcolor=blue,
      linkbordercolor=blue,
    }%
    \SavedEqref{#1}%
  \endgroup
}
\DeclareSymbolFont{extraup}{U}{zavm}{m}{n}
\DeclareMathSymbol{\varheart}{\mathalpha}{extraup}{86}
\DeclareMathSymbol{\vardiamond}{\mathalpha}{extraup}{87}
\def\CT@@do@color{%
  \global\let\CT@do@color\relax
        \@tempdima\wd\z@
        \advance\@tempdima\@tempdimb
        \advance\@tempdima\@tempdimc
        \kern-\@tempdimb
\transparent{0.6}%
        \leaders\vrule
                \hskip\@tempdima\@plus  1fill
        \kern-\@tempdimc
        \hskip-\wd\z@ \@plus -1fill }
\newcommand{\thickhline}{%
    \noalign {\ifnum 0=`}\fi \hrule height 1pt
    \futurelet \reserved@a \@xhline
}
\newcolumntype{"}{@{\hskip\tabcolsep\vrule width 1pt\hskip\tabcolsep}}
\newtheorem{Theorem}{Theorem}[section]
\newtheorem{Lemma}[Theorem]{Lemma}
\newtheorem{Proposition}[Theorem]{Proposition}
\newtheorem{Corollary}[Theorem]{Corollary}
\newtheorem{Conjecture}[Theorem]{Conjecture}
\newtheorem{Definition}[Theorem]{Definition}
\newcommand{\Hom}{{\rm Hom}}
\newcommand{\K}{\Bbb{K}}
\def\ad{\mathrm{ad}}
\newcommand{\im}{\mathrm{im}}
\newcommand{\Coh}{\mathrm{Coh}}
\newcommand{\Jac}{\mathrm{Jac}}
\newcommand{\bp}{\begin{Proposition}}
\newcommand{\ep}{\end{Proposition}}
\newcommand{\bl}{\begin{Lemma}}
\newcommand{\el}{\end{Lemma}}
\newcommand{\bt}{\begin{Theorem}}
\newcommand{\et}{\end{Theorem}}
\newcommand{\bd}{\begin{Definition}}
\newcommand{\ed}{\end{Definition}}
\newcommand{\End}{\mathrm{End}}
\newcommand{\ev}{\mathrm{ev}}
\newcommand{\eqdef}{\stackrel{{\rm def.}}{=}}
\newcommand{\cinf}{{C^\infty(X)}}
\def\rM{\mathrm{M}}
\DeclareFontFamily{U}{rsf}{}
\DeclareFontShape{U}{rsf}{m}{n}{<5> <6> rsfs5 <7> <8> <9> rsfs7 <10-> rsfs10}{}
\DeclareMathAlphabet\Scr{U}{rsf}{m}{n}
\def\cU{\mathcal{U}}
\def\hW{{\widehat W}}
\def\hS{{\hat S}}
\def\hR{{\hat R}}
\def\hD{{\widehat D}}
\def\cE{\check{E}}
\def\Z{\mathbb{Z}}
\def\C{\mathbb{C}}
\def\R{\mathbb{R}}
\def\H{\mathbb{H}}
\def\K{\mathbb{K}}
\def\rk{{\rm rk}}
\def\deg{{\rm deg}}
\def\dd{\mathrm{d}}
\def\supp{\mathrm{supp}}
\def\fd{\mathfrak{d}}
\def\md{\boldsymbol{\fd}}
\def\tW{{\widetilde W}}
\def\tw{{\widetilde w}}
\def\ad{\mathrm{ad}}
\def\Int{\mathrm{Int}}
\def\hOmega{\widehat{\Omega}}
\def\F{\mathrm{F}}
\def\i{\mathbf{i}}
\def\cC{\mathcal{C}}
\def\MF{\mathrm{MF}}
\def\HMF{\mathrm{HMF}}
\def\ext{\mathrm{ext}}
\def\wcR{{\boldsymbol{\mathcal R}}}
\def\ba{{\mathbf{a}}}
\def\fz{\mathfrak{Z}}
\def\Card{\mathrm{Card}}
\def\mpd{\boldsymbol{\pd}}
\def\sm{\mathrm{sm}}
\newcommand{\be}{\begin{equation*}}
\newcommand{\ee}{\end{equation*}}
\newcommand{\ben}{\begin{equation}}
\newcommand{\een}{\end{equation}}
\newcommand{\beqa}{\begin{eqnarray*}}
\newcommand{\eeqa}{\end{eqnarray*}}
\newcommand{\beqan}{\begin{eqnarray}}
\newcommand{\eeqan}{\end{eqnarray}}
\newcommand \pd {{\partial}}
\newcommand \bpd {{\overline{\partial}}}
\newcommand{\nn}{\nonumber}
\newcommand{\Res}{\mathrm{Res}}
\newcommand{\id}{\mathrm{id}}
\newcommand{\Tr}{\mathrm{Tr}}
\newcommand{\tr}{\mathrm{tr}}
\newcommand{\str}{\mathrm{str}}
\def\Hess{\mathrm{Hess}}
\def\cR{{\mathcal R}}
\def\cB{{\mathcal B}}
\def\cK{\mathrm{\cal K}}
\def\cM{\mathrm{\cal M}}
\def\odd{\mathrm{odd}}
\def\O{\mathrm{O}}
\def\cD{\mathcal{D}}
\def\cA{\mathcal{A}}
\def\cE{\mathcal{E}}
\def\cP{\mathcal{P}}
\def\cT{\mathcal{T}}
\def\cF{\mathcal{F}}
\def\cH{\mathcal{H}}
\def\G_2{\mathrm{G_2}}
\def\cO{\mathcal{O}}
\def\cS{\mathcal{S}}
\def\fD{\mathfrak{D}}
\def\Ob{\mathrm{Ob}}
\def\VB{\mathrm{VB}}
\def\Vb{\mathrm{Vb}}
\def\HF{\mathrm{HF}}
\def\PV{\mathrm{PV}}
\def\HPV{\mathrm{HPV}}
\def\Cob{\mathrm{Cob}}
\def\Vect{\mathrm{Vect}}
\def\ioda{\boldsymbol{\iota}}
\def\bbpd{\boldsymbol{\bpd}}
\def\0{{\hat{0}}}
\def\1{{\hat{1}}}
\def\cJ{\mathcal{J}}
\def\Jac{\mathrm{Jac}}
\def\O{\mathrm{O}}
\def\DF{\mathrm{DF}}
\def\HDF{\mathrm{HDF}}
\def\Re{\mathrm{Re}}
\def\Sh{\mathrm{Sh}}
\def\J{\mathrm{J}}
\def\mod{\mathrm{mod}}
\def\vect{\mathrm{vect}}
\def\me{\mathbf{e}}
\def\mtr{\boldsymbol{\tr}}
\def\mtr{\boldsymbol{\tr}}
\def\grad{\mathrm{grad}}
\def\Hess{\mathrm{Hess}}
\def\blrcorner{\boldsymbol{\lrcorner}}
\def\Res{\mathrm{Res}}
\def\germ{\mathrm{germ}}
\newcommand{\twopartdef}[4]
{
	\left\{
		\begin{array}{ll}
			#1 & \mbox{if } #2 \\
			#3 & \mbox{if } #4
		\end{array}
	\right.
}
\begin{document}

\title{Differential models for B-type open-closed topological Landau-Ginzburg theories}

\author{Elena Mirela Babalic \and  Dmitry Doryn \and Calin Iuliu Lazaroiu \and Mehdi
  Tavakol}

\institute{Center for Geometry and Physics, Institute for Basic
  Science (IBS), Pohang 37673, Republic of Korea\\
\email{mirela@ibs.re.kr, dmitry@ibs.re.kr, calin@ibs.re.kr, mehdi@ibs.re.kr}}

\date{}

\maketitle

\abstract{We propose a family of differential models for B-type
open-closed topological Landau-Ginzburg theories defined by a pair
$(X,W)$, where $X$ is any non-compact Calabi-Yau manifold and $W$ is
any holomorphic complex-valued function defined on $X$ whose critical
set is compact. The models are constructed at cochain level using
smooth data, including the twisted Dolbeault algebra of
polyvector-valued forms and a twisted Dolbeault category of
holomorphic factorizations of $W$. We give explicit proposals for
cochain level versions of the bulk and boundary traces and for the
bulk-boundary and boundary-bulk maps of the Landau-Ginzburg theory. We
prove that most of the axioms of an open-closed TFT (topological field
theory) are satisfied on cohomology and conjecture that the remaining
two axioms (namely non-degeneracy of bulk and boundary traces and the
topological Cardy constraint) are also satisfied. }

\tableofcontents 

\section{Introduction}
\label{sec:intro}

Classical oriented open-closed topological Landau-Ginzburg theories of
type B are classical field theories parameterized by pairs $(X,W)$ and
defined on compact oriented surfaces with corners, where $X$ is a
non-compact \Khr manifold of dimension $d\eqdef \dim_\C X$ and
$W:X\rightarrow \C$ is a non-constant holomorphic function. The
general construction of such theories was given\footnote{References
\cite{LG1,LG2} considered only the case of oriented surfaces with
boundary. However, the construction of loc. cit. extends easily to
oriented surfaces with corners, leading in the boundary sector to the
differential graded category $\DF_\sm(X,W)$ discussed in Appendix
\ref{app:sc}.} in \cite{LG1,LG2}. As shown in loc. cit., such
classical field theories determine a collection of topological
D-branes, which arise as objects parameterizing the boundary
contribution to the classical action. The Lagrangian construction of
\cite{LG1,LG2} shows that the topological D-branes are described by
{\em special Dolbeault factorizations}, defined as pairs $(S,\fD)$
formed of a smooth complex vector superbundle $S$ on $X$ and a
``special Dolbeault superconnection'' $\fD$ defined on $S$ and which
squares to $W\id_E$. As implicit in \cite{LG2} and explained in
Appendix \ref{app:sc}, such objects naturally form a certain
$\Z_2$-graded differential graded (DG)
category $\DF_\sm(X,W)$, whose morphisms are bundle-valued
differential forms and whose differentials are induced by the
superconnections. The classical action on a compact oriented surface
$\Sigma$ with corners depends on the choice of a special Dolbeault
factorization $(E_i,D_i)$ for each boundary component of $\Sigma$, on
the choice of a morphism of $\DF_\sm(X,W)$ at each corner of
$\Sigma$ as well as on the choice of a K\"{a}hler metric on $X$ and of
an ``admissible'' Hermitian metric $h_i$ on each $E_i$.

It is widely expected that such theories admit a non-anomalous
quantization when $X$ is a Calabi-Yau manifold in the sense that the
canonical line bundle of $X$ is holomorphically trivial. On general
grounds, a physically acceptable quantization procedure for the
Calabi-Yau case must produce, among other structures, a quantum
oriented open-closed topological field theory in the sense axiomatized
in reference \cite{tft} --- a mathematical object which, as shown in
loc. cit., can be described equivalently by a certain algebraic
structure called a {\em TFT datum} of {\em signature} equal to the mod
2 reduction $\mu\in \Z_2$ of the complex dimension of $X$ (see Section
\ref{sec:OCTFT} for the precise definition and terminology). Using a
non-rigorous method of ``partial localization'' of the path integral,
reference \cite{LG2} argued that, when $X$ is Calabi-Yau and the
critical set of $W$ is compact, the TFT datum of quantum B-type
topological Landau-Ginzburg theories can be constructed as the
cohomology of a family of smooth differential models built using a 
dg-algebra $(\PV(X),\updelta_W)$ of polyvector-valued forms defined on
$X$ and the dg-category $\DF_\sm(X,W)$ mentioned above. The ``bulk
differential'' $\updelta_W$ is obtained from the Dolbeault
differential on polyvector-valued forms by adding an operator
proportional to the contraction with $\pd W$.

In this paper, we reconsider the proposal of \cite{LG2} from a
mathematical perspective. Using the Koszul-Malgrange correspondence,
we show that $\DF_\sm(X,W)$ is equivalent with a dg-category
$\DF(X,W)$ of {\em holomorphic factorizations} of $W$. A holomorphic
factorization of $W$ is a pair $(E,D)$, with $E$ a holomorphic vector
superbundle defined on $X$ and $D$ an odd holomorphic section of
$End(E)$ which squares to $W\id_E$. The morphisms of $\DF(X,W)$ are
smooth bundle-valued differential forms while the differentials are
obtained from the Dolbeault operator by adding a correction dependent
on $D$. Using this equivalent description of the dg-category of
topological D-branes, we give a simplified and rigorous construction
of the differential models initially proposed in \cite{LG2} and study
their mathematical properties.

The differential models constructed in this paper provide a cochain
level realization of the constitutive blocks of the TFT datum, from
which the TFT datum can be recovered by passing to cohomology. These
differential models depend explicitly on metric data, namely on the
choice of a K\"{a}hler metric $G$ on $X$ and of an ``admissible''
Hermitian metric $h_a$ on the holomorphic vector superbundle $E$
underlying each holomorphic factorization $a=(E,D)$ of $W$. They also
depend on the choice of a ``localization parameter'' $\lambda\in
\R_{\geq 0}$. More precisely, the bulk and boundary traces
$\Tr$ and $\tr=(\tr_a)_{a\in \Ob\DF(X,W)}$ and the bulk-boundary and
boundary-bulk maps $e=(e_a)_{a\in \Ob \DF(X,W)}$ and $f=(f_a)_{a\in
  \Ob \DF(X,W)}$ of the TFT datum (see Section \ref{sec:OCTFT}) have
cochain level realizations which depend on $\lambda$ and on the
metrics $G$ and $h_a$, such that the maps induced by these on
cohomology recover $\Tr$, $\tr$ and $e$, $f$. We prove that the
cohomological bulk and boundary traces $\Tr$ and $\tr$ constructed in
this manner are independent of the choice of $\lambda$, $G$ and $h_a$
and that the cohomological bulk-boundary maps $e_a$ and cohomological
boundary-bulk maps $f_a$ are independent on the choice of $\lambda$
and $h_a$ and depend only on the K\"{a}hler class of $G$. 
We also show that most of the axioms of a TFT datum are
satisfied on cohomology:

\

\begin{Theorem}
\label{thm}
Suppose that the critical set $Z_W$ of $W$ is compact. Then the
cohomology algebra $\HPV(X,W)$ of $(\PV(X),\updelta_W)$ is
finite-dimensional over $\C$ while the total cohomology category
$\HDF(X,W)$ of $\DF(X,W)$ is Hom-finite over $\C$. Moreover, the
system:
\be
(\HPV(X,W),\HDF(X,W), \Tr, \tr, e)
\ee
obeys all defining properties of a TFT datum of signature equal to the
modulo 2 reduction of $d\eqdef \dim _\C X$, up to non-degeneracy of the
bulk and boundary traces and to the topological Cardy constraint.
\end{Theorem}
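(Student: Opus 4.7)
My plan is to verify the finiteness statements by localization to the compact critical locus $Z_W$, and then to establish the TFT axioms by proving cochain-level versions (either strictly or up to $\updelta_W$-exact terms) and passing to cohomology.

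\textbf{Finite dimensionality.} For $\HPV(X,W)$, filter $\PV(X)$ by exterior degree of the polyvector part and consider the associated spectral sequence for $\updelta_W=\bpd+\iota_{\pd W}$. Away from $Z_W$ the section $\pd W$ of $T^*X$ is nowhere zero, so the Koszul differential $\iota_{\pd W}$ is acyclic on positive exterior powers. Consequently, the $E_1$ page is supported on $Z_W$, and compactness of $Z_W$ together with ellipticity of $\bpd$ on a relatively compact neighborhood yield finite dimensionality by a standard cut-off argument. The same strategy applies to the morphism spaces of $\HDF(X,W)$: each $\Hom_{\HDF(X,W)}(a,b)$ is the cohomology of $(\Omega^{0,*}(X,\iHom(E_a,E_b)),\bpd+\fd_{a,b})$, where $\fd_{a,b}$ is the odd commutator action by $D_a$ and $D_b$. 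Since $D_a^2=D_b^2=W\id$, one has $\fd_{a,b}^2=0$, and away from $Z_W$ the operators $D_a,D_b$ admit explicit algebraic inverses modulo $W$, providing a contracting homotopy for the twisted differential on compact subsets disjoint from $Z_W$. The same localization argument then yields Hom-finiteness.

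\textbf{Cochain-level axioms.} Graded-commutativity and associativity of the bulk product on $\PV(X)$, and associativity of composition in $\DF(X,W)$, are built in by construction; combined with the fact that $\updelta_W$ and the morphism differentials of $\DF(X,W)$ are derivations, these descend to the cohomology algebras. The cochain-level bulk-boundary map $e_a$ is the natural contraction of a polyvector-valued form against $\iEnd(E_a)$ using $D_a$, and a direct computation shows it is a chain map that is multiplicative up to exact terms; similarly, the compatibility of $e_a$ with compositions in $\HDF(X,W)$ is visible cochain-wise. Cyclicity of the cochain level bulk trace $\Tr$, defined by integrating against the holomorphic volume form of the Calabi-Yau $X$ paired with a $\lambda$-dependent regulator peaked near $Z_W$, is verified by integration by parts using the graded-adjointness of $\bpd$ and $\iota_{\pd W}$ relative to the volume integration (the Calabi-Yau condition is essential here). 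Cyclicity of each $\tr_a$ follows analogously, using the supertrace on $\End(E_a)$ and the admissible metric $h_a$.

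\textbf{Main obstacle.} The most delicate point is the analytic control of the $\lambda$-dependent and metric-dependent integrals defining $\Tr$ and $\tr_a$ on the non-compact $X$. Convergence is enforced by the regulator together with the admissibility of $h_a$, which controls the behavior of $D_a$ at infinity so that combinations built from $|\pd W|_G^2$ are exponentially suppressed there. Independence of the cohomological traces from $\lambda$ should be shown by checking that the $\lambda$-derivative of the integrand is $\updelta_W$-exact; independence from $G$ (up to its K\"{a}hler class) and from $h_a$ follows from analogous deformation identities combined with the localization estimates. Once these analytic points are settled, all the remaining algebraic axioms of the TFT datum of signature equal to $d$ modulo $2$, other than non-degeneracy of the bulk and boundary traces and the topological Cardy constraint, reduce to identities that can be verified directly at cochain level along the lines formally argued in \cite{LG2}.
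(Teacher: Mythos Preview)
Your finiteness arguments are in the right spirit and broadly parallel the paper's: both localize the relevant cohomology sheaves to $Z_W$ (the paper does this via the hypercohomology of the Koszul complex and Grauert's direct image theorem for $\HPV$, and via an explicit local contracting homotopy $\frac{1}{\pd_1 W}\pd_1 D_2$ for $\HDF$), and then invoke compactness of $Z_W$.

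However, your treatment of the traces contains a genuine misconception about the analytic setup. You describe $\Tr$ and $\tr_a$ as integrals over the non-compact $X$ whose convergence is enforced by a $\lambda$-dependent regulator and by ``admissibility of $h_a$, which controls the behavior of $D_a$ at infinity''. Neither ingredient plays that role in the paper. In the paper, ``admissible'' means only that $h|_{E^{\hat 0}\times E^{\hat 1}}=0$, i.e.\ compatibility with the $\Z_2$-grading; it imposes no decay or growth condition whatsoever. More importantly, the paper never attempts to make integrals converge on all of $\PV(X)$ or $\cA(X,End(E))$. Instead, the canonical traces $\Tr_B$ and $\tr_a^B$ are defined only on the \emph{compactly supported} subcomplexes $\PV_c(X)$ and $\End_{\DF_c(X,W)}(a)$, where integration is trivially well-defined. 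The passage to $\HPV(X,W)$ and $\HDF(X,W)$ is then purely algebraic: when $Z_W$ is compact, the inclusions $\PV_c(X)\hookrightarrow \PV(X)$ and $\DF_c(X,W)\hookrightarrow \DF(X,W)$ are quasi-isomorphisms (Propositions~\ref{prop:i} and~\ref{prop:j}, proved by explicit homotopies built from $\grad_G\overline{W}/||\pd W||^2$ and a cutoff function), and one transports the traces along the induced isomorphisms on cohomology. The $\lambda$-dependent tempered traces are introduced not to achieve convergence but to interpolate between different representatives and to prove independence of the cohomological traces from $G$ and $h$; they are always applied to compactly supported elements.

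Your proposed route via decay at infinity would require growth hypotheses on $W$ and $D_a$ that are not assumed, and without them the integrals you envisage need not exist. The key idea you are missing is precisely this compact-support-plus-quasi-isomorphism mechanism, which replaces all analytic convergence questions by a homotopy argument localized near $Z_W$.
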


\

\noindent In view of the path integral arguments of \cite{LG2} and of
experience with the case when $X$ is Stein and $W$ has finite critical
set (see \cite{nlg2}), we make the following conjecture:

\

\begin{Conjecture}
\label{conj}
Suppose that $Z_W$ is compact. Then $(\HPV(X,W), \HDF(X,W), \Tr, \tr,
e)$ is a TFT datum and hence it defines a quantum open-closed TFT in
the axiomatic sense of \cite{tft}. Moreover, the boundary-bulk maps of
this TFT datum are induced by the cochain level boundary-bulk maps, and the 
same is true for the corresponding bulk-boundary maps. 
\end{Conjecture}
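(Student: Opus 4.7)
The plan is to dispatch the three pieces of the conjecture (non-degeneracy of the cohomological bulk trace $\Tr$ and boundary traces $\tr_a$, the topological Cardy constraint, and the induction statement for the boundary-bulk and bulk-boundary maps) by first sharpening the localization picture that should be produced in the proof of Theorem \ref{thm}, and then invoking residue duality together with the known Cardy constraint for algebraic matrix factorizations.

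For non-degeneracy of $\Tr$, I would show that, after a cochain-level deformation retract, $\HPV(X,W)$ decomposes as a direct sum $\bigoplus_{Z}\HPV_{Z}(X,W)$ indexed by the connected components $Z$ of $Z_W$, with each summand computed by a complex supported in an arbitrarily small tubular neighbourhood of $Z$. The cochain-level trace then splits as a sum of local residue pairings; in the limit of shrinking neighbourhoods, each such pairing is a Grothendieck-type residue on $Z$ equipped with its derived (possibly non-reduced) critical structure sheaf, and is perfect by classical residue duality. Gluing the local non-degeneracies gives a perfect global pairing on $\HPV(X,W)$. The identical argument, applied to the twisted Dolbeault complexes $(\Omega^{0,\ast}(X,\iHom(E_a,E_a)),\bpd+[D_a,\cdot])$, yields non-degeneracy of $\tr_a$, with the boundary residue playing the role of the Berezinian/supertrace integral on the matrix factorization side.

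For the topological Cardy constraint, the strategy is to reduce to the algebraic matrix factorization setting and invoke the Polishchuk-Vaintrob theorem: on each tubular neighbourhood of a component of $Z_W$, the cochain-level $\Tr$, $\tr$, $e$ and $f$ should be matched, under the local deformation retract above, with the Hochschild cohomology trace, Hochschild homology trace, boundary-bulk and bulk-boundary maps of the dg-category of matrix factorizations on the formal neighbourhood of $Z_W$, where Cardy is known; gluing the local Cardy identities yields the global one. The induction statement, that the cohomological boundary-bulk and bulk-boundary maps are induced from their cochain-level counterparts $e_a$ and $f_a$, is then an immediate consequence of the naturality of this identification together with the homotopy-invariance already established in Theorem \ref{thm}. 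The main obstacle is the analytic step underlying the localization: transferring the smooth Dolbeault cochain models to the formal neighbourhood of $Z_W$ and matching them with the standard matrix-factorization models requires a Dolbeault analogue of the Hochschild-Kostant-Rosenberg theorem for matrix factorizations, together with a careful control of the residue representatives. This transfer is routine in the Stein case treated in \cite{nlg2}, but is genuinely delicate when $X$ is an arbitrary non-compact Calabi-Yau and $Z_W$ is a higher-dimensional, possibly non-reduced subscheme; getting this step right is, to my mind, the hard part of the whole program.
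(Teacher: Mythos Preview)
The statement you are attempting to prove is labeled a \emph{Conjecture} in the paper, and the paper does \emph{not} contain a proof of it. The paper explicitly says that Theorem~\ref{thm} establishes all axioms of a TFT datum \emph{except} non-degeneracy of the bulk and boundary traces and the topological Cardy constraint, and these remaining conditions are left open in the general case. So there is no ``paper's own proof'' to compare your proposal against.

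That said, the paper does indicate what is known and what approach the authors have in mind, and here your proposal diverges in one important respect. For non-degeneracy of $\Tr$ and $\tr_a$, the paper does not propose the localization-plus-residue-duality route you outline; instead it points to the companion paper \cite{tserre}, where non-degeneracy is proved in full generality by adapting Serre's original functional-analytic argument for Serre duality on non-compact complex manifolds. That approach works directly with the Dolbeault-type complexes and does not require reducing to formal neighbourhoods of $Z_W$ or invoking Grothendieck residues on a possibly non-reduced critical scheme. Your localization strategy is closer to what the paper sketches in Section~\ref{sec:Stein} for the Stein case with finite critical set, where the reduction to analytic Milnor algebras and Kapustin--Li traces is available; but the paper itself emphasizes (see the end of Section~\ref{sec:Stein}) that it regards the Serre-duality approach as more natural precisely because it avoids the delicate $\lambda\to\infty$ localization limit.

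For the Cardy constraint, your plan to reduce to the Polishchuk--Vaintrob theorem on formal neighbourhoods is in line with the paper's remark that Cardy is a ``deformed Hirzebruch--Riemann--Roch theorem'', but the paper offers no proof and no further details beyond the Stein case. Your own honest assessment that the analytic transfer step (matching the smooth Dolbeault models with matrix-factorization models over formal or analytic neighbourhoods of a higher-dimensional, non-reduced $Z_W$) is the genuine obstacle is exactly right, and the paper does not resolve it. In short: your outline is a plausible program, partially aligned with the paper's hints, but it is a proposal for future work rather than a reconstruction of a proof that exists in the paper.
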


\

\noindent When $X$ is Stein and $W$ has finite-critical set, the
open-closed TFT datum simplifies as explained in \cite{nlg2} and
summarized in Section \ref{sec:Stein}. In that situation, the
cohomological bulk trace reduces to a sum over Grothendieck traces
defined over the analytic Milnor algebras of the holomorphic function germ of
$W$ at its critical points, so its non-degeneracy follows from
non-degeneracy of the later. In this situation, the cohomological
boundary trace of a holomorphic factorization $a$ of $W$ reduces to
the sum of Kapustin-Li traces (see \cite{KL}) over the critical points, so its
non-degeneracy follows from the results of \cite{DM,PV}. As we show in
\cite{tserre}, non-degeneracy of the bulk and boundary traces can be
proved in full generality by adapting the approach originally used by
Serre in \cite{Serre} to prove Serre duality on (not necessarily
compact) complex manifolds. On the other hand, the topological Cardy
constraint can be viewed as a ``deformed'' version of the
Hirzebruch-Riemann-Roch theorem.

\

The paper is organized as follows. Section \ref{sec:OCTFT} recalls the
algebraic description of (non-anomalous and oriented) quantum
open-closed two-dimensional topological field theories (TFTs) which
was derived in \cite{tft} using the definition of the latter as
certain monoidal functors from the category of two-dimensional
oriented cobordisms with corners to the category of finite-dimensional
$\Z_2$-graded vector spaces over $\C$. Section \ref{sec:bulk}
discusses the twisted Dolbeault dg-algebra $(\PV(X),\updelta_W)$ of
polyvector-valued forms defined on $X$, which was proposed in
\cite{LG2} as an ``off-shell'' model for the algebra of bulk
observables of B-type Landau-Ginzburg TFTs. When the critical set of
$W$ is compact, we show that the cohomology $\HPV(X,W)$ of this
algebra is finite-dimensional over $\C$ and that it is isomorphic with
the cohomology $\HPV_c(X,W)$ of the dg-subalgebra $\PV_c(X)$ obtained
by restricting to polyvector-valued forms with compact
support. Section \ref{sec:DF} discusses the dg-category $\DF(X,W)$ of
holomorphic factorizations of $W$, which (as shown in Appendix
\ref{app:sc}) provides an equivalent model for the dg-category of
topological D-branes. When the critical set of $W$ is compact, we show
that the total cohomology category $\HDF(X,W)$ of this dg-category is
Hom-finite over $\C$ and that it is isomorphic with the total
cohomology category $\HDF_c(X,W)$ of a non-full dg-subcategory
$\DF_c(X,W)$ obtained by restricting to morphisms of compact
support. Sections \ref{sec:BulkTrace} and \ref{sec:BoundaryTrace}
discuss cochain level models for the bulk and boundary traces as well
as the maps which they induce on cohomology. The cochain level traces
can always be defined on $\PV_c(X)$ and $\DF_c(X,W)$ and the cochain
level boundary traces satisfy a strict cyclicity condition. When the
critical set of $W$ is compact, the cochain level traces induce
cohomological traces on $\HPV(X,W)$ and $\HDF(X,W)$, which in
particular make the latter into a pre-Calabi-Yau $\Z_2$-graded
category. Section \ref{sec:disk} discusses a dg-algebra which provides
an off-shell model for the algebra of ``disk observables'' of the
B-type Landau-Ginzburg theory. Sections \ref{sec:BoundaryBulk} and
\ref{sec:BulkBoundary} discuss cochain level boundary-bulk and
bulk-boundary maps between $\DF_c(X,W)$ and $\PV_c(X)$, showing that
they satisfy a strict adjointness condition with respect to the
cochain level bulk and boundary traces and that the cohomological maps
which they induce between $\HDF_c(X,W)$ and $\HPV_c(X,W)$ depend on
metric data only through the \Kh class of $G$. When the critical set
of $W$ is compact, these induce cohomological maps between $\HDF(X,W)$
and $\HPV(X,W)$ which satisfy the adjointness condition of a TFT
datum. Section \ref{sec:Datum} gives the proof of Theorem
\ref{thm}. Section \ref{sec:Stein} explains how our proposal for the
TFT datum simplifies when $X$ is a Stein Calabi-Yau manifold and in
particular discusses the example $X=\C^d$.  Finally, Section
\ref{sec:conclusions} presents our conclusions and some directions for
further research. Appendix A gives various expressions in local
coordinates and establishes equivalence of our model for the off-shell
bulk algebra with another description found in the
literature. Appendix B discusses the dg-category $\DF_\sm(X,W)$ of
special Dolbeault factorizations and establishes its equivalence with
the dg-category $\DF(X,W)$ of holomorphic factorizations used in the
present paper.

\subsection{Notations and conventions}
\label{subsec:notation}

\begin{enumerate}[I.]

\item The notation $A\eqdef B$ means that $A$ is defined to equal $B$. 
The notation $A:=B$ means that $A$ is a simplified notation for $B$, 
usually obtained by omitting indices which indicate the dependence of $B$ 
on other quantities, when the latter is clear from the context. 

\item Numbers. The imaginary unit is denoted by $\i$ and the field of
  integers modulo two is denoted by $\Z_2\eqdef \Z/2\Z$. The modulo 2
  reduction of any integer $n\in \Z$ is denoted by $\hat{n}\in \Z_2$.

\item Rings and categories. Let $\Vect_\C$ denote the category of
  vector spaces over $\C$ and $\vect_\C$ denote the full subcategory
  of finite-dimensional $\C$-vector spaces. For any ring $R$ (not
  necessarily commutative) and any element $r\in R$, we denote by
  ${\hat r}:R\rightarrow R$ the operator of left multiplication with
  $r$:
\be
{\hat r}(x)\eqdef rx~~,~~\forall x\in R~~.
\ee

\item Graded categories. Let $\Lambda$ be an Abelian group and $R$ be a
  commutative ring. By a ($\Lambda$-graded) $R$-linear dg-category we
  mean a ($\Lambda$-graded) linear dg-category defined over $R$.
  Given a $\Lambda$-graded $R$-linear category $\cA$, let $\cA^0$
  denote the $R$-linear category obtained from $\cA$ by keeping the
  same objects and keeping only those morphisms of $\cA$ which have
  degree zero. Given a $\Lambda$-graded $R$-linear dg-category 
  $\cC$, let $\rH(\cC)$ denote its {\em total} cohomology
  category, defined as the $\Lambda$-graded $R$-linear category having
  the same objects as $\cC$ and morphism spaces given by the total
  cohomology $R$-modules:
\be
\Hom_{\rH(\cC)}(a,b)\eqdef \rH(\Hom_{\cC}(a,b),d_{a,b})=\bigoplus_{\lambda\in \Lambda}\rH^\lambda(\Hom_{\cC}(a,b),d_{a,b})~~,
\ee
where $d_{a,b}$ is the differential on the $R$-module
$\Hom_{\cC}(a,b)$. In this case, we use the notation
$\rH^0(\cC)=\rH(\cC)^0$.

\item Complex manifolds. All manifolds considered are smooth,
  paracompact, {\em connected} and of non-zero dimension and all
  vector bundles considered are smooth. For a complex
  manifold $X$, we use the following notations:
\begin{enumerate}[1.]
\itemsep 0.0em
\item Given a global section $s$ of any vector bundle $S$ defined on
  $X$, an open subset $U\subset X$ and a section $\sigma$ of $S|_U$,
  the notation $s=_U\sigma$ means $s|_U=\sigma$.
\item Given any holomorphic vector bundle $E$ defined on $X$, let: 
\be
\rOmega(X,E)\eqdef \bigoplus_{i,j=0}^d \rOmega^{i,j}(X,E)
\ee
denote the $\cinf$-module of $E$-valued smooth differential forms
defined on $X$, where $\rOmega^{i,j}(X,E)$ is the $\cinf$-submodule of
smooth $E$-valued differential forms of type $(i,j)$. Let
$\cA^j(X,E)\eqdef \rOmega^{0,j}(X,E)$ and:
\be
\cA(X,E)\eqdef \bigoplus_{j=0}^d\cA^j(X,E)~~.
\ee
Let $\bbpd_E:\rOmega(X,E)\rightarrow \rOmega(X,E)$ denote the
Dolbeault operator of $E$, which satisfies $\bbpd_E^2=0$ as well as
$\bbpd_E(\rOmega^{i,j}(X,E))\subset \rOmega^{i,j+1}(X,E)$. Let $\pd$
and $\bpd$ denote the Dolbeault operators of $X$. We have
$\bpd=\bbpd_{\cO_X}$ and
$\cA(X):=\cA(X,\cO_X)=\oplus_{j=0}^d\cA^j(X)$, where
$\cA^j(X):=\cA^j(X,\cO_X)=\rOmega^{0,j}(X):=\rOmega^{0,j}(X,\cO_X)$.
 
\item Let $TX$ and ${\overline{T}} X$ denote the holomorphic and
  anti-holomorphic tangent bundles of $X$. Let $T^\ast X:=
  (TX)^\vee$, ${\overline{T}}^\ast X:= ({\overline{T}}X)^\vee$
  denote the holomorphic and anti-holomorphic cotangent bundles of
  $X$. Let $\cT_\R X$ denote the tangent bundle of the underlying real
  manifold and $\cT X$ denote the complexification of $\cT_\R X$. We
  have $\cT X=TX \oplus {\overline{T}}X$. The map $\Re:TX\rightarrow
  \cT_\R X$ which takes the real part gives a canonical isomorphism of
  real vector bundles between $TX$ and $\cT_\R X$ (see \cite{Voisin}).
\item Let $\Vb_\sm(X)$ denote the exact category of complex vector
  bundles defined on $X$. Let $\cC^\infty_X$ denote the sheaf of
  smooth complex-valued functions defined on open subsets of $X$
  (notice that this sheaf is not coherent when $X$ has positive
  dimension) and $\Sh_\sm(X)$ denote the Abelian category of
  sheaves of $\cC^\infty_X$-modules. Let $\VB_\sm(X)$ denote the
  {\em full} subcategory of $\Sh_\sm(X)$ whose objects are the
  locally-free sheaves of finite rank. Let
  $\rGamma_\sm:\Vb_\sm(X)\rightarrow \Sh_\sm(X)$ be the
  functor which sends a complex vector bundle $S$ to the corresponding
  locally-free sheaf of $\cC^\infty_X$-modules $\cS:=
  \rGamma_\sm(S)$ consisting of locally-defined smooth sections of
  $S$ and sends a morphism of vector bundles into the corresponding
  morphism of sheaves. Then $\rGamma_\sm$ embeds $\Vb_\sm(X)$ as
  a {\em non-full} subcategory of $\VB_\sm(X)$ having the same
  objects as the latter. This allows us to view the objects of
  $\VB_\sm(X)$ as complex vector bundles defined on $X$. With this
  identification, the morphisms of $\Vb_\sm(X)$ are those morphisms
  of $\VB_\sm(X)$ whose kernel and cokernel are locally-free
  sheaves.  Let $\cinf:=\cC^\infty_X(X)$ denote the commutative
  $\C$-algebra of complex-valued smooth functions defined on $X$ and
  $\rGamma_\sm(X,S):= \rGamma_\sm(S)(X)=\cS(X)$ denote the
  $\cinf$-module of globally-defined smooth sections of a complex
  vector bundle $S$. For any two complex vector bundles
  $S_1$ and $S_2$ defined on $X$, we have:
\be
\Hom_{\VB_\sm(X)}(S_1,S_2)=\rGamma_\sm(X,Hom(S_1,S_2))=Hom(\cS_1,\cS_2)(X)~~,
\ee
  where $Hom(S_1,S_2)\!\eqdef\! S_1^\vee\otimes S_2$ denotes the vector
  bundle of morphisms from $S_1$ to $S_2$ and
  $Hom_{\cC^\infty_X}(\cS_1,\cS_2)\!\eqdef \!\cS_1^\vee\otimes_{\cC^\infty_X} \cS_2$ is the
  inner Hom sheaf of the corresponding locally-free sheaves of
  $\cC^\infty_X$-modules $\cS_i:=\rGamma_\sm(S_i)$.
\item Let $\Vb(X)$ denote the exact category of holomorphic vector
  bundles defined on $X$. Let $\cO_X$ denote the sheaf of holomorphic
  complex-valued functions defined on open subsets of $X$ (which is
  coherent by Oka's coherence theorem) and $\Coh(X)$ denote the
  Abelian category of coherent sheaves of $\cO_X$-modules. Let
  $\VB(X)$ denote the {\em full} subcategory of $\Coh(X)$ whose
  objects are the locally-free sheaves of finite rank. Let
  $\rGamma:\Vb(X)\rightarrow \Coh(X)$ be the functor which sends a
  holomorphic vector bundle $E$ to the corresponding locally-free
  sheaf of $\cO_X$-modules $\cE:=\rGamma(E)$ consisting of locally-defined
  holomorphic sections of $E$ and sends a morphism of holomorphic
  vector bundles to the corresponding morphism of sheaves. Then
  $\rGamma$ identifies $\Vb(X)$ with a {\em non-full} subcategory of
  $\VB(X)$. This allows us to view the objects of $\VB(X)$ as
  holomorphic vector bundles defined on $X$. With this identification,
  the morphisms of $\Vb(X)$ are those morphisms of $\VB(X)$ whose
  kernel and cokernel are locally-free sheaves. Let $\O(X):=
  \cO_X(X)$ denote the commutative $\C$-algebra of complex-valued
  holomorphic functions defined on $X$. For any holomorphic vector
  bundle $E$ defined on $X$, let $\rGamma(X,E):=
  \rGamma(E)(X)=\cE(X)$ denote the $\O(X)$-module of globally-defined
  holomorphic sections of $E$ (which coincides with the zeroth sheaf
  cohomology $\rH^0(\cE)$ of the corresponding locally-free sheaf
  $\cE:=\rGamma(E)$). For any two holomorphic vector bundles $E_1$
  and $E_2$ defined on $X$, we have:
\be
\Hom_{\VB(X)}(E_1,E_2)=\rGamma(X,Hom(E_1,E_2))=Hom(\cE_1,\cE_2)(X)~~,
\ee
  where $Hom(E_1,E_2)=E_1^\vee\otimes E_2$ denotes the holomorphic
  vector bundle of morphisms from $E_1$ to $E_2$ and
  $Hom(\cE_1,\cE_2)=\cE_1^\vee\otimes_{\cO_X} \cE_2$ denotes the sheaf
  inner Hom of the corresponding locally-free sheaves of
  $\cO_X$-modules $\cE_i:=\rGamma(E_i)$.
\item As a general point of terminology inspired by the physics literature, 
``off-shell'' refers to an object defined at cochain level while ``on-shell'' 
refers to an object defined at cohomology level. 
\end{enumerate}

\begin{remark}
Unlike the categories $\Vb_\sm(X)$ and $\Vb(X)$ (which are only
$\C$-linear), the categories $\VB_\sm(X)$ and $\VB(X)$ are
respectively $\cinf$-linear and $\O(X)$-linear. 
\end{remark}
\end{enumerate}

\section{Algebraic description of quantum two-dimensional oriented open-closed TFTs}
\label{sec:OCTFT}

A (non-anomalous) quantum two-dimensional oriented open-closed
topological field theory (TFT) can be defined axiomatically
(see \cite{tft,MS,LP}) as a symmetric monoidal functor from a certain
symmetric monoidal category $\Cob_2^\ext$ of labeled 2-dimensional
oriented cobordisms with corners to the symmetric monoidal category
$\vect^s_\C$ of finite-dimensional super-vector spaces defined over
$\C$. The objects of the category $\Cob_2^\ext$ are finite disjoint
unions of circles and line segments while the morphisms are oriented
cobordisms with corners between such, carrying appropriate labels on
boundary components.  By definition, the {\em closed sector} of such a
theory is obtained by restricting the monoidal functor to the
subcategory of $\Cob_2^\ext$ whose objects are disjoint unions of
circles and whose morphisms are ordinary cobordisms (without
corners). It was shown in \cite{tft} that such a functor can be
described equivalently by an algebraic structure which we shall call a
{\em TFT datum}. We start by describing a simpler algebraic structure,
which forms part of any such datum.

\subsection{Pre-TFT data}

\

\

\begin{Definition}
\label{def:preTFT}
A {\em pre-TFT datum} is an ordered triple $(\cH,\cT,e)$ consisting of:
\begin{enumerate}[A.]
\itemsep 0.0em
\item A finite-dimensional unital and supercommutative superalgebra
  $\cH$ defined over $\C$ (called the {\em bulk algebra}), whose unit
  we denote by $1_\cH$
\item A Hom-finite $\Z_2$-graded $\C$-linear category $\cT$ (called
  the {\em category of topological D-branes}), whose composition of
  morphisms we denote by $\circ$ and whose units we denote by $1_a\in
  \Hom_\cT(a,a)$ for all $a\in\Ob\cT$
\item A family $e=(e_a)_{a\in \Ob\cT}$ consisting of even
  $\C$-linear maps $e_a:\cH\rightarrow \Hom_\cT(a,a)$ defined for each
  object $a$ of $\cT$ (the map $e_a$ is called the {\em bulk-boundary
    map} of $a$)
\end{enumerate}
such that the following conditions are satisfied:
\begin{enumerate}[1.]
\itemsep 0.0em
\item For any object $a\in \Ob\cT$, the map $e_a$ is a unital morphism
  of $\C$-superalgebras from $\cH$ to the endomorphism algebra
  $(\End_\cT(a),\circ)$, where $\End_\cT(a)\eqdef \Hom_\cT(a,a)$.
\item For any two objects $a,b\in \Ob\cT$ and for any $\Z_2$-homogeneous
  elements $h\in\cH$ and $t\in \Hom_\cT(a,b)$, we have:
\be
e_b(h)\circ t=(-1)^{\deg h\,\deg t} t \circ e_a(h)~~.
\ee
\end{enumerate}
\end{Definition}

\noindent Given a pre-TFT datum $(\cH,\cT,e)$, the objects of the
category $\cT$ are called {\em topological D-branes}.  Elements $h\in
\cH$ are called {\em on-shell bulk states}. For any topological
D-branes $a, b\in \Ob \cT$, elements $t\in \Hom_{\cT}(a,b)$ are called
{\em on-shell boundary states} from $a$ to $b$.

\

\begin{Definition}
Let $\cT$ be a $\Z_2$-graded $\C$-linear category.  The {\em total
  endomorphism algebra} of $\cT$ is the associative $\C$-superalgebra
whose underlying $\Z_2$-graded vector space is defined through:
\be
\End(\cT)\eqdef \bigoplus_{a,b\in \Ob\cT}\Hom_\cT({a,b})
\ee
and whose multiplication is given by the obvious $\C$-bilinear
extension of the composition of morphisms of $\cT$. 
\end{Definition}

\

\noindent The total endomorphism algebra $\End(\cT)$ is
unital with unit $1_\cT\eqdef \oplus_{a\in \Ob\cT}{1_a}$. Notice that 
$\End(\cT)$ is infinite-dimensional if $\cT$ has an infinity of objects, 
even when $\cT$ is Hom-finite. 

\

\begin{Definition}
Let $(\cH,\cT,e)$ be a triplet satisfying conditions A., B. and C. of
Definition \ref{def:preTFT}. The {\em total bulk-boundary map} of
$(\cH,\cT,e)$ is the even $\C$-linear map
$\me:\cH\rightarrow \End(\cT)$ defined through:
\be
\me(h)\eqdef \bigoplus_{a\in \Ob\cT}e_a(h)~~,~~\forall h\in \cH~~.
\ee
\end{Definition} 

\noindent The following statement is obvious: 

\

\begin{Proposition}
Let $(\cH,\cT,e)$ be a triplet satisfying conditions A., B. and C.
above. Then $(\cH,\cT,e)$ is a pre-TFT datum iff the total
bulk-boundary map $\me$ of $(\cH,\cT, e)$ is a unital morphism of
$\Z_2$-graded algebras whose image lies in the supercenter of the
total endomorphism algebra $(\End(\cT),\circ)$.
\end{Proposition}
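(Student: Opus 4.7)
The plan is to unpack both sides of the claimed equivalence and check that they match component by component, exploiting the block structure of $\End(\cT)$ induced by the Hom-decomposition $\End(\cT) = \bigoplus_{a,b\in \Ob\cT}\Hom_\cT(a,b)$. The key preliminary observation is that composition in $\End(\cT)$ respects this decomposition: for $x\in \Hom_\cT(a,b)$ and $y\in \Hom_\cT(c,d)$ one has $x\circ y = 0$ unless $b=c$. Consequently, since $\me(h) = \bigoplus_{a} e_a(h)$ is ``block-diagonal'' (i.e.\ its $(a,a)$-component is $e_a(h)$ and all off-diagonal components vanish), the composition $\me(h_1)\circ \me(h_2)$ reduces to $\bigoplus_{a}\bigl(e_a(h_1)\circ e_a(h_2)\bigr)$, and $1_\cT = \bigoplus_a 1_a$.

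First I would verify that condition 1.\ of Definition~\ref{def:preTFT} is equivalent to $\me$ being a unital morphism of $\Z_2$-graded $\C$-algebras. Using the block-diagonal description above, the identity $\me(h_1 h_2) = \me(h_1)\circ \me(h_2)$ holds iff $e_a(h_1 h_2) = e_a(h_1)\circ e_a(h_2)$ for every $a\in\Ob\cT$, and $\me(1_\cH) = 1_\cT$ holds iff $e_a(1_\cH) = 1_a$ for every $a\in\Ob\cT$. The even-parity condition on $\me$ is equivalent to each $e_a$ being even by construction. This gives the equivalence between condition 1.\ and the superalgebra morphism property of $\me$.

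Next I would show that condition 2.\ is equivalent to $\im(\me)$ being contained in the supercenter of $(\End(\cT),\circ)$. For any $\Z_2$-homogeneous $h\in \cH$ and any $\Z_2$-homogeneous $y\in \End(\cT)$, decompose $y = \sum_{a,b} t_{a,b}$ with $t_{a,b}\in \Hom_\cT(a,b)$ homogeneous of the same degree as $y$. Using the block structure once more, $\me(h)\circ t_{a,b} = e_b(h)\circ t_{a,b}$ and $t_{a,b}\circ \me(h) = t_{a,b}\circ e_a(h)$. Hence the supercenter condition $\me(h)\circ y = (-1)^{\deg h\,\deg y}\, y\circ \me(h)$ extended over all $(a,b)$ is equivalent, after summing over $(a,b)$, to the family of identities
\be
e_b(h)\circ t = (-1)^{\deg h\,\deg t}\, t\circ e_a(h)\quad\text{for all } a,b\in\Ob\cT,\ t\in \Hom_\cT(a,b),
\ee
which is exactly condition 2. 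Combining the two equivalences yields the proposition.

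This proof is essentially definitional: no step requires real work beyond careful bookkeeping. The only point that demands any care is to keep track of the $\Z_2$-signs and to justify that it suffices to test the supercenter condition against $\Z_2$-homogeneous elements lying in a single Hom-space $\Hom_\cT(a,b)$, which is legitimate because both sides of the supercenter identity are $\C$-bilinear in $y$ and both $\me(h)\circ(-)$ and $(-)\circ \me(h)$ preserve the Hom-decomposition of $\End(\cT)$.
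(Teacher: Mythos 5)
Your proof is correct and is precisely the definitional bookkeeping that the paper leaves implicit — it states this proposition as ``obvious'' and gives no proof, and your block-diagonal unpacking of $\End(\cT)$ is the intended argument. One minor slip: with the composition convention you actually use later (and which the paper's condition 2.\ presupposes, since $e_b(h)\circ t$ is formed for $t\in\Hom_\cT(a,b)$), the product $x\circ y$ for $x\in \Hom_\cT(a,b)$, $y\in \Hom_\cT(c,d)$ vanishes unless $d=a$ rather than $b=c$; this does not affect the rest of your computations, which are carried out consistently and correctly.
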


\subsection{Calabi-Yau and pre-Calabi-Yau supercategories}

\

\

\begin{Definition}
A {\em Calabi-Yau supercategory of parity $\mu\in \Z_2$} is a pair
$(\cT,\tr)$, where:
\begin{enumerate}[A.]
\itemsep 0.0em
\item $\cT$ is a $\Z_2$-graded and $\C$-linear Hom-finite category
\item $\tr=(\tr_a)_{a\in \Ob \cT}$ is a family of $\C$-linear maps
  $\tr_a:\Hom_\cT(a,a)\rightarrow \C$ of $\Z_2$-degree $\mu$
\end{enumerate} 
such that the following conditions are satisfied:
\begin{enumerate}[1.]
\itemsep 0.0em
\item For any two objects $a,b\in \Ob\cT$, the $\C$-bilinear pairing
  $\langle \cdot , \cdot \rangle_{a,b}:\Hom_\cT(a,b)\times
  \Hom_\cT(b,a)\rightarrow \C$ defined through:
\be
\langle t_1,t_2\rangle_{a,b}\eqdef \tr_b (t_1\circ t_2)~,~~\forall t_1\in \Hom_\cT(a,b)~,~\forall t_2\in \Hom_\cT(b,a)
\ee
is non-degenerate.
\item For any two objects $a,b\in \Ob\cT$ and any $\Z_2$-homogeneous
  elements $t_1\in \Hom_\cT(a,b)$ and $t_2\in \Hom_\cT(b,a)$, we
  have:
\ben
\label{tcyc}
\langle t_1,t_2\rangle_{a,b}=(-1)^{\deg t_1\,\deg t_2}\langle t_2,t_1\rangle_{b,a}~~.
\een
\end{enumerate}
If only condition $2.$ above is satisfied, we say that $(\cT,\tr)$ is
a {\em pre-Calabi-Yau supercategory} of parity $\mu$.
\end{Definition}

\

\noindent The condition that $\tr_a$ has degree $\mu$ amounts to the
requirement:
\be
\tr_a(h)=0~~\mathrm{unless}~~h\in \Hom_\cT^\mu(a,a)~~,
\ee
where $\Hom_\cT^\mu(a,a)$ denotes the homogeneous component of
$\Hom_\cT(a,a)$ lying in $\Z_2$-degree $\mu$. Considering the total
endomorphism algebra $(\End(\cT),\circ)$ as above, $\tr$ induces a
$\C$-linear map of degree $\mu$:
\be
\mtr:\End(\cT)\rightarrow \C~~,
\ee
defined as follows for $t\in \Hom_{\cT}(a,b)$:
\be
\mtr(t)\eqdef\twopartdef{\tr(t)~,}{a=b}{0~,}{a\neq b}~~.
\ee
This map satisfies: 
\be
\mtr(t_1\circ t_2)=(-1)^{\deg t_1\,\deg t_2}\mtr(t_2\circ t_1)~~,
\ee
for any $\Z_2$-homogeneous elements $t_1,t_2\in \End(\cT)$. 

\subsection{TFT data}

As shown in \cite{tft}, quantum two-dimensional topological field
theories of the type considered above can be identified with the
following algebraic structure:

\

\begin{Definition}
A {\em TFT datum} of parity $\mu\in \Z_2$ is a system
$(\cH,\cT,e,\Tr,\tr)$, where:
\begin{enumerate}[A.]
\itemsep 0.0em
\item $(\cH,\cT,e)$ is a pre-TFT datum
\item $\Tr:\cH\rightarrow \C$ is an even $\C$-linear map (called
  the {\em bulk trace})
\item $\tr=(\tr_a)_{a\in \Ob\cT}$ is a family of $\C$-linear maps
  $\tr_a:\Hom_\cT(a,a)\rightarrow \C$ of $\Z_2$-degree $\mu$ (called the {\em
  boundary traces})
\end{enumerate}
such that the following conditions are satisfied:
\begin{enumerate}[1.]
\itemsep 0.0em
\item $(\cH,\Tr)$ is a supercommutative Frobenius superalgebra. This
  means that the pairing induced by $\Tr$ on $\cH$ is
  non-degenerate, in the sense that the condition $\Tr(hh')=0$ for all
  $h'\in \cH$ implies $h=0$.
\item $(\cT,\tr)$ is a Calabi-Yau supercategory of parity $\mu$.
\item The following relation, called the {\em topological Cardy
  constraint}, is satisfied for all $a, b\in \Ob\cT$:
\ben
\label{Cardy}
\Tr\big(f_a(t_1)f_b(t_2)\big)=\str (\Phi_{ab}\big(t_1,t_2)\big)~~,~~\forall t_1\in \Hom_{\cT}(a,a)~,~\forall t_2\in \Hom_{\cT}(b,b)~~,
\een
where $\str$ is the supertrace on the finite-dimensional $\Z_2$-graded 
vector space $\End_\C(\Hom_\cT(a,b))$ and:
\begin{enumerate}[(a)]
\item The $\C$-linear map $f_a:\Hom_\cT(a,a)\rightarrow \cH$ of $\Z_2$-degree
  $\mu$ (which is called the {\em boundary-bulk map of $a$}) is
  defined as the adjoint of the bulk-boundary map $e_a:\cH\rightarrow
  \Hom_{\cT}(a,a)$ with respect to the non-degenerate traces $\Tr$
  and $\tr_a$, being determined by the relation:
\be
\Tr\big(h f_a(t)\big)=\tr_a\big(e_a(h)\circ t\big)~~,~~\forall h\in \cH~,~\forall t \in \Hom_\cT(a,a)~~.
\ee 
\item For any $a,b\in \Ob\cT$ and any $t_1\in \Hom_\cT(a,a)$ and
  $t_2\in \Hom_\cT(b,b)$, the $\C$-linear map
  $\Phi_{ab}(t_1,t_2):\Hom_\cT(a,b)\rightarrow \Hom_\cT(a,b)$ is
  defined through:
\be
\Phi_{ab}(t_1,t_2)(t)\eqdef t_2\circ t\circ t_1~~,~~\forall t\in \Hom_\cT(a,b)~~.
\ee
\end{enumerate}
\end{enumerate}
\end{Definition}

\begin{remark}
The topological Cardy constraint in the form given above was first
derived in \cite[Subsection 4.7]{tft}. 
\end{remark}

\subsection{B-type topological Landau-Ginzburg theories}

A quantum open-closed B-type topological Landau-Ginzburg theory is a particular
quantum two-dimensional field theory defined on compact oriented
surfaces with corners, which is conjecturally associated to pairs
$(X,W)$, where $X$ is a non-compact Calabi-Yau manifold and
$W:X\rightarrow \C$ is a non-constant complex-valued holomorphic
function defined on $X$. At the classical (i.e. non-quantum) level,
such theories admit a Lagrangian description whose general
construction for compact oriented surfaces with boundary was given in
\cite{LG1,LG2}\footnote{The case of oriented surfaces {\em without}
  boundary was studied much earlier in \cite{Vafa,Labastida}.}. That
construction extends immediately to compact oriented surfaces with
corners, as outlined in Appendix \ref{app:sc}. As explained in
\cite{LG2}, a (non-rigorous) procedure of ``partial path integral
localization'' allows one to extract an explicit family of smooth
differential models for the off-shell state spaces of this theory,
whose cohomology conjecturally recovers the TFT datum of the
open-closed TFT defined by the on-shell un-integrated correlators. In
the next sections, we give an equivalent mathematical description of
that model, referring the reader to Appendix \ref{app:sc} for the
precise relation to the superconnection language used in
\cite{LG2}. Let us first define the data parameterizing such
theories. By a K\"{a}hlerian manifold we mean a complex manifold which
admits at least one K\"{a}hler metric.

\

\begin{Definition}
A {\em Landau-Ginzburg (LG) pair} of dimension $d$ is a pair $(X,W)$, where:
\begin{enumerate}
\itemsep 0.0em
\item $X$ is a non-compact K\"{a}hlerian manifold of complex dimension $d$
  which is {\em Calabi-Yau} in the sense that the canonical line
  bundle $K_X\eqdef \wedge^d T^\ast X$ is holomorphically trivial.
\item $W:X\rightarrow \C$ is a {\em non-constant} complex-valued holomorphic
  function defined on $X$.
\end{enumerate}
The {\em signature} $\mu(X,W)$ of a Landau-Ginzburg pair $(X,W)$ is
defined as the modulo 2 reduction of the complex dimension of $X$:
\be
\mu(X,W)\eqdef {\hat d}\in \Z_2~~.
\ee
\end{Definition}

\

\begin{Definition} 
The {\em critical set of $W$} is the set: 
\be
Z_W\eqdef \big\{p\in X \,\big|\,(\pd W)(p)=0\big\}
\ee 
of critical points of $W$.  
\end{Definition}

\section{The off-shell bulk algebra}
\label{sec:bulk}

Let $(X,W)$ be a Landau-Ginzburg pair with $\dim_\C X=d$. In this
subsection, we discuss a dg-algebra $(\PV(X,W),\updelta_W)$ of
polyvector-valued forms. When the critical set of $W$ is compact, the
path integral argument of \cite{LG2} shows that the total cohomology
algebra of this dg-algebra can be interpreted as the bulk
algebra of the B-type open-closed topological Landau-Ginzburg theory
defined by $(X,W)$.

\subsection{The bicomplex of smooth differential forms}

Let $\cT X=T X \oplus {\overline{T}} X$ denote the complexified tangent
bundle of the real manifold underlying $X$ and $\cT^\ast X\eqdef (\cT
X)^\vee$ denote the complexified cotangent bundle. Let $\wedge
\cT^\ast X\eqdef \oplus_{k=0}^{2d} \wedge^k \cT^\ast X$ denote the
complexified exterior bundle of $X$. Let $\wedge T^\ast X\eqdef
\oplus_{k=0}^d \wedge^k T^\ast X$ and $\wedge \overline{T}^\ast X\eqdef
\oplus_{k=0}^d \wedge^k {\overline{T}}^\ast X$ denote the holomorphic and
antiholomorphic exterior bundles. Let:
\be
\rOmega(X)\eqdef \rGamma_\sm(X,\wedge \cT^\ast X)
\ee
denote the Grassmann algebra of $X$, i.e. the $\cinf$-algebra of smooth
inhomogeneous differential forms defined on $X$, with multiplication
given by the wedge product. Let: 
\be
\rOmega^{i,j}(X)\eqdef \rGamma_\sm(X,\wedge^i T^\ast X \otimes \wedge ^j {\overline{T}}^\ast X)
\ee
denote the $\cinf$-submodule of $\rOmega(X)$ consisting of smooth
differential forms of type $(i,j)$. Set $\rOmega^{i,j}(X)=0$ for
$i\not\in \{0,\ldots, d\}$ or $j\not \in \{0,\ldots, d\}$.  We view
$\rOmega(X)$ as a unital bigraded $\cinf$-algebra using the
decomposition:
\be
\rOmega(X)=\bigoplus_{i,j=0}^d \rOmega^{i,j}(X)~~.
\ee
The {\em rank grading} of $\rOmega(X)$ is the total $\Z$-grading
induced by this bigrading, which corresponds to the decomposition: 
\be
\rOmega(X)=\bigoplus_{k=0}^{2d} \rOmega^k(X)~~\mathrm{where}~~\rOmega^k(X)\eqdef \bigoplus_{i+j=k}\rOmega^{i,j}(X)~~.
\ee
Let $\dd$ be the de Rham differential and $\partial$ and ${\bar
  \partial}$ be the Dolbeault differentials of $X$. We have:
\be
\dd=\pd+\bpd~~
\ee
and: 
\be
\dd^2=\pd^2=\bpd^2=\pd\bpd+\bpd\pd=0~~.
\ee
In particular, $(\rOmega(X),\pd,\bpd)$ is a bicomplex of $\C$-vector
spaces whose total differential equals $\dd$. When $\rOmega(X)$ is
endowed with the rank grading, the triplet $(\rOmega(X),\wedge,\dd)$ is
a unital $\Z$-graded supercommutative dg-algebra over $\C$. Notice
that the differential $\bpd$ is $\O(X)$-linear.

\subsection{The dg-algebra of $(0,\ast)$-forms}
Let: 
\be
\cA^j(X)\eqdef \rOmega^{0,j}(X)=\rGamma_\sm(X,\wedge^j \overline{T}^\ast X)~~
\ee
be the $\cinf$-module of smooth differential forms of type $(0,j)$
defined on $X$. Consider the $\Z$-graded $\cinf$-module:
\be
\cA(X)\eqdef \rGamma_\sm(X,\wedge \overline{T}^\ast X)=\bigoplus_{j=0}^d\cA^j(X)~~
\ee
consisting of all differential forms which have degree zero with
respect to the first grading of $\rOmega(X)$. Then $(\cA(X),\wedge,
\bpd)$ is a $\Z$-graded $\O(X)$-linear dg-algebra.

\subsection{The algebra of polyvector fields}

Let $\wedge TX\eqdef \oplus_{i=0}^d \wedge^i TX$ be the bundle of
holomorphic polyvectors. Consider the $\cinf$-module: 
\be
\cB^i(X)\eqdef \rGamma_\sm(X,\wedge^i TX)~~
\ee
and the $\Z$-graded $\cinf$-module: 
\be
\cB(X)\eqdef \rGamma_\sm(X,\wedge TX)=\bigoplus_{i=0}^d\cB^i(X)~~.
\ee
Then $\cB(X)$ becomes a unital associative and supercommutative
$\Z$-graded $\cinf$-algebra when endowed with the multiplication
induced by the wedge product of polyvector fields.

\subsection{The twisted Dolbeault algebra of polyvector-valued forms}
\label{bulk}

Consider the $\cinf$-module\,\footnote{In some references, elements of
  $\PV(X)$ are called ``polyvector fields'', though such language does
  not match the traditional terminology used in differential
  geometry. Strictly speaking, a smooth polyvector field of type
  $(\ast,0)$ is a smooth section of the bundle $\wedge TX$ and not a
  differential form valued in that bundle.}:
\be
\PV(X)\eqdef \cA(X,\wedge TX) \simeq \cA(X)\otimes_\cinf\cB(X)~~,
\ee
endowed with the unital and associative $\cinf$-linear multiplication
determined uniquely by the condition:
\be
(\rho_1\otimes P_1)(\rho_2\otimes P_2)=(-1)^{ij}(\rho_1\wedge \rho_2)\otimes (P_1\wedge P_2)~~
\ee
for all $\rho_1\in \cA(X), \rho_2\in \cA^j(X), P_1\in \cB^i(X),P_2\in \cB(X)$.  
For any $i=-d,\ldots, 0$ and $j=0,\ldots, d$, let: 
\be
\PV^{i,j}(X)\eqdef \cA^j(X,\wedge^{|i|}TX)\simeq \cA^j(X)\otimes_\cinf \cB^{|i|}(X)
\ee
denote the space of smooth $(0,j)$-forms valued in the holomorphic
vector bundle $\wedge^{|i|} TX$. Set $\PV^{i,j}(X)=0$ for $i\not\in
\{-d,\ldots, 0\}$ or $j\not\in \{0,\ldots, d\}$. With these
conventions, the decomposition:
\be
\PV(X)= \bigoplus_{i=-d}^0 \bigoplus_{j=0}^d \PV^{i,j}(X)~~
\ee
makes $\PV(X)$ into a unital associative $\Z\times \Z$-graded
$\cinf$-algebra with multiplication given by the wedge product, 
whose grading is concentrated in bidegrees $(i,j)$
satisfying $i\in \{-d,\ldots, 0\}$ and $j\in \{0,\ldots, d\}$. Notice
that $\PV(X)$ is a unital supercommutative $\cinf$-algebra when
endowed with the total grading, which we denote by $\deg$.

\

\begin{Definition}
The {\em canonical grading of $\PV(X)$} is the total $\Z$-grading of
the bigrading introduced above, i.e. the $\Z$-grading whose
homogeneous components are defined through:
\be 
\PV^k(X)\eqdef \bigoplus_{i+j=k} \PV^{i,j}(X)~~(k\in \Z)~~.
\ee 
The {\em reduced grading of $\PV(X)$} is the $\Z_2$-grading
defined as the mod 2 reduction of the canonical grading:
\beqa
& & \PV^{\hat 0}(X)\eqdef \bigoplus_{k=\ev} \PV^k(X)\nn\\
& & \PV^{\hat 1}(X)\eqdef \bigoplus_{k=\odd} \PV^k(X)\nn~~.
\eeqa
\end{Definition}

\

\noindent Notice that $\PV^k(X)=0$ unless $k\in \{-d,\ldots, d\}$. We
have:
\be
\PV^{-d}(X)=\PV^{-d,0}(X)=\cB^d(X)~~,~~\PV^d(X)=\PV^{0,d}(X)=\cA^d(X)~~. 
\ee
Let $\bbpd:=\bbpd_{\wedge TX}:\PV(X)\rightarrow \PV(X)$ denote the
Dolbeault operator of the holomorphic vector bundle $\wedge
TX$. This is an $\O(X)$-linear odd derivation of $\PV(X)$ which
preserves the first $\Z$-grading and has degree $+1$ with respect to
the second $\Z$-grading:
\be
\bbpd\big(\PV^{i,j}(X)\big)\subset \PV^{i,j+1}(X)~~.
\ee 
Let $\ioda_W:\PV(X)\rightarrow \PV(X)$ be the unique $\cinf$-linear
map which satisfies the following conditions:
\begin{itemize}
\itemsep 0.0em
\item $\ioda_W$ has bidegree $(1,0)$: 
\ben
\ioda_W\big(\PV^{i,j}(X)\big)\subset \PV^{i+1,j}(X)~~.
\een
\item $\ioda_W$ is an odd derivation of $\PV(X)$ with respect to the canonical grading:
\ben
\label{frml2.01}
\ioda_W(\omega\eta)=(\ioda_W\omega)\eta+(-1)^k\omega (\ioda_W\eta)~~,~~\forall \omega\in \PV^k(X)~,~\forall \eta\in \PV(X)~~.
\een
\item $\ioda_W$ coincides with the contraction with the holomorphic 1-form
$-\i\partial W\in \rGamma(X,T^\ast X)\subset \rOmega^{1,0}(X)$ when
restricted to the submodule
$\PV^{-1,0}(X)=\cB^1(X)=\rGamma_\sm(X,TX)$. 
\end{itemize} 
Both $\bbpd$ and $\ioda_W$ have degree $+1$ with respect to the
canonical grading and they are odd $\O(X)$-linear derivations of the
algebra $\PV(X)$, when the latter is endowed with the reduced
grading. The derivation $\ioda_W$ is also $\cinf$-linear.

\

\begin{Definition}
Let $(X,W)$ be a Landau-Ginzburg pair. The {\em twisted differential}
determined by $W$ on $\PV(X)$ is the operator
$\updelta_W:\PV(X)\rightarrow \PV(X)$ defined through:
\be
\updelta_W\eqdef \bbpd+\ioda_W~~.  
\ee
\end{Definition}

\noindent The twisted differential is $\O(X)$-linear. It has degree
$+1$ with respect to the canonical grading and it is odd with respect
to the reduced grading. We have:
\be 
\updelta_W^2=\bbpd^2=(\ioda_W)^2=\ioda_W\circ
\bbpd+\bbpd\circ \ioda_{W}=0~~,
\ee 
which shows that $(\PV(X),\bbpd,\ioda_W)$ is a bicomplex of $\O(X)$-modules.  

\

\begin{Definition}
The {\em twisted Dolbeault algebra of polyvector-valued forms} of the
Landau-Ginzburg pair $(X,W)$ is the supercommutative $\Z$-graded
$\O(X)$-linear dg-algebra $(\PV(X),\updelta_W)$, where $\PV(X)$ is
endowed with the canonical $\Z$-grading. The {\em cohomological
  twisted Dolbeault algebra} of $(X,W)$ is the supercommutative
$\Z$-graded $\O(X)$-linear algebra defined through:
\be
\HPV(X,W)\eqdef \rH(\PV(X),\updelta_W)~~.
\ee
\end{Definition}

\begin{Proposition}
\label{prop:bulkfd}
Assume that the critical set $Z_W$ is compact. Then $\HPV(X,W)$ is
finite-dimensional over $\C$.
\end{Proposition}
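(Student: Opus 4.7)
The plan is to identify $\HPV(X,W)$ with the hypercohomology on $X$ of a bounded complex of coherent analytic sheaves whose cohomology sheaves are supported on the compact set $Z_W$, and then to invoke finiteness of cohomology of coherent sheaves with compact support.

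Let $\mathcal{K}^\bullet\eqdef (\wedge^{-\bullet}TX,\,\iota_{-\i\pd W})$ denote the Koszul-type complex of locally free coherent $\cO_X$-modules, concentrated in degrees $-d,\dots,0$, whose differential is contraction with the holomorphic $1$-form $-\i\pd W\in\rGamma(X,T^\ast X)$; this is the holomorphic complex underlying the smooth derivation $\ioda_W$ introduced above. Applying the Dolbeault resolution $\cA^{0,\bullet}(X,\wedge^{|i|}TX)$ in each degree $i$ produces precisely the bicomplex $(\PV^{\bullet,\bullet}(X),\bbpd,\ioda_W)$ whose total differential is $\updelta_W$. Since the sheaves of smooth bundle-valued forms are fine on the paracompact manifold $X$, this is a Cartan--Eilenberg-type resolution, and standard homological algebra yields a canonical isomorphism
\be
\HPV(X,W)\;\simeq\;\mathbb{H}^{\bullet}(X,\mathcal{K}^{\bullet})\ .
\ee

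I would then invoke the hypercohomology spectral sequence
\be
E_2^{p,q}=H^p\bigl(X,\mathcal{H}^q(\mathcal{K}^{\bullet})\bigr)\;\Longrightarrow\;\mathbb{H}^{p+q}(X,\mathcal{K}^{\bullet})\ .
\ee
The cohomology sheaves $\mathcal{H}^q(\mathcal{K}^{\bullet})$ are coherent by Oka coherence, being subquotients in a bounded complex of locally free coherent sheaves. The decisive observation is that $\mathcal{K}^{\bullet}$ is acyclic on $X\setminus Z_W$: at any point $x$ with $(\pd W)(x)\neq 0$, one can choose locally a holomorphic vector field $Z$ satisfying $\langle -\i\pd W,Z\rangle=1$, and then wedging with $Z$ provides a contracting homotopy for the stalk Koszul complex $(\wedge^{\bullet}T_xX,\iota_{-\i\pd W(x)})$. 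Hence $\supp\mathcal{H}^q(\mathcal{K}^{\bullet})\subseteq Z_W$, which is compact by hypothesis.

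A coherent analytic sheaf on a complex manifold with compact support has finite-dimensional cohomology in every degree, by the Cartan--Serre finiteness theorem applied to its support (a compact complex-analytic subspace of $X$). Consequently every $E_2^{p,q}$ is finite-dimensional; the spectral sequence is confined to the bounded region $0\le p\le d$, $-d\le q\le 0$ and converges strongly, so the abutment is finite-dimensional in each total degree, proving the proposition. I expect the main obstacle to be the first step: verifying carefully that the Dolbeault resolutions of the individual terms $\wedge^{|i|}TX$ assemble with the horizontal differential $\iota_{-\i\pd W}$ into a double complex whose total complex equals $(\PV(X),\updelta_W)$ and which computes $\mathbb{H}^{\bullet}(X,\mathcal{K}^{\bullet})$; once this identification is granted, the sheaf-theoretic finiteness argument is standard.
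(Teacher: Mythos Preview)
Your proposal is correct and takes essentially the same approach as the paper: both identify $\HPV(X,W)$ with the hypercohomology of the Koszul complex $(\wedge^{-\bullet}TX,\ioda_W)$ via the Dolbeault resolution, observe that the cohomology sheaves are coherent and supported on the compact set $Z_W$, and then invoke finiteness of cohomology for compactly supported coherent analytic sheaves. The only cosmetic differences are that the paper appeals to Grauert's direct image theorem (for the identity map) to obtain coherence, whereas you use Oka coherence directly on the subquotients, and the paper phrases the final step in terms of the hypercohomology presheaf rather than the second spectral sequence; your version is arguably the more standard packaging of the same argument.
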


\begin{proof}
Recall (see \cite[Sec 8.1]{Voisin}) that the hypercohomology of a
complex of analytic sheaves which is bounded from below can be
computed using the double complex of sheaves obtained by taking
acyclic resolutions of each sheaf of the original complex\footnote{The
total complex of such a double complex of acyclic sheaves is a complex
of acyclic sheaves which is quasi-isomorphic with the original
complex, so one can apply \cite[Proposition 8.12]{Voisin}.}. Since
Dolbeault resolutions of holomorphic vector bundles are acyclic
(because any sheaf of $\cC^\infty_X$-modules is a fine sheaf), we can
use such resolutions to present the $\Z$-graded $\O(X)$-module
$\HPV(X,W)$ as the total hypercohomology
$\H(\cK_W)=\rH(\rR\rGamma(X,\cK_W))$ of the following Koszul complex 
of vector bundles:
\be
(\cK_W):~~0 \rightarrow \wedge^d TX \stackrel{\ioda_W}{\rightarrow} 
\wedge^{d-1} TX \stackrel{\ioda_W}{\rightarrow} \ldots \stackrel{\ioda_W}{\rightarrow} 
\cO_X \rightarrow 0~~.
\ee
Here $\rR\rGamma(X,\cK_W)$ is viewed as the quasi-isomorphism class of
a complex of $\O(X)$-modules (the image of the complex $\cK_W$ through
the right derived functor of the global sections functor
$\rGamma(X,-)$) and $\rH(\rR\rGamma(X,\cK_W))$ denotes its total
cohomology.  We have $\H(\cK_W)=\cH(X)$, where $\cH$ is the
$\Z$-graded hypercohomology presheaf of $\cK_W$, which associates to
each open subset $U\subset X$ the $\Z$-graded $\O(X)$-module
$\cH(U)=\rH(\rR\rGamma(U,\cK_W))$. The sheafification of $\cH$ is
coherent by Grauert's direct image theorem (see \cite[Chapter 10]{GRs}), since
it coincides with the derived direct image $\rR\id_\ast(\cK_W)$ of the
finite complex of coherent analytic sheaves $\cK_W$ through the proper
holomorphic map $\id_X:X\rightarrow X$. For any open subset $U$ of the
complement $X\setminus Z_W$, the restricted sheaf Koszul complex
$\cK_W|_U$ is exact, hence $\cH(U)$ vanishes for any such $U$. Thus
$\supp\cH$ is contained in the critical set $Z_W$. Since $Z_W$ is
compact and the support is closed by definition, it follows that the
coherent sheaf $\cH$ has compact support, hence its space of sections
$\H(\cK_W)=\cH(X)$ is finite-dimensional.
~\qed
\end{proof}

\

\noindent When $Z_W$ is compact, the path integral argument of
\cite{LG2} shows that the bulk algebra $\cH$ of the open-closed topological
field theory defined by the un-integrated correlators of on-shell
observables of the B-type Landau-Ginzburg theory of the pair $(X,W)$
can be identified with the $\Z_2$-graded $\C$-superalgebra obtained
from $\HPV(X,W)$ by restricting scalars from $\O(X)$ to $\C$ and
reducing the canonical grading modulo 2.

\subsection{The reduced contraction determined by a holomorphic volume form}

Recall that the canonical line bundle $K_X\eqdef \wedge^d T^\ast X$ is
holomorphically trivial, so it admits nowhere-vanishing holomorphic
sections (called holomorphic volume forms). Let $\Omega\in
\rGamma(X,K_X)\subset \rOmega^{d,0}(X)$ be a holomorphic volume form.

\

\begin{Definition}
\label{def:RedCont}
The {\em reduced contraction with $\Omega$} is the $\cinf$-linear map
$\Omega\lrcorner_0:\PV(X)\rightarrow \cA(X)$ defined through:
\be
\Omega\lrcorner_0 \omega=\twopartdef{0~,~}{\omega\not \in \PV^{-d,\ast}(X)}{\Omega\lrcorner \omega~,~}{\omega\in \PV^{-d,\ast}(X)}~~.
\ee
\end{Definition}
We have: 
\be
\Omega\lrcorner_0(\PV^{i,j}(X))=\twopartdef{0~,~}{i\neq -d}{\cA^j(X)~,~}{i=-d}~~.
\ee
Since $\Omega$ is nowhere-vanishing, the restricted map
$\Omega\lrcorner_0:\PV^{-d,\ast}(X)=\cA^\ast(X,\wedge^d TX)
\rightarrow \cA^\ast(X)$ is an isomorphism of $\cinf$-modules. Since
$\Omega$ is holomorphic while $\ioda_W$ decreases polyvector rank, the
reduced contraction satisfies:
\ben
\Omega\lrcorner_0 (\updelta_W\omega)=\Omega\lrcorner_0 (\bbpd\omega)=(-1)^d \bpd(\Omega\lrcorner_0\omega)~~,~~\forall \omega\in \PV(X)~~.
\een
Thus $\Omega\lrcorner_0$ is a map of complexes of $\O(X)$-modules from
$(\PV(X),\updelta_W)$ to $(\cA(X),\bpd)$ which reduces polyvector rank
by $d$ without changing the form rank. This map has $\Z$-degree $+d$
when $\PV(X)$ is endowed with the canonical $\Z$-grading and
$\Z_2$-degree $\mu={\hat d}$ when $\PV(X)$ is endowed with the reduced
grading.

\begin{remark}
Picking a holomorphic
volume form $\Omega$, the operator $\iota_\Omega=\Omega\lrcorner $ of contraction with
$\Omega$ gives an isomorphism of $\cinf$-modules:
\be
\iota_\Omega:\PV^{i,j}(X) \stackrel{\sim}{\rightarrow} \rOmega^{d+i,j}(X)~~.
\ee
This can be used to transport the holomorphic Dolbeault differential
$\pd$ of $\rOmega(X)$ to a differential $\pd_\Omega:\PV(X)\rightarrow
\PV(X)$ which satisfies:
\be
\pd_\Omega(\PV^{i,j}(X))\subset \PV^{i+1,j}(X)~~.
\ee
Consider the bracket $\{\cdot,\cdot\}_{_\Omega}:\PV(X)\times \PV(X)
\rightarrow \PV(X)$ defined through:
\ben
\label{OmegaBracket}
\{\omega,\eta\}_{_\Omega}\eqdef \pd_{\Omega}(\omega\wedge\eta)-(\pd_{\Omega}\omega)\wedge\eta-(-1)^{\deg \omega}\omega\wedge\pd_{\Omega}\eta~.
\een
Then $(\PV(X), \{\cdot,\cdot\}_{_\Omega},\wedge,\pd_\Omega)$ is a Batalin-Vilkovisky
algebra. One can check (see Appendix \ref{app:coord}) that we have:
\ben
\label{ioda_bracket}
\ioda_W=-\i\{W,\cdot\}_{_\Omega}~~,
\een
so the operator $\delta_W$ coincides with the operator denoted by
$\bpd_f$ in \cite{LLS} provided that one takes $f=-\i W$.  Notice that
$\ioda_W$ is independent of $\Omega$, so the formulation in terms of
$\ioda_W$ (which was already used in \cite{LG2}) is more natural.  In
particular, the dg-algebra $(\PV(X),\updelta_W)$ and its cohomology
$\HPV(X,W)$ are independent of $\Omega$. 
\end{remark}

\subsection{Compact support version}

Let: 
\be
\PV_c(X)\subset \PV(X)
\ee
denote the $\cinf$-submodule of $\PV(X)$ consisting of
compactly-supported polyvector-valued forms. Then $\PV_c(X)$ is a subalgebra
of the bidifferential algebra $(\PV(X),\bbpd,\ioda_W)$. In particular
$\PV_c(X)$ (when endowed with the canonical $\Z$-grading) is a 
dg-subalgebra of the $\O(X)$-linear dg-algebra $(\PV(X),\updelta_W)$.

\

\begin{Definition}
The {\em compactly supported twisted Dolbeault algebra of
  polyvector-valued forms} is the $\Z$-graded $\O(X)$-linear dg-algebra
$(\PV_c(X),\updelta_W)$, where $\PV_c(X)$ is endowed with the
canonical $\Z$-grading.
\end{Definition}

\

\noindent Let $\HPV_c(\!X,\!W\!)\!\eqdef\! \rH(\PV_c(\!X\!),\!\updelta_W\!)$ 
be the total
cohomology algebra of the dg-algebra $(\PV_c(\!X\!),\!\updelta_W\!)$. Then
$\HPV_c(X,W)$ is a $\Z$-graded $\O(X)$-linear {\em non-unital}
associative and supercommutative algebra when endowed with the
canonical $\Z$-grading. Let $i_\ast:\HPV_c(X,W)\rightarrow \HPV(X,W)$
denote the morphism of $\Z$-graded $\O(X)$-algebras induced on
cohomology by the inclusion $i:\PV_c(X)\rightarrow \PV(X)$~.

\

\begin{Proposition}
\label{prop:i}
Assume that the critical set $Z_W$ is compact. Then the inclusion
$i:\PV_c(X)\hookrightarrow \PV(X)$ is a homotopy equivalence (and
hence a quasi-isomorphism) of differential $\Z$-graded $\O(X)$-modules
from $(\PV_c(X),\updelta_W)$ to $(\PV(X),\updelta_W)$.
\end{Proposition}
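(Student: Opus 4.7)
The plan is to construct an explicit $\cinf$-linear (hence $\O(X)$-linear) contracting homotopy by combining a Koszul-type formula on $X\setminus Z_W$ with a geometric-series correction that terminates thanks to the dimension bound on $\PV(X)$.

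First, I would fix a K\"{a}hler metric $G$ on $X$ and a cutoff $\chi\in C_c^\infty(X)$ with $\chi=1$ on some open neighborhood of $Z_W$. On $X\setminus Z_W$, where $\pd W$ is nowhere-vanishing, the $G$-dual of $\i\pd W$ defines a smooth vector field $V$ satisfying $-\i\pd W(V)=1$. Since $1-\chi$ vanishes on a neighborhood of $Z_W$, the product $\tilde V\eqdef(1-\chi)V$ extends by zero to a globally smooth element of $\PV^{-1,0}(X)$. Let $\kappa_{\tilde V}$ denote wedge multiplication by $\tilde V$, an odd $\cinf$-linear endomorphism of $\PV(X)$. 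Since $\tilde V$ is odd, $\{\updelta_W,\kappa_{\tilde V}\}=m_{\updelta_W\tilde V}$, where $m_{(-)}$ denotes wedge multiplication in $\PV(X)$. A direct calculation gives $\updelta_W\tilde V=(1-\chi)-\bpd\chi\wedge V+(1-\chi)\bbpd V$, so
\be
\{\updelta_W,\kappa_{\tilde V}\}=\id-m_\Phi,\qquad \Phi=\Phi_c+\Phi_{nc},
\ee
with $\Phi_c\eqdef\chi+\bpd\chi\wedge V$ compactly supported and $\Phi_{nc}\eqdef-(1-\chi)\bbpd V\in\PV^{-1,1}(X)$.

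The crucial observation is that $\Phi_{nc}$ is nilpotent in the algebra $\PV(X)$: $(\Phi_{nc})^k\in\PV^{-k,k}(X)=0$ for $k>d$, so
\be
\Xi\eqdef\sum_{k=0}^d(\Phi_{nc})^k=(1-\Phi_{nc})^{-1}
\ee
is a well-defined inverse of $1-\Phi_{nc}$ in $\PV(X)$. I would then set $H\eqdef\kappa_{\tilde V}\circ m_\Xi$ and compute via the super-Leibniz rule
\be
\{\updelta_W,H\}=(\id-m_\Phi)m_\Xi-\kappa_{\tilde V}\,m_{\updelta_W\Xi}=m_{(1-\Phi)\Xi}-\kappa_{\tilde V}\,m_{\updelta_W\Xi}.
\ee
The algebraic identity $(1-\Phi)\Xi=(1-\Phi_{nc})\Xi-\Phi_c\Xi=1-\Phi_c\Xi$ reduces this to $\{\updelta_W,H\}=\id-P$ where $P\eqdef m_{\Phi_c\Xi}+\kappa_{\tilde V}\,m_{\updelta_W\Xi}$. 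To confirm that $P$ lands in $\PV_c(X)$ requires two compact-support facts: (i) $\Phi_c\Xi$ is compactly supported because $\Phi_c$ is; and (ii) $\updelta_W\Xi$ is compactly supported, because $\updelta_W\Phi_{nc}=\bpd\chi\wedge\bbpd V$ --- the potentially non-compactly-supported contribution $-(1-\chi)\ioda_W\bbpd V$ vanishes identically by virtue of the identity $\ioda_W\bbpd V=\i\,\bpd(\pd W(V))=0$ on $X\setminus Z_W$ (using $\pd W(V)=\i$ there).

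Since $P=\id-\{\updelta_W,H\}$, the operator $P$ automatically commutes with $\updelta_W$, so $r\eqdef P\colon\PV(X)\to\PV_c(X)$ is a chain map. Moreover, $H$ is wedge multiplication by a globally smooth element of $\PV(X)$, so it preserves the subspace $\PV_c(X)$, and the relation $\{\updelta_W,H\}=\id-P$ restricts to $\PV_c(X)$ as well. Combined with $iP=P$ and $Pi=P|_{\PV_c(X)}$, this yields $\cinf$-linear null-homotopies $ir-\id_{\PV(X)}=-\{\updelta_W,H\}$ and $ri-\id_{\PV_c(X)}=-\{\updelta_W,H|_{\PV_c(X)}\}$, exhibiting $i$ as an $\O(X)$-linear homotopy equivalence of dg-modules. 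The main technical obstacle is that the naive Koszul-type homotopy $\kappa_{\tilde V}$ leaves a residual error $m_{\Phi_{nc}}$ whose support is not compact; the nilpotency of $\Phi_{nc}$, forced by the dimension bound $d=\dim_\C X$, is precisely what permits the correcting geometric series $\Xi$ to terminate and produces the desired chain-level retraction into $\PV_c(X)$.
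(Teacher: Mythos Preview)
Your argument is correct and follows essentially the same strategy as the paper's proof: a Koszul-type contraction against a vector field dual to $\pd W$ on $X\setminus Z_W$, a terminating geometric series exploiting the nilpotency forced by $\dim_\C X=d$, and a cutoff near the compact set $Z_W$. The only organizational difference is that the paper first builds the exact homotopy $\cR$ on $X_0$ satisfying $[\updelta_W,\cR]=I_0$ and then multiplies by the cutoff $(1-\rho)$, whereas you cut off the vector field first and correct afterward---this is why you need the extra (valid) check that $\updelta_W\Xi$ is compactly supported, a step that in the paper's ordering is absorbed into the identity $[\updelta_W,\hS]=0$ on $X_0$.
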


\begin{proof}
This is an extension to the global case of \cite[Lemma 2.3]{LLS} (see
also \cite[Lemma 3.1]{ChiangLi}); we give the proof in detail for
clarity and completeness. Let $\updelta:=\updelta_W=\bbpd+\ioda_W$ 
and $\updelta_c:=\updelta|_{\PV_c(X)}$. Let $I:= \id_{\PV(X)}$ and 
$I_c:=\id_{\PV_c(X)}$. We will construct maps:
\be
\pi:\PV(X) \rightarrow \PV_c(X)~~\mathrm{and}~~\wcR:\PV(X) \rightarrow \PV(X) 
\ee
such that $\pi$ is $\O(X)$-linear (in fact, $\cC^\infty(X)$-linear) and such that:
\ben
\label{htrels}
I-i\circ \pi~=[\updelta,~\wcR]~~\mathrm{and}~~I_c -\pi \circ i=[\updelta_c,\wcR_c]~~,
\een
where $\wcR$ preserves the subspace $\PV_c(X)$ and $\wcR_c:=
\wcR|_{\PV_c(X)}$. Let $X_0\eqdef X\setminus Z_W$ and $W_0:=
W|_{X_0}$. To construct $\wcR$, pick a Hermitian metric $G$ on $X$ and
consider the following smooth section of the holomorphic tangent
bundle of $X_0$:
\be
s\eqdef \frac{\i}{||\pd W||_G^2} \grad_G \overline{W}\in \rGamma_\sm(X_0,TX_0)=\PV^{-1,0}(X_0)~~,
\ee
where $||\cdot||_G$ denotes the norm induced by $G$ on $\wedge T^\ast
X$, $\overline{W}:X\rightarrow \C$ denotes the complex conjugate of
$W$ and $\grad_G \overline{W}\in \rGamma_\sm(X,TX)$ denotes the
gradient of $\overline{W}$ taken with respect to $G$ (see Subsection
\ref{subsec:bulkflow} for details). The section $s$ satisfies (cf. equation 
\eqref{normpdW}):
\ben
\label{iodas}
\ioda_W(s)=1_{X_0}~~,
\een
where $1_{X_0}\in \cC^\infty(X_0)$ is the unit function of the open
subset $X_0\subset X$. For any element $\eta\in \PV(X_0)$, let
$\hat{\eta}:\PV(X_0)\rightarrow \PV(X_0)$ denote the operator of
multiplication from the left with $\eta$ in the algebra $\PV(X_0)$:
\be
{\hat \eta}(\omega)\eqdef \eta\omega~~,~~\forall \omega\in \PV(X_0)~~.
\ee
Let $I_0\eqdef \id_{\PV(X_0)}$. Relation \eqref{iodas} implies:
\be
[\updelta, {\hat s}]=I_0+\widehat{\bbpd s}~~,
\ee
where $[\cdot,\cdot]$ is the graded commutator of operators acting in
$\PV(X_0)$:
\be
[A,B]=A\circ B-(-1)^{\deg A\,\deg B}B\circ A~~,~~\forall A,B:\PV(X_0)\to \PV(X_0)~~.
\ee
Since $\bbpd s\in \PV^{-1,1}(X_0)$, the operator $\widehat{\bbpd s}$
is nilpotent. Hence the operator $[\updelta, {\hat s}]$ is
invertible on $\PV(X_0)$ and we have:
\be
[\updelta, {\hat s}]^{-1}=\big(I_0+\widehat{\bbpd s}\big)^{-1}=\sum_{k=0}^d(-1)^k(\widehat{\bbpd s})^k=\hS~~,
\ee
where: 
\be
S\eqdef \big(1+\bbpd s\big)^{-1}=\sum_{k=0}^d(-1)^k (\bbpd s)^k\in \PV^0(X_0)~~. 
\ee
This allows us to define the following operator on the space
$\PV(X_0)$:
\be
\cR\eqdef {\hat s}\circ [\updelta, {\hat s}]^{-1}={\hat s}\circ {\hS}=\hR~~,
\ee
where: 
\be
R\eqdef s S=s\big(1+\bbpd s\big)^{-1}=s\sum_{k=0}^d (-1)^k (\bbpd s)^k\in \PV^{-1}(X_0)~~.
\ee
Since $\cR$ is the operator of left multiplication with $R$, it
preserves the subspace $\PV_c(X_0)$. Applying the operator
$[\updelta,\cdot]$ to the relation: 
\be
[\updelta, {\hat s}]\circ {\hS}=I_0
\ee
gives
\be
[\updelta,{\hS}]=0~~,
\ee
since $\big[\updelta, [\updelta, {\hat s}]\big]=0$ and $[\updelta,{\hat s}]$
is invertible on $\PV(X_0)$.  Thus: 
\ben
\label{ucR}
[\updelta, \cR]=[\updelta, {\hat s}\circ {\hS}]=[\updelta, {\hat s}]\circ {\hS}=I_0~~.
\een
Since the critical set $Z_W$ is compact and of codimension at least
one (recall that $W$ is not constant), there exists a relatively
compact open neighborhood $U$ of $Z_W$. Let $U_1$ be an open subset of
$X$ such that $Z_W\subset U_1\subset \overline{U_1}\subset U$ and
$\rho\in \cC^\infty_c(X)$ be a smooth compactly-supported function
which equals $1$ on $U_1$ and vanishes outside $U$. The
desired operators $\wcR$ and $\pi$ are defined as:
\beqan
&& \wcR\eqdef (1_X-\rho) \cR=(1_X-\rho){\hR}~~\nn\\
&& \pi \eqdef \rho I+(\bpd \rho)\cR= \rho I +(\bpd \rho) {\hR}~~,
\eeqan
where $1_X\in \cinf$ is the unit function of $X$. We have: 
\be
\wcR_c:= \wcR|_{\PV_c(X)}=(1_X-\rho) \cR|_{\PV_c(X_0)}~~.
\ee
Noticing that $\updelta\rho=\bpd\rho$, we compute: 
\ben
\label{udR}
[\updelta, \wcR]=-(\bpd \rho)\cR+(1_X-\rho)[\updelta, \cR]=-(\bpd \rho)\cR+(1_X-\rho)I_0=-(\bpd \rho)\cR+(1_X-\rho)I=I-i\circ \pi~~,
\een
where we used \eqref{ucR} and noticed that
$(1_X-\rho)I_0=(1_X-\rho)I$, since $1_X-\rho$ vanishes on $Z_W$. This
shows that the first relation in \eqref{htrels} is satisfied. On the
other hand, we have:
\be
\pi\circ i=\pi|_{\PV_c(X)}=\rho I_c +(\bpd \rho)\wcR_c~~,
\ee
so restricting \eqref{udR} to $\PV_c(X)$ gives the second relation in
\eqref{htrels}. Since $\pi$ is $\cC^\infty(X)$-linear, relations
\eqref{htrels} imply that $\pi$ is an $\O(X)$-linear map of complexes
and that $i$ is a homotopy equivalence having $\pi$ as an inverse up
to the homotopies provided by $\wcR$ and $\wcR_c$. \qed
\end{proof}

\section{The dg-category of topological D-branes}
\label{sec:DF}

Let $(X,W)$ be a Landau-Ginzburg pair with $\dim_\C X=d$. In this
subsection, we describe a $\Z_2$-graded dg-category $\DF(X,W)$ whose
objects are so-called {\em holomorphic factorizations} of $W$ (which
can be viewed as a certain global version of matrix factorizations) but
whose morphisms are bundle-valued differential forms; the differentials 
on the Hom spaces are certain deformations of the Dolbeault differential. 
The path integral arguments of \cite{LG2} imply that the total
cohomology category of $\DF(X,W)$ can be interpreted as the category
of topological D-branes of the B-type Landau-Ginzburg model defined by
$(X,W)$. The precise relation with the superconnection formalism used
in loc. cit. is explained in Appendix \ref{app:sc}.

\subsection{A category of holomorphic vector superbundles} 

We start by describing a category of holomorphic vector superbundles
which is equivalent with the $\Z_2$-graded $\O(X)$-linear category of
finite rank sheaves of locally-free $\cO_X$-supermodules defined on
$X$. Recall the categories $\VB(X)$ and $\VB_\sm(X)$ defined in
Subsection \ref{subsec:notation}.

\

\begin{Definition}
A {\em holomorphic vector superbundle} on $X$ is a $\Z_2$-graded
holomorphic vector bundle defined on $X$, i.e. a complex holomorphic
vector bundle $E$ endowed with a direct sum decomposition
$E=E^\0\oplus E^\1$, where $E^\0$ and $E^\1$ are holomorphic
sub-bundles of $E$.
\end{Definition}

\

\begin{Definition} 
The $\Z_2$-graded $\O(X)$-linear category $\VB^s(X)$ is defined as
follows.
\begin{itemize}
\itemsep 0.0em
\item The objects are the holomorphic vector superbundles on $X$.
\item Given two holomorphic vector superbundles $E$ and $F$ on $X$, let: 
\be
\Hom_{\VB^s(X)}(E,F)\eqdef \rGamma(X,Hom(E,F))
\ee
be the $\O(X)$-module of holomorphic sections of the bundle
$Hom(E,F)\eqdef E^\vee\otimes F$, endowed with the $\Z_2$-grading with
homogeneous components:
\beqa
&& \Hom^\0_{\VB^s(X)}(E,F)\eqdef \rGamma(X,Hom(E^\0,F^\0))\oplus \rGamma(X, Hom(E^\1,F^\1))~~\nn\\
&& \Hom_{\VB^s(X)}^\1(E,F)\eqdef \rGamma(X,Hom(E^\0,F^\1))\oplus \rGamma(X, Hom(E^\1,F^\0))~~~~.
\eeqa
\item The composition of morphisms is induced from that of $\VB(X)$. 
\end{itemize}
\end{Definition}

\begin{remark}
Given a holomorphic vector superbundle $E=E^\0\oplus E^\1$, the
locally-free sheaf of $\cO_X$-modules $\cE\eqdef \rGamma(E)$ defined by
$E$ is $\Z_2$-graded by the decomposition $\cE=\cE^\0\oplus \cE^\1$,
where $\cE^\0:= \rGamma(E^\0)$ and $\cE^\1:= \rGamma(E^\1)$ are
the locally-free sheaves corresponding to the holomorphic vector
bundles $E^\0$ and $E^\1$. We have:
\beqa
&& \Hom^\0_{\VB^s(X)}(E,F)\eqdef \Hom(\cE^\0,\cF^\0)\oplus \Hom(\cE^\1,\cF^\1)~~\nn\\
&& \Hom_{\VB^s(X)}^\1(E,F)\eqdef \Hom(\cE^\0,\cF^\1)\oplus \Hom(\cE^\1,\cF^\0)~~,
\eeqa
where $\Hom$ denotes the outer Hom of sheaves of $\cO_X$-modules. Thus
$\VB^s(X)$ can be identified with the $\Z_2$-graded $\O(X)$-linear
category of $\Z_2$-graded locally-free sheaves of $\cO_X$-modules.
\end{remark}

\

\noindent Let $E$ and $F$ be two holomorphic vector superbundles on
$X$. The {\em graded direct sum} of $E$ with $F$ is the direct sum
$E\oplus F$ of the underlying holomorphic vector bundles, endowed with
the $\Z_2$-grading given by:
\be
(E\oplus F)^\kappa=E^\kappa\oplus F^\kappa ~~,~~\forall \kappa\in \Z_2~~. 
\ee
The {\em graded tensor product} of $E$ with $F$ is the ordinary tensor
product $E\otimes F$ of the underlying holomorphic vector bundles, endowed
with the $\Z_2$-grading given by:
\beqa
& & (E\otimes F)^\0\eqdef (E^\0\otimes F^\0)\oplus (E^\1\otimes F^\1)\\
& & (E\otimes F)^\1\eqdef (E^\0\otimes F^\1)\oplus (E^\1\otimes F^\0)~~.
\eeqa
The {\em graded dual} $E$ is the holomorphic vector superbundle whose
underlying holomorphic vector bundle is the ordinary dual
$E^\vee$ of $E$, endowed with the $\Z_2$-grading
given by:
\be
(E^\vee)^\kappa\eqdef (E^\kappa)^\vee ~~,~~\forall \kappa\in \Z_2~~. 
\ee
The {\em holomorphic vector superbundle of morphisms} from $E$ to $F$
is the holomorphic vector superbundle $Hom(E,F)\eqdef E^\vee\otimes
F$, where $E^\vee$ is the graded dual and $\otimes$ is the graded
tensor product. Thus $Hom(E,F)$ is the usual bundle of morphisms
between the underlying holomorphic vector bundles, endowed with the
$\Z_2$-grading given by:
\beqa
&& Hom^\0(E,F)\eqdef Hom(E^\0,F^\0)\oplus Hom(E^\1,F^\1)~~\nn\\
&& Hom^\1(E,F)\eqdef Hom(E^\0,F^\1)\oplus Hom(E^\1,F^\0)~~~~.
\eeqa 
When $F=E$, we set $End(E)\eqdef Hom(E,E)$, 
$End^\kappa(E)\eqdef Hom^\kappa(E,E)$ etc.

\subsection{Holomorphic factorizations of $W$}

We next introduce holomorphic factorizations of $W$ and two natural 
dg-categories constructed with such objects, which we denote by
$\F_\sm(X,W)$ and $\F(X,W)$. The total cohomology category
$\HF(X,W)$ of $\F(X,W)$ can be viewed as a {\em naive} candidate for the
category of topological D-branes of the Landau-Ginzburg theory defined
by the pair $(X,W)$. In general, however, the category $\HF(X,W)$ {\em
  differs} from the correct category $\HDF(X,W)$ which results from
the path integral arguments of \cite{LG2} and is described in the next
subsection.

\

\begin{Definition}
A {\em holomorphic factorization} of $W$ is a pair $a=(E,D)$, where
$E=E^\0\oplus E^\1$ is a holomorphic vector superbundle on $X$ and $D\in
\rGamma(X,End^\1(E))$ is a holomorphic section of the bundle
$End^\1(E)=Hom(E^\0,E^\1)\oplus Hom(E^\1,E^\0)\subset End(E)$ which
satisfies the condition $D^2=W\id_E$.
\end{Definition}

\

\begin{remark}
Let $a=(E,D)$ be a holomorphic factorization of
$W$. Decomposing $E=E^\0\oplus E^\1$, the condition that $D$ is odd
implies that $D$ has the form:
\ben
\label{Dmatrix}
D=\left[\begin{array}{cc} 0 & v\\ u & 0\end{array}\right]~~,
\een
where $u\in \rGamma(X, Hom(E^\0,E^\1))$ and $v\in
\rGamma(X,Hom(E^\1,E^\0))$. The condition $D^2=W\id_E$ amounts to the
relations:
\be
v\circ u=W \id_{E^\0}~~,~~u\circ v=W \id_{E^\1}~~.
\ee
\end{remark}

\

\begin{Definition}
The {\em smooth dg-category of holomorphic factorizations} of $W$ is
the $\Z_2$-graded $\cinf$-linear dg-category $\F_\sm(X,W)$ defined
as follows:
\begin{itemize}
\itemsep 0.0em
\item The objects are the holomorphic factorizations of $W$.
\item Given two holomorphic factorizations
  $a_1=(E_1,D_1)$ and $a_2=(E_2,D_2)$ of $W$, we set: 
\be
\Hom_{\F_\sm(X,W)}(a_1,a_2)=\rGamma_\sm(X,Hom(E_1,E_2))~~,
\ee
endowed with the $\Z_2$-grading with homogeneous components: 
\be
\Hom^\kappa_{\F_\sm(X,W)}(a_1,a_2)\eqdef \rGamma_\sm(X,Hom^\kappa(E_1,E_2))~~,~~\forall \kappa\in \Z_2
\ee
and with the differentials (called {\em defect differentials}) $\fd_{a_1,a_2}$ determined
uniquely by the condition:
\be
\fd_{a_1,a_2}(f)\eqdef D_2\circ f-(-1)^\kappa f\circ D_1~~,~~\forall f\in \rGamma_\sm(X,Hom^\kappa(E_1,E_2))~~,~~\forall \kappa\in \Z_2~~.
\ee
\item The composition of morphisms is induced by that of
  $\VB_\sm(X)$.
\end{itemize}
The {\em smooth cohomological category of holomorphic factorizations}
is the $\Z_2$-graded $\cinf$-linear category $\HF_\sm(X,W)\eqdef
\rH(\F_\sm(X,W))$.
\end{Definition}

\

\begin{Definition}
The {\em holomorphic dg-category of holomorphic factorizations} of $W$
is the $\Z_2$-graded $\O(X)$-linear dg-category $\F(X,W)$ defined as
follows:
\begin{itemize}
\itemsep 0.0em
\item The objects are the holomorphic factorizations of $W$.
\item Given two holomorphic factorizations
  $a_1=(E_1,D_1)$ and $a_2=(E_2,D_2)$ of $W$, we set: 
\be
\Hom_{\F(X,W)}(a_1,a_2)=\rGamma(X,Hom(E_1,E_2))~~,
\ee
endowed with the $\Z_2$-grading with homogeneous components: 
\be
\Hom_{\F(X,W)}^\kappa(a_1,a_2)\eqdef \rGamma(X, Hom^\kappa(E_1,E_2))~~,~~\forall \kappa\in \Z_2
\ee
and with the differentials $\fd_{a_1,a_2}$ determined
uniquely by the condition:
\be
\fd_{a_1,a_2}(f)\eqdef D_2\circ f-(-1)^\kappa f\circ D_1~~,~~\forall f\in \rGamma(X, Hom^\kappa(E_1,E_2))~~,~~\forall \kappa\in \Z_2~~.
\ee
\item The composition of morphisms is induced by that of $\VB(X)$.
\end{itemize}
The {\em holomorphic cohomological category of holomorphic
  factorizations} is the $\Z_2$-graded $\O(X)$-linear category
$\HF(X,W)\eqdef \rH(\F(X,W))$.
\end{Definition}

\

\noindent Notice that $\F(X,W)$ is a non-full dg-subcategory of the
$\O(X)$-linear dg-category obtained from $\F_\sm(X,W)$ by
restriction of scalars.

\

\begin{remark} 
Given two holomorphic factorizations $a_1=(E_1,D_1)$ and
$a_2=(E_2,D_2)$ of $W$, write $D_i=\left[\begin{array}{cc} 0 &
    v_i\\ u_i & 0\end{array}\right]$ as in \eqref{Dmatrix}, where
$u_i\in \rGamma(X,Hom(E_i^\0,E_i^\1))$ and $v_i\in
\rGamma(X,Hom(E_i^\1,E_i^\0))$. Then an even morphism $f\in
\Hom_{\F(X,W)}^\0(a_1,a_2)$ has the form $f=\left[\begin{array}{cc}
    f_{\0\0} & 0\\ 0 & f_{\1\1}\end{array}\right]$ with $f_{\0\0}\in
\rGamma(X, Hom(E^\0_1,E^\0_2))$, $f_{\1\1}\in
\rGamma(X,Hom(E^\1_1,E^\1_2))$ and we have:
\ben
\label{even_mor_need}
\fd_{a_1,a_2}(f)=D_2\circ f-f\circ D_1=\left[\begin{array}{cc} 0 & v_2\circ f_{\1\1}-f_{\0\0}\circ v_1\\ u_2\circ f_{\0\0}-f_{\1\1}\circ u_1 &
    0\end{array}\right]~~.
\een
On the other hand, an odd morphism $g\in
\Hom^\1_{\F(X,W)}(a_1,a_2)$ has the form
$g=\left[\begin{array}{cc} 0 & g_{\1\0}\\ g_{\0\1} & 0\end{array}\right]$
with $g_{\1\0}\in \rGamma(X, Hom(E^\1_1,E^\0_2))$, $g_{\0\1}\in
\rGamma(X, Hom(E^\0_1,E^\1_2))$ and we have:
\ben
\label{odd_mor_need}
\fd_{a_1,a_2}(g)=D_2\circ g+g\circ D_1=\left[\begin{array}{cc} v_2\circ g_{\0\1}+g_{\1\0}\circ u_1 & 0 \\  0 &
  u_2\circ g_{\1\0}+g_{\0\1}\circ v_1  \end{array}\right]~~.
\een
\end{remark}

\subsection{The twisted Dolbeault category of holomorphic factorizations}

We are now ready to describe the $\Z_2$-graded dg-category $\DF(X,W)$
which results from the path integral arguments of \cite{LG2} and whose 
total cohomology category $\HDF(X,W)$ is the topological D-brane category 
of the B-type Landau-Ginzburg theory defined by $(X,W)$. We refer
the reader to Appendix \ref{app:sc} for the precise relation with the
superconnection formalism used in loc. cit. The category $\DF(X,W)$ is
closely related to the Dolbeault category of holomorphic vector
superbundles, so we start by introducing the latter.

\paragraph{The Dolbeault category of holomorphic vector superbundles}

For any holomorphic vector superbundle $E=E^\0\oplus E^\1$ on $X$, the
$\cinf$-module of smooth sections $\rGamma_\sm(X,E)$ is
$\Z_2$-graded with homogeneous components
$\rGamma^\kappa_\sm(X,E)\eqdef
\rGamma_\sm(X,E^\kappa)$. Accordingly, the $\cinf$-module
$\cA(X,E)\eqdef \cA(X)\otimes_{\cinf} \rGamma_\sm(X,E)$ has an
induced $\Z\times \Z_2$-grading. This bigrading corresponds to the
decomposition:
\be
\cA(X,E)=\bigoplus_{k=0}^d\bigoplus_{\kappa\in \Z_2}\cA^k(X,E^\kappa)~~,
\ee
where: 
\be
\cA^k(X,E^\kappa)=\cA^k(X)\otimes_\cinf \rGamma_\sm(X,E^\kappa)~~.
\ee
The $\Z$-grading is called the {\em rank grading} of $\cA(X,E)$ and
corresponds to the decomposition:
\be
\cA(X,E)=\bigoplus_{i=0}^d \cA^i(X,E)~~,
\ee
where $\cA^i(X,E)\eqdef \cA^i(X)\otimes_{\cinf} \rGamma_\sm (X,E)$.
The $\Z_2$-grading is called the {\em bundle grading} of $\cA(X,E)$
and corresponds to the decomposition:
\be
\cA(X,E)=\cA(X,E^\0)\oplus \cA(X,E^\1)~~,
\ee
where $\cA(X,E^\kappa)\eqdef \cA(X)\otimes_\cinf
\rGamma_\sm(X,E^\kappa)$. For any $\alpha\in \cA^k(X,E)$, let
$\rk\alpha\eqdef k\in \Z$. For any $\alpha\in \cA(X,E^\kappa)$, let
$\sigma(\alpha)\eqdef \kappa\in \Z_2$. The {\em total grading} of
$\cA(X,E)$ is the $\Z_2$-grading given by the decomposition:
\be
\cA(X,E)=\cA(X,E)^\0\oplus \cA(X,E)^\1~~, 
\ee
where: 
\beqa
& &\cA(X,E)^\0\eqdef \bigoplus_{i=\ev} \cA^i(X,E^\0)\oplus \bigoplus_{i=\odd} \cA^i(X,E^\1)\\
& &\cA(X,E)^\1\eqdef \bigoplus_{i=\ev} \cA^i(X,E^\1)\oplus \bigoplus_{i=\odd} \cA^i(X,E^\0)~~.
\eeqa
For any $\alpha\in \cA(X,E)^\kappa$, we set $\deg\alpha=\kappa\in
\Z_2$. For all $\rho\in \cA^k(X)$ and all $s \in
\rGamma_\sm(M,E^\kappa)$, we have:
\begin{equation}
\deg(\rho\otimes s) = {\hat k}+ \kappa \in \Z_2~~.
\end{equation}

\

\begin{Definition}
The {\em Dolbeault category $\fD^s(\!X\!)$ of holomorphic vector
  superbundles} is the $\cinf$-linear $\Z\times \Z_2$-graded category
defined as follows:
\begin{itemize}
\itemsep 0.0em
\item The objects are the holomorphic vector superbundles defined on
  $X$.
\item The $\cinf$-modules of morphisms are given by:
\be
\Hom_{\fD^s(X)}(E,F)=\cA(X,Hom(E,F))=\cA(X)\otimes_\cinf\rGamma_\sm(X,Hom(E,F))~~,
\ee
endowed with the obvious $\Z\times \Z_2$-grading.
\item The $\cinf$-bilinear composition of morphisms
$\circ:\cA(X,Hom(F,G))\times \cA(X,Hom(E,F))\rightarrow
\cA(X,Hom(E,G))$ is determined uniquely through the condition:
\ben
\label{Dscomp}
(\rho\otimes f)\circ (\eta\otimes g)=(-1)^{\sigma(f) \, \rk \eta}(\rho\wedge \eta) \otimes (f\circ g)~~
\een
for all pure rank forms $\rho,\eta\in \cA(X)$ and all pure
$\Z_2$-degree elements $f\in \rGamma_\sm(X,Hom(F,G))$ and $g\in
\rGamma_\sm(X, Hom(E,F))$.
\end{itemize}
\end{Definition}

\paragraph{The twisted Dolbeault category of holomorphic factorizations}

Consider two holomorphic factorizations $a_1=(E_1,D_1)$ and
$a_2=(E_2,D_2)$ of $W$. Then the space $\cA(X,Hom(E_1,E_2))$ 
carries two natural differentials: 
\begin{itemize}
\itemsep 0.0em
\item The {\em Dolbeault differential}
  $\bbpd:=\bbpd_{a_1,a_2}=\bbpd_{Hom(E_1,E_2)}$ determined on
  $\cA(X,Hom(E_1,E_2))$ by the holomorphic structure of the vector bundle
  $Hom(E_1,E_2)$. This differential is $\O(X)$-linear, preserves the
  bundle grading and has degree $+1$ with respect to the rank grading.
\item The {\em defect differential}, i.e. the $\cinf$-linear
  endomorphism $\md\!:=\!\md_{a_1,a_2}$ of $\cA(X,Hom(E_1,E_2))$
  determined uniquely by the condition:
\be
\md_{a_1,a_2}(\rho\otimes f)=(-1)^{\rk \rho} \rho \otimes (D_2\circ f)-(-1)^{\rk \rho +\sigma(f)} \rho\otimes (f\circ D_1)
\ee
for all pure rank forms $\rho \in \cA(X)$ and all pure $\Z_2$-degree
elements $f\in \rGamma_\sm(X,Hom(E_1,E_2))$. Notice that $\md$
squares to zero because $D_i$ square to $W\id_{E_i}$. Moreover, this
differential preserves the rank grading and is odd with respect to the
bundle grading.
\end{itemize}

\noindent Since $D_i$ are holomorphic, the Dolbeault and defect
differentials anticommute. Thus:
\be
\bbpd^2=\md^2=\bbpd\circ\md+\md\circ \bbpd=0~~,
\ee
and hence $(\cA(X,Hom(E_1,E_2)),\bbpd_{a_1,a_2},\md_{a_1,a_2})$ is a
$\Z\times\Z_2$-graded bicomplex.

\

\begin{Definition}
The {\em twisted differential $\updelta:=\updelta_{a_1,a_2}$} of the
ordered pair $(a_1,a_2)$ is the total differential of the bicomplex
$\big(\cA(X,Hom(E_1,E_2)),\bbpd_{a_1,a_2},\md_{a_1,a_2}\big)$:
\be
\updelta_{a_1,a_2}\eqdef \bbpd_{a_1,a_2}+\md_{a_1,a_2}~~.
\ee
\end{Definition}

\noindent Notice that the twisted differential is odd with respect to the total
$\Z_2$-grading.

\

\begin{Definition}
The {\em twisted Dolbeault category of holomorphic factorizations} of
$(X,W)$ is the $\Z_2$-graded $\O(X)$-linear dg-category $\DF(X,W)$
defined as follows:
\begin{itemize}
\itemsep 0.0em
\item The objects of $\DF(X,W)$ are the holomorphic factorizations of
  $W$.
\item Given two holomorphic factorizations $a_1=(E_1,D_1)$ and
  $a_2=(E_2,D_2)$, define:
\be
\Hom_{\DF(X,W)}(a_1,a_2)\eqdef \cA(X,Hom(E_1,E_2))~~,
\ee
endowed with the total $\Z_2$-grading and with the twisted
differentials $\updelta_{a_1,a_2}$.
\item The composition of morphisms coincides with that of $\fD^s(X)$
  (see \eqref{Dscomp}).
\end{itemize}
\end{Definition}

\

\noindent It is easy to check that $\DF(X,W)$ is indeed a dg-category. 
As a $\Z_2$-graded category, $\DF(X,W)$ coincides with the
$\O(X)$-linear category obtained from $\fD^s(X)$ by restriction of
scalars. Let:
\be
\HDF(X,W)\eqdef \rH(\DF(X,W))
\ee
denote its total cohomology category, which is $\Z_2$-graded and
$\O(X)$-linear. This can also be viewed as a $\Z_2$-graded
$\C$-linear category by restriction of scalars.

\

\begin{Proposition}
\label{prop:boundaryfd}
Assume that the critical set $Z_W$ is compact. Then $\HDF(X,W)$ is Hom-finite 
as a $\C$-linear category. 
\end{Proposition}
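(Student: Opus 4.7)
The approach is to adapt the hypercohomology strategy of Proposition \ref{prop:bulkfd} to morphism complexes. For any two holomorphic factorizations $a_i = (E_i, D_i)$ ($i = 1, 2$) of $W$, the plan is to show that $\Hom_{\HDF(X,W)}(a_1, a_2) = \rH(\cA(X, Hom(E_1, E_2)), \updelta_{a_1, a_2})$ is finite-dimensional over $\C$.

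First, I would recognize this cohomology as a $\Z_2$-graded hypercohomology on $X$. Since sheaves of $\cC^\infty_X$-modules are fine, the Dolbeault complex $(\cA^\bullet(X, Hom(E_1, E_2)), \bbpd_{a_1, a_2})$ is an acyclic resolution of the locally-free analytic sheaf $Hom(\cE_1, \cE_2) = \cE_1^\vee \otimes_{\cO_X} \cE_2$. Holomorphicity of $D_1$ and $D_2$ implies that $\md_{a_1, a_2}$ is $\cO_X$-linear and anticommutes with $\bbpd_{a_1, a_2}$, so it descends to a $\Z_2$-periodic differential on $Hom(\cE_1, \cE_2)$, yielding a complex of coherent analytic sheaves $(\cC^\bullet_{a_1, a_2}, \md_{a_1, a_2})$. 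The same bicomplex argument as in Proposition \ref{prop:bulkfd} identifies the total cohomology of $(\cA(X, Hom(E_1, E_2)), \updelta_{a_1, a_2})$ with the $\Z_2$-graded hypercohomology of this complex, whose cohomology sheaves $\cH^\bullet$ are coherent by Grauert's direct image theorem applied to the identity $\id_X$.

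The central step is to prove that $\supp \cH^\bullet \subseteq Z_W$. Since $\cH^\bullet$ is a subquotient of a coherent sheaf, it suffices to verify $\cH^\bullet_p = 0$ for every $p \in X \setminus Z_W$, for which I would treat two sub-cases. If $W(p) \neq 0$, then $W$ is invertible on a neighborhood $U$ of $p$, hence so are $D_1$ and $D_2$, and one checks directly that $K(f) \eqdef \frac{1}{2W}\, D_2 \circ f$ is a contracting homotopy for $\md_{a_1, a_2}$ on $U$: the relation $\md K + K \md = \id$ follows from $D_i^2 = W\id_{E_i}$. If $W(p) = 0$ but $\pd W(p) \neq 0$, then $\cO_{X,p}$ is a regular local Noetherian ring, $W$ is a non-zero-divisor in it, and the quotient $\cO_{X,p}/(W)$ is regular (since $\{W = 0\}$ is smooth at $p$). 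Eisenbud's theorem then identifies the homotopy category of matrix factorizations of $W$ over $\cO_{X,p}$ with the stable category of maximal Cohen-Macaulay modules over $\cO_{X,p}/(W)$; the latter is zero because every MCM module over a regular local ring is free. Hence the identity of every matrix factorization is null-homotopic at the stalk $p$, forcing the morphism complex between any two factorizations to be acyclic at $p$.

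Assembling these ingredients as in Proposition \ref{prop:bulkfd}: the coherent cohomology sheaves $\cH^\bullet$ have support in the compact set $Z_W$, hence finite-dimensional spaces of global sections, and the resulting hypercohomology $\Hom_{\HDF(X,W)}(a_1, a_2)$ is finite-dimensional over $\C$. This establishes Hom-finiteness of $\HDF(X,W)$. The hard part will be the second sub-case of the local vanishing argument, where one must invoke Eisenbud's theorem (or a concrete substitute) in the analytic setting; alternatively, one can bypass the case distinction by constructing a global contracting homotopy on $X \setminus Z_W$ from the section $s = \frac{\i}{\|\pd W\|_G^2} \grad_G \overline{W}$ of Proposition \ref{prop:i} combined with the matrix-factorization relations $D_i^2 = W \id_{E_i}$ and a partition of unity, along the lines of the proof of Proposition \ref{prop:i}.
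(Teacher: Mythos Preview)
Your hypercohomology framework is sound and parallels Proposition~\ref{prop:bulkfd} correctly: the Dolbeault resolution is acyclic, the defect differential $\md_{a_1,a_2}$ descends to a $\Z_2$-periodic complex of coherent $\cO_X$-sheaves, and Grauert's theorem gives coherence of its cohomology sheaves. Your approach is correct, but the paper's proof is considerably more elementary on the crucial local vanishing step, and in particular avoids both your case distinction on $W(p)$ and the appeal to Eisenbud's theorem.

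The paper's trick is this: at any $p\notin Z_W$, choose local holomorphic coordinates with $\pd_1 W(p)\neq 0$ and a holomorphic trivialization of $E_2$ near $p$. Differentiating $D_2^2 = W\,\id_{E_2}$ gives $[D_2,\pd_1 D_2] = (\pd_1 W)\,\id_{E_2}$, so the odd holomorphic section $f\eqdef \frac{1}{\pd_1 W}\,\pd_1 D_2$ satisfies $\md_2(f)=[D_2,f]=\id_{E_2}$; since $f$ is holomorphic, in fact $\updelta_2(f)=\id_{E_2}$. Then for any $\updelta$-closed $\alpha$ one has $\alpha=\id_{E_2}\circ\alpha=(\updelta_2 f)\circ\alpha=\updelta(f\alpha)$, so $\alpha$ is exact on $U$. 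This single calculation works uniformly whether or not $W(p)=0$, uses nothing beyond the Leibniz rule, and would equally well establish your stalk-level vanishing of $\md$-cohomology---so Eisenbud's equivalence (valid, but requiring some care to transplant to the analytic setting) is not needed. What your route buys is a clean invocation of Grauert for coherence, which the paper's proof asserts rather than argues; what the paper's route buys is a one-line contracting homotopy that replaces your entire second sub-case. The alternative you sketch at the end, via a metric-dependent section as in Proposition~\ref{prop:i}, is closer to what the paper does in Proposition~\ref{prop:j} for the compact-support comparison; for Hom-finiteness the holomorphic $f$ above is simpler still.
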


\begin{proof}
Let $a_1=(E_1,D_1)$ and $a_2=(E_2,D_2)$ be two holomorphic
factorizations of $W$. Set: 
\be
\updelta_1:=\updelta_{a_1, a_1}~~,~~\updelta_2:=\updelta_{a_2 ,a_2}~~,~~\updelta:=\updelta_{a_1,a_2}~~. 
\ee
The space $\Hom_{\DF(X,W)}(a_1,a_2)$ coincides with the space of
global sections of the coherent sheaf of $\cO_X$-modules
$\cH_{a_1,a_2}$ defined by $\cH_{a_1,a_2}(U)\eqdef
\rH(\cA(U,Hom(E_1,E_2)),\updelta)$ for any open subset $U\subset
X$. Since $W$ is non-constant, the critical set $Z_W$ is a closed
proper subset of $X$. Choose a point $x\in X\setminus Z_W$ and let
$U\subset X\setminus Z_W$ be an open subset containing $x$ and
supporting local complex coordinates $z^1,\ldots, z^d$. We can choose
$U$ small enough such that $E_i|_U$ are holomorphically
trivializable. Picking holomorphic trivializations of $E_i|_U$ allows
us to identify $D_i|_U$ with matrices whose entries are complex-valued
holomorphic functions defined on $U$ and we tacitly do so below; this
allows us to define the holomorphic partial derivatives $\partial_k D_i=\frac{\partial D_i}{\partial z^k}$.
Since $x$ does not belong to $Z_W$, at least one of the partial
derivatives $\pd_i W:=\frac{\partial W}{\partial z^i}$ does not vanish
at $x$ and, without loss of generality, we can assume that $\pd_1
W(x)\neq 0$. By shrinking $U$ if necessary, we can further assume that
$\pd_1 W$ doesn't vanish at any point of $U$. Then the relation
$D_2^2=W\id_{E_2}$ implies:
\be
[D_2,\pd_1 D_2]=D_2(\pd_1 D_2)+(\pd_1 D_2) D_2=(\pd_1 W)\id_{E_2}~~,
\ee
which gives: 
\be
\id_{E_2}=[D_2,f]=\updelta_2(f)~~,
\ee
where $f\eqdef \frac{1}{\pd_1 W} \pd_1 D_2\in \rGamma(U,
End^1(E_2))\subset \cA(U,End^1(E_2))$ and we used the fact that $f$ is
holomorphic. For any $\alpha\in \cA(U,Hom(E_1,E_2))$ such that
$\updelta \alpha=0$, we have:
\be
\alpha=\id_{E_2}\alpha=\updelta_2(f)\alpha=\updelta(f\alpha)~~.
\ee
Thus $\cH_{a_1,a_2}(U)=\rH(\cA(U,Hom(E_1,E_2)),\updelta)=0$. This
shows that $\cH_{a_1,a_2}$ is supported on the critical set
$Z_W$. Thus $\Hom_{\DF(X,W)}(a_1,a_2)=\cH_{a_1,a_2}(X)$ is
finite-dimensional since $Z_W$ is compact.  ~\qed
\end{proof}

\subsection{The extended supertrace}
\label{subsec:strext}

Let $a=(E,D)$ be a holomorphic factorization of $W$ and set
$\updelta_a:=\updelta_{a,a}$. Let $\str:\rGamma_\sm(X,End(E))\rightarrow \cinf$ 
denote the fiberwise supertrace, which is an even map when $\C$ is 
viewed as a $\Z_2$-graded $\C$-algebra concentrated in degree 
zero\footnote{This means that $\str(s)=0$ for $s\in
  \rGamma_\sm(X,End^\1(E))$.}. Noticing that $\cA(X,End(E))\simeq
\cA(X)\otimes_\cinf\rGamma_\sm(X,End(E))$ is a left module over
$\cA(X)$, we extend the fiberwise supertrace to a left $\cA(X)$-linear
map denoted by the same symbol:
\be
\str\eqdef \id_{\cA(X)} \otimes_\cinf \str:\cA(X,End(E))\rightarrow \cA(X)~~.
\ee
For any $\rho\in \cA(X)$ and any $s\in \rGamma_\sm(X,End(E))$, we have:
\be
\str(\rho\otimes s)=\rho \,\str(s)~~
\ee
and the extended supertrace satisfies: 
\ben
\label{strextcyc}
\str(\alpha\beta)=(-1)^{\deg \alpha\, \deg \beta}\str(\beta\alpha)~~,~~\forall \alpha,\beta\in \End_{\DF(X,W)}(a)=\cA(X,End(E))~~.
\een
The extended supertrace is a morphism of complexes of
$\O(X)$-modules from $(\End_{\DF(X,W)}(a),\updelta_a)$ to
$(\cA(X),\bpd)$, when the latter is endowed with the mod 2 reduction
of its $\Z$-grading:
\ben
\label{strbpd}
\str(\updelta_a\alpha)=\bpd \,\str(\alpha)~~,~~\forall \alpha\in \End_{\DF(X,W)}(a)~~
\een
and satisfies the relation: 
\ben
\label{strfd}
\str(\fd_a\alpha)=0~~,~~\forall \alpha\in \End_{\DF(X,W)}(a)~~.
\een
Notice that $\str(\omega)$ vanishes unless $\omega\in \cA(X,End(E))$ is even
with respect to the bundle grading. 

\subsection{Compact support version}

\

\

\begin{Definition}
The {\em compactly-supported twisted Dolbeault category of holomorphic
  factorizations of $W$} is the $\O(X)$-linear $\Z_2$-graded dg-category 
  $\DF_c(X,W)$ defined as follows:
\begin{itemize}
\itemsep 0.0em
\item The objects of $\DF_c(X,W)$ are the holomorphic factorizations
  of $W$.
\item Given two objects $a_1=(E_1,D_1)$ and $a_2=(E_2,D_2)$, set:
\be
\Hom_{\DF_c(X,W)}(a_1,a_2)\!\eqdef \!\!\big\{\alpha\in \Hom_{\DF(X,W)}(a_1,\!a_2)\!=\!\cA(X,Hom(E_1,\! E_2))\big|\, \supp(\alpha)\mathrm{~is ~compact}\big\}
\ee
\item The differentials and composition of morphisms are induced from
  $\DF(X,W)$.
\end{itemize}
\end{Definition}

\noindent Notice that $\DF_c(X,W)$ is a non-full subcategory of
$\DF(X,W)$. Let:
\be
\HDF_c(X,W)\eqdef \rH(\DF_c(X,W))
\ee
denote the total cohomology category of $\DF_c(X,W)$; this is a
$\Z_2$-graded $\O(X)$-linear category.  Let
$j:\DF_c(X,W)\rightarrow \DF(X,W)$ be the inclusion functor and
$j_\ast:\HDF_c(X,W)\rightarrow \HDF(X,W)$ denote the functor induced
by $j$ between the total cohomology categories.

\

\begin{Proposition}
\label{prop:j}
Assume that the critical set $Z_W$ is compact. Then the inclusion
functor $j:\DF_c(X,W)\rightarrow \DF(X,W)$ is a quasi-equivalence of
$\Z_2$-graded $\O(X)$-linear dg-categories.
\end{Proposition}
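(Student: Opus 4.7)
Since $j$ is the identity on objects, essential surjectivity on $\rH^0$ is automatic, and it remains to show quasi-fully-faithfulness: for each pair of holomorphic factorizations $a_1=(E_1,D_1)$ and $a_2=(E_2,D_2)$, the inclusion
\be
\iota:\Hom_{\DF_c(X,W)}(a_1,a_2)\hookrightarrow \Hom_{\DF(X,W)}(a_1,a_2)
\ee
must induce an isomorphism on $\updelta:=\updelta_{a_1,a_2}$-cohomology. The plan is to adapt the proof of Proposition \ref{prop:i} almost verbatim, replacing left multiplication by the vector field $s\in \PV^{-1,0}(X_0)$ used there with left composition by an appropriate odd endomorphism-valued section $\sigma\in \rGamma_\sm(X_0, End^1(E_2))$, where $X_0\eqdef X\setminus Z_W$.

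The crucial first step is to construct $\sigma$ satisfying $\md_{a_2,a_2}(\sigma)=\id_{E_2}$ on $X_0$. I would pick a \Kh metric $G$ on $X$ and recall from the proof of Proposition \ref{prop:i} the smooth vector field $s=\frac{\i}{||\pd W||_G^2}\grad_G\overline{W}\in \rGamma_\sm(X_0, TX_0)$, which satisfies $\ioda_W(s)=1_{X_0}$, i.e.\ $\langle\pd W, s\rangle=\i$. Then set
\be
\sigma\eqdef -\i\, s\lrcorner \pd D_2\in \rGamma_\sm(X_0, End^1(E_2)).
\ee
Applying $\pd$ to the identity $D_2^2=W\id_{E_2}$ produces the relation $[D_2, \pd D_2]_+=(\pd W)\,\id_{E_2}$ in $\rOmega^{1,0}(X, End(E_2))$; contracting with $s$ and multiplying by $-\i$ yields $\md_{a_2,a_2}(\sigma)=\id_{E_2}$ on $X_0$. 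The section $\sigma$ is smooth but not holomorphic, so $\bbpd\sigma\in \cA^1(X_0, End^1(E_2))$ is generally nonzero but has strictly positive antiholomorphic form degree.

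The second step mimics Proposition \ref{prop:i}. Let $\hat\sigma$ denote left composition with $\sigma$ on $\cA(X_0, Hom(E_1,E_2))$. Since $\updelta$ is an odd derivation of composition, one obtains
\be
[\updelta, \hat\sigma]=\widehat{\updelta\sigma}=I_0+\widehat{\bbpd\sigma},
\ee
where $I_0$ is the identity operator. Because $\widehat{\bbpd\sigma}$ raises antiholomorphic form degree by one it is nilpotent of order at most $d+1$, so $[\updelta,\hat\sigma]$ is invertible with inverse $\hS$, where $S=\sum_{k=0}^d(-1)^k(\bbpd\sigma)^k$. Setting $\cR\eqdef \hat\sigma\circ [\updelta,\hat\sigma]^{-1}$ and arguing as in the proof of Proposition \ref{prop:i}, which rests only on $[\updelta, [\updelta,\hat\sigma]]=0$, one obtains $[\updelta,\cR]=I_0$ on $\cA(X_0, Hom(E_1, E_2))$.

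Finally, since $Z_W$ is compact and of positive codimension, choose a relatively compact open neighborhood $U\supset Z_W$, an open set $U_1$ with $Z_W\subset U_1\subset\overline{U_1}\subset U$, and a cut-off $\rho\in\cC^\infty_c(X)$ equal to $1$ on $U_1$ and vanishing outside $U$. Define
\be
\wcR\eqdef (1_X-\rho)\cR\quad\text{and}\quad\pi\eqdef \rho\, I+(\bpd\rho)\cR.
\ee
Both $(1_X-\rho)\cR$ and $(\bpd\rho)\cR$ extend by zero to smooth operators on all of $X$, and the image of $\pi$ lies in $\Hom_{\DF_c(X,W)}(a_1,a_2)$ since $\rho$ and $\bpd\rho$ are compactly supported. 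The same computation as in Proposition \ref{prop:i} then yields $I-\iota\circ\pi=[\updelta,\wcR]$ and $I_c-\pi\circ\iota=[\updelta_c,\wcR_c]$, so $\iota$ is a homotopy equivalence and hence a quasi-isomorphism. The main obstacle is the construction of $\sigma$ in the first step; once that odd ``almost-homotopy'' for $\md$ is in hand, everything else is a faithful translation of the bulk argument, with the factorization identity $D_2^2=W\id_{E_2}$ supplying the derivative relation that plays the role of $W$ itself in the polyvector complex.
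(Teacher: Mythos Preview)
Your proof is correct and follows essentially the same approach as the paper's: construct an odd section $\sigma$ of $End(E_2)$ over $X_0$ with $\md_{a_2}(\sigma)=\id_{E_2}$, invert $I_0+\widehat{\bbpd\sigma}$ by nilpotence, and cut off with $\rho$ exactly as in Proposition~\ref{prop:i}. There is one small technical gap you should patch: the expression $\pd D_2$ is not well-defined globally, since $D_2$ is a section of the holomorphic bundle $End(E_2)$ and there is no canonical $(1,0)$-derivative on such sections. The paper fixes this by also choosing an admissible Hermitian metric $h$ on $E_2$ and using the $(1,0)$-part $\mpd_\ba$ of the induced Chern connection on $End(E_2)$; then $\mpd_\ba D_2\in\rOmega^{1,0}(X,End^\1(E_2))$ is globally defined and the Leibniz identity $D_2\circ(\mpd_\ba D_2)+(\mpd_\ba D_2)\circ D_2=(\pd W)\id_{E_2}$ follows by applying $\mpd_\ba$ to $D_2^2=W\id_{E_2}$. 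Any connection on $E_2$ would do, since the anticommutator relation holds for the induced derivation on $End(E_2)$ regardless of curvature, but you must pick one. With that fix your $\sigma$ coincides (up to notation) with the paper's $s=\frac{\langle\pd W,\mpd_\ba D_2\rangle}{||\pd W||^2}$, and the remainder of your argument matches the paper's line by line.
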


\begin{proof}
This is an extension to the global case of \cite[Proposition
  3.3]{DS3}; the proof is similar to that of Proposition \ref{prop:i}.
Let $a_1=(E_1,D_1)$ and $a_2=(E_2,D_2)$ be two holomorphic
factorizations of $W$. Let:
\beqan
&& \updelta:=\updelta_{a_1,a_2}~~,~~\updelta_c:=\updelta|_{\Hom_{\DF_c(X,W)}(a_1,a_2)}~~\nn\\
&& I:= \id_{\Hom_{\DF(X,W)}(a_1,a_2)}~~,~~I_c:= \id_{\Hom_{\DF_c(X,W)}(a_1,a_2)}~~.
\eeqan
Let $\updelta_i:=\updelta_{a_i}$, $\md_i:=\md_{a_i}=[D_i,\cdot]$,
$\bbpd_i:=\bbpd_{a_i}=\bbpd_{End(E_i)}$ and $\updelta:=\updelta_{a_1,a_2}$,
$\md:=\md_{a_1,a_2}$, $\bbpd:=\bbpd_{a_1,a_2}=\bbpd_{Hom(E_1,E_2)}$. Let
$j:=j_{a_1,a_2}:\Hom_{\DF_c(X,W)}(a_1,a_2)\rightarrow \Hom_{\DF(X,W)}(a_1,a_2)$
denote the inclusion morphism. We will construct maps:
\beqan
&& \pi:=\pi_{a_1,a_2}: \Hom_{\DF(X,W)}(a_1,a_2) \rightarrow \Hom_{\DF_c(X,W)}(a_1,a_2)\nn\\
&& \wcR:=\wcR_{a_1,a_2}:\Hom_{\DF(X,W)}(a_1,a_2)\rightarrow \Hom_{\DF(X,W)}(a_1,a_2)
\eeqan
such that $\pi$ is a map of complexes and such that: 
\beqan
\label{hrels}
I~-j\circ \pi &=& [\delta, ~\wcR]\nn\\
I_c-\pi\circ j &=& [\delta_c, \wcR_c]~~,
\eeqan
where $\wcR$ preserves the subspace $\Hom_{\DF_c(X,W)}(a_1,a_2)$ and
$\wcR_c\eqdef \wcR|_{\Hom_{\DF_c(X,W)}(a_1,a_2)}$. Let $X_0\eqdef
X\setminus Z_W$ and $W_0\eqdef W|_{X_0}$. To construct $\wcR$, pick a
Hermitian metric $G$ on $X$ and a Hermitian metric $h$ on $E$ such
that $h_{E^\0\times E^\1}=0$\footnote{This is a so-called admissible
  metric on $E$, see Subsection \ref{subsec:hhf}.} and let $\langle
\cdot , \cdot \rangle$ denote the Hermitian metric induced by $G$ and
$h$ on $\wedge T^\ast X$ and $||\cdot||$ denote the corresponding
norm. We trivially extend the former to a pairing:
\be
\langle \cdot , \cdot \rangle:(\wedge T^\ast X)\times (\wedge T^\ast X \otimes End(E))\rightarrow End(E)~~.
\ee
Let $\mpd^h_{E_2}$ be the operator defined in \eqref{mpd_E^h} and
$\mpd$ be the induced differential of the graded associative algebra
$\rOmega(X,End(E_2))$ (see Subsection \ref{subsec:hhf}). Differentiating
the relation $D_2^2=W\id_{E_2}$ gives:
\be
D_2\circ (\mpd D_2)+(\mpd D_2) \circ D_2=(\pd W)\id_{E_2}~~.
\ee
This implies that the following relation holds on $X_0$:
\ben
\label{mdrel}
\md_2(s)=[D_2,s]=D_2|_{X_0}\circ s+s\circ D_2|_{X_0}=\id_{E_2}|_{X_0}~~,
\een
where: 
\be
s\eqdef \frac{\langle \pd W, \partial D_2 \rangle}{||\pd W||^2}\in \rGamma_{\infty}(X_0,End^\1(E_2))\subset \Hom_{\DF(X_0,W_0)}(a_2,a_2)^\1~~.
\ee
For any element $\beta\in \Hom_{\DF(X_0,W_0)}(a_2,a_2)$, let ${\hat
  \beta}: \Hom_{\DF(X_0,W_0)}(a_1,a_2)\rightarrow
\Hom_{\DF(X_0,W_0)}(a_1,a_2)$ denote the operator of left composition
with $\beta$ in the dg-category $\DF(X_0,W_0)$ (see the commutative diagram
\eqref{diagram:hatbeta}):
\be
{\hat \beta}(\alpha)\eqdef \beta\alpha\in \Hom_{\DF(X_0,W_0)}(a_1,a_2)~~,~~\forall \alpha\in \Hom_{\DF(X_0,W_0)}(a_1,a_2)~~.
\ee
\ben
\scalebox{1.0}{
\xymatrix{
a_1 \ar[dr]_{{\hat \beta}(\alpha)} \ar[r]^\alpha & a_2\ar[d]^{\beta}\\
& a_2}}
\label{diagram:hatbeta}
\een
Relation \eqref{mdrel} implies:
\be
[\md,{\hat s}]=\widehat{\md_2(s)}=\widehat{\id_{E_2}}=I_0~~,
\ee
where $I_0\eqdef\id_{\Hom_{\DF(X_0,W_0)}(a_1,a_2)}$. Since $\bbpd_2
s\in \cA^1(X,End^\1(E_2))$ has rank one, the operator $[\bbpd,
  {\hat s}]=\widehat{\bbpd_2 s}$ is nilpotent on the space
$\Hom_{\DF(X_0,W_0)}(a_1,a_2)$. This implies that the operator:
\be
[\updelta,{\hat s}]=I_0+[\bbpd,{\hat s}]=I_0+\widehat{\bbpd_2 s}
\ee
is invertible on the same space. We have: 
\be
[\updelta, {\hat s}]^{-1}=\big(I_0+\widehat{\bbpd_2 s}\big)^{-1}=\hS~~,
\ee
where: 
\be
S\eqdef (\id_{E_2}^{(0)}+\bbpd_2 s)^{-1}=\sum_{k=0}^d (-1)^k (\bbpd_2 s)^k\in \cA(X,End(E_2))^\0=\Hom_{\DF(X_0,W_0)}(a_2)^\0~~
\ee
and $\id_{E_2}^{(0)}\in \rGamma_\sm(X_0,End(E_2))$ is the identity endomorphism of $E_2|_{X_0}$. 
Consider the following operator defined on $\Hom_{\DF(X_0,W_0)}(a_1,a_2)$:
\be
\cR\eqdef {\hat s} \circ {\hS}=\hR~~,
\ee
where: 
\be
R\eqdef s S~~.
\ee
Applying the operator $[\updelta,\cdot]$ to the relation: 
\be
[\updelta, {\hat s}]\circ {\hS}=I_0
\ee
gives:
\be
[\updelta, {\hS}]=0~~,
\ee
since $[\updelta, [\updelta, {\hat s}]]=0$ and $[\updelta,{\hat s}]$
is invertible.  Thus:
\ben
\label{ucRb}
[\updelta, \cR]=[\updelta, {\hat s}]\circ {\hS}=I_0~~.
\een
Since the critical set $Z_W$ is compact, we can find a relatively
compact open neighborhood $U$ of $Z_W$. Let $U_1$ be an open subset of
$X$ such that $Z_W\subset U_1 \subset \overline{U}_1 \subset U$ and
let $\rho\in \cC^\infty_c(X)$ be a smooth function which is identically $1$ on
$U_1$ and vanishes outside $U$. The operator $\wcR$ is now defined as:
\be
\wcR\eqdef (1_X-\rho)\cR~~,
\ee
where $1_X\in \cinf$ is the unit function of $X$. Notice that
$\wcR$ is odd with respect to the total $\Z_2$-grading of
$\Hom_{\DF(X,W)}(a_1,a_2)$ and that $\wcR$ preserves the subspace
$\Hom_{\DF_c(X,W)}(a_1,a_2)$. We now define $\pi\eqdef\rho I +(\bpd
\rho) \mathcal{R}$. Relation \eqref{ucRb} implies:
\be
[\updelta,\wcR]=I-j\circ \pi~~,
\ee
showing that the first of equations \eqref{hrels} is
satisfied. Restricting this to $\Hom_{\DF_c(X,W)}(a_1,a_2)$ shows that
the second equation in \eqref{hrels} also holds, which implies that
$j_{a_1,a_2}$ is a quasi-isomorphism. Since $a_1$ and $a_2$ are
arbitrary, this gives the conclusion.  ~\qed
\end{proof}

\section{Off-shell bulk traces and the bulk flow}
\label{sec:BulkTrace}

In this section, we describe the off-shell models for the bulk trace
which result from the path integral analysis of \cite{LG2}.  Let
$(X,W)$ be a Landau-Ginzburg pair with $\dim_\C X=d$ and $\Omega$ be a
holomorphic volume form on $X$.

\subsection{The Serre trace induced by $\Omega$ on $\cA_c(X)$}

\

\

\begin{Definition}
The {\em Serre trace} induced by $\Omega$ on $\cA_c(X)$ is the
$\C$-linear map $\int_\Omega:\cA_c(X)\rightarrow \C$ defined through:
\ben
\label{intOmega}
\int_\Omega\rho\eqdef \int_X \Omega\wedge \rho~~,~~\forall \rho\in \cA_c(X)~~.
\een
\end{Definition}

\

\noindent Let $\C[0]$ denote the complex of vector spaces over $\C$ 
whose only non-trivial term equals $\C$ and sits in degree zero.

\

\begin{Proposition}
The Serre trace $\int_\Omega$ is a map of complexes of degree $-d$
from $(\cA_c(X),\bpd)$ to $\C[0]$.
\end{Proposition}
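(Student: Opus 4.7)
The plan is to verify two assertions: (i) the map $\int_\Omega$ shifts degree by $-d$, and (ii) it intertwines $\bpd$ on the source with the zero differential on $\C[0]$, i.e.\ it annihilates $\bpd$-exact forms.

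For (i), I would simply keep track of Hodge types. Since $\Omega \in \rOmega^{d,0}(X)$ and $\rho \in \cA^k_c(X) = \rOmega^{0,k}_c(X)$, the product $\Omega \wedge \rho$ lies in $\rOmega^{d,k}(X)$. The underlying real manifold has real dimension $2d$, so $\int_X \Omega \wedge \rho$ can be nonzero only when $\Omega \wedge \rho$ is a top form, i.e.\ $k=d$. Hence $\int_\Omega$ vanishes on $\cA^k_c(X)$ for $k \neq d$, and maps $\cA^d_c(X)$ to $\C$, so it has $\Z$-degree $-d$ as a map into $\C[0]$.

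For (ii), it suffices by degree to consider $\rho \in \cA^{d-1}_c(X)$. I would compute $\dd(\Omega \wedge \rho) = \dd \Omega \wedge \rho + (-1)^d \Omega \wedge \dd \rho$. Since $\Omega$ is holomorphic, $\bpd\Omega = 0$; and $\pd\Omega \in \rOmega^{d+1,0}(X) = 0$. So $\dd\Omega = 0$. Writing $\dd \rho = \pd \rho + \bpd \rho$ with $\pd\rho \in \rOmega^{1,d-1}(X)$, the product $\Omega \wedge \pd\rho$ lies in $\rOmega^{d+1,d-1}(X) = 0$. Therefore
\be
\dd(\Omega \wedge \rho) = (-1)^d\, \Omega \wedge \bpd \rho.
\ee
Since $\rho$ has compact support, so does $\Omega \wedge \rho$, and Stokes' theorem gives $\int_X \dd(\Omega \wedge \rho) = 0$, whence $\int_\Omega \bpd \rho = \int_X \Omega \wedge \bpd \rho = 0$.

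There is no real obstacle here: both steps are bookkeeping in Hodge bidegrees together with a single application of Stokes' theorem, the essential inputs being holomorphicity of $\Omega$ (which kills $\bpd\Omega$), the vanishing of $(d{+}1, \cdot)$-forms, and the compact support assumption (which makes Stokes' theorem applicable on the non-compact manifold $X$).
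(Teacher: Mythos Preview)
Your proof is correct and follows essentially the same approach as the paper: both arguments track Hodge bidegrees to establish the degree $-d$ claim, then use holomorphicity of $\Omega$ together with vanishing of $(d{+}1,\ast)$-forms to reduce $\int_X \Omega\wedge\bpd\rho$ to a total derivative, which vanishes by Stokes' theorem thanks to compact support. The only cosmetic difference is that the paper passes through $\bpd(\Omega\wedge\rho)=\dd(\Omega\wedge\rho)$ directly, while you expand $\dd(\Omega\wedge\rho)$ via the Leibniz rule first.
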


\begin{proof} 
Since $\Omega$ has type $(d,0)$, we have $\int_\Omega\rho=0$ unless
$\rho\in \cA^d_c(X)$, so $\int_\Omega$ has degree $-d$. Since $\Omega$
is holomorphic, we have:
\be
\int_\Omega(\bpd\rho)=\int_X \Omega\wedge \bpd\rho=(-1)^d \int_X \bpd(\Omega\wedge \rho)=(-1)^d\int_X \dd (\Omega\wedge \rho)=0~~,~~\forall \rho\in \cA_c(X)~~,
\ee
where we noticed that $\pd(\Omega\wedge \rho)=0$ for degree
reasons. This shows that $\int_\Omega$ is a map of complexes. \qed
\end{proof}

\subsection{The canonical off-shell bulk trace induced by $\Omega$ on $\PV_c(X)$}

\

\

\begin{Definition}
The {\em canonical off-shell bulk trace induced by $\Omega$ on $\PV_c(X)$}
is the $\C$-linear map $\Tr_B:=\Tr_B^\Omega:\PV_c(X)\rightarrow \C$
defined through:
\ben
\label{Tr0}
\Tr_B^\Omega(\omega)=\int_X \Omega\wedge (\Omega \lrcorner \omega)~~,~~\forall \omega\in \PV_c(X)~~.
\een
\end{Definition}

\noindent Since $\Omega$ has bidegree $(d,0)$, we have:
\ben
\label{TrRedCont}
\Tr_B^\Omega(\omega)=\int_X\Omega\wedge (\Omega\lrcorner_0\omega)=\int_\Omega (\Omega\lrcorner_0 \omega)~~,~~\forall \omega\in \PV_c(X)~~,
\een
where $\Omega\lrcorner_0$ is the reduced contraction introduced in Definition
\ref{def:RedCont}. Thus $\Tr_B^\Omega=\int_\Omega\circ (\Omega\lrcorner_0)$~. 
We also have:
\be
\Tr_B^\Omega (\omega)=0~~\mathrm{unless}~~\omega\in \PV_c^{-d,d}(X)~~.
\ee

\subsection{Cohomological bulk traces}

In the following, we simplify notation by omitting to indicate dependence of the traces on $\Omega$. 

\

\begin{Proposition}
\label{prop:TrBclosed}
For any $\eta\in \PV_c(X)$, we have:
\be
\Tr_B(\updelta_W\eta)=\Tr_B(\bbpd \eta)=\Tr_B(\ioda_W\eta)=0~~.
\ee
In particular, $\Tr_B$ descends to $\HPV_c(X,W)$. 
\end{Proposition}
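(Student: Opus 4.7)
The plan is to unpack the definition $\Tr_B = \int_\Omega \circ (\Omega \lrcorner_0)$ and reduce the claim to the two facts already established in the excerpt: that the Serre trace $\int_\Omega$ is a chain map from $(\cA_c(X),\bpd)$ to $\C[0]$ (so it kills $\bpd$-exact compactly supported forms), and that the reduced contraction intertwines $\updelta_W$ and $(-1)^d \bpd$ modulo a vanishing contribution from $\ioda_W$.

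First I would dispose of the $\ioda_W$ piece by a bidegree argument. Since $\ioda_W$ has bidegree $(1,0)$, it maps $\PV_c^{i,j}(X)$ into $\PV_c^{i+1,j}(X)$. For $\Omega \lrcorner_0(\ioda_W \eta)$ to be nonzero, $\ioda_W \eta$ must lie in $\PV_c^{-d,\ast}(X)$, which would force $\eta \in \PV_c^{-d-1,\ast}(X) = 0$. Hence $\Omega \lrcorner_0 (\ioda_W \eta) = 0$ for every $\eta \in \PV_c(X)$, which gives
\be
\Tr_B(\ioda_W \eta) = \int_\Omega \big(\Omega \lrcorner_0 (\ioda_W \eta)\big) = 0~~.
\ee
(This is also consistent with the identity $\Omega\lrcorner_0(\updelta_W\omega) = \Omega\lrcorner_0(\bbpd\omega)$ recorded earlier.)

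Next I would handle the $\bbpd$ piece using the intertwining relation stated in the excerpt, namely $\Omega \lrcorner_0 (\bbpd \eta) = (-1)^d \bpd(\Omega \lrcorner_0 \eta)$. Since $\eta$ has compact support, so does $\Omega \lrcorner_0 \eta \in \cA_c(X)$, and applying the previous proposition on the Serre trace yields
\be
\Tr_B(\bbpd \eta) = \int_\Omega \big(\Omega \lrcorner_0 (\bbpd \eta)\big) = (-1)^d \int_\Omega \bpd(\Omega \lrcorner_0 \eta) = 0~~.
\ee
Adding the two vanishings and using $\updelta_W = \bbpd + \ioda_W$ gives $\Tr_B(\updelta_W \eta) = 0$ for every $\eta \in \PV_c(X)$.

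Finally, since $\Tr_B$ is $\C$-linear and vanishes on the image of $\updelta_W$ acting on $\PV_c(X)$, it descends to a well-defined $\C$-linear map on the quotient $\HPV_c(X,W) = \rH(\PV_c(X),\updelta_W)$. There is no real obstacle here: the only thing to watch is that the identity $\Omega\lrcorner_0(\bbpd\eta) = (-1)^d \bpd(\Omega\lrcorner_0 \eta)$ is applied on $\cA_c(X)$ (which is immediate since support is preserved by both $\bbpd$ and the reduced contraction), so that the previous proposition may be invoked verbatim.
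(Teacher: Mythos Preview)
Your proof is correct and follows essentially the same approach as the paper: the $\ioda_W$ part vanishes by the bidegree/polyvector-rank argument, and the $\bbpd$ part vanishes via the intertwining relation $\Omega\lrcorner_0(\bbpd\eta)=(-1)^d\bpd(\Omega\lrcorner_0\eta)$ together with Stokes. The only cosmetic difference is that you invoke the Serre-trace proposition and the intertwining identity as black boxes, whereas the paper unwinds the same computation inline (writing $\int_X\Omega\wedge[\Omega\lrcorner(\bbpd\eta)]=\int_X\bpd[\Omega\wedge(\Omega\lrcorner\eta)]=\int_X\dd[\Omega\wedge(\Omega\lrcorner\eta)]=0$ directly).
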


\begin{proof}
For any $\eta\in \PV_c(X)$, we have:
\be
\Tr_B(\bbpd \eta)=\int_X \Omega\wedge [\Omega \lrcorner (\bbpd \eta)]=\int_X \bpd \left[\Omega\wedge (\Omega \lrcorner \eta)\right]=
\int_X \dd \left[\Omega\wedge (\Omega \lrcorner \eta)\right]=0~~,
\ee
where in the third equality the Dolbeault differential $\bpd$ can be
replaced with the de Rham differential $\dd$ because $\Omega\wedge
(\Omega \lrcorner \eta)$ either vanishes or has type $(d,\ast)$. The
last equality follows from the Stokes theorem since $\eta$ (and hence
also $\Omega\wedge (\Omega \lrcorner \eta)$) has compact support. To
prove the relation $\Tr_B(\ioda_W\eta)=0$, notice that $\ioda_W \eta$
has polyvector rank at most $d-1$, which implies $\Omega\lrcorner_0
(\ioda_W\eta)=0$ and hence $\Tr_B(\ioda_W\eta)=0$ by
\eqref{TrRedCont}. Finally, the relation $\Tr_B(\updelta_W\eta)=0$
follows by adding the two relations established above. ~\qed
\end{proof}

\

\begin{Definition}
The {\em cohomological bulk trace induced by $\Omega$ on $\HPV_c(X,W)$} is
the $\C$-linear map $\Tr_c:=\Tr_c^\Omega:\HPV_c(X,W)\rightarrow \C$
induced by $\Tr_B^\Omega$. 
\end{Definition}

\

\noindent When the critical set of $W$ is compact, Proposition
\ref{prop:i} implies that the inclusion map $i:\PV_c(X)\rightarrow
\PV(X)$ induces an isomorphism of $\Z$-graded $\O(X)$-modules
$i_\ast:\HPV_c(X,W)\stackrel{\sim}{\rightarrow} \HPV(X,W)$ (moreover,
$\HPV(X,W)$ is finite-dimensional over $\C$ by Proposition
\ref{prop:bulkfd}). This allows us to transfer $\Tr_c$ to a trace on
$\HPV(X,W)$:

\

\begin{Definition}
Assume that the critical set $Z_W$ is compact. In this case, the {\em
  cohomological bulk trace induced by $\Omega$ on $\HPV(X,W)$} is the
$\C$-linear map $\Tr:=\Tr^\Omega\eqdef \Tr_c^\Omega\circ
i_\ast^{-1}:\HPV(X,W)\rightarrow \C$ obtained by composing $\Tr_c$
with the inverse of the linear isomorphism
$i_\ast:\HPV_c(X,W)\stackrel{\sim}{\rightarrow} \HPV(X,W)$ induced on
cohomology by the inclusion map.
\end{Definition}

\subsection{The bulk flow determined by a K\"{a}hler metric on $\PV(X)$}
\label{subsec:bulkflow} 

Let $G$ be a K\"{a}hler metric on $X$ and $\nabla:\rGamma_\sm(X,\cT_\R
X)\rightarrow \rOmega^1(X,\cT_\R X)$ be its Levi-Civita connection. Let
$G^c$ denote the complexification of $G$ (which is a complex-valued
and fiberwise non-degenerate pairing on $\cT X$) and let $h_G$ denote
the Hermitian metric induced by $G$ on $\cT X$. We have
$G^c|_{TX\otimes TX}=G^c|_{{\overline{T}X}\otimes {\overline{T}}X}=0$ while $h_G$
restricts to Hermitian metrics on $TX$ and ${\overline{T}}X$.  We
also have $h_G(s_1,s_2)=G^c(s_1,\overline{s_2})$ for all $s_1,s_2\in
\rGamma_\sm(X,\cT X)$.  Identifying $\cT_\R X$ with $TX$ through the
isomorphism of real vector bundles $\Re:T X\rightarrow \cT_\R X$, the
Levi-Civita connection $\nabla$ can be viewed as a connection on
$TX$. Since $G$ is K\"{a}hler, $\nabla$ is $\C$-linear as a connection
on $TX$ and coincides (see \cite[Theorem 3.13]{Voisin}) with the Chern
connection of the Hermitian vector bundle $(TX,h_G)$. Hence $\nabla$
can be viewed as the unique Hermitian connection on $(TX,h_G)$ which
satisfies:
\ben
\label{nablaChern}
\nabla^{0,1}=\bbpd_{TX}|_{\rGamma_\sm(X,TX)}~~.
\een
Let $\overline{W}:X\rightarrow \C$ denote the complex conjugate of the
function $W$. Let $\grad_G \overline{W}\in \rGamma_\sm(X,T
X)=\PV^{-1,0}(X)$ denote the gradient of $\overline{W}$ with respect to
$G^c$, which is defined through the relation:
\ben
\label{grad}
(\bpd \,\overline{W}) (\bar{s})=G^c(\grad_G \overline{W},\bar{s})=h_G(\grad_G \overline{W}, s)~~,~~\forall s\in \rGamma_\sm(X,T X)~~.
\een
Let:
\be
\Hess_G(\overline{W})\eqdef \nabla (\grad_G \overline{W})\in \rOmega^1(X,T X)
\ee
denote the Hessian operator of $\overline{W}$. Let: 
\ben
\label{Hdef}
H_G\eqdef \Hess^{0,1}_G(\overline{W})=\nabla^{0,1}(\grad_G \overline{W})=\bbpd_{TX} (\grad_G \overline{W})\in \cA^1(X,TX)=\PV^{-1,1}(X)
\een
denote the $(0,1)$-part of the Hessian operator. Let ${\hat h}_G$ denote
the Hermitian metric induced by $h_G$ on the bundle $T^\ast X$ and:
\ben
\label{normpdW}
\!\!\!\!\! ||\pd W||_G^2\!\eqdef \!{\hat h}_G(\pd W,\pd W)\!=\!h_G(\grad_G \overline{W}\!,\grad_G \overline{W})\!=\!(\pd W)(\grad_G \overline{W})\in \cA^0(X)\!=\!\PV^{0,0}(X)
\een
denote the squared norm of the holomorphic 1-form $\pd W\in
\rGamma(X,T^\ast X)\subset \rOmega^{1,0}(X)$. Notice that $H_G$ is a 
nilpotent element of the algebra $\PV(X)$, which allows us to define its 
exponential. For any $\lambda\in [0,+\infty)$, we have:
\be
e^{-\i \lambda H_G}=\sum_{p=0}^{d}\frac{1}{p!} (-\i\lambda)^p (H_G)^p\in \PV^0(X)~~,
\ee
where the expansion of the exponential reduces to the first $d+1$
terms because $H_G$ belongs to $\PV^{-1,1}(X)$ and hence $(H_G)^p$
belongs to $\PV^{-p,p}(X)$, while $\PV^{-p,p}(X)=0$ for $p>d$.

\

\begin{Definition}
The {\em bulk flow generator} determined by the K\"{a}hler
metric $G$ is the element:
\be
L_G\eqdef ||\pd W||_G^2+\i H_G\in \PV^{0,0}(X)\oplus \PV^{-1,1}(X)\subset \PV^0(X)~~.
\ee
\end{Definition}

\noindent Notice that $L_G$ has degree zero with respect to
the canonical $\Z$-grading of $\PV(X)$.

\

\begin{Proposition}
We have: 
\be
L_G=\updelta_W v_G~~,
\ee
where:
\be
v_{G}\eqdef \i~\grad_G \overline{W} \in \rGamma_\sm(X,TX)=\PV^{-1,0}(X)~~.
\ee
\end{Proposition}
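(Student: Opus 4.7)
The proof should be a short direct computation unpacking the definition of $\updelta_W = \bbpd + \ioda_W$ acting on $v_G = \i\,\grad_G \overline{W}$.

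First I would compute the $\ioda_W$-part. Since $v_G \in \rGamma_\sm(X,TX) = \PV^{-1,0}(X)$, the third bullet in the definition of $\ioda_W$ applies: it acts on a section of $TX$ as contraction with the holomorphic $1$-form $-\i\,\pd W$. Pulling the constant $\i$ out and using the characterization \eqref{normpdW} of $\|\pd W\|_G^2$ as $(\pd W)(\grad_G \overline{W})$, one gets
\be
\ioda_W(v_G) = \i\,(-\i\,\pd W)(\grad_G \overline{W}) = (\pd W)(\grad_G \overline{W}) = \|\pd W\|_G^2 \in \PV^{0,0}(X).
\ee

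Next I would compute the $\bbpd$-part. Since $\bbpd = \bbpd_{\wedge TX}$ is the Dolbeault operator of the holomorphic bundle $\wedge TX$ and on $\PV^{-1,0}(X) = \rGamma_\sm(X,TX)$ this restricts to $\bbpd_{TX}$, we obtain directly from the definition \eqref{Hdef} of $H_G$:
\be
\bbpd(v_G) = \i\,\bbpd_{TX}(\grad_G \overline{W}) = \i\,H_G \in \PV^{-1,1}(X).
\ee

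Adding the two contributions gives $\updelta_W v_G = \|\pd W\|_G^2 + \i\,H_G = L_G$, as required. There is no real obstacle here; the only thing to verify carefully is that the factors of $\i$ from the definition of $v_G$ and from the formula $\ioda_W|_{\PV^{-1,0}(X)} = \iota_{-\i\,\pd W}$ combine correctly so that the first term comes out with a $+$ sign, and that the Chern-connection identity \eqref{nablaChern} is implicitly used only to recognize $H_G$ as $\bbpd_{TX}(\grad_G \overline{W})$ (which is how $H_G$ was defined in \eqref{Hdef}).
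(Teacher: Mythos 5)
Your computation is correct and is essentially the same argument as the paper's proof, which also proceeds by splitting $\updelta_W=\bbpd+\ioda_W$, using the definition \eqref{Hdef} of $H_G$ and the identity $\ioda_W(\grad_G \overline{W})=-\i\,\|\pd W\|_G^2$ coming from \eqref{normpdW}. The signs and factors of $\i$ are handled correctly, so nothing further is needed.
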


\begin{proof} 
Follows by direct computation using $\updelta_W=\bbpd+\ioda_W$, the
definition \eqref{Hdef} and the relation:
\ben
\label{iodagrad}
\ioda_W(\grad_G \overline{W})=-\i~||\pd W||_G^2~~,
\een
which in turn follows from \eqref{normpdW}.~\qed
\end{proof}

\

\noindent Consider the polyvector-valued form:
\ben
\label{expLG}
e^{-\lambda L_G}\eqdef e^{-\lambda ||\pd W||_G^2}e^{-\i \lambda H_G}\in \PV^0(X)~~.
\een

\begin{Proposition}
\label{prop:expL}
For any $\lambda\in \R_{\geq 0}$, we have: 
\ben
\label{expL}
e^{-\lambda L_G}=1-\updelta_W S_G(\lambda)~~,
\een
where: 
\be
\label{SGdef}
S_G(\lambda)=v_G\int_0^\lambda \dd t e^{-t L_G}\in \PV^{-1}(X)~~.
\ee
In particular, we have: 
\ben
\label{expLclosed}
\updelta_W(e^{-\lambda L_G})=0~~.
\een
\end{Proposition}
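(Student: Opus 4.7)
The plan is to reduce the statement to the identity $\updelta_W L_G=0$ (which the preceding Proposition essentially gives us) and then turn the differential Leibniz rule into an exact formula for $\updelta_W S_G(\lambda)$.

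First, I observe that $L_G=\updelta_W v_G$ and $\updelta_W^2=0$ imply $\updelta_W L_G=0$. The element $L_G\in\PV^0(X)$ is even in the reduced grading, while $\updelta_W$ is an odd $\O(X)$-linear derivation of $\PV(X)$. The Leibniz rule combined with supercommutativity of $\PV(X)$ then gives $\updelta_W(L_G^p)=pL_G^{p-1}\updelta_W L_G=0$ for every $p\ge 0$. Writing $e^{-\lambda L_G}=e^{-\lambda \|\pd W\|_G^2}\sum_{p=0}^{d}\frac{(-\i\lambda)^p}{p!}H_G^p$ as in \eqref{expLG} (the sum is finite because $H_G\in\PV^{-1,1}(X)$ is nilpotent of order $\leq d+1$, and the scalar factor commutes with $H_G^p$), or more abstractly applying $\updelta_W$ termwise to the convergent power series $\sum_p \frac{(-\lambda)^p}{p!} L_G^p$, I obtain the closedness statement
\[
\updelta_W\bigl(e^{-\lambda L_G}\bigr)=0.
\]
This already establishes \eqref{expLclosed}.

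Next, I compute $\updelta_W S_G(\lambda)$. Since $\updelta_W$ is $\C$-linear and the integrand $v_G e^{-tL_G}\in\PV^{-1}(X)$ depends smoothly on $t$ with values in a space where $\updelta_W$ is a differential operator of finite order, it commutes with the $t$-integral, so
\[
\updelta_W S_G(\lambda)=\int_0^\lambda \updelta_W\!\bigl(v_G\, e^{-tL_G}\bigr)\,\dd t.
\]
Using that $v_G\in\PV^{-1,0}(X)$ is odd and that $\updelta_W v_G=L_G$ together with the closedness of $e^{-tL_G}$ established above, the Leibniz rule gives
\[
\updelta_W\bigl(v_G\,e^{-tL_G}\bigr)=(\updelta_W v_G)\,e^{-tL_G}-v_G\,\updelta_W(e^{-tL_G})=L_G\,e^{-tL_G}.
\]

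Finally, I integrate: since $\frac{\dd}{\dd t}e^{-tL_G}=-L_G\,e^{-tL_G}$, the fundamental theorem of calculus yields $\int_0^\lambda L_G e^{-tL_G}\dd t = 1-e^{-\lambda L_G}$, so $\updelta_W S_G(\lambda)=1-e^{-\lambda L_G}$, which is \eqref{expL}. The only subtlety that needs a little care is the bookkeeping of signs in the Leibniz rule (since $v_G$ is odd and $L_G$ is even) and the justification of commuting $\updelta_W$ with the parameter integral; both are routine given the finite-dimensional fiberwise structure of $\PV(X)$ and the smoothness of the integrand in $t$.
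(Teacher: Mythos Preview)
Your proof is correct and follows essentially the same route as the paper: both establish $\updelta_W(e^{-\lambda L_G})=0$ from $\updelta_W L_G=0$, then use the Leibniz identity $\updelta_W(v_G e^{-tL_G})=L_G e^{-tL_G}$ together with $\frac{\dd}{\dd t}e^{-tL_G}=-L_G e^{-tL_G}$ and integrate. The only cosmetic difference is that the paper obtains closedness via an ODE uniqueness argument at each point rather than your termwise Leibniz computation, but the substance is the same.
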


\begin{proof}
For any $x\in X$, the map $\R_{\geq 0}\ni \lambda\rightarrow
e^{-\lambda L_G(x)}\in (\wedge {\overline{T}}_x^\ast
M\otimes \wedge T_xM)^0$ is smooth and gives the unique solution of
the linear equation:
\ben
\label{ODE}
\frac{\dd}{\dd\lambda} f(\lambda)=-L_G(x)f(\lambda)~~\mathrm{with}~~f:\R_{\geq 0}\rightarrow (\wedge {\overline{T}}_x^\ast
M\otimes \wedge T_xM)^0
\een
which satisfies the initial condition $f(0)=1$. Since $\updelta_W
L_G=0$, the function $\lambda\rightarrow [\updelta_W (e^{-\lambda
    L_G})](x)$ is the unique solution of \eqref{ODE} which vanishes at
$\lambda=0$; thus $\updelta_W (e^{-\lambda L_G})=0$. Since
$L_G=\updelta_W v_G$, this gives:
\be
\frac{\dd}{\dd\lambda} e^{-\lambda L_G}=-\updelta_W(v_G e^{-\lambda L_G})
\ee
which implies the conclusion. \qed 
\end{proof}

\

\noindent Proposition \ref{prop:expL} implies that the $\updelta_W$-cohomology
class of $e^{-\lambda L_G}$ coincides with the unit of the algebra
$\HPV(X,W)$. Let $\widehat{L_G}$ denote the operator of left
multiplication with the element $L_G$ in the algebra $\PV(X)$.

\

\begin{Definition}
The {\em bulk flow} determined by the K\"{a}hler metric $G$ is the
semigroup $(U_G(\lambda))_{\lambda \geq 0}$ generated by
$\widehat{L_G}$. Thus $U_G(\lambda)$ is the even $\cinf$-linear
endomorphism of \,$\PV(X)$ defined through:
\be
U_G(\lambda)(\omega)\eqdef e^{-\lambda L_G}\omega~~,~~\forall \omega\in \PV(X)~~.
\ee
\end{Definition}

\noindent Notice that $U_G(\lambda)$ has degree zero with respect to
the canonical $\Z$-grading of $\PV(X)$ and that it preserves the subspace
$\PV_c(X)$. We have $U_G(0)=\id_{\PV(X)}$ and
$U_G(\lambda_1)U_G(\lambda_2)=U_G(\lambda_1+\lambda_2)$ for all
$\lambda_1,\lambda_2\geq 0$.

\

\begin{Proposition}
For any $\lambda\in [0,+\infty)$, the endomorphism $U_G(\lambda)$ is 
homotopy equivalent with $\id_{\PV(X)}$. In particular, we have:
\ben
\label{UGclosed}
\updelta_W\circ U_G(\lambda)=U_G(\lambda)\circ \updelta_W~~.
\een
Thus $U_G(\lambda)$ preserves the subspaces $\ker(\updelta_W)$ and
$\im(\updelta_W)$ and it induces the identity endomorphism of
$\HPV(X,W)$ on the cohomology of $\updelta_W$.
\end{Proposition}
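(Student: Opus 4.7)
The plan is to construct an explicit cochain homotopy between $U_G(\lambda)$ and $\id_{\PV(X)}$ out of the polyvector-valued form $S_G(\lambda)$ provided by Proposition \ref{prop:expL}, and then read off the stated consequences.

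First, I would verify \eqref{UGclosed} directly. Since $e^{-\lambda L_G}\in \PV^0(X)$ is even with respect to the reduced grading and $\updelta_W$ is an odd derivation of $(\PV(X),\wedge)$, for any $\omega\in \PV(X)$ we have
\be
\updelta_W\bigl(e^{-\lambda L_G}\omega\bigr)=(\updelta_W e^{-\lambda L_G})\,\omega+e^{-\lambda L_G}\,\updelta_W\omega=e^{-\lambda L_G}\,\updelta_W\omega,
\ee
where the first term vanishes by \eqref{expLclosed}. This is exactly \eqref{UGclosed}.

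Next, define the $\cinf$-linear odd operator $K_G(\lambda):\PV(X)\rightarrow \PV(X)$ as left multiplication by $-S_G(\lambda)$; note that $S_G(\lambda)\in \PV^{-1}(X)$ is odd under the reduced grading, so $K_G(\lambda)$ preserves $\PV_c(X)$ and shifts the reduced degree by $\hat 1$. Using again that $\updelta_W$ is an odd derivation and the sign coming from $\deg S_G(\lambda)=\hat 1$, a direct computation gives for every $\omega\in \PV(X)$
\beqa
\bigl[\updelta_W,K_G(\lambda)\bigr](\omega) &=& \updelta_W\bigl(-S_G(\lambda)\,\omega\bigr)-S_G(\lambda)\,\updelta_W\omega\\
&=& -(\updelta_W S_G(\lambda))\,\omega + S_G(\lambda)\,\updelta_W\omega - S_G(\lambda)\,\updelta_W\omega\\
&=& -(\updelta_W S_G(\lambda))\,\omega.
\eeqa
Combining this with the identity $e^{-\lambda L_G}=1-\updelta_W S_G(\lambda)$ from \eqref{expL} yields
\be
\bigl[\updelta_W,K_G(\lambda)\bigr]=U_G(\lambda)-\id_{\PV(X)},
\ee
so $K_G(\lambda)$ is the desired chain homotopy and $U_G(\lambda)$ is homotopy equivalent to $\id_{\PV(X)}$ as an endomorphism of the complex $(\PV(X),\updelta_W)$.

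The remaining assertions are now formal: since $U_G(\lambda)$ is a chain map (by \eqref{UGclosed}) it preserves both $\ker(\updelta_W)$ and $\im(\updelta_W)$, and since it is homotopic to the identity the induced map on $\HPV(X,W)$ is the identity. The only conceptual point is book-keeping of the signs in the graded Leibniz rule, since $S_G(\lambda)$ is odd while $e^{-\lambda L_G}$ is even; after that everything reduces to applying Proposition \ref{prop:expL}.
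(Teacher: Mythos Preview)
Your proof is correct and follows essentially the same approach as the paper: both construct the chain homotopy as left multiplication by $\pm S_G(\lambda)$ and invoke \eqref{expL} to obtain $U_G(\lambda)-\id_{\PV(X)}=\pm[\updelta_W,\widehat{S_G(\lambda)}]$. The only cosmetic difference is that you verify \eqref{UGclosed} separately from \eqref{expLclosed}, whereas the paper reads it off as a consequence of the homotopy relation; either way the content is identical.
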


\begin{proof}
Relation \eqref{expL} implies:
\be
U_G(\lambda)=\id_{\PV(X)}-\big[\updelta_W,\widehat{S_G(\lambda)}\big]~~,
\ee
where $\widehat{S_G(\lambda)}:\PV(X)\rightarrow \PV(X)$ is the
$\cinf$-linear operator of left multiplication in the algebra $\PV(X)$
with the element $S_G(\lambda)\in \PV^{-1}(X)$ defined in
\eqref{SGdef}:
\be
\widehat{S_G(\lambda)}(\omega)\eqdef S_G(\lambda)\omega~~,~~\forall \omega\in \PV(X)~~.
\ee
Notice that $\widehat{S_G(\lambda)}$ is an operator of degree $-1$
with respect to the canonical $\Z$-grading of $\PV(X)$. The remaining
statements are now obvious.  ~\qed
\end{proof}

\subsection{Tempered traces on $\PV_c(X)$}

\

\

\begin{Definition}
For any $\lambda\geq 0$, the {\em $\lambda$-tempered trace induced by
  $G$ and $\Omega$ on $\PV_c(X)$} is the $\C$-linear map
$\Tr^{(\lambda)}:=\Tr^{(\lambda),\Omega, G}:\PV_c(X)\rightarrow \cinf$
defined through:
\be
\Tr^{(\lambda),\Omega,G}\eqdef \Tr_B^\Omega\circ U_G(\lambda)~~.
\ee
This map has degree zero with respect to
the canonical $\Z$-grading of $\PV_c(X)$.
\end{Definition}

\

\noindent We have $\Tr^{(0),\Omega,G}=\Tr_B^\Omega$ (which is independent of $G$) and:
\be
\label{Tr_Tr0}
\Tr^{(\lambda),\Omega,G}(\omega)=\Tr_B^\Omega (e^{-\lambda L_G} \omega)~~
\ee
for all $\omega\in \PV_c(X)$. In the following, we simplify notation by omitting
to indicate the dependence of the traces on $\Omega$ and $G$.

\

\begin{Proposition}
\label{prop:canbulk}
For any $\omega\in \PV_c^{i,j}(X)$, we have: 
\be
\Tr^{(\lambda)}(\omega)=0~~\mathrm{unless}~~i+j=0
\ee
and: 
\ben
\label{TrLambdaExp}
\Tr^{(\lambda)}(\omega)=\frac{(-\i \lambda)^{d-j}}{(d-j)!} \int_X \rOmega\wedge \left(\rOmega\lrcorner [(H_G)^{d-j}
  \omega]\right)e^{-\lambda
  ||\pd W||_G^2}~~,~~\mathrm{when}~~ \omega\in \PV_c^{-j,j}(X)~~.
\een
\end{Proposition}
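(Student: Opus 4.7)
The plan is to unwind the definitions and exploit the bidegree of each factor that appears in $e^{-\lambda L_G}$.

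First, I would observe that $\|\partial W\|_G^2 \in \PV^{0,0}(X)$ is a scalar function while $H_G \in \PV^{-1,1}(X)$, so these two elements commute in the supercommutative algebra $\PV(X)$ (the scalar being central). Hence
\be
e^{-\lambda L_G} = e^{-\lambda \|\partial W\|_G^2} e^{-\i\lambda H_G} = e^{-\lambda \|\partial W\|_G^2} \sum_{p=0}^{d} \frac{(-\i\lambda)^p}{p!} (H_G)^p,
\ee
where the sum terminates at $p=d$ since $(H_G)^p \in \PV^{-p,p}(X)$ vanishes for $p>d$. For $\omega \in \PV_c^{i,j}(X)$, the product $(H_G)^p \omega$ lies in $\PV_c^{i-p,j+p}(X)$, and the scalar factor $e^{-\lambda\|\partial W\|_G^2}$ does not shift bidegree.

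Second, by \eqref{TrRedCont} and Definition \ref{def:RedCont}, the canonical off-shell bulk trace $\Tr_B$ annihilates any element of bidegree $(i',j')$ unless $(i',j')=(-d,d)$, in which case $\Tr_B(\eta) = \int_X \Omega \wedge (\Omega \lrcorner \eta)$. Applying this to each term $(H_G)^p \omega$ of bidegree $(i-p, j+p)$, the condition $(i-p, j+p) = (-d,d)$ forces $p = d-j = d+i$, whence $i+j=0$. This immediately yields $\Tr^{(\lambda)}(\omega) = \Tr_B(e^{-\lambda L_G}\omega) = 0$ unless $i+j=0$.

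Third, when $\omega \in \PV_c^{-j,j}(X)$, only the $p = d-j$ term in the expansion contributes to $\Tr_B(e^{-\lambda L_G}\omega)$. Pulling the constant $\frac{(-\i\lambda)^{d-j}}{(d-j)!}$ outside and using that the scalar function $e^{-\lambda\|\partial W\|_G^2} \in \cC^\infty(X)$ commutes with the contraction $\Omega\lrcorner$ (which only involves the polyvector slot), I obtain
\be
\Tr^{(\lambda)}(\omega) = \frac{(-\i\lambda)^{d-j}}{(d-j)!} \int_X \Omega \wedge \bigl(\Omega \lrcorner [(H_G)^{d-j}\omega]\bigr) e^{-\lambda\|\partial W\|_G^2},
\ee
which is the desired expression \eqref{TrLambdaExp}. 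The computation is essentially bookkeeping on bidegrees; the only point requiring a moment of care is the commutativity of $\|\partial W\|_G^2$ with $H_G$ (which holds because $\|\partial W\|_G^2$ is a scalar) and the fact that a $\cC^\infty$ scalar factor can be passed through $\Omega\lrcorner$ and collected inside the integral, so there is no substantive obstacle.
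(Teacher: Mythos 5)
Your proof is correct and follows essentially the same route as the paper: expand $e^{-\lambda L_G}$ into the finite sum $\sum_p \frac{(-\i\lambda)^p}{p!}(H_G)^p$, note $(H_G)^p\omega\in\PV_c^{i-p,j+p}(X)$, and use that $\Tr_B$ is supported in bidegree $(-d,d)$ (the paper phrases this as the $(d,d)$-type condition under $\int_X\Omega\wedge(\Omega\lrcorner\,\cdot\,)$, which forces $p=d-j$ and $i+j=0$), after which only the $p=d-j$ term survives and yields \eqref{TrLambdaExp}.
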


\begin{proof}
For any $\omega\in \PV_c^{i,j}(X)$, we have: 
\be
\Omega\lrcorner (e^{-\lambda L_G} \omega)=e^{-\lambda ||\pd W||_G^2} \sum_{p=0}^{d}\frac{(-\i\lambda)^p}{p!} \Omega \lrcorner [(H_G)^p \omega]~~,
\ee
with $(H_G)^p \omega\in \PV^{i-p,j+p}(X)$ and $\Omega\lrcorner
[(H_G)^p \omega]\in \PV^{d+i-p,j+p}(X)$. Thus $\Omega\wedge
\left(\Omega\lrcorner [(H_G)^p \omega]\right)\in
\rOmega^{2d+i-p,j+p}(X)$ and $\int_X \Omega\wedge \left(\Omega\lrcorner
      [(H_G)^p \omega]\right)$ vanishes unless $2d+i-p=j+p=d$, which
      requires $p=d-j$ and $i+j=0$. Thus $\Tr^{(\lambda)}(\omega)$
      vanishes unless $i+j=0$, in which case
      \eqref{TrLambdaExp} holds.  ~\qed
\end{proof}

\

\begin{Proposition}
\label{prop:OnShellTr}
Let $\omega\in \PV_c(X)$. Then the following statements hold for any
$\lambda\geq 0$:
\begin{enumerate}[1.]
\itemsep 0.0em
\item If $\omega=\updelta_W\eta$ for some $\eta\in \PV_c(X)$, then
  $\Tr^{(\lambda)}(\omega)=0$.
\item If $\updelta_W\omega=0$, then $\Tr^{(\lambda)}(\omega)$ does not
  depend on $\lambda$ or $G$ and coincides with $\Tr_B(\omega)$:
\ben
\label{TrTr0}
\Tr^{(\lambda)}(\omega)=\Tr^{(0)}(\omega)=\Tr_B(\omega)~~.
\een
In particular, the map induced by $\Tr^{(\lambda)}(\omega)$ on
$\HPV_c(X,W)$ coincides with $\Tr_c$.
\end{enumerate}
\end{Proposition}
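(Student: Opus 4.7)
The plan is to reduce both parts of the proposition to Proposition \ref{prop:TrBclosed} (vanishing of $\Tr_B$ on $\updelta_W$-exact compactly supported forms), using two different but closely related mechanisms: the fact that $U_G(\lambda)$ is a chain map on $(\PV(X),\updelta_W)$ that preserves $\PV_c(X)$, and the explicit homotopy $e^{-\lambda L_G}=1-\updelta_W S_G(\lambda)$ of Proposition \ref{prop:expL}.

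For part 1, I would unfold $\Tr^{(\lambda)}=\Tr_B\circ U_G(\lambda)$ and use the commutation relation \eqref{UGclosed} to write
\be
\Tr^{(\lambda)}(\updelta_W\eta)=\Tr_B(U_G(\lambda)\updelta_W\eta)=\Tr_B(\updelta_W U_G(\lambda)\eta)~~.
\ee
Since multiplication by the smooth element $e^{-\lambda L_G}\in \PV^0(X)$ preserves compact supports, $U_G(\lambda)\eta\in \PV_c(X)$, so Proposition \ref{prop:TrBclosed} forces this to vanish.

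For part 2, I would apply the identity \eqref{expL} directly: using $e^{-\lambda L_G}=1-\updelta_W S_G(\lambda)$, write
\be
U_G(\lambda)\omega=\omega-(\updelta_W S_G(\lambda))\omega~~.
\ee
Since $\updelta_W$ is an odd derivation of $\PV(X)$ and $\updelta_W\omega=0$ by hypothesis, the graded Leibniz rule gives
\be
\updelta_W\bigl(S_G(\lambda)\omega\bigr)=(\updelta_W S_G(\lambda))\omega-S_G(\lambda)\updelta_W\omega=(\updelta_W S_G(\lambda))\omega~~,
\ee
so $U_G(\lambda)\omega=\omega-\updelta_W(S_G(\lambda)\omega)$. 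Because $\omega$ has compact support and $S_G(\lambda)\in\PV^{-1}(X)$ acts by left wedge multiplication, the element $S_G(\lambda)\omega$ again lies in $\PV_c(X)$. Applying $\Tr_B$ and using Proposition \ref{prop:TrBclosed} once more yields
\be
\Tr^{(\lambda)}(\omega)=\Tr_B(\omega)-\Tr_B\bigl(\updelta_W(S_G(\lambda)\omega)\bigr)=\Tr_B(\omega)~~,
\ee
which is manifestly independent of $\lambda$ and $G$. This establishes \eqref{TrTr0}, and the last assertion follows because every $\updelta_W$-cohomology class in $\HPV_c(X,W)$ has a $\updelta_W$-closed representative in $\PV_c(X)$.

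There is no genuine obstacle here; the only point that requires a moment of care is verifying that the auxiliary elements $U_G(\lambda)\eta$ and $S_G(\lambda)\omega$ remain compactly supported so that Proposition \ref{prop:TrBclosed} is applicable. This is automatic since $e^{-\lambda L_G}$ and $S_G(\lambda)$ act by global left multiplication on $\PV(X)$ and wedge multiplication preserves compact support of the second factor.
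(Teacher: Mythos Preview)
Your proof is correct. Part 1 is essentially identical to the paper's argument: both exploit $\updelta_W$-closedness of $e^{-\lambda L_G}$ together with the Leibniz rule to rewrite $e^{-\lambda L_G}\updelta_W\eta$ as $\updelta_W(e^{-\lambda L_G}\eta)$, then invoke Proposition~\ref{prop:TrBclosed}.

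For Part 2 you take a slightly different route. The paper differentiates $\Tr^{(\lambda)}(\omega)$ with respect to $\lambda$, uses $L_G=\updelta_W v_G$ to write the derivative as $-\Tr^{(\lambda)}[\updelta_W(v_G\omega)]$, and then appeals to Part 1 to conclude the derivative vanishes. You instead apply the integrated homotopy formula \eqref{expL} directly, writing $U_G(\lambda)\omega-\omega$ as the $\updelta_W$-exact element $-\updelta_W(S_G(\lambda)\omega)$ and killing it in one stroke with Proposition~\ref{prop:TrBclosed}. The two arguments are of course equivalent---your $S_G(\lambda)=v_G\int_0^\lambda e^{-tL_G}\,\dd t$ is precisely the integral of the paper's infinitesimal generator---but your version is marginally cleaner: it avoids invoking differentiability in $\lambda$ and Part 1, and it makes the independence of both $\lambda$ and $G$ visible in a single line. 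The paper's differential approach, on the other hand, generalizes more transparently to situations where one only knows the generator rather than a closed-form homotopy.
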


\begin{proof}
\

\begin{enumerate}
\itemsep 0.0em
\item Since  $\updelta_W(e^{-\lambda L_G})=0$ (see \eqref{expLclosed}), we have:
\ben
\label{Trpartial}
\Tr^{(\lambda)}(\updelta_W \eta)=\Tr_B\left[e^{-\lambda L_G} \updelta_W\eta\right]=
\Tr_B\left[\updelta_W(e^{-\lambda L_G} \eta)\right]=0~~,
\een
where in the last equality we used Proposition \ref{prop:TrBclosed}.
\item Notice that $\Tr^{(\lambda)}(\omega)$ is differentiable with
  respect to $\lambda$.  Since $L_G=\updelta_W v_G$ and $\updelta_W
  \omega=0$, we have:
\be
\frac{\dd}{\dd \lambda}\Tr^{(\lambda)}(\omega)=\frac{\dd}{\dd
  \lambda}\Tr_B(e^{-\lambda L_G}\omega)= -\Tr_B\left[e^{-\lambda L_G}
  \updelta_W (v_G)\omega\right]= -\Tr^{(\lambda)}[\updelta_W
  (v_G\omega)]~~,
\ee
which vanishes by point 1. This shows that $\Tr^{(\lambda)}(\omega)$
does not depend on $\lambda$ or $G$ and hence \eqref{TrTr0} holds.
\end{enumerate}    ~\qed
\end{proof}

\section{Off-shell boundary traces and boundary flows}
\label{sec:BoundaryTrace} 

\subsection{The canonical off-shell boundary traces induced by $\Omega$ on $\DF_c(X,W)$}

Let $a=(E,D)$ be a holomorphic factorization of $W$. Let
$\updelta_a:=\updelta_{a,a}$ and $\md_a:=\md_{a,a}$ denote the twisted
and defect differentials on $\End_{\DF(X,W)}(a)$. Let
$\bbpd_a:=\bbpd_{a,a}=\bbpd_{End(E)}$ denote the Dolbeault operator of
$End(E)$. We have:
\be
\updelta_a=\bbpd_a+\md_a~~,~~\md_a=[D,\cdot]~~,
\ee
where $[\cdot,\cdot]$ denotes the graded commutator. 

\

\begin{Definition}
The {\em canonical off-shell boundary trace} induced by $\Omega$ on
$\End_{\DF_c(X,W)}(a)$ is the $\C$-linear map
$\tr_a^B:=\tr_a^{B,\Omega}:\End_{\DF_c(X,W)}(a)\rightarrow \C$ defined
through:
\ben
\label{trdef}
\tr_a^{B,\Omega}(\alpha)=\int_X\Omega\wedge \str(\alpha)=\int_\Omega \str(\alpha)~~,
\een 
for all $\alpha\in \End_{\DF_c(X,W)}(a)=\cA_c(X,End(E))$, where $\str$
denotes the extended supertrace of Subsection \ref{subsec:strext}.
\end{Definition}

\

\noindent We have: 
\be
\tr_a^{B,\Omega}(\alpha)=0~~\mathrm{unless}~~\alpha\in \cA_c^d(X,End^\0(E))\subset \cA_c(X,End(E))^\mu~~,
\ee
where $\mu={\hat d}$ is the signature of $(X,W)$. In the following, we
simplify notation by omitting to indicate dependence of the traces on
$\Omega$.

\

\begin{Proposition}
\label{prop:cancyclic}
For any holomorphic factorizations $a_1$ and $a_2$ of $W$, we have: 
\ben
\label{trBcyc}
\tr^B_{a_2}(\alpha\beta)=(-1)^{\deg \alpha \, \deg \beta}\tr_{a_1}^B(\beta\alpha)~~,
\een
when $\alpha\in \Hom_{\DF_c(X,W)}(a_1,a_2)$ and $\beta\in
\Hom_{\DF_c(X,W)}(a_2,a_1)$ have pure total $\Z_2$-degree. 
\end{Proposition}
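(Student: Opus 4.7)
The plan is to deduce \eqref{trBcyc} from a ``cross-object'' generalization of the extended supertrace cyclicity identity \eqref{strextcyc}. Specifically, I will first establish that for any two holomorphic factorizations $a_1$, $a_2$ of $W$ and any $\Z_2$-homogeneous morphisms $\alpha\in \Hom_{\DF(X,W)}(a_1,a_2)$ and $\beta\in \Hom_{\DF(X,W)}(a_2,a_1)$, the identity
\[
\str(\alpha\circ\beta) = (-1)^{\deg\alpha\,\deg\beta}\,\str(\beta\circ\alpha)
\]
holds in $\cA(X)$. The conclusion \eqref{trBcyc} then follows immediately by applying the $\C$-linear Serre trace $\int_\Omega$ to both sides of this identity, using that $\tr^B_a = \int_\Omega\circ\str$ by definition and that $\alpha\circ\beta$, $\beta\circ\alpha$ both have compact support whenever $\alpha$ and $\beta$ do.

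To prove the cross-object supertrace identity, I reduce to the case of pure tensors by $\cinf$-bilinearity of composition and $\cA(X)$-linearity of $\str$. Writing $\alpha = \rho\otimes f$ and $\beta = \eta\otimes g$ with $\rho,\eta\in \cA(X)$ of pure rank and $f\in \rGamma_\sm(X,Hom(E_1,E_2))$, $g\in \rGamma_\sm(X,Hom(E_2,E_1))$ of pure bundle-$\Z_2$-degree $\sigma(f),\sigma(g)$, the composition rule \eqref{Dscomp} gives
\begin{align*}
\str(\alpha\circ\beta) &= (-1)^{\sigma(f)\cdot \rk\eta}\,(\rho\wedge\eta)\,\str(f\circ g), \\
\str(\beta\circ\alpha) &= (-1)^{\sigma(g)\cdot \rk\rho}\,(\eta\wedge\rho)\,\str(g\circ f).
\end{align*}
Two well-known graded commutation identities then combine: the fiberwise (pointwise) supertrace cyclicity on $End(E_1)$ and $End(E_2)$ yields $\str(f\circ g) = (-1)^{\sigma(f)\sigma(g)}\str(g\circ f)$, while the supercommutativity of the wedge product gives $\rho\wedge\eta = (-1)^{\rk\rho\,\rk\eta}\,\eta\wedge\rho$. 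Inserting both into the expression for $\str(\alpha\circ\beta)$ produces the overall sign
\[
(-1)^{\sigma(f)\rk\eta \,+\, \sigma(f)\sigma(g) \,+\, \rk\rho\,\rk\eta \,+\, \sigma(g)\rk\rho}
\]
relative to $\str(\beta\circ\alpha)$. Expanding the product $\deg\alpha\cdot \deg\beta = (\rk\rho+\sigma(f))(\rk\eta+\sigma(g))$ in $\Z_2$ shows that this exponent equals $\deg\alpha\,\deg\beta$, which is the required sign.

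I do not expect any serious obstacle here: the entire argument is a bookkeeping of four mod-$2$ sign contributions and a pointwise appeal to the ordinary supertrace cyclicity on $End(E_i)$. The only subtle point is to correctly match the convention used in \eqref{Dscomp}, where the Koszul sign involves $\sigma(f)\rk\eta$ rather than $\sigma(f)(\rk\eta+\sigma(g))$ or similar; verifying that the four contributions sum to $\deg\alpha\cdot\deg\beta$ is the single line that has to be checked carefully. Compact support of $\alpha$ and $\beta$ enters only at the very last step to guarantee that $\int_\Omega\str(\alpha\circ\beta)$ and $\int_\Omega\str(\beta\circ\alpha)$ are well-defined finite numbers; no integration by parts or Stokes argument is needed.
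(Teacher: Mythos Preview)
Your proposal is correct and follows essentially the same route as the paper, which simply says the result ``follows immediately from \eqref{trdef} upon using relation \eqref{strextcyc}.'' You have merely made explicit what the paper leaves implicit: relation \eqref{strextcyc} is stated only for $\alpha,\beta\in\End_{\DF(X,W)}(a)$, whereas the proposition requires its two-object variant, which you correctly supply via the pure-tensor sign check.
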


\begin{proof}
Follows immediately from \eqref{trdef} upon using relation
\eqref{strextcyc}.  ~\qed
\end{proof}

\

\begin{remark}
Relation \eqref{trBcyc} is an off-shell (cochain level) version
of the cyclicity condition \eqref{tcyc}.
\end{remark}

\subsection{Cohomological boundary traces}

\

\

\begin{Proposition}
\label{prop:bdrytrace}
For any $\alpha\in \End_{\DF_c(X,W)}(a)$, we have:
\be
\tr_a^B(\updelta_a\alpha)=\tr_a^B(\bbpd_a \alpha)=\tr_a^B(\md_a\alpha)=0~~.
\ee
In particular, $\tr_a^B$ descends to
$\End_{\HDF_c(X,W)}(a)=\rH(\cA_c(X,End(E)),\updelta_a)$.
\end{Proposition}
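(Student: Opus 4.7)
The plan is to reduce the three vanishing statements to two already-established inputs: the compatibility relations \eqref{strbpd} and \eqref{strfd} for the extended supertrace, plus Stokes' theorem on $X$ (justified by compact support of $\alpha$).

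First I would handle the $\bbpd_a$ piece. Since $\alpha \in \cA_c(X, End(E))$, applying \eqref{strbpd} gives $\str(\bbpd_a \alpha) = \bpd\,\str(\alpha) \in \cA_c(X)$. Then
\[
\tr_a^B(\bbpd_a \alpha) = \int_X \Omega \wedge \bpd\,\str(\alpha).
\]
Because $\Omega$ is holomorphic of type $(d,0)$, we have $\pd(\Omega \wedge \str(\alpha)) = 0$ for purely degree reasons, so $\dd(\Omega \wedge \str(\alpha)) = \bpd(\Omega \wedge \str(\alpha)) = (-1)^d \Omega \wedge \bpd\,\str(\alpha)$. Since $\str(\alpha)$ has compact support, Stokes' theorem forces the integral to vanish. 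This is exactly the argument already used in Proposition \ref{prop:TrBclosed} for the bulk case, now applied to the $\cA_c(X)$-valued object $\str(\alpha)$.

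Next I would handle the $\md_a$ piece. Relation \eqref{strfd} gives $\str(\md_a \alpha) = 0$ pointwise (this is the natural extension of the standard fact that $\str[D, -]$ vanishes, since $D$ is odd and $\str$ is a supertrace; concretely, writing $\alpha = \rho \otimes f$ for a pure-degree section $f$, the formula for $\md_a$ yields $\str(\md_a(\rho\otimes f)) = (-1)^{\rk\rho}\rho\bigl[\str(D \circ f) - (-1)^{\sigma(f)}\str(f \circ D)\bigr]$, and this bracket vanishes by the super-cyclicity of $\str$ applied to the odd element $D$). Hence $\tr_a^B(\md_a \alpha) = \int_\Omega 0 = 0$.

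Combining the two cases via $\updelta_a = \bbpd_a + \md_a$ and $\C$-linearity yields $\tr_a^B(\updelta_a \alpha) = 0$, giving all three vanishings. The ``descent to cohomology'' is then automatic: the map $\tr_a^B$ kills $\updelta_a$-coboundaries, so it factors through $\End_{\HDF_c(X,W)}(a) = \rH(\cA_c(X, End(E)), \updelta_a)$. I expect no real obstacle here; the only subtle point is keeping track of signs so as to invoke \eqref{strbpd} and \eqref{strfd} cleanly rather than re-deriving them from scratch.
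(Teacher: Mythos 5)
Your proposal is correct and follows essentially the same route as the paper: the $\bbpd_a$ term is killed by \eqref{strbpd} together with the type-$(d,\ast)$ observation that lets one replace $\bpd$ by $\dd$ and invoke Stokes on the compactly supported form $\Omega\wedge\str(\alpha)$, while the $\md_a$ term vanishes pointwise by \eqref{strfd}, and the $\updelta_a$ statement follows by linearity. The only difference is cosmetic: you re-derive \eqref{strfd} via super-cyclicity of the supertrace, whereas the paper simply cites it.
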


\begin{proof}
We have:
\beqa
\tr_a^B(\bbpd_a \alpha)&=&\int_X \Omega\wedge \str(\bbpd_a \alpha)=\int_X \Omega\wedge [\bpd\,\str(\alpha)]=\nn\\
&=& (-1)^d \int_X \bpd \left[\Omega\wedge \str(\alpha)\right]=
(-1)^d\int_X \dd \left[\Omega\wedge \str(\alpha)\right]=0~~,
\eeqa
where we used \eqref{strbpd} and \eqref{strfd} and the fact that
$\Omega\wedge \str(\alpha)$ has type $(d,\ast)$. The relation
$\tr_a^B(\md_a(\alpha))=0$ follows from \eqref{strfd}, while the
relation $\tr_a^B(\updelta_a \alpha)=0$ follows by combining these two
properties.
\end{proof}

\

\begin{Definition}
The {\em cohomological boundary trace induced by $\Omega$ on
  $\End_{\HDF_c(X,W)}(a)$} is the $\C$-linear map
$\tr_a^c:=\tr_a^{c,\Omega}:\End_{\HDF_c(X,W)}(a) \rightarrow \C$
induced by $\tr_a^{B,\Omega}$ on $\End_{\HDF_c(X,W)}(a)$.
\end{Definition}

\

\noindent When the critical set $Z_W$ is compact, Proposition \ref{prop:j}
implies that the inclusion induces a strictly surjective and fully-faithful
functor $j_\ast:\HDF_c(X,W)\stackrel{\sim}{\rightarrow} \HDF(X,W)$
(moreover, $\HDF(X,W)$ is Hom-finite over $\C$ by Proposition \ref{prop:boundaryfd}). 
This allows us to transport $\tr_a^c$ to a trace on $\End_{\HDF(X,W)}(a)$:

\

\begin{Definition}
Assume that the critical set $Z_W$ is compact. Then the {\em
  cohomological boundary trace induced by $\Omega$ on
  $\End_{\HDF(X,W)}(a)$} is the $\C$-linear map $\tr_a\eqdef
\tr_a^c\circ j_{\ast,a}^{-1}: \End_{\HDF(X,W)}(a) \rightarrow \C$,
where
$j_{\ast,a}:\End_{\HDF_c(X,W)}(a)\stackrel{\sim}{\rightarrow} \End_{\HDF(X,W)}(a)$
is the linear isomorphism induced by the inclusion functor.
\end{Definition}

\

\noindent Let $\tr^c\eqdef (\tr^c_a)_{a\in \Ob\HDF_c(X,W)}$ and
$\tr\eqdef (\tr_a)_{a\in \Ob\HDF(X,W)}$. When the critical set $Z_W$
is compact, Proposition \ref{prop:cancyclic} and the remarks above
imply that $(\HDF_c(X,W),\tr^c)$ and $(\HDF(X,W),\tr)$ are equivalent
pre-Calabi-Yau supercategories.

\subsection{Hermitian holomorphic factorizations}
\label{subsec:hhf} 

Let $E=E^\0\oplus E^\1$ be a holomorphic vector superbundle on $X$. 

\

\begin{Definition}
A Hermitian metric $h$ on $E$ is called {\em admissible} if the
sub-bundles $E^\0$ and $E^\1$ of $E$ are $h$-orthogonal:
\be
h|_{E^\0\times E^\1}=h|_{E^\1\times E^\0}=0~~.
\ee
\end{Definition}

\

\begin{Definition}
A {\em Hermitian holomorphic factorization} of $W$ is a triplet
$\ba=(E,h,D)$, where $a=(E,D)$ is a holomorphic factorization of $W$
and $h$ is an admissible Hermitian metric on $E$.
\end{Definition}

\subsection{The boundary flow of a Hermitian holomorphic factorization induced by a K\"{a}hler metric}
\label{subsec:boundary flow}

\

\

\noindent Let us fix a Hermitian holomorphic factorization $\ba=(E,h,
D)$ of $W$ with underlying holomorphic factorization $a=(E,D)$. Let
$\nabla:=\nabla_\ba$ denote the Chern connection of the Hermitian
holomorphic vector bundle $(E,h)$. This is the unique $h$-compatible
$\C$-linear connection on $E$ which satisfies:
\be
\nabla_\ba^{0,1}=\bbpd_E|_{\rGamma_\sm(X,E)}~~,
\ee
where $\bbpd_E$ is the Dolbeault operator of $E$. Let
$\rOmega(X,E)\eqdef \rOmega(X)\otimes_\cinf \rGamma_\sm(X,E)$ and
$\mpd_E^h:\rOmega(X,E)\rightarrow \rOmega(X,E)$ be the unique
$\C$-linear operator which satisfies the Leibniz rule:
\ben
\label{mpd_E^h}
\mpd_E^h(\rho\otimes s)=(\pd \rho)\otimes s+(-1)^k\rho\wedge \nabla_\ba^{1,0}(s)~~
\een
for all $\rho\in \rOmega^k(X)$ and all $s\in \rGamma_\sm(X,E)$. We have 
$\mpd_E^h(\rOmega^{i,j}(X,E))\subset \rOmega^{i+1,j}(X,E)$ for all $i,j$
and:
\be
\mpd_E^h|_{\rGamma_\sm(X,E)}=\nabla^{1,0}_\ba~~.
\ee
Let $F_\ba\in \rOmega^{1,1}(X,End^\0(E))=\cA^1(X,T^\ast X\otimes
End^\0(E))$ denote the curvature of $\nabla_\ba$. Since $F_\ba$ has
type $(1,1)$, we have $(\nabla_\ba^{1,0})^2=(\nabla_\ba^{0,1})^2=0$
and
$\nabla^{1,0}_\ba\nabla^{0,1}_\ba+\nabla^{0,1}_\ba\nabla^{1,0}_\ba=F_\ba$. These
relations imply:
\be
(\mpd_E^h)^2=\bbpd_E^2=0~~,~~\mpd_E^h\bbpd_E+\bbpd_E\mpd_E^h=\widehat{F_\ba}~~,
\ee
where $\widehat{F_\ba}$ denotes the operator of left multiplication in
the associative algebra $\rOmega(X, End(E))$. Let
$\mpd_\ba:\rOmega(X,End(E))\rightarrow \rOmega(X,End(E))$ denote the
differential induced by $\mpd_E^h$ on the graded associative algebra 
$\rOmega(X,End(E))$. We have:
\ben
\label{mpdrels}
\mpd_\ba^2=0~~,~~\mpd_\ba\bbpd_a+\bbpd_a\mpd_\ba=[F,\cdot]~~,
\een
where $\bbpd_a=\bbpd_{End(E)}$. 

\

\begin{Definition}
The {\em boundary flow generator of $\ba=(E,h, D)$ determined by the
K\"{a}hler metric $G$} is defined through:
\be
L_\ba^{G}\eqdef ||\pd W||_G^2 \id_E + H_G \lrcorner (\mpd_\ba D+F)\in \cA^0(X,End^\0(E))\oplus \cA^1(X,End^\1(E))\oplus \cA^2(X,End^\0(E))~~.
\ee
\end{Definition}

\

\begin{Proposition}
We have: 
\be
L_\ba^G=\updelta_a v_\ba^G~~,
\ee
where: 
\be
v_\ba^{G}\eqdef \grad_G \overline{W}\lrcorner (\mpd_\ba D+F)\in \cA^0(X,End^\1(E))\oplus \cA^1(X,End^\0(E))~~.
\ee
\end{Proposition}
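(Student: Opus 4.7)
The plan is to verify $L_\ba^G = \updelta_a v_\ba^G$ by direct expansion of $\updelta_a = \bbpd_a + \md_a$, distributing each piece across the contraction using Leibniz rules with respect to $\xi \lrcorner$, where $\xi \eqdef \grad_G \overline{W}$. First I would assemble the standard identities flowing from the Chern geometry of $(E,h)$ and the factorization condition $D^2 = W\id_E$: holomorphicity of $D$ gives $\bbpd_a D = 0$; the Bianchi identity $\nabla_\ba F = 0$, together with $F$ being of type $(1,1)$, gives $\bbpd_a F = 0$; specializing the commutation relation $\mpd_\ba\bbpd_a + \bbpd_a \mpd_\ba = [F,\cdot]$ from \eqref{mpdrels} to $D$ then yields $\bbpd_a \mpd_\ba D = [F,D]_{\mathrm{alg}}$ (algebraic commutator, since $F$ is bundle-even); applying the rank-degree derivation $\mpd_\ba$ to $D^2 = W\id_E$ produces $(\mpd_\ba D)D + D(\mpd_\ba D) = (\pd W)\id_E$ in the algebra $\rOmega(X,End(E))$; finally, by definition $\bbpd \xi = H_G$ and $(\pd W)(\xi) = ||\pd W||_G^2$.

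Applying the Koszul-type Leibniz identity $\bbpd_a(\xi \lrcorner \beta) = (\bbpd \xi)\lrcorner \beta - \xi \lrcorner \bbpd_a \beta$ to $\beta = \mpd_\ba D + F$ and invoking $\bbpd_a F = 0$, I obtain
\[
\bbpd_a v_\ba^G = H_G \lrcorner (\mpd_\ba D + F) - \xi \lrcorner \bbpd_a \mpd_\ba D.
\]
For the defect part, the contraction $\xi \lrcorner$ intertwines with the supercommutator $\md_a = [D,\cdot]$ up to a sign dictated by the Koszul convention \eqref{Dscomp}, producing $\md_a v_\ba^G = -\xi \lrcorner \md_a(\mpd_\ba D + F)$. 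Translating the algebraic identity $(\mpd_\ba D)D + D(\mpd_\ba D) = (\pd W)\id_E$ into the category-composition supercommutator gives $\md_a(\mpd_\ba D) = -(\pd W)\id_E$, while $\md_a F$ reduces to the algebraic commutator $DF - FD = -[F,D]_{\mathrm{alg}} = -\bbpd_a \mpd_\ba D$. Substituting yields $\md_a v_\ba^G = ||\pd W||_G^2 \id_E + \xi \lrcorner \bbpd_a \mpd_\ba D$. The two occurrences of $\xi \lrcorner \bbpd_a \mpd_\ba D$ cancel in the sum, leaving exactly $||\pd W||_G^2 \id_E + H_G \lrcorner (\mpd_\ba D + F) = L_\ba^G$.

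The main obstacle is reconciling the sign conventions across the three interacting gradings: form bidegree $(p,q)$, bundle $\Z_2$-grading, and total $\Z_2$-grading. The usual algebra structure on $\rOmega(X,End(E))$ and its graded derivation $\mpd_\ba$ operate with the rank grading, whereas the category composition \eqref{Dscomp} and its induced defect differential $\md_a$ use the total $\Z_2$-grading; the passage between these is where the delicate signs arise. In particular, both $\md_a(\mpd_\ba D) = -(\pd W)\id_E$ and the cancellation $\md_a F + \bbpd_a \mpd_\ba D = 0$ depend on correctly translating between the two conventions. These verifications are most transparently carried out in a local holomorphic trivialization of $E$, where $D$ becomes a matrix of holomorphic functions and the Koszul signs of \eqref{Dscomp} can be read off directly; once the signs are pinned down the assembly of the identity is automatic.
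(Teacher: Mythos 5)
Your proof is correct and follows essentially the same route as the paper's: decompose $\updelta_a=\bbpd_a+\md_a$, apply the Leibniz rule for the contraction together with $\bbpd_a D=\bbpd_a F=0$, the relation $\bbpd_a\mpd_\ba D=[F,D]$ from \eqref{mpdrels}, the differentiated identity $D^2=W\id_E$, and $\grad_G\overline{W}\lrcorner\,\pd W=||\pd W||_G^2$, with the cross-terms cancelling exactly as in the paper (which simply writes $\bbpd_a\mpd_\ba D$ as $[F,D]$ throughout). Your remarks on the sign bookkeeping between the rank-graded and total-$\Z_2$-graded conventions match the conventions the paper uses implicitly, so no gap remains.
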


\begin{proof}
The decomposition $\updelta_a=\bbpd_a+\md_a$ gives: 
\be
\updelta_a(v_\ba^G)=\bbpd_a(v_\ba^G)+\fd_a(v_\ba^G)~~.
\ee
Since $H_G=\bbpd_{T X}(\grad_G\overline{W})$, we have: 
\ben
\label{e1}
\bbpd_a(v_\ba^G)=H_G\lrcorner (\mpd_\ba D+F)- \grad_G \overline{W}\lrcorner [F,D]~~,
\een
where we used the second relation in \eqref{mpdrels} and the fact that
$\bbpd_a D=\bbpd_a F=0$.  On the other hand, we have:
\ben
\label{e2}
\fd_a(v_\ba^G)=-\grad_G \overline{W}\lrcorner \big([D,\mpd_\ba D]+[D,F]\big)= ||\pd W||_G^2\id_E +\grad_G \overline{W}\lrcorner [F,D]~~,
\een
where we used the relation $[D,\mpd_\ba D]=D\mpd_\ba D-(\mpd_\ba
D)D=-(\pd W)\id_E$ (which follows by applying $\mpd_a$ to the identity
$D^2=W\id_E$) and the relation $(\grad_G \overline{W}) \lrcorner \pd W=(\pd
W)(\grad_G \overline{W})=||\pd W||_G^2$ (see \eqref{normpdW}). Adding
\eqref{e1} and \eqref{e2} gives the conclusion.~\qed
\end{proof}

\

\noindent Notice that $L_\ba^G\in \End^\0_{\DF(X,W)}(a) $ and $v_\ba^G\in
\End^\1_{\DF(X,W)}(a)$. Also notice that $H_G \lrcorner (\mpd_\ba D+F)$ is
nilpotent, which allows us to define its exponential. For any
$\lambda\geq 0$, we have:
\be
e^{-\lambda H_G \lrcorner (\mpd_\ba D+F)}=\sum_{k=0}^d \frac{(-\lambda)^k}{k!} \big[H_G \lrcorner (\mpd_\ba D+F)\big]^k\in \End^\0_{\DF(X,W)}(a)~~,
\ee
where the series reduces to the first $d+1$ terms because $H_G
\lrcorner (\mpd_\ba D+F)$ belongs to $\cA^1(X,End^\1(E))\oplus
\cA^2(X,End^\0(E))$ and hence $\left[H_G \lrcorner (\mpd_\ba D+F)\right]^k$
vanishes for $k>d$. Define: 
\be
e^{-\lambda L_\ba^G}\eqdef e^{-\lambda ||\pd W||_G^2}e^{-\lambda H_G \lrcorner (\mpd_\ba D+F)} \in \End^\0_{\DF(X,W)}(a)~~.
\ee

\begin{Proposition}
\label{prop:expLa}
For any $\lambda \geq 0$, we have:
\ben
\label{expLa}
e^{-\lambda L_\ba^G}=1-\updelta_a S_\ba^G(\lambda)~~, 
\een
where:
\ben
\label{Sadef}
S_\ba^G(\lambda)\eqdef v_\ba^G \int_0^\lambda \dd t e^{-t L_\ba^G}\in \End^\1_{\DF(X,W)}(a)~~.
\een
In particular, we have: 
\ben
\label{Laclosed}
\updelta_a(e^{-\lambda L_\ba^G})=0~~.
\een
\end{Proposition}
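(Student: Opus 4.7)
\noindent The plan is to mirror almost verbatim the argument used for Proposition \ref{prop:expL}, with the bulk data $(\PV(X),\updelta_W,v_G,L_G)$ replaced by the boundary data $(\End_{\DF(X,W)}(a),\updelta_a,v_\ba^G,L_\ba^G)$. The key structural inputs are that $\updelta_a$ is an odd $\C$-linear derivation of the graded associative algebra $\End_{\DF(X,W)}(a)=\cA(X,End(E))$ with $\updelta_a^2=0$, that $L_\ba^G$ is even and $v_\ba^G$ is odd with respect to the total $\Z_2$-grading, and that the preceding proposition has already established $L_\ba^G=\updelta_a v_\ba^G$.

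\noindent First I would show that $e^{-\lambda L_\ba^G}$ is $\updelta_a$-closed, i.e.\ \eqref{Laclosed}. Since $L_\ba^G=\updelta_a v_\ba^G$ and $\updelta_a^2=0$, we have $\updelta_a L_\ba^G=0$. Because $L_\ba^G$ is even and $\updelta_a$ is a derivation, an immediate induction gives $\updelta_a\bigl((L_\ba^G)^k\bigr)=0$ for every $k\geq 0$, so applying $\updelta_a$ term by term to the finite series defining $e^{-\lambda L_\ba^G}$ yields $\updelta_a(e^{-\lambda L_\ba^G})=0$. (Alternatively, one can argue pointwise as in the proof of Proposition \ref{prop:expL}, by noting that both sides of $\updelta_a f(\lambda)=0$ evaluated on the solution $f(\lambda)=e^{-\lambda L_\ba^G(x)}$ of the linear ODE $f'=-L_\ba^G(x)f$ with $f(0)=\id_E$ give the unique solution with vanishing initial data.)

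\noindent Next I would compute the $\lambda$-derivative of $e^{-\lambda L_\ba^G}$. Since $L_\ba^G$ commutes with its own exponential, we have $\tfrac{\dd}{\dd\lambda}e^{-\lambda L_\ba^G}=-L_\ba^G e^{-\lambda L_\ba^G}$. Applying the graded Leibniz rule for the odd derivation $\updelta_a$ to the product of the odd element $v_\ba^G$ and the even, $\updelta_a$-closed element $e^{-\lambda L_\ba^G}$, I get
\be
\updelta_a\bigl(v_\ba^G\, e^{-\lambda L_\ba^G}\bigr)=(\updelta_a v_\ba^G)\,e^{-\lambda L_\ba^G}-v_\ba^G\,\updelta_a(e^{-\lambda L_\ba^G})=L_\ba^G\,e^{-\lambda L_\ba^G}=-\frac{\dd}{\dd\lambda}e^{-\lambda L_\ba^G}~~.
\ee

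\noindent Finally I would integrate this identity from $0$ to $\lambda$, using $e^{-\lambda L_\ba^G}|_{\lambda=0}=\id_E$ and the fact that the $\C$-linear operator $\updelta_a$ commutes with the Riemann integral in the parameter $t$ (both sides being smooth in $t$ and valued in the fixed $\C$-vector space $\End_{\DF(X,W)}(a)$). This yields
\be
e^{-\lambda L_\ba^G}-1=-\int_0^\lambda \updelta_a\bigl(v_\ba^G\, e^{-t L_\ba^G}\bigr)\,\dd t=-\updelta_a\!\left(v_\ba^G\int_0^\lambda e^{-t L_\ba^G}\dd t\right)=-\updelta_a S_\ba^G(\lambda)~~,
\ee
which is \eqref{expLa}, and \eqref{Laclosed} follows by applying $\updelta_a$ to both sides. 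The only mild obstacle is bookkeeping the sign in the graded Leibniz rule (where the oddness of $v_\ba^G$ is essential to cancel the $v_\ba^G\,\updelta_a(\cdot)$ term) and justifying the interchange of $\updelta_a$ with the integral; everything else is a direct transcription of the bulk argument.
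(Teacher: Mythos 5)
Your argument is correct and is essentially the paper's own: the paper's proof of Proposition \ref{prop:expLa} simply states that it is almost identical to that of Proposition \ref{prop:expL}, namely $\updelta_a$-closedness of $e^{-\lambda L_\ba^G}$ combined with $L_\ba^G=\updelta_a v_\ba^G$, the graded Leibniz rule, and integration in $\lambda$ --- exactly your steps. One small inaccuracy in your first step: $e^{-\lambda L_\ba^G}$ is \emph{not} given by a finite series in $L_\ba^G$, because only the component $H_G\lrcorner(\mpd_\ba D+F_\ba)$ is nilpotent while the scalar part $||\pd W||_G^2\,\id_E$ is not; so ``applying $\updelta_a$ term by term to the finite series'' is not literally available as stated. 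This is easily repaired, either by your parenthetical pointwise ODE argument (which is precisely the paper's route in Proposition \ref{prop:expL}), or by using the defining factorization $e^{-\lambda L_\ba^G}=e^{-\lambda ||\pd W||_G^2}\,e^{-\lambda H_G\lrcorner(\mpd_\ba D+F_\ba)}$ and checking that the $\bpd$ of the scalar factor cancels against $\updelta_a$ applied to the genuinely finite exponential of the nilpotent part. The remaining steps (the sign bookkeeping in the Leibniz rule using oddness of $v_\ba^G$, and interchanging $\updelta_a$ with the $t$-integral) are handled correctly.
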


\begin{proof}
The proof is almost identical to that of Proposition \ref{prop:expL}.\qed
\end{proof}

\

\noindent
Let $\widehat{L_\ba^G}$ be the operator of left multiplication
with the element $L_\ba^G$ in the algebra
$\End_{\DF(X,W)}(a)\!=\!\cA(X,End(E))$.

\

\begin{Definition}
The {\em boundary flow of $\ba=(E,h,D)$} determined by the K\"{a}hler metric
  $G$ is the semigroup $(U_\ba^G(\lambda))_{\lambda \geq 0}$
generated by $\widehat{L_\ba^G}$. Thus $U_\ba^G(\lambda)$ is the even
$\cinf$-linear endomorphism of $\End_{\DF(X,W)}(a)$ defined through:
\be
U_\ba^G(\lambda)(\alpha)\eqdef e^{-\lambda L_\ba^G}\alpha~~,~~\forall \alpha\in \End_{\DF(X,W)}(a)~~.
\ee
\end{Definition}

\noindent Notice that $U_\ba^G(\lambda)$ is even with respect to the
canonical $\Z_2$-grading and that it preserves the subspace
$\End_{\DF_c(X,W)}(a)\!=\!\cA_c(X,End(E))$. We have
$U_\ba^G(0)\!=\!\id_{\End_{\DF(X,W)}(a)}$ and
$U_\ba^G(\lambda_1)U_\ba^G(\lambda_2)\!=\!U_\ba^G(\lambda_1+\lambda_2)$
for all $\lambda_1,\lambda_2\geq 0$~.

\

\begin{Proposition}
For any $\lambda\geq 0$, the endomorphism $U_\ba^G(\lambda)$ is
homotopy equivalent with $\id_{\End_{\DF(X,W)}(a)}$. In particular, we
have:
\ben
\label{Uaclosed}
\updelta_a\circ U_\ba^G(\lambda)=U_\ba^G(\lambda)\circ \updelta_a~~.
\een
Hence $U_\ba^G(\lambda)$ preserves the subspaces
  $\ker(\updelta_a)$ and $\im(\updelta_a)$ and it induces the identity
  endomorphism of $\End_{\DF(X,W)}(a)$ on the cohomology of
$\updelta_a$.
\end{Proposition}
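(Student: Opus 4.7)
The plan is to mimic the bulk analogue established for $U_G(\lambda)$: use Proposition \ref{prop:expLa} to exhibit an explicit contracting homotopy between $U_\ba^G(\lambda)$ and the identity, from which the remaining assertions follow formally. Let $\widehat{S_\ba^G(\lambda)}:\End_{\DF(X,W)}(a)\rightarrow \End_{\DF(X,W)}(a)$ denote the operator of left multiplication with $S_\ba^G(\lambda)$ in the associative algebra $\cA(X,End(E))$. Since $S_\ba^G(\lambda)\in \End^\1_{\DF(X,W)}(a)$, this operator is odd with respect to the total $\Z_2$-grading.

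First I would verify the identity
\be
U_\ba^G(\lambda)=\id_{\End_{\DF(X,W)}(a)}-[\updelta_a,\widehat{S_\ba^G(\lambda)}]~~,
\ee
where the bracket is the graded commutator of $\C$-linear operators on $\End_{\DF(X,W)}(a)$. Because $\updelta_a$ is an odd derivation of the associative algebra $\End_{\DF(X,W)}(a)$ and $\widehat{S_\ba^G(\lambda)}$ is left multiplication by an odd element, a direct computation gives
\be
[\updelta_a,\widehat{S_\ba^G(\lambda)}](\alpha)=\updelta_a\bigl(S_\ba^G(\lambda)\alpha\bigr)+S_\ba^G(\lambda)\updelta_a(\alpha)=\bigl(\updelta_a S_\ba^G(\lambda)\bigr)\alpha=\widehat{\updelta_a S_\ba^G(\lambda)}(\alpha)~~,
\ee
so the claimed identity is equivalent to $U_\ba^G(\lambda)=\widehat{e^{-\lambda L_\ba^G}}=\widehat{1-\updelta_a S_\ba^G(\lambda)}$, which is precisely relation \eqref{expLa}.

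Next I would observe that this identity exhibits $\widehat{S_\ba^G(\lambda)}$ as a chain homotopy between $U_\ba^G(\lambda)$ and $\id_{\End_{\DF(X,W)}(a)}$, viewed as endomorphisms of the complex $(\End_{\DF(X,W)}(a),\updelta_a)$; hence $U_\ba^G(\lambda)$ is homotopy equivalent to the identity. Applying $\updelta_a$ to both sides of $U_\ba^G(\lambda)=\id-[\updelta_a,\widehat{S_\ba^G(\lambda)}]$ and using $\updelta_a^2=0$ yields \eqref{Uaclosed}. The relation \eqref{Uaclosed} immediately implies that $U_\ba^G(\lambda)$ preserves the subspaces $\ker(\updelta_a)$ and $\im(\updelta_a)$, while homotopy equivalence to the identity shows that it induces the identity endomorphism on the cohomology of $\updelta_a$.

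There is essentially no obstacle beyond careful sign bookkeeping: the only subtlety is confirming that $\updelta_a$ is a graded derivation of the algebra structure on $\cA(X,End(E))$ (which follows from $\updelta_a=\bbpd_a+[D,\cdot]$, both summands being odd derivations of the composition product on $End(E)$-valued $(0,\ast)$-forms) and that $S_\ba^G(\lambda)$ is odd (which is immediate from $v_\ba^G\in \End^\1_{\DF(X,W)}(a)$ and evenness of $e^{-t L_\ba^G}$). Once these are in place, the argument is formally identical to the proof given for the bulk flow in the previous subsection.
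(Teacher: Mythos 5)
Your proof is correct and follows the same route as the paper: relation \eqref{expLa} gives $U_\ba^G(\lambda)=\id_{\End_{\DF(X,W)}(a)}-\big[\updelta_a,\widehat{S_\ba^G(\lambda)}\big]$, and all the stated claims follow formally from this homotopy identity. The only difference is that you spell out the sign bookkeeping showing the graded commutator equals left multiplication by $\updelta_a S_\ba^G(\lambda)$, which the paper leaves implicit.
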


\begin{proof}
Relation \eqref{expLa} implies:
\be
U_\ba^G(\lambda)=\id_{\End_{\DF(X,W)}(a)} - \big[\updelta_a,\widehat{S_\ba^G(\lambda)}\big]~~,
\ee
where $\widehat{S_\ba^G(\lambda)}$ is the $\cinf$-linear operator of
left multiplication with the element $S_\ba^G(\lambda)$ defined in
\eqref{Sadef} in the algebra $\End_{\DF(X,W)}(a)$, which is odd with
respect to the canonical $\Z_2$-grading. This implies all statements
in the proposition.  ~\qed
\end{proof}

\subsection{Tempered traces on $\DF_c(X,W)$}

\

\

\begin{Definition}
Let $\lambda\in \R_{\geq 0}$. The {\em $\lambda$-tempered trace} of
  $\ba=(E,h, D)$ induced by $\Omega$ and $G$ is the $\C$-linear map
$\tr_\ba^{(\lambda)}:=\tr_\ba^{(\lambda),\Omega, G}:\End_{\HDF_c(X,W)}(a)\rightarrow
\C$ defined through:
\be
\tr_\ba^{(\lambda),\Omega, G}\eqdef \tr_a^{B,\Omega}\circ U_\ba^{G}(\lambda)~~.
\ee 
\end{Definition}

\

\noindent We have $\tr_\ba^{(0), \Omega, G}=\tr_a^{B,\Omega}$ (which
is independent of $G$ and $h$) and:
\be
\tr_\ba^{(\lambda), \Omega,G}(\alpha)=\tr_a^{B,\Omega}(e^{-\lambda L_\ba^G} \alpha)~~,~~\forall \alpha\in \End_{\DF_c(X,W)}(a)~~.
\ee
In the following, we simplify notation by omitting to indicate the dependence of the traces on $\Omega$ and $G$. 

\

\begin{Proposition}
\label{prop:onshellbdrytr}
Let $\alpha\in \End_{\DF_c(X,W)}(a)$. Then the following
statements hold for any $\lambda\geq 0$:
\begin{enumerate}[1.]
\itemsep 0.0em
\item If $\alpha=\updelta_a \beta$ for some $\beta\in \End_{\DF_c(X,W)}(a)$,
  then $\tr_\ba^{(\lambda)}(\alpha)=0$.
\item If $\updelta_a \alpha=0$, then $\tr_\ba^{(\lambda)}(\alpha)$ does
  not depend on $\lambda$ or on the metrics $G$ and $h$:
\ben
\label{tr_tr0}
\tr_\ba^{(\lambda)}(\alpha)=\tr_\ba^{(0)}(\alpha)=\tr^B_a(\alpha)~~.
\een
\end{enumerate}
In particular, the map induced by $\tr_\ba^{(\lambda)}$ on
$\End_{\HDF_c(X,W)}(a)$ coincides with $\tr_a^c$.
\end{Proposition}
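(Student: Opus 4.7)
The plan is to mirror the argument used for Proposition \ref{prop:OnShellTr} in the bulk case, transferring it to the endomorphism algebra $(\End_{\DF(X,W)}(a),\circ)$ equipped with the odd derivation $\updelta_a$. The key facts I shall use are: (i) $\updelta_a$ is an odd derivation of the associative $\cinf$-algebra $\End_{\DF(X,W)}(a)$; (ii) $e^{-\lambda L_\ba^G}\in \End_{\DF(X,W)}^\0(a)$ is $\updelta_a$-closed by \eqref{Laclosed}; (iii) $L_\ba^G=\updelta_a v_\ba^G$ with $v_\ba^G$ of odd parity; and (iv) $\tr_a^B$ vanishes on $\updelta_a$-exact compactly-supported elements by Proposition \ref{prop:bdrytrace}.

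For part 1, I would write $\tr_\ba^{(\lambda)}(\updelta_a\beta)=\tr_a^B\big(e^{-\lambda L_\ba^G}\,\updelta_a\beta\big)$ and then use the Leibniz rule together with evenness and $\updelta_a$-closedness of $e^{-\lambda L_\ba^G}$ to move $\updelta_a$ outside:
\begin{equation*}
e^{-\lambda L_\ba^G}\,\updelta_a\beta=\updelta_a\!\big(e^{-\lambda L_\ba^G}\beta\big).
\end{equation*}
Since $\beta\in \End_{\DF_c(X,W)}(a)$ is compactly supported and $e^{-\lambda L_\ba^G}\beta$ remains compactly supported, Proposition \ref{prop:bdrytrace} gives $\tr_a^B\big(\updelta_a(e^{-\lambda L_\ba^G}\beta)\big)=0$.

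For part 2, I would differentiate in $\lambda$. The map $\lambda\mapsto \tr_\ba^{(\lambda)}(\alpha)$ is smooth and its derivative equals $-\tr_a^B\big(e^{-\lambda L_\ba^G}\,L_\ba^G\,\alpha\big)$. Substituting $L_\ba^G=\updelta_a v_\ba^G$ and using that $\updelta_a\alpha=0$, the graded Leibniz rule (with $v_\ba^G$ odd) yields $(\updelta_a v_\ba^G)\alpha=\updelta_a(v_\ba^G\alpha)$. Since $v_\ba^G\alpha$ is compactly supported, part 1 (with $\beta:=v_\ba^G\alpha$) forces the derivative to vanish identically in $\lambda$. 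Hence $\tr_\ba^{(\lambda)}(\alpha)=\tr_\ba^{(0)}(\alpha)=\tr_a^B(\alpha)$, the last expression being manifestly independent of $G$ and $h$ (these data enter only through $L_\ba^G$ and thus only through the flow $U_\ba^G(\lambda)$, which disappears at $\lambda=0$). The final assertion then follows: on $\updelta_a$-closed elements $\tr_\ba^{(\lambda)}$ coincides with $\tr_a^B$, and both vanish on $\updelta_a$-exact elements, so $\tr_\ba^{(\lambda)}$ and $\tr_a^B$ descend to the same $\C$-linear map on $\End_{\HDF_c(X,W)}(a)$, which is by definition $\tr_a^c$.

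The only nontrivial point is the careful bookkeeping of signs in the Leibniz rule inside the $\Z_2$-graded algebra $\End_{\DF(X,W)}(a)$; in particular one must use that $e^{-\lambda L_\ba^G}$ is even (so no sign appears when $\updelta_a$ is pulled across it) and that $v_\ba^G$ is odd (so the term $v_\ba^G\,\updelta_a\alpha$ in the derivation identity carries a sign that becomes irrelevant once $\updelta_a\alpha=0$ is invoked). No genuine analytic obstacle arises: the exponential $e^{-\lambda L_\ba^G}$ is a finite polynomial in the nilpotent part of $L_\ba^G$ times a smooth positive function, so differentiation under the integral sign is completely elementary and the argument reduces to the algebraic manipulation above, which is structurally identical to the proof of Proposition \ref{prop:OnShellTr}.
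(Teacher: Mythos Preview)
Your proof is correct and follows essentially the same route as the paper's own argument: for part 1 you use evenness and $\updelta_a$-closedness of $e^{-\lambda L_\ba^G}$ to pull $\updelta_a$ outside and invoke Proposition \ref{prop:bdrytrace}, and for part 2 you differentiate in $\lambda$, write $L_\ba^G=\updelta_a v_\ba^G$, use $\updelta_a\alpha=0$ to rewrite the integrand as $\updelta_a(v_\ba^G\alpha)$, and apply part 1. Your additional remarks on sign bookkeeping and on why differentiation under the integral is elementary are accurate and only make explicit what the paper leaves implicit.
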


\begin{proof}

\

\begin{enumerate}
\itemsep 0.0em
\item Since $e^{-\lambda L_\ba^G}$ is even and $\updelta_a$-closed, we have: 
\ben
\label{trpartial}
\tr_\ba^{(\lambda)}(\updelta_a \beta)=\tr_a^B\left[e^{-\lambda L_\ba^G} 
\updelta_a \beta \right]=\tr_a^B\left[\updelta_a (e^{-\lambda L_\ba^G} \beta) \right]=0~~,
\een
where in the last equality we used Proposition \ref{prop:bdrytrace}.
\item Notice that $\tr_a^{(\lambda)}$ is differentiable with respect
  to $\lambda$.  Since $L_\ba^G=\updelta_a v_\ba^G$ while $\alpha$ is
  $\delta_a$-closed, we have:
\be
\frac{\dd}{\dd\lambda} \tr_\ba^{(\lambda)}(\alpha)=-\tr_\ba^{(\lambda)}\big[\updelta_a(v_\ba^G)\alpha\big]=-\tr_\ba^{(\lambda)} \big[\updelta_a(v_\ba^G\alpha)\big]~~,
\ee
which vanishes by point 1. This implies \eqref{tr_tr0}. 
\end{enumerate}  
~\qed
\end{proof}

\

\begin{Proposition}
\label{prop:tr_cyc}
Let $\ba_1=(E_1,h_1,D_1)$ and $\ba_2=(E_2,h_2, D_2)$ be two Hermitian
holomorphic factorizations of $W$ with underlying holomorphic
factorizations $a_1=(E_1,D_1)$ and $a_2=(E_2,D_2)$.  Let $\alpha\in
\Hom_{\DF_c(X,W)}(a_1,a_2)$ and $\beta\in \Hom_{\DF_c(X,W)}(a_2,a_1)$
have pure total $\Z_2$-degree and satisfy
$\updelta_{a_1,a_2}\alpha=\updelta_{a_2,a_1}\beta=0$. Then:
\be
\tr^{(\lambda),G}_{\ba_2}(\alpha\beta)=(-1)^{\deg \alpha \, \deg \beta}\tr_{\ba_1}^{(\lambda), G}(\beta\alpha)~~.
\ee
\end{Proposition}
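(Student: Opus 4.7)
The plan is to reduce the assertion for tempered traces to the cyclicity of the canonical off-shell traces (Proposition \ref{prop:cancyclic}) by using the invariance statement of Proposition \ref{prop:onshellbdrytr}.

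First I would verify that the two composite morphisms $\alpha\beta\in \End_{\DF_c(X,W)}(a_2)$ and $\beta\alpha\in \End_{\DF_c(X,W)}(a_1)$ are themselves $\updelta$-closed and have compact support. Compact support is immediate since at least one factor in each product has compact support. Closedness follows from the graded Leibniz rule for the twisted differential: since $\updelta$ is defined pointwise by $\bbpd+[D,\cdot]$ on morphism spaces and both $\bbpd$ and the defect operator are odd derivations of the composition in $\DF(X,W)$ (inherited from $\fD^s(X)$), one has
\be
\updelta_{a_2}(\alpha\beta)=\updelta_{a_1,a_2}(\alpha)\circ \beta+(-1)^{\deg\alpha}\alpha\circ\updelta_{a_2,a_1}(\beta)=0,
\ee
and similarly $\updelta_{a_1}(\beta\alpha)=0$ by the assumptions on $\alpha$ and $\beta$.

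Next I would invoke Proposition \ref{prop:onshellbdrytr}, part 2, applied separately to the $\updelta$-closed elements $\alpha\beta$ and $\beta\alpha$. This yields
\be
\tr_{\ba_2}^{(\lambda),G}(\alpha\beta)=\tr_{a_2}^{B}(\alpha\beta)\qquad\text{and}\qquad \tr_{\ba_1}^{(\lambda),G}(\beta\alpha)=\tr_{a_1}^{B}(\beta\alpha),
\ee
in particular making the tempered traces independent of $\lambda$, $G$ and the Hermitian metrics $h_1$, $h_2$ on the two sides.

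Finally, I would apply Proposition \ref{prop:cancyclic}, which gives the graded cyclicity
\be
\tr^B_{a_2}(\alpha\beta)=(-1)^{\deg\alpha\,\deg\beta}\tr^B_{a_1}(\beta\alpha)
\ee
at the off-shell level, and combine this identity with the two equalities from the previous step to conclude. I do not expect any genuine obstacle here: the only point requiring a minor check is the derivation property of $\updelta_{a_1,a_2}$ with respect to composition, but this is built into the definition of $\DF(X,W)$ as a dg-category and was already used implicitly in Section \ref{sec:DF}.
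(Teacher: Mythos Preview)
Your proposal is correct and follows essentially the same route as the paper's proof: show that $\alpha\beta$ and $\beta\alpha$ are $\updelta$-closed, apply Proposition~\ref{prop:onshellbdrytr} to reduce the tempered traces to the canonical ones, and then invoke Proposition~\ref{prop:cancyclic}. The paper states the closedness of the composites without writing out the Leibniz rule, but otherwise the argument is identical.
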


\begin{proof}
Since $\updelta_{a_1,a_2}\alpha=\updelta_{a_2,a_1}\beta=0$, we have
$\updelta_{a_2}(\alpha\beta)=0$ and $\updelta_{a_1}(\beta\alpha)=0$.
Thus $\tr^{(\lambda),G}_{\ba_2}(\alpha\beta)=\tr^B_{a_2}(\alpha\beta)$
and $\tr^{(\lambda),G}_{\ba_1}(\beta\alpha)=\tr^B_{a_1}(\beta\alpha)$
by Proposition \ref{prop:onshellbdrytr}. This implies the conclusion
upon using Proposition \ref{prop:cancyclic}.  ~\qed
\end{proof}

\section{The disk algebra}
\label{sec:disk}

\subsection{The dg-algebra $\PV(X,End(E))$}

\noindent Let $a=(E,D)$ be a holomorphic factorization of $W$ and set
$\bbpd_a:=\bbpd_{a,a}=\bbpd_{End(E)}$, $\fd_a:=\fd_{a,a}=[D,\cdot]$ and
$\updelta_a:=\updelta_{a,a}=\bbpd_a+\fd_a$. Consider the
$\Z_2$-graded unital associative $\cinf$-algebra:
\be
\PV(X,End(E))\eqdef \cA(X,\wedge TX \otimes End(E))\simeq \PV(X){\hat \otimes}_\cinf\rGamma_\sm(X,End(E))~~,
\ee
where ${\hat \otimes}_\cinf$ denotes the graded tensor product and
$\PV(X)$ in the right hand side is endowed with the canonical
$\Z_2$-grading. The isomorphism of $\Z_2$-graded $\cinf$-algebras
$\cA(X,End(E))\simeq \cA(X){\hat \otimes}_\cinf
\rGamma_\sm(X,End(E))$ implies:
\be
\PV(X,End(E))\simeq \PV(X){\hat \otimes}_{\cA(X)} \cA(X,End(E))=\PV(X){\hat \otimes}_{\cA(X)} \End_{\DF(X,W)}(a)~~.
\ee
The unit of $\PV(X,End(E))$ is the identity endomorphism $\id_E$ of
$E$.

\

\begin{Definition}
The {\em total twisted differential} $\Delta_a$ on $\PV(X,End(E))$ is the 
odd $\O(X)$-linear differential:
\be
\Delta_a\eqdef \updelta_W{\hat \otimes}_{\cA(X)} \id_{\cA(X,End(E))}+\id_{\PV(X)}{\hat \otimes}_{\cA(X)}\updelta_a
\ee
induced on $\PV(X,End(E))$ by the differentials $\updelta_W$ of
$\PV(X)$ and $\updelta_a$ of $\End_{\DF(X,W)}(a)$.
\end{Definition}

\

\noindent It is easy to see that $\Delta_a$ is an odd
derivation of $\PV(X,End(E))$ which squares to zero. To simplify
notation, we write $\updelta_W$ instead of $\updelta_W{\hat
  \otimes}_{\cA(X)} \id_{\cA(X,End(E))}$ and $\updelta_a$ instead of
$\id_{\PV(X)}{\hat \otimes}_{\cA(X)}\updelta_a$. Then
$(\PV(X,End(E)),\updelta_W,\updelta_a)$ is a $\Z\times \Z_2$-graded
bicomplex when endowed with the $\Z$-grading induced by the canonical
$\Z$-grading of $\PV(X)$ and with the $\Z_2$-grading induced by the
bundle grading of $E$; the $\Z_2$-grading of $\PV(X,End(E))$ is the
total $\Z_2$-grading induced by this bigrading. Moreover, $\Delta_a$ 
is the total differential of this bicomplex. 

\

\begin{Definition}
The {\em off-shell disk algebra of the holomorphic factorization
  $a=(E,D)$} is the $\O(X)$-linear $\Z_2$-graded unital dg-algebra
$(\PV(X,End(E)),\Delta_a)$. The {\em cohomological disk algebra of
  $a$} is the $\Z_2$-graded $\O(X)$-linear algebra $\HPV(X,a)$ defined
as the total cohomology algebra of the off-shell disk algebra:
\be
\HPV(X,a)\eqdef \rH(\PV(X,End(E)), \Delta_a)~~.
\ee
\end{Definition}

\begin{remark}
Let $\bbpd_{\wedge TX\otimes End(E)}:\PV(X,End(E))\rightarrow
\PV(X,End(E))$ be the Dolbeault operator of the holomorphic vector
bundle $\wedge TX\otimes End(E)$. Then:
\ben
\label{bbpd}
\bbpd_{\wedge TX\otimes End(E)}=\bbpd_{\wedge TX}{\hat \otimes}_{\cA(X)} \id_{\cA(X,End(E))}+\id_{\PV(X)} {\hat \otimes}_{\cA(X)} \bbpd_{End(E)}
\een
and hence:
\ben
\label{Delta}
\Delta_a=\bbpd_{\wedge TX\otimes End(E)}+\ioda_W{\hat \otimes}_{\cA(X)}\id_{\cA(X,End(E))}+\id_{\PV(X)}{\hat \otimes}_{\cA(X)}\fd_a~~.
\een
For simplicity, we denote $\bbpd_{\wedge TX}{\hat \otimes}_{\cA(X)}
\id_{\cA(X,End(E))}$ by $\bbpd$ and $\id_{\PV(X)} {\hat
  \otimes}_{\cA(X)} \bbpd_{End(E)}$ by $\bbpd_a$. Similarly, we denote $\ioda_W{\hat
  \otimes}_{\cA(X)}\id_{\cA(X,End(E))}$ and $\id_{\PV(X)}{\hat
  \otimes}_{\cA(X)}\fd_a$ by $\ioda_W$ and $\fd_a$. With these
notations, combining \eqref{bbpd} and \eqref{Delta} gives a
decomposition of $\Delta_a$ as a sum of four odd differentials which
mutually anticommute:
\be
\Delta_a=\bbpd+\bbpd_a+\ioda_W+\fd_a~~.
\ee
\end{remark}

\subsection{The disk extended supertrace}

Notice that $\PV(X,End(E))$ is a left module over the superalgebra
$\PV(X)$ (when the latter is endowed with the canonical
$\Z_2$-grading) as well as a right module over the superalgebra
$\rGamma_\sm(X,End(E))$.  We extend the fiberwise supertrace
$\str:\rGamma_\sm(X,End(E))\rightarrow \cinf$ to a left
$\PV(X)$-linear map denoted by the same symbol and called the 
{\em disk extended supertrace}:
\be
\str\eqdef \id_{\PV(X)} \otimes_\cinf \str:\PV(X,End(E))\rightarrow \PV(X)~~.
\ee
For any $\omega\in \PV(X)$ and any $s\in \rGamma_\sm(X,End(E))$, we have:
\be
\str(\omega\otimes_\cinf s)=\omega \, \str(s)~~
\ee
and the disk extended supertrace satisfies: 
\be
\str(\fz_1\fz_2)=(-1)^{\deg \fz_1\, \deg \fz_2}\str(\fz_2\fz_1)~~,~~\forall \fz_1,\fz_2\in \PV(X,End(E))~~.
\ee
The disk extended supertrace is an even map of $\Z_2$-graded complexes from
$(\PV(X,End(E)),\Delta_a)$ to $(\PV(X),\updelta_W)$, when the latter is 
endowed with the canonical $\Z_2$-grading:
\ben
\label{strDelta}
\str(\Delta_a\fz)=\updelta_W \str(\fz)~~,~~\forall \fz\in \PV(X,End(E))~~
\een
and, with the simplified notations introduced above, it satisfies: 
\ben
\label{diskstrprops}
\str(\ioda_W \fz)=\ioda_W\str(\fz)~~\mathrm{and}~~\str(\fd_a\fz)=0~~,~~\forall \fz\in \PV(X,End(E))~~.
\een

\subsection{The extended reduced contraction induced by a holomorphic volume form}

Given a holomorphic volume form $\Omega$ on $X$, we extend the reduced
contraction of Definition \ref{def:RedCont} to a right
$\rGamma_\sm(X,End(E))$-linear map of $\Z_2$-degree $\mu$:
\be
\Omega \lrcorner_0\eqdef (\Omega \lrcorner_0) \otimes_\cinf \id_{\rGamma_\sm(X,End(E))}:\PV(X,End(E))\rightarrow \End_{\DF(X,W)}(a)=\cA(X,End(E))~~.
\ee
For any $\omega\in \PV(X)$ and $s\in \rGamma_\sm(X,End(E))$, we have:
\be
\Omega\lrcorner_0(\omega\otimes_\cinf s)=(\Omega\lrcorner_0\omega)\otimes_\cinf s~~.\nn\\
\ee
With these definitions, the following relation is satisfied (see the
commutative diagram \eqref{diagram:strOmega}): 
\ben
\label{strlrcorner}
\str (\Omega\blrcorner_0\fz)=\Omega\lrcorner_0\str(\fz)~~,~~\forall \fz\in \PV(X,End(E))~~.
\een

\begin{Lemma}
\label{lemma:DeltaContr}
For any $\fz\in \PV(X,End(E))$, we have: 
\be
\updelta_a(\Omega\lrcorner_0 \fz)=(-1)^d \Omega\lrcorner_0 (\Delta_a\fz)~~.
\ee
Thus $\Omega\lrcorner_0$ is a map of complexes of $\Z_2$-degree
$\mu\!=\!{\hat d}$ from $(\PV(\!X\!,\!End(\!E)\!),\!\Delta_a\!)$ to
$(\End_{\DF(\!X,W\!)}\!(a\!),\!\updelta_a\!)$.
\end{Lemma}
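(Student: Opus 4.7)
The plan is to use the decomposition $\Delta_a = \bbpd + \bbpd_a + \ioda_W + \fd_a$ from equation \eqref{Delta}, match it against $\updelta_a = \bbpd_a + \fd_a$ on $\End_{\DF(X,W)}(a) = \cA(X, End(E))$, and verify the claim separately on each of the four pieces. Equivalently, I rewrite the desired identity as $\Omega \lrcorner_0 \circ \Delta_a = (-1)^d \updelta_a \circ \Omega\lrcorner_0$; two of the four pieces of $\Delta_a$ should produce $\bbpd_a$ and $\fd_a$ on the right with a uniform sign $(-1)^d$, while the other two should be annihilated.

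First, the $\ioda_W$ term is killed by $\Omega\lrcorner_0$. Since $\ioda_W$ has bidegree $(1,0)$, it sends $\PV^{i,j}(X, End(E))$ into $\PV^{i+1,j}(X, End(E))$. On the only bidegree where $\Omega\lrcorner_0$ is non-zero, namely $i = -d$, the image $\ioda_W \fz \in \PV^{-d+1,j}(X, End(E))$ has polyvector rank $d-1$ and is therefore annihilated by $\Omega\lrcorner_0$ (by the extension of Definition \ref{def:RedCont} to the coefficients-in-$End(E)$ case); for $i > -d$ both sides vanish automatically. Next, for $\fd_a = [D,\cdot\,]$, the operator acts only on the $End(E)$ factor of $\PV(X, End(E)) \simeq \PV(X) \hat\otimes_{\cA(X)} \cA(X, End(E))$, whereas $\Omega\lrcorner_0$ acts only on the $\PV(X)$ factor. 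Decomposing a local element as $\omega \otimes s$ and tracking the Koszul signs governing $\hat\otimes_{\cA(X)}$, one verifies directly that $\Omega\lrcorner_0 \circ \fd_a = (-1)^d \fd_a \circ \Omega\lrcorner_0$, the sign arising because $\Omega\lrcorner_0$ has parity $\mu = \hat d$ and must be commuted past the odd operator $\fd_a$.

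For the remaining piece $\bbpd + \bbpd_a = \bbpd_{\wedge TX \otimes End(E)}$, the key observation is that because $\Omega$ is a nowhere-vanishing holomorphic section of $K_X = \wedge^d T^*X$, contraction with $\Omega$ defines an isomorphism of holomorphic line bundles $\wedge^d TX \xrightarrow{\sim} \cO_X$. Tensoring with $End(E)$ produces an isomorphism of holomorphic vector bundles $\wedge^d TX \otimes End(E) \xrightarrow{\sim} End(E)$, so $\Omega\lrcorner_0$ restricted to $\PV^{-d,\ast}(X, End(E))$ intertwines the Dolbeault operators on the two sides up to the sign coming from pulling $\bpd$ past the $(d,0)$-form $\Omega$. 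This upgrades the identity $\Omega\lrcorner_0(\bbpd \omega) = (-1)^d \bpd(\Omega\lrcorner_0\omega)$ already established for $\omega \in \PV(X)$ to the coefficients-in-$End(E)$ statement $\Omega\lrcorner_0((\bbpd + \bbpd_a)\fz) = (-1)^d \bbpd_a(\Omega\lrcorner_0 \fz)$. Summing the three piece-by-piece identities yields the claim, and the concluding statement that $\Omega\lrcorner_0$ is a map of complexes of $\Z_2$-degree $\mu$ is then immediate, since $\Omega\lrcorner_0$ shifts the canonical $\Z$-grading by $d$. The main obstacle is pure bookkeeping: tracking the Koszul signs introduced by the identification $\PV(X, End(E)) \simeq \PV(X) \hat\otimes_{\cA(X)} \cA(X, End(E))$ and ensuring that the factor $(-1)^d$ appears uniformly across the three surviving pieces so the decomposition sums consistently; no genuine analytic input is required beyond the compatibility of contraction by $\Omega$ with the Dolbeault differential on a holomorphically trivial line bundle.
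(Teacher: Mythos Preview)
Your approach is correct and is genuinely different from the paper's. The paper works in a local holomorphic chart $(U, z^1,\ldots,z^d)$, writes $\Omega =_U \varphi(z)\,\dd z^1\wedge\cdots\wedge\dd z^d$, expands $\fz =_U \sum \theta^{i_k}\cdots\theta^{i_1}\fz_{i_1\ldots i_k}$, and then computes both sides explicitly: $\updelta_a(\Omega\lrcorner_0\fz)=_U\varphi(z)\,\updelta_a\fz_{1\ldots d}$, while in $\Delta_a\fz$ only the term $(-1)^d\theta^d\cdots\theta^1(\updelta_a\fz_{1\ldots d})$ survives reduced contraction, giving $\Omega\lrcorner_0(\Delta_a\fz)=_U(-1)^d\varphi(z)\,\updelta_a\fz_{1\ldots d}$. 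By contrast, you decompose $\Delta_a=\bbpd+\bbpd_a+\ioda_W+\fd_a$ and argue piece by piece using intrinsic properties: $\ioda_W$ dies for polyvector-rank reasons, $\fd_a$ commutes with $\Omega\lrcorner_0$ up to the Koszul sign $(-1)^d$ because the two operators act on disjoint tensor factors, and the Dolbeault piece reduces to the already-established identity $\Omega\lrcorner_0(\bbpd\,\cdot)=(-1)^d\bpd(\Omega\lrcorner_0\,\cdot)$ tensored with $End(E)$-coefficients. Your argument is more conceptual and makes transparent which structural features (holomorphicity of $\Omega$, the bidegree of $\ioda_W$, the factor-wise action of $\fd_a$) drive the result; the paper's coordinate computation is blunter but has the virtue of making every sign explicit and leaving nothing to ``bookkeeping''. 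Since you yourself flag the Koszul sign-tracking as the only nontrivial point, it would strengthen your write-up to exhibit the $(-1)^d$ for the $\fd_a$ piece on a generic decomposable element $\omega\otimes s$ with $\omega\in\PV^{-d,j}(X)$, rather than asserting it.
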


\begin{proof}
It is enough to prove the restriction of this relation to an open
subset $U\subset X$ supporting complex coordinates $z^1,\ldots, z^d$.
Write $\Omega=\varphi(z)\dd z^1\wedge\ldots \wedge \dd z^d$, where
$\varphi\in \cO_X(U)$. Expand $\fz=_U\sum_{k=0}^d \sum_{1\leq
  i_1<i_2<\ldots <i_k\leq d}\theta^{i_k}\ldots
\theta^{i_1}\fz_{i_1\ldots i_k}$, with coefficients $\fz_{i_1\ldots
  i_k}\in \cA(U,End(E))$. Then $\Omega\lrcorner_0
\fz=\varphi(z)\fz_{1\ldots d}$. Thus:
\ben
\label{rel1}
\updelta_a(\Omega\lrcorner_0 \fz)=_U \varphi(z)\updelta_a \fz_{1\ldots d}~~,
\een
where we used the relations $\dd z^i\lrcorner \theta_j=\dd
z_i\lrcorner \pd_j=\delta_{ij}$. On the other hand, we have:
\be
\!\!\!\!\Delta_a\fz\!=_U\!\!\sum_{k=0}^d\,\sum_{1\!\leq i_1\!<\ldots <\!i_k\leq\!  d}
\!\left[\!(-1)^k\theta^{i_k}\!\!\!\ldots \theta^{i_1} \!(\updelta_a \fz_{i_1\ldots i_k})
-\!\i\! \sum_{s=1}^k\! (-1)^{k-s} (\pd_{i_s} W) \theta_{i_k}\! \ldots \theta_{i_{s+1}}\!\theta_{i_{s-1}}\!\ldots \theta_{i_1} \fz_{i_1\ldots i_k}\!\right]~~
\ee
where we noticed that $\Delta_a(\theta_i)=\ioda_W(\theta_i)=-\i \pd_i
W$. The reduced contraction of $\Omega$ with every term in this sum vanishes
for degree reasons with the single exception of the contraction with
the term $(-1)^d\theta^d\ldots \theta^1 (\updelta_a \fz_{1\ldots d})$,
which equals $(-1)^d\varphi(z)\updelta_a\fz_{1\ldots d}$. Thus:
\ben
\label{rel2}
\Omega\lrcorner _0(\Delta_a\fz)=(-1)^d\varphi(z) \updelta_a\fz_{1\ldots d}~~.
\een
Comparing \eqref{rel1} and \eqref{rel2} gives the conclusion. \qed
\end{proof}

\

\noindent Consider the degree $\mu$ map of $\Z_2$-graded complexes
$\lambda_\Omega:(\PV(X,End(E)),\Delta_a)\rightarrow (\cA(X),\bpd)$
defined through:
\ben
\label{lambdaDef}
\lambda_\Omega\eqdef (\Omega\lrcorner_0)\circ\str=\str\circ (\Omega\lrcorner_0)~~,
\een
where we used relation \eqref{strlrcorner}. Combining everything, we
have the following commutative diagram of $\Z_2$-graded complexes,
where the $\Z_2$-degree of each map is indicated in square brackets:
\ben
\label{diagram:strOmega}
\scalebox{0.9}{
\xymatrix{
& &\ar[ddl]^{[0]}_{\str}(\PV(X,End(E)),\Delta_a)\ar[dddd]^{[\mu]}_{\lambda_\Omega}\ar[ddr]_{[\mu]}^{\Omega\lrcorner_0} & &\\
& & & &\\
& (\PV(X),\updelta_W)\ar[ddr]^{[\mu]}_{\Omega\lrcorner_0}& &\ar[ddl]_{[0]}^{\str}(\End_{\DF(X,W)}(a),\updelta_a) &\\
& & & &\\
& & (\cA(X),\bpd)& &
}}
\een

\subsection{The extended disk algebra}

The $\cinf$-module $\PV_e(X,End(E))\eqdef \rOmega(X,\wedge TX\otimes End(E))$ carries a
natural structure of $\cinf$-superalgebra which makes it isomorphic
with the graded tensor product:
\be
\PV_e(X,End(E))\simeq \rOmega^{\ast,0}(X){\hat \otimes}_\cinf \PV(X,End(E))~~,
\ee
where $\rOmega^{\ast,0}(X)=\rGamma_\sm(X,\wedge T^\ast
X)=\bigoplus_{k=0}^d \rOmega^{k,0}(X)$. We trivially extend $\Delta_a$
to an odd $\O(X)$-linear differential on this algebra which we denote by the same
symbol:
\be
\Delta_a\eqdef \id_{\rOmega^{\ast,0}(X)}{\hat \otimes}_\cinf \Delta_a~~.
\ee
For any $\rho\in \rOmega^{i,0}(X)$ and any $\fz\in \PV(X,End(E))$, we have: 
\be
\Delta_a(\rho\otimes \fz)=(-1)^i\rho\otimes (\Delta_a\fz)~~.
\ee

\begin{Definition}
The {\em extended disk algebra} is the unital and $\Z_2$-graded
$\O(X)$-linear dg-algebra $(\PV_e(X,End(E)),\Delta_a)$. The {\em
  cohomological extended disk algebra} is the unital $\Z_2$-graded
$\O(X)$-linear algebra $\HPV_e(X,a)$ defined as the total cohomology
algebra of the extended disk algebra:
\be
\HPV_e(X,a)\eqdef \rH(\PV_e(X,End(E)),\Delta_a)~~.
\ee
\end{Definition}

\

\noindent Let $\Omega$ be a holomorphic
volume form on $X$. For any element $s\in \rOmega^{1,\ast}(X,\wedge
TX\otimes End(E))$, let $s^d\in \rOmega^{d,\ast}(X,\wedge TX\otimes
End(E))$ denote the $d$-th power of $s$ computed in this
superalgebra. Since $\Omega$ is a nowhere-vanishing section of
$\wedge^d T^\ast X$, there exists a unique element ${\det}_{\Omega} s\in
\PV(X, End(E))$ such that:
\be
s^d=(-1)^{\frac{d(d-1)}{2}}\Omega\otimes ({\det}_{\Omega} s)~~.
\ee
This gives a map $\det_\Omega:\rOmega^{1,\ast}(X,\wedge TX\otimes
End(E))\rightarrow \PV(X, End(E))$. For $\rho\in
\rOmega^{1,0}(X)$ and $\omega\in \PV(X,End(E))$, we have:
\be
{\det}_\Omega(\rho\otimes \omega)=({\det}_\Omega\rho) \omega^d~~,
\ee
where $\det_\rOmega\rho\in \cinf$ is defined through the relation: 
\be
\rho^d=({\det}_\Omega\rho) \Omega\in \rOmega^{d,0}(X)~~.
\ee
In a complex coordinate chart $(U, (z^1,\ldots, z^d))$ such that
$\Omega=_U\varphi(z)\dd z^1\wedge \ldots \wedge \dd z^d$ (where $f\in \cO_X(U)$
does not vanish on $U$), we can expand $s=_U\sum_{i=1}^d \dd
z^i\otimes s_i$ with $s_i\in \PV(U, End(E))$ and we have:
\ben
\label{dets}
{\det}_\Omega s=_U \frac{1}{\varphi(z)}\epsilon^{i_1\ldots i_d} s_{i_1}\ldots s_{i_d}~~,
\een
where $\epsilon^{i_1\ldots i_d}$ is the Levi-Civita symbol and we use
implicit summation over repeated indices, the multiplication being
taken in the algebra $\PV(U,End(E))$.

\subsection{The twisted curvature}

The natural isomorphism $End(T X)\simeq T^\ast X\otimes TX$ maps the
identity endomorphism $\id_{TX}\in \rGamma(X,End(TX))$ of the
holomorphic tangent bundle $TX$ into a holomorphic section $\theta\in
\rGamma(X,T^\ast X\otimes TX)\subset \rOmega^{1,0}(X,TX)$.
Let $G$ be a K\"{a}hler metric on $X$ and $\omega_G\in
\rOmega^{1,1}(X)$ be the K\"{a}hler form of $G$. Let $\ba=(E,h,D)$ be a
Hermitian factorization of $W$. Define:
\ben
\label{Vdef}
V_\ba^G\eqdef \mpd_\ba D+ F_\ba -\omega_G \id_E \in
\rOmega^{1,0}(X,End^\1(E))\oplus \rOmega^{1,1}(X,End^\0(E))~~,
\een 
where $F_\ba\in \rOmega^{1,1}(X,End^\0(E))$ is the curvature of the Chern connection of $(E,h)$ 
and $\mpd_a$ was defined in Subsection \ref{subsec:hhf}.

\

\begin{Definition}
The {\em twisted curvature} of the Hermitian holomorphic
factorization $\ba$ determined by $G$ is defined through:
\be
A_\ba^G\eqdef \theta\otimes \id_E+\i V_\ba^G\in \rOmega^{1,0}(X,TX\otimes End^\0(E))\oplus \rOmega^{1,0}(X,End^\1(E))\oplus \rOmega^{1,1}(X,End^\0(E))~~.  
\ee
\end{Definition}

\begin{remark}
We have:
\ben
\label{Aspace}
A_\ba^G\in \rOmega^{1,\ast}(X,\wedge TX\otimes End(E))\simeq \rOmega^{1,0}(X)\otimes_\cinf \PV(X,End(E))~~. 
\een
Choose local complex coordinates $z^1,\ldots, z^d$ defined on an open
subset $U\subset X$. Using the isomorphism in \eqref{Aspace}, we have
the following expansions, where juxtaposition in the second equality
denotes multiplication in the algebra $\PV(X,End(E))$:
\beqa
\theta~&=_U&\sum_{i=1}^d \dd z^i \otimes \theta_i~~\\
F~&=_U&\sum_{i=1}^d\dd z^i\otimes \left(\sum_{j=1}^d \dd {\bar z}^j F_{i{\bar j}}\right)~~\\
\omega_G&=_U&\sum_{i=1}^d\dd z^i \otimes \left(\sum_{j=1}^d \i G_{i{\bar j}}\dd {\bar z}^j\right)~~.
\eeqa
Here: 
\be
\theta_i\eqdef\partial_i:=\frac{\partial}{\partial z^i}\in\rGamma(U, TX)\subset \cA^0(U,TX)=\PV^{-1,0}(U)~~
\ee
and $G_{i{\bar j}}\in \cC^\infty(U)$, $F_{i{\bar j}}\in \rGamma_\sm(U,End^\0(E))$. 
We also have $V_\ba^G=_U\sum_{i=1}^d \dd z^i\otimes V_i$ and $A_\ba^G=_U\sum_{i=1}^d \dd z^i\otimes A_i$, where:
\beqan
\label{VAlocal}
&& V_i=_U\nabla^{1,0}_i D+\sum_{j=1}^d \dd \bar{z}^j \! \otimes \! (F_{i{\bar j}} \! - \! \i  G_{i{\bar j}}\id_E)\in \cA^0(U,End^\1(E))\! 
\oplus\! \cA^1(U,End^\0(E))\subset \PV(U,End(E))^\1 \nn\\
&& A_i=_U\theta_i \otimes \id_E+\i V_i\in \PV(U,End(E))^\1~~.
\eeqan
\end{remark}

\subsection{The disk kernel of a Hermitian holomorphic factorization}

\

\

\begin{Definition}
The {\em disk kernel} of the Hermitian holomorphic factorization
$\ba=(E,h,D)$ determined by $\Omega$ and by the K\"{a}hler metric $G$ is the element
$\Pi_\ba:=\Pi_\ba^{\Omega,G}\in \PV(X,End(E))$ defined through the relation:
\ben
\label{PiDef}
\Pi_\ba^{\Omega, G}=\frac{1}{d!}{\det}_\Omega A_\ba^G~~.
\een
\end{Definition}
If $(U,(z^1,\ldots, z^d))$ is a complex coordinate chart with
$\Omega=_U\varphi(z)\dd z^1\wedge \ldots \wedge \dd z^d$, relations
\eqref{VAlocal} and \eqref{dets} give:
\ben
\label{PiCoord}
\Pi_\ba=_U\frac{1}{d!\varphi(z)} \epsilon^{i_1\ldots i_d} \big(\theta_{i_1}\id_E+\i V_{i_1}\big)\ldots \big(\theta_{i_d}\id_E+\i V_{i_d}\big)~~.
\een
Notice that $A_i$ have odd total $\Z_2$-degree and hence:
\be
\deg \Pi_\ba=\mu={\hat d}\in \Z_2~~.
\ee

\begin{Lemma}
\label{lemma:rels}
The following relations hold in any complex coordinate
chart $(U,(z^1,\ldots, z^d))$ on $X$:
\beqan
&& \updelta_a V_i=(\pd_i W)\otimes \id_E~~\label{Vrel1}~~\\
&& \updelta_W \theta_i=-\i \pd_i W~~ \label{Vrel2}~~\\
&& \Delta_a(A_i)=0~~. \label{Vrel3}
\eeqan
\end{Lemma}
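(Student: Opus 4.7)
The three relations are essentially local computations. Relation \eqref{Vrel2} is immediate: since $\theta_i=\partial/\partial z^i$ is a local holomorphic section of $TX$, $\bbpd\theta_i=0$; and by the defining property of $\ioda_W$ as contraction with $-\i\pd W$ on $\PV^{-1,0}(X)$, we get $\ioda_W\theta_i=-\i(\pd W)(\theta_i)=-\i\pd_i W$, so $\updelta_W\theta_i=-\i\pd_i W$.

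For \eqref{Vrel1}, I would decompose $V_i$ into its three local summands $\nabla^{1,0}_i D$, $\sum_j d\bar z^j\otimes F_{i\bar j}$, and $-\i\sum_j G_{i\bar j}\,d\bar z^j\otimes\id_E$, and compute $\updelta_a=\bbpd_a+\fd_a$ on each. The contributions are: (a) $\fd_a(\nabla^{1,0}_i D)=D(\nabla_i D)+(\nabla_i D)D=\nabla_i(D^2)=(\pd_i W)\id_E$, from the graded Leibniz rule for the induced Chern connection on $End(E)$ together with $\nabla_{End(E)}\id_E=0$; (b) applying \eqref{mpdrels} to the holomorphic section $D$ yields $\bbpd_a\mpd_\ba D=[F_\ba,D]$, and reading off the coefficient of $dz^i\wedge d\bar z^j$ gives $\bbpd_a(\nabla^{1,0}_i D)=\sum_j[D,F_{i\bar j}]\,d\bar z^j$; (c) the Koszul sign rule for the defect differential gives $\fd_a(d\bar z^j\otimes F_{i\bar j})=-d\bar z^j\otimes[D,F_{i\bar j}]$, exactly cancelling (b) after the sum over $j$; (d) after antisymmetrization in $d\bar z^j\wedge d\bar z^k$, $\bbpd_a(\sum_j d\bar z^j\otimes F_{i\bar j})=0$ is the type-$(1,2)$ Bianchi identity $\bbpd_a F_\ba=0$ for the Chern connection; (e) $\fd_a$ applied to the $G$-summand vanishes because $\id_E$ is central, while $\bbpd_a(-\i\sum_j G_{i\bar j}\,d\bar z^j\,\id_E)$ vanishes after antisymmetrization by the \Kh identity $\pd_{\bar k}G_{i\bar j}=\pd_{\bar j}G_{i\bar k}$ (equivalent to $\bar\partial\omega_G=0$). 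Only $(\pd_i W)\id_E$ from (a) survives.

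For \eqref{Vrel3}, expand $\Delta_a=\bbpd+\bbpd_a+\ioda_W+\fd_a$ and apply it to $A_i=\theta_i\otimes\id_E+\i V_i$. Since $V_i$ has polyvector rank zero, both $\bbpd$ and $\ioda_W$ (which act only on the $\wedge TX$-factor, trivial here) annihilate it, so $\Delta_a V_i=\bbpd_a V_i+\fd_a V_i=\updelta_a V_i=(\pd_i W)\id_E$ by \eqref{Vrel1}. On the other summand, $\theta_i$ is holomorphic and $\id_E$ is $\nabla$-flat and central, so $\bbpd\theta_i=\bbpd_a\id_E=\fd_a\id_E=0$ and only $\ioda_W\theta_i\otimes\id_E=-\i(\pd_i W)\id_E$ contributes to $\Delta_a(\theta_i\otimes\id_E)$ via \eqref{Vrel2}. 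Hence $\Delta_a A_i=-\i(\pd_i W)\id_E+\i(\pd_i W)\id_E=0$.

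The main bookkeeping hurdle is the Koszul sign convention on the $\Z\times\Z_2$-graded tensor algebra, especially the factors $(-1)^{\rk\rho}$ and $(-1)^{\rk\rho+\sigma(f)}$ in the formula for $\fd_a$ on decomposable elements $\rho\otimes f$, and maintaining the distinction between the operators $\bbpd_a,\fd_a$ on $\cA(X,End(E))$ and their trivial extensions to $\PV(X,End(E))$. Once signs are correctly tracked, the computation is mechanical.
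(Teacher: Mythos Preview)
Your proof is correct and follows essentially the same route as the paper's. The only difference is organizational: for \eqref{Vrel1} the paper computes $\updelta_a V_\ba^G=(\pd W)\id_E$ globally in one stroke (using $\bbpd_a D=0$, $\bpd\omega_G=0$, $\bbpd_a F_\ba=0$, and $[D,\mpd_\ba D]=(\pd W)\id_E$) and then reads off the $\dd z^i$-component, whereas you work directly at the level of $V_i$ and verify the same cancellations term by term --- your items (a)--(e) are precisely the componentwise unpacking of the paper's four-line computation.
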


\begin{proof}

\

\begin{enumerate}[1.]
\itemsep 0.0em
\item Since $D$ is holomorphic, we have $\bbpd_a D=0$. Since
  $\omega_G$ and $F_\ba$ are closed $(1,1)$-forms, we also have $\bpd
  \omega_G=\bbpd_a F_\ba=0$. Thus $\bbpd_a V_\ba^G=\bbpd_a \mpd_\ba
  D=[F,D]$ (see \eqref{mpdrels}) and:
\be
\updelta_a V_\ba^G=\bbpd_a V_\ba^G+[D,V_\ba^G]=[F,D]+[D,\mpd_\ba D+F]=[D,\mpd_a D]=(\pd W) \id_E~~,
\ee
where in the second relation we used the condition $D^2=W\id_E$, which
implies $[D,\mpd_a D]=(\pd W)\id_E$. Thus \eqref{Vrel1} holds. 
\item Since $\theta_i$ is holomorphic, we have: 
\be
\updelta_W\theta_i=\ioda_W(\theta_i)=\ioda_W(\partial_i)= -\i \pd_i W~~.
\ee
\item Using relations \eqref{Vrel1} and \eqref{Vrel2}, we compute: 
\be
\Delta_a(A_i)=\Delta_a (\theta_i+\i V_i)=(\updelta_W\theta_i) \id_E+\i \updelta_a V_i=0~~.
\ee
\end{enumerate}
\qed
\end{proof}

\subsection{The twisted Atiyah class}

Recall that the Atiyah class (see \cite{Atiyah}): 
\be
A(E)\in \rH^{1,1}(X,End(E))\simeq \rH^1(T^\ast X\otimes End(\cE))
\ee 
of a holomorphic vector bundle $E$ on $X$ coincides (see \cite[Proposition
  4.3.10]{Huybrechts}) with the $\bbpd_{End(E)}$-cohomology class of
the curvature of the Chern connection $F\in \rOmega^{1,1}(X,E)$
determined by any Hermitian connection $h$ on $E$. In this subsection,
we introduce a ``twisted'' version of the Atiyah class for holomorphic
factorizations. Let $\ba=(E,h,D)$ be a Hermitian holomorphic
factorization of $W$ with underlying holomorphic factorization
$a=(E,D)$.

\

\begin{Proposition}
\label{prop:Aclosed}
We have: 
\ben
\label{Aclosed}
\Delta_a A_\ba=0~~
\een
and: 
\ben
\label{PiRel}
\Delta_a \Pi_\ba=0~~.
\een
Moreover, the $\Delta_a$-cohomology class $[\Pi_\ba]_{\Delta_a}\in
\HPV(X,a)$ depends only on $\Omega$ and on the $\Delta_a$-cohomology class
$[A_\ba]_{\Delta_a}\in \HPV_e(X,a)$.
\end{Proposition}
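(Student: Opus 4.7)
Claims \eqref{Aclosed} and \eqref{PiRel} follow from Lemma~\ref{lemma:rels} by direct calculation. Working in a complex coordinate chart $(U,(z^1,\ldots,z^d))$, the local expression \eqref{VAlocal} gives $A_\ba^G=_U\sum_{i=1}^d \dd z^i\otimes A_i$. Since the total differential on the extended algebra satisfies $\Delta_a(\dd z^i\otimes A_i)=-\,\dd z^i\otimes \Delta_a(A_i)$ (the $\rOmega^{\ast,0}(X)$ factor being ``holomorphic'' for $\Delta_a$), relation \eqref{Vrel3} yields $\Delta_a A_\ba^G =_U 0$, proving \eqref{Aclosed} globally. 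For \eqref{PiRel}, recall that $\Pi_\ba$ is characterized by $A_\ba^d=(-1)^{d(d-1)/2}\Omega\otimes(d!\,\Pi_\ba)$. Because $\Delta_a$ is an odd derivation of $\PV_e(X,End(E))$ and $\Delta_a A_\ba=0$, the graded Leibniz rule gives $\Delta_a(A_\ba^d)=0$. Since $\Omega$ is a holomorphic $(d,0)$-form, $\Delta_a(\Omega\otimes\eta)=(-1)^d\Omega\otimes\Delta_a\eta$ for every $\eta\in\PV(X,End(E))$; as $\Omega$ is nowhere vanishing, the map $\eta\mapsto\Omega\otimes\eta$ is injective, forcing $\Delta_a\Pi_\ba=0$.

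The heart of the proposition is the assertion about cohomology classes. I will show that if $A_\ba'=A_\ba+\Delta_a B$ with $B\in\PV_e(X,End(E))^\0$, then $\Pi_{\ba'}-\Pi_\ba$ is $\Delta_a$-exact in $\PV(X,End(E))$. Since $\Delta_a$ preserves the holomorphic form rank while $A_\ba$ and $A_\ba'$ both lie in $\rOmega^{1,\ast}(X,\wedge TX\otimes End(E))$, the form-rank-$r$ component of $\Delta_a B$ vanishes for $r\neq 1$, so I may replace $B$ by its form-rank-$1$ component without altering the equation. The telescoping identity in the associative algebra $\PV_e(X,End(E))$ gives
\be
(A_\ba')^d-A_\ba^d=\sum_{k=0}^{d-1}(A_\ba')^k(\Delta_a B)\,A_\ba^{d-1-k}.
\ee
Because $A_\ba, A_\ba'$ are odd and $\Delta_a$-closed while $B$ is even, the graded Leibniz rule rewrites each summand as $(-1)^k\Delta_a\bigl((A_\ba')^k B\,A_\ba^{d-1-k}\bigr)$, so the whole sum equals $\Delta_a\Theta$ with
\be
\Theta\eqdef \sum_{k=0}^{d-1}(-1)^k(A_\ba')^k B\,A_\ba^{d-1-k}\in \PV_e(X,End(E)).
\ee
The form-rank-$1$ normalization of $B$ forces $\Theta$ to be of pure holomorphic form rank $d$, hence $\Theta=\Omega\otimes\tilde\Theta$ for a unique $\tilde\Theta\in\PV(X,End(E))$. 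Pulling $\Omega$ past $\Delta_a$ in the identity $\Delta_a\Theta=(-1)^d\Omega\otimes\Delta_a\tilde\Theta$ and comparing with the characterizations $A_\ba^d=(-1)^{d(d-1)/2}\Omega\otimes(d!\,\Pi_\ba)$ and $(A_\ba')^d=(-1)^{d(d-1)/2}\Omega\otimes(d!\,\Pi_{\ba'})$ then identifies $\Pi_{\ba'}-\Pi_\ba$ as a nonzero scalar multiple of $\Delta_a\tilde\Theta$, which is the required exactness.

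The only genuine obstacle is bookkeeping: tracking the graded signs in the Leibniz rule so that the telescoped sum collapses to a single $\Delta_a$-coboundary, and exploiting the compatibility of $\Delta_a$ with the holomorphic-form-rank decomposition in order to descend $\Delta_a$-exactness from $\PV_e(X,End(E))$ to $\PV(X,End(E))$ through the canonical ``division by $\Omega$'' isomorphism $\rOmega^{d,0}(X)\otimes_\cinf\PV(X,End(E))\xrightarrow{\sim}\PV(X,End(E))$ afforded by the nowhere-vanishing holomorphic volume form.
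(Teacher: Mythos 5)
Your proof is correct and follows essentially the same route as the paper: closure of $A_\ba$ and $\Pi_\ba$ from relation \eqref{Vrel3} together with the fact that $\Delta_a$ is an odd derivation (with the sign $\Delta_a(\rho\otimes\fz)=(-1)^i\rho\otimes\Delta_a\fz$ for $\rho\in\rOmega^{i,0}(X)$), and the class statement from $\Delta_a$-closure of $A_\ba$ plus the definition of $\Pi_\ba$. The paper leaves the last step as "immediate"; your telescoping identity, the reduction of $B$ to its even form-rank-one component, and the division by $\Omega$ are precisely the details it suppresses, and your sign bookkeeping checks out.
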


\begin{proof}
Both statements regarding $A_\ba$ follow from \eqref{Vrel3} upon
using the fact that $\Delta_a$ is a graded derivation. The fact that
$[\Pi_\ba]_{\Delta_a}$ depends only on $\Omega$ and $[A_\ba]_{\Delta_a}$
follows immediately using $\Delta_a$-closure of $A_\ba$ and the
definition of $\Pi_\ba$. ~\qed
\end{proof}

\

\begin{Definition}
The {\em twisted Atiyah class} of $\ba$ induced by $G$ is the
$\Delta_a$-cohomology class $[A_\ba^G]_{\Delta_a}\in \HPV_e(X,a)$ of $A_\ba^G$.
\end{Definition}

\

\begin{Definition}
We say that the {\em $\pd\bpd$-lemma holds for $(1,1)$-forms on $X$} if 
any $(1,1)$-form $\omega\in \Omega^{1,1}(X)$ which is both $\pd$ and $\bpd$-closed 
as well as $\dd$-exact can be written as $\omega=\bpd\pd \varphi$ for some 
smooth complex-valued function $\varphi\in \cC^\infty(X)$. 
\end{Definition}

\

\noindent Recall that any compact K\"ahler manifold satisfies the
$\pd\bpd$-lemma for all $(p,q)$ forms (see \cite[Proposition 6.17,
page 144]{Voisin}). In our case, even though $X$ is K\"ahlerian, it
may not satisfy the $\pd\bpd$-lemma for $(1,1)$ forms since it is
non-compact. The $\pd\bpd$-lemma holds for $(1,1)$-forms on $X$ iff
the natural map $H_{BC}^{1,1}(X)\rightarrow H^2_{\mathrm{dR}}(X,\C)$
is injective\footnote{This statement (which does not require
compactness of $X$) follows directly from the definition $H_{BC}(X)\eqdef
[\ker(\pd)\cap \ker(\bpd)]/\im (\pd\bpd)$ of Bott-Chern cohomology.},
where $H^{1,1}_{BC}(X)$ and $H^2_{\mathrm{dR}}(X,\C)$ denote the
corresponding Bott-Chern (see \cite[Section 8.1]{Demailly}) and de Rham
cohomology groups of $X$. Bott-Chern cohomology was introduced in
\cite{BC}.

\

\begin{Proposition}
\label{prop:At}
The twisted Atiyah class $[A_\ba^G]_{\Delta_a}$ of a Hermitian
holomorphic factorization $\ba=(E,D,h)$ is independent of the choice
of admissible metric $h$ on $E$.  Moreover, when the $\pd\bpd$-lemma
holds for $(1,1)$-forms on $X$, the twisted Atiyah class
$[A_\ba^G]_{\Delta_a}$ depends only on the K\"{a}hler class
$[\omega_G]\in \rH^{1,1}(X)$ of $G$ and on the Atiyah class $A(E)\in
\rH^{1,1}(X,End(E))$ of the holomorphic vector bundle $E$.
\end{Proposition}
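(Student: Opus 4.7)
The plan is to treat the two assertions separately, both via explicit cochain-level calculations in the extended disk algebra $(\PV_e(X,End(E)),\Delta_a)$.

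First I would prove independence of $[A_\ba^G]_{\Delta_a}$ from the admissible Hermitian metric. Fix admissible metrics $h_0,h_1$ on $E$, let $\ba_i=(E,h_i,D)$ with Chern connections $\nabla_i$, and set $\alpha\eqdef \nabla_1-\nabla_0$. Since both $\nabla_i$ share the $(0,1)$-part $\bbpd_E$ and preserve the $\Z_2$-grading by admissibility, $\alpha\in\rOmega^{1,0}(X,End^0(E))$. Standard connection calculus yields $\mpd_{\ba_1}D-\mpd_{\ba_0}D=[\alpha,D]$ as a graded commutator, while the identity $F_1-F_0=\nabla_0\alpha+\alpha\wedge\alpha$ combined with the fact that both Chern curvatures are of type $(1,1)$ forces the $(2,0)$-contribution $\mpd_{\ba_0}\alpha+\alpha\wedge\alpha$ to vanish, leaving $F_{\ba_1}-F_{\ba_0}=\bbpd_a\alpha$. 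Since $\omega_G$ is $h$-independent, substituting into \eqref{Vdef} yields
\[
A_{\ba_1}^G-A_{\ba_0}^G=\i\bbpd_a\alpha+\i[\alpha,D].
\]
View $\alpha$ as an element of $\PV_e(X,End(E))$ via the canonical embedding $\rOmega^{1,0}(X,End^0(E))\hookrightarrow \PV_e(X,End(E))$ and apply $\Delta_a=\bbpd+\bbpd_a+\ioda_W+\fd_a$: we get $\bbpd\alpha=0$ (no polyvector content), $\ioda_W\alpha=0$ (since $\PV^{1,\ast}(X)=0$), $\bbpd_a\alpha=\bbpd_{End(E)}\alpha$, and $\fd_a\alpha=[D,\alpha]=[\alpha,D]$ (the graded commutator of two odd elements is symmetric). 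Hence $\Delta_a(\i\alpha)=A_{\ba_1}^G-A_{\ba_0}^G$ and the class is independent of $h$.

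For the second claim, assume the $\pd\bpd$-lemma for $(1,1)$-forms and take K\"ahler metrics $G,G'$ with $[\omega_G]=[\omega_{G'}]$. Their difference is a $\dd$-exact, $\pd$- and $\bpd$-closed $(1,1)$-form, so the $\pd\bpd$-lemma produces $\varphi\in\cC^\infty(X)$ with $\omega_{G'}-\omega_G=\bpd\pd\varphi$. Put $\beta\eqdef \pd\varphi\cdot\id_E$, viewed as an element of $\rOmega^{1,0}(X,End^0(E))\subset\PV_e(X,End(E))$. Since $\id_E$ is $\bbpd_a$-parallel, commutes with $D$, and has polyvector rank zero, only the $\bbpd_a$-component of $\Delta_a\beta$ survives, giving $\Delta_a\beta=\bpd(\pd\varphi)\cdot\id_E=\bpd\pd\varphi\cdot\id_E$; hence
\[
A_\ba^{G'}-A_\ba^G=-\i(\omega_{G'}-\omega_G)\id_E=\Delta_a(-\i\beta)
\]
is $\Delta_a$-exact. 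Combining with the first part, $[A_\ba^G]_{\Delta_a}$ depends only on $[\omega_G]$ and on the holomorphic factorization $(E,D)$. The only piece of $A_\ba^G$ that mixes $E$'s analytic structure with the metric is the curvature $F_\ba$, whose $\bbpd_a$-cohomology class is by definition the Atiyah class $A(E)\in\rH^{1,1}(X,End(E))$, so the dependence on $E$ passes through $A(E)$ alone.

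The main obstacle is the sign bookkeeping inside the extended disk algebra: one must verify the identities $F_{\ba_1}-F_{\ba_0}=\bbpd_a\alpha$ and $\Delta_a\alpha=\bbpd_a\alpha+[\alpha,D]$ with the correct conventions for bundle degree, form rank, total $\Z_2$-degree, and the graded commutator versus the Leibniz rule. Once these are pinned down (either in a local chart, exploiting that $\alpha$ has polyvector rank zero, or intrinsically via the four-term decomposition of $\Delta_a$), the remaining steps reduce to direct substitution.
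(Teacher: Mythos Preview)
Your proposal is correct and follows essentially the same route as the paper's proof: both arguments write the difference of Chern connections as an element $\alpha\in\rOmega^{1,0}(X,End^{\hat 0}(E))$, use $F'-F=\bbpd_a\alpha$ and $\mpd' D-\mpd D=[\alpha,D]$ to identify $A'-A$ as $\Delta_a$ of (a scalar multiple of) $\alpha$, and handle the K\"ahler-class dependence via the $\pd\bpd$-lemma with primitive $\pd\varphi\cdot\id_E$. Your version is in fact slightly more careful than the paper's in two places: you track the factor of $\i$ coming from $A_\ba^G=\theta\otimes\id_E+\i V_\ba^G$ (the paper's displayed formula for $A'-A$ omits it), and you spell out the type argument forcing the $(2,0)$-part of $F'-F$ to vanish; the paper also leaves the Atiyah-class clause of the statement essentially unargued, so your closing sentence is no weaker than what the paper provides.
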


\begin{proof}
We first show that the twisted Atiyah class does not depend on the
choice of $h$.  Let $\nabla$ be the Chern connection of $(E,h)$ and
$F\in \rOmega^{1,1}(X,End^\0(E))$ denote its curvature. Let $h'$ be
another admissible Hermitian metric on $E$ and $F'\in
\rOmega^{1,1}(X,End^\0(E))$ be the curvature of the Chern connection
$\nabla'$ of $(E,h')$. Let $\bbpd:=\bbpd_{End(E)}=\bbpd_a$ and
$\mpd=\mpd_\ba$, $\mpd':=\mpd_{\ba'}$, where $a'$ denotes the
Hermitian holomorphic factorization $(E,h,D)$. We have
$\nabla'-\nabla=\nabla'^{1,0}-\nabla^{1,0}=S$ for some $S\in
\rOmega^{1,0}(X,End^\0(E))$. This implies $\mpd_E^{h'}=\mpd_E^h+S$. In
turn, this gives:
\be
\mpd'=\mpd+[S,\cdot]~~,~~F'=F+\bbpd S~~,
\ee
where $[\cdot, \cdot]$ denotes the graded commutator. 
Let $\ba'\eqdef (E,h',D)$ and $A:=A_\ba^G$,
$A':=A_{\ba'}^{G'}$. Relation \eqref{Vdef} gives:
\ben
\label{Vpr}
A'=A+[S, D]+\bbpd (S-\rho\id_E)=A+\updelta_a B=A+\Delta_a B~~,
\een
where $B\eqdef S- \rho\id_E\in \rOmega^{1,0}(X,End^\0(E))$. Thus
$[A']_{\Delta_a}=[A]_{\Delta_a}$~.~

Let us now assume that the $\pd\bpd$-lemma holds for $(1,1)$-forms on
$X$.  Let $G'$ be another K\"{a}hler metric on $X$ whose K\"{a}hler
form $\omega':=\omega_{G'}$ belongs to the same de Rham cohomology
class as $\omega:=\omega_G$.  Since $\omega'-\omega$ has type $(1,1)$,
the conditions $\dd \omega=\dd\omega'=0$ imply that $\omega'-\omega$
is both $\pd$- and $\bpd$- closed. Since $\omega'-\omega$ is $d$-exact
and the $\pd\bpd$-lemma holds for $(1,1)$-forms on $X$, there exists
$\varphi\in \cinf$ such that:
\be
\omega'-\omega=\bpd\pd\varphi~~.
\ee 
Setting $\rho\eqdef \pd\varphi\in \rOmega^{1,0}(X)$, this gives: 
\be
\omega'=\omega+\bpd \rho~~.
\ee
\qed
\end{proof}

\begin{remark}
Propositions \ref{prop:Aclosed} and \ref{prop:At} imply that the
$\Delta_a$-cohomology class $[\Pi_\ba]_{\Delta_a}\in \HPV(X,a)$
depends only on the holomorphic volume form $\Omega$, on the
K\"{a}hler class $[\omega_G]$ and on the Atiyah class $A(E)$, provided 
that the $\pd\bpd$-lemma holds for $(1,1)$-forms on $X$.
\end{remark}

\section{Off-shell boundary-bulk maps}
\label{sec:BoundaryBulk}

\begin{Definition}
The {\em canonical off-shell boundary-bulk map} of the Hermitian holomorphic
factorization $\ba=(E,h, D)$ determined by $\Omega$ and by the K\"{a}hler metric $G$
is the $\cinf$-linear map
$f_\ba^B:=f_\ba^{B,\Omega, G}:\End_{\DF(X,W)}(a)\rightarrow \PV(X)$ defined
through:
\be
f_\ba^{B,\Omega,G}(\alpha)\eqdef \str(\Pi_\ba^{\Omega, G}\alpha)~~,~~\forall \alpha\in \End_{\DF(X,W)}(a)=\cA(X,End(E))~~.
\ee
\end{Definition}
Notice that $f_\ba^B$ has total $\Z_2$-degree $\mu$. 

\

\begin{Proposition}
We have: 
\ben
\updelta_W\circ f^B_\ba=(-1)^d f^B_\ba\circ \updelta_a~~.
\een
In particular, $f^B_\ba$ descends to an $\O(X)$-linear map from
$\End_{\DF(X,W)}(a)$ to $\HPV(X,W)$.
\end{Proposition}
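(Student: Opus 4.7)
The plan is to verify the stated identity by embedding the source complex $\bigl(\End_{\DF(X,W)}(a),\updelta_a\bigr) = \bigl(\cA(X,End(E)),\updelta_a\bigr)$ into the disk complex $\bigl(\PV(X,End(E)),\Delta_a\bigr)$ along polyvector rank zero, and then exploiting the two properties already established for $\Pi_\ba$ and $\str$: namely $\Delta_a\Pi_\ba = 0$ (Proposition \ref{prop:Aclosed}) and the fact that $\str$ intertwines $\Delta_a$ with $\updelta_W$ (relation \eqref{strDelta}).

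First I would observe that any $\alpha \in \cA(X,End(E))$ sits inside $\PV(X,End(E))$ as an element of polyvector rank zero, and that on this subspace two of the four summands in the decomposition $\Delta_a = \bbpd + \bbpd_a + \ioda_W + \fd_a$ act trivially: $\bbpd\alpha = 0$ and $\ioda_W\alpha = 0$, since both operators act on the polyvector factor. Consequently $\Delta_a\alpha = \bbpd_a\alpha + \fd_a\alpha = \updelta_a\alpha$, so the embedding is a map of complexes.

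Next, since $\Delta_a$ is an odd derivation of $\PV(X,End(E))$, $\Pi_\ba$ has total $\Z_2$-degree $\mu = {\hat d}$, and $\Delta_a\Pi_\ba = 0$, the graded Leibniz rule gives
\be
\Delta_a(\Pi_\ba\,\alpha) \;=\; (\Delta_a\Pi_\ba)\,\alpha + (-1)^d\,\Pi_\ba\,(\Delta_a\alpha) \;=\; (-1)^d\,\Pi_\ba\,(\updelta_a\alpha)~~.
\ee
Applying the disk extended supertrace $\str$ to both sides and using \eqref{strDelta} yields
\be
\updelta_W\,\str(\Pi_\ba\,\alpha) \;=\; \str\bigl(\Delta_a(\Pi_\ba\,\alpha)\bigr) \;=\; (-1)^d\,\str\bigl(\Pi_\ba\,\updelta_a\alpha\bigr)~~,
\ee
which by definition of $f_\ba^B$ is exactly $\updelta_W\bigl(f_\ba^B(\alpha)\bigr) = (-1)^d f_\ba^B(\updelta_a\alpha)$. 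The final assertion that $f_\ba^B$ descends to an $\O(X)$-linear map $\End_{\HDF(X,W)}(a)\to \HPV(X,W)$ then follows from the fact that $f_\ba^B$ is a chain map (up to the overall sign $(-1)^d$ consistent with its $\Z_2$-degree $\mu$), while $\O(X)$-linearity is inherited from the $\O(X)$-linearity of $\str$ and of multiplication by the fixed element $\Pi_\ba$.

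There is no real obstacle here; the only minor point requiring care is the sign bookkeeping in the Leibniz rule and the verification that $\bbpd$ and $\ioda_W$ act trivially on the polyvector-rank-zero part. Both are immediate from the definitions of these operators on the tensor factor $\PV(X)$ of $\PV(X,End(E)) \simeq \PV(X)\hat\otimes_{\cA(X)}\cA(X,End(E))$.
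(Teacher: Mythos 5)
Your proof is correct and follows essentially the same route as the paper: both arguments observe that $\Delta_a\alpha=\updelta_a\alpha$ for $\alpha$ of polyvector rank zero, apply the graded Leibniz rule together with $\Delta_a\Pi_\ba=0$ (relation \eqref{PiRel}), and conclude via the intertwining property \eqref{strDelta} of the disk extended supertrace. Your sign bookkeeping matches the paper's chain of equalities, so no further comment is needed.
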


\begin{proof}
For any $\alpha\in \cA(X,End(E))$, we have: 
\be
\label{Rel1}
f_\ba^B(\updelta_a \alpha)=\str[\Pi_\ba (\updelta_a\alpha)]=(-1)^d \str[\Delta_a(\Pi_\ba \alpha)]=(-1)^d \updelta_W [\str(\Pi_\ba \alpha)]=(-1)^d\updelta_W f_\ba^B(\alpha)~~,
\ee
where in the last equality we noticed that
$\Delta_a\alpha=\delta_a\alpha$ and used \eqref{PiRel} and
\eqref{strDelta} ~\qed
\end{proof}

\

\begin{Definition}
The {\em cohomological boundary-bulk map} of $\ba=(E,h, D)$
is the $\O(X)$-linear map $f_\ba:=f_\ba^{\Omega, G}:\End_{\HDF(X,W)}(a)\rightarrow
\HPV(X,W)$ induced by $f_\ba^{B,\Omega, G}$ on cohomology.
\end{Definition}

\section{Off-shell bulk-boundary maps}
\label{sec:BulkBoundary}

\subsection{Canonical off-shell bulk-boundary maps}

\

\

\begin{Definition}
The {\em canonical off-shell bulk-boundary map} of the Hermitian
holomorphic factorization $\ba=(E,h, D)$ determined by $\Omega$ and by
the K\"{a}hler metric $G$ is the $\cinf$-linear map
$e_\ba^B:=e_\ba^{B,\Omega, G}:\PV(X)\rightarrow
\End_{\DF(X,W)}(a)$ defined through:
\be
e_\ba^{B,\Omega,G} (\omega)\eqdef  \Omega\lrcorner_0 \left(\omega \Pi_\ba^{\Omega, G} \right)~~,~~\forall \omega\in \PV(X)~~.
\ee
\end{Definition}

\noindent Notice that $e_\ba^B$ has total $\Z_2$-degree $\0$.

\

\begin{Proposition}
\label{prop:eBclosed}
We have:
\ben
\updelta_a\circ e_\ba^B=(-1)^d e_\ba^B \circ \updelta_W~~.
\een
In particular, $e^B_\ba$ descends to an $\O(X)$-linear map from $\HPV(X,W)$ to $\End_{\DF(X,W)}(a)$.
\end{Proposition}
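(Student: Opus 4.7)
The plan is to mimic the proof of the analogous closure property for the off-shell boundary-bulk map $f_\ba^B$ given just before, exploiting the fact that $e_\ba^B$ differs from $f_\ba^B$ only by trading the extended supertrace $\str$ (applied after left multiplication by $\Pi_\ba$) for the reduced contraction $\Omega\lrcorner_0$ (applied after left multiplication by $\omega$). The two key inputs are already available: $\Delta_a\Pi_\ba=0$ (Proposition \ref{prop:Aclosed}) and the intertwining formula $\updelta_a\circ(\Omega\lrcorner_0)=(-1)^d(\Omega\lrcorner_0)\circ\Delta_a$ provided by Lemma \ref{lemma:DeltaContr}.

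More concretely, for $\omega\in\PV(X)$ I would first view $\omega$ inside $\PV(X,End(E))$ via the embedding $\omega\mapsto\omega\otimes\id_E$. Using the decomposition $\Delta_a=\updelta_W+\updelta_a$ on $\PV(X,End(E))$ (with the simplified notation of Section \ref{sec:disk}) together with $\bbpd_a(\id_E)=0$ and $[D,\id_E]=0$, one sees that $\Delta_a(\omega\otimes\id_E)=(\updelta_W\omega)\otimes\id_E$, i.e. $\Delta_a$ restricts to $\updelta_W$ on the image of $\PV(X)$. Since $\Delta_a$ is an odd derivation of the algebra $\PV(X,End(E))$, the graded Leibniz rule combined with $\Delta_a\Pi_\ba=0$ yields
\be
\Delta_a(\omega\,\Pi_\ba)=(\updelta_W\omega)\,\Pi_\ba+(-1)^{\deg\omega}\omega\,\Delta_a\Pi_\ba=(\updelta_W\omega)\,\Pi_\ba~~.
\ee
Applying $\Omega\lrcorner_0$ to both sides and using Lemma \ref{lemma:DeltaContr} gives
\be
\updelta_a\bigl(e_\ba^B(\omega)\bigr)=\updelta_a\bigl(\Omega\lrcorner_0(\omega\,\Pi_\ba)\bigr)=(-1)^d\Omega\lrcorner_0\bigl(\Delta_a(\omega\,\Pi_\ba)\bigr)=(-1)^d\,\Omega\lrcorner_0\bigl((\updelta_W\omega)\,\Pi_\ba\bigr)=(-1)^d e_\ba^B(\updelta_W\omega)~~,
\ee
which is the desired intertwining relation. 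The statement about descent to cohomology then follows at once: $e_\ba^B$ maps $\ker\updelta_W$ into $\ker\updelta_a$ and $\im\updelta_W$ into $\im\updelta_a$, hence induces a well-defined $\O(X)$-linear map $\HPV(X,W)\to\End_{\HDF(X,W)}(a)$ of total $\Z_2$-degree $\0$.

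There is essentially no serious obstacle: the only point that requires a moment's care is verifying that $\Delta_a$ indeed restricts to $\updelta_W$ on the subspace $\PV(X)\otimes\id_E$, which is immediate from the definition of $\Delta_a$ as the tensor-product differential and from holomorphicity of $\id_E$ together with $[D,\id_E]=0$. Everything else is a two-line computation chaining the Leibniz rule, $\Delta_a$-closure of $\Pi_\ba$, and Lemma \ref{lemma:DeltaContr}.
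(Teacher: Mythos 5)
Your proposal is correct and follows essentially the same route as the paper's own proof: both hinge on Lemma \ref{lemma:DeltaContr}, the $\Delta_a$-closure of $\Pi_\ba$ from \eqref{PiRel}, and the observation that $\Delta_a$ acts as $\updelta_W$ on elements coming from $\PV(X)$. Your version merely spells out the embedding $\omega\mapsto\omega\otimes\id_E$ and the Leibniz-rule step a bit more explicitly than the paper does.
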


\begin{proof}
Let $\omega\in \PV(X)$. Using Lemma \ref{lemma:DeltaContr}, we
compute:
\be
\updelta_a(e_\ba^B (\omega))=\updelta_a [\Omega\lrcorner_0(\omega\Pi_\ba)]= (-1)^d \Omega\lrcorner_0 \left(\Delta_a(\omega)\Pi_\ba\right)=
(-1)^d \Omega\lrcorner_0 \left(\updelta_W(\omega)\Pi_\ba\right)=(-1)^d e_\ba^B(\updelta_W\omega)~~,
\ee
where we used the fact that $\Pi_\ba$ is $\Delta_a$-closed (see
\eqref{PiRel}) and noticed that $\Delta_a \omega=\updelta_W\omega$. \qed
\end{proof}

\

\begin{Definition}
The {\em cohomological bulk-boundary map} of $\ba=(E,h,D)$ is the
$\O(X)$-linear map
$e_\ba:=e_\ba^{\Omega, G}:\HPV(X,W)\rightarrow \End_{\HDF(X,W)}(a)$ induced by
$e_\ba^{B,\Omega, G}$ on cohomology.
\end{Definition}

\subsection{Tempered off-shell bulk-boundary maps}

\

\

\begin{Definition}
For any $\lambda\geq 0$ the {\em $\lambda$-tempered off-shell
  bulk-boundary map} of the Hermitian holomorphic factorization
$\ba=(E,h, D)$ determined by $\Omega$ and by the K\"{a}hler metric $G$
is the $\cinf$-linear map $e_\ba^{(\lambda)}:=e_\ba^{(\lambda),\Omega,
  G}:\PV(X)\rightarrow
\End_{\DF(X,W)}(a)$ defined through:
\be
e_\ba^{(\lambda),\Omega, G}\eqdef  U_\ba^G(-\lambda)\circ e_\ba^{B,\Omega, G}\circ U_G(\lambda)~~.
\ee
\end{Definition}

\noindent Notice that $e_\ba^{(\lambda)}$ has total $\Z_2$-degree $\0$. We have:
\ben
\label{e0}
e_\ba^{(0)}= e_\ba^{B}
\een
and:
\be
e_\ba^{(\lambda)}(\omega)=e^{\lambda L_\ba^G} \Omega\lrcorner_0 \left(e^{-\lambda L_G} \omega \Pi_\ba \right)~~,~~\forall \omega\in \PV(X)~~.
\ee

\

\noindent In what follows, we fix a holomorphic volume form $\Omega$ and a K\"{a}hler metric $G$ on $X$ and
denote $e_\ba^{(\lambda),\Omega, G}$ by $e_\ba^{(\lambda)}$ and $e_\ba^{B,\Omega, G}$
by $e_\ba^B$. We also denote $L_G$, $v_G$ by $L$, $v$
and $L_\ba^G$, $v_\ba^G$ by $L_\ba$, $v_\ba$.

\

\begin{Proposition}
For any $\lambda\geq 0$, we have:
\ben
\label{eclosed}
\updelta_a\circ e_\ba^{(\lambda)}=(-1)^d e_\ba^{(\lambda)}\circ \updelta_W~~.
\een
\end{Proposition}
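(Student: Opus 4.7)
The plan is to prove \eqref{eclosed} by combining three intertwining relations already established in the paper and simply conjugating them. By definition
\begin{equation*}
e_\ba^{(\lambda)} = U_\ba^G(-\lambda) \circ e_\ba^B \circ U_G(\lambda),
\end{equation*}
so it suffices to know how $\updelta_a$ and $\updelta_W$ interact with each of the three factors on the right.

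First, I would invoke Proposition \ref{prop:eBclosed}, which gives the intertwining $\updelta_a \circ e_\ba^B = (-1)^d\, e_\ba^B \circ \updelta_W$ for the canonical off-shell bulk-boundary map. Second, I would use that the bulk flow $U_G(\lambda)$ commutes with $\updelta_W$ by \eqref{UGclosed}, and that the boundary flow $U_\ba^G(\lambda)$ commutes with $\updelta_a$ by \eqref{Uaclosed}. Chaining these three facts:
\begin{equation*}
\updelta_a \circ e_\ba^{(\lambda)} = \updelta_a \circ U_\ba^G(-\lambda) \circ e_\ba^B \circ U_G(\lambda) = U_\ba^G(-\lambda) \circ \updelta_a \circ e_\ba^B \circ U_G(\lambda)
\end{equation*}
\begin{equation*}
= (-1)^d\, U_\ba^G(-\lambda) \circ e_\ba^B \circ \updelta_W \circ U_G(\lambda) = (-1)^d\, U_\ba^G(-\lambda) \circ e_\ba^B \circ U_G(\lambda) \circ \updelta_W = (-1)^d\, e_\ba^{(\lambda)} \circ \updelta_W.
\end{equation*}

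The only point that requires a moment of reflection is the use of $U_\ba^G(-\lambda)$, which was not originally defined by the semigroup construction (the semigroup was only given for $\lambda \geq 0$). Here $U_\ba^G(-\lambda)$ must be interpreted as the operator of left multiplication by $e^{\lambda L_\ba^G}$, the inverse of $U_\ba^G(\lambda)$. This is well-defined because $e^{-\lambda L_\ba^G}$ is a sum of an invertible scalar factor $e^{-\lambda \|\pd W\|_G^2}$ and a nilpotent correction $e^{-\lambda H_G \lrcorner(\mpd_\ba D + F)}$. The commutation $[\updelta_a, U_\ba^G(-\lambda)] = 0$ then follows by exactly the same argument as in the proof of \eqref{Uaclosed}: since $L_\ba^G = \updelta_a v_\ba^G$ is $\updelta_a$-closed and even, so is $e^{\lambda L_\ba^G}$, and left multiplication by an even $\updelta_a$-closed element commutes with the odd derivation $\updelta_a$.

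There is essentially no obstacle here — the statement is a purely formal consequence of results already proved. I would therefore present the proof as a one-line calculation, preceded by a short remark justifying the existence and the $\updelta_a$-commutation of $U_\ba^G(-\lambda)$.
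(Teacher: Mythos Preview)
Your proof is correct and follows exactly the approach the paper takes: the paper's proof reads ``Follows immediately from Proposition \ref{prop:eBclosed} using relations \eqref{UGclosed} and \eqref{Uaclosed},'' and you have simply written out that chain of equalities explicitly, together with a useful remark on why $U_\ba^G(-\lambda)$ makes sense. One tiny slip of wording: $e^{-\lambda L_\ba^G}$ is a \emph{product} of the invertible scalar $e^{-\lambda\|\pd W\|_G^2}$ with the \emph{unipotent} factor $e^{-\lambda H_G\lrcorner(\mpd_\ba D+F)}$ (not a sum with a nilpotent), but of course this still yields the invertibility you need.
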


\begin{proof}
Follows immediately from Proposition \ref{prop:eBclosed} using
relations \eqref{UGclosed} and \eqref{Uaclosed}. \qed
\end{proof}

\

\begin{Proposition}
\label{prop:onshelltr}
Let $\lambda \geq 0$. For any $\omega\in \ker \updelta_W$, we have:
\ben
\label{ee0}
e_{\ba}^{(\lambda)}(\omega)=e_\ba^B(\omega)~~\mod ~~\im \updelta_a~~.
\een
In particular, the map induced by $e_\ba^{(\lambda)}$ on cohomology
does not depend on $\lambda$ and coincides with the cohomological
bulk-boundary map.
\end{Proposition}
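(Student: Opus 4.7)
The plan is to show that $e_\ba^{(\lambda)}(\omega) - e_\ba^B(\omega) \in \im\updelta_a$ for all $\lambda \geq 0$, which together with $e_\ba^{(0)} = e_\ba^B$ from \eqref{e0} gives both assertions. I would rewrite the difference as a sum of two contributions, each of which will be manifestly $\updelta_a$-exact:
$$e_\ba^{(\lambda)}(\omega) - e_\ba^B(\omega) = e^{\lambda L_\ba}\,\Omega\lrcorner_0\!\bigl[(e^{-\lambda L}-1)\,\omega\,\Pi_\ba\bigr] + \bigl(e^{\lambda L_\ba}-1\bigr)\,e_\ba^B(\omega).$$
The first term ``removes'' the bulk exponential inside the reduced contraction, the second strips off the outer boundary exponential; both operations cost only a $\updelta_a$-exact remainder.

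For the first summand, Proposition \ref{prop:expL} gives $e^{-\lambda L} - 1 = -\updelta_W S(\lambda)$. Because $\omega$ is $\updelta_W$-closed, $\Pi_\ba$ is $\Delta_a$-closed by Proposition \ref{prop:Aclosed}, and $\Delta_a$ restricts to $\updelta_W$ on elements of $\PV(X)\otimes\id_E$, the graded Leibniz rule produces $(e^{-\lambda L}-1)\,\omega\,\Pi_\ba = -\Delta_a\bigl(S(\lambda)\,\omega\,\Pi_\ba\bigr)$. Applying $\Omega\lrcorner_0$ and invoking Lemma \ref{lemma:DeltaContr} converts the outer $\Delta_a$ into $(-1)^d\updelta_a$; since $e^{\lambda L_\ba}$ is even and $\updelta_a$-closed (which follows from $L_\ba = \updelta_a v_\ba$ by the argument of Proposition \ref{prop:expLa}), multiplication by it commutes with $\updelta_a$, so this summand equals $(-1)^{d+1}\updelta_a\bigl[e^{\lambda L_\ba}\,\Omega\lrcorner_0\bigl(S(\lambda)\,\omega\,\Pi_\ba\bigr)\bigr]$.

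For the second summand, the identity $e^{-\lambda L_\ba} = 1 - \updelta_a S_\ba(\lambda)$ from Proposition \ref{prop:expLa}, multiplied on the left by $e^{\lambda L_\ba}$ and rearranged using the $\updelta_a$-closedness of $e^{\lambda L_\ba}$, yields $e^{\lambda L_\ba} - 1 = \updelta_a\bigl(e^{\lambda L_\ba}\,S_\ba(\lambda)\bigr)$. Combined with the fact that $e_\ba^B(\omega)$ is $\updelta_a$-closed, which follows from Proposition \ref{prop:eBclosed} together with $\updelta_W\omega = 0$, the graded Leibniz rule writes this summand as $\updelta_a\bigl[e^{\lambda L_\ba}\,S_\ba(\lambda)\,e_\ba^B(\omega)\bigr]$. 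Adding the two explicit $\updelta_a$-primitives produces the desired congruence, and passing to cohomology shows that the map induced by $e_\ba^{(\lambda)}$ is independent of $\lambda$ and coincides with $e_\ba$. The main bookkeeping obstacle I foresee is tracking the graded signs coming from the odd parities of $v$, $v_\ba$, $S(\lambda)$, $S_\ba(\lambda)$, the factor $(-1)^d$ in Lemma \ref{lemma:DeltaContr}, and the parity of $\omega$; conceptually, however, the argument just says that both tempering exponentials are exponentials of $\updelta$-exact operators, hence cohomologically trivial.
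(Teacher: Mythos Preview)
Your argument is correct. Both pieces of your decomposition are indeed $\updelta_a$-exact for the reasons you give: the first uses $e^{-\lambda L}-1=-\updelta_W S(\lambda)$ together with $\Delta_a$-closure of $\omega\Pi_\ba$ and Lemma~\ref{lemma:DeltaContr}; the second uses $e^{\lambda L_\ba}-1=\updelta_a(e^{\lambda L_\ba}S_\ba(\lambda))$ and the $\updelta_a$-closure of $e_\ba^B(\omega)$ from Proposition~\ref{prop:eBclosed}. The sign bookkeeping you flag works out, and the $\updelta_a$-closure of $e^{\lambda L_\ba}$ follows either by the ODE argument of Proposition~\ref{prop:expLa} with $\lambda\mapsto -\lambda$, or simply by applying $\updelta_a$ to $e^{\lambda L_\ba}e^{-\lambda L_\ba}=\id_E$ and using \eqref{Laclosed}.

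The paper proceeds differently: it differentiates $e_\ba^{(\lambda)}(\omega)$ with respect to $\lambda$, obtains
\[
\frac{\dd}{\dd\lambda}e_\ba^{(\lambda)}(\omega)=(\updelta_a v_\ba)\,e_\ba^{(\lambda)}(\omega)-e_\ba^{(\lambda)}\big((\updelta_W v)\omega\big),
\]
and shows each term is $\updelta_a$-exact using \eqref{eclosed} and $\updelta_W\omega=0$. Your approach is the ``integrated'' form of this computation: where the paper invokes $L=\updelta_W v$ and $L_\ba=\updelta_a v_\ba$ infinitesimally and then appeals to the fact that the derivative lies in $\im\updelta_a$, you invoke the already-integrated identities from Propositions~\ref{prop:expL} and~\ref{prop:expLa} directly. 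Your route has the advantage of producing an explicit $\updelta_a$-primitive for the difference $e_\ba^{(\lambda)}(\omega)-e_\ba^B(\omega)$; the paper's route is slightly shorter but leaves the primitive implicit in the $\lambda$-integral.
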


\begin{proof}
For any $\omega\in \PV(X)$ such that $\updelta_W\omega=0$, we compute:  
\be
\frac{\dd}{\dd \lambda} e_\ba^{(\lambda)}(\omega)=\frac{\dd}{\dd \lambda} \left[e^{\lambda L_\ba} e_\ba^B(e^{-\lambda L}\omega)\right]=L_\ba e_\ba^{(\lambda)}(\omega)-e_\ba^{(\lambda)}(L\omega)=
(\updelta_\ba v_\ba) e_\ba^{(\lambda)}(\omega)-e_\ba^{(\lambda)}\big((\updelta_W v)\omega\big)~~,
\ee
where we used the fact that $L_\ba=\updelta_\ba v_\ba$ and
$L=\updelta_W v$. We have:
\be
(\updelta_\ba v_\ba) e_\ba^{(\lambda)}(\omega)=\updelta_\ba \left[v_\ba e_\ba^{(\lambda)}(\omega)\right]+v_\ba \updelta_\ba e_\ba^{(\lambda)}(\omega)=\updelta_\ba \left[v_\ba e_\ba^{(\lambda)}(\omega)\right]~~,
\ee
where we noticed that relation \eqref{eclosed} implies: 
\be
\updelta_\ba e_\ba^{(\lambda)}(\omega)=(-1)^de_\ba^{(\lambda)}(\updelta_W \omega)=0~~,
\ee
since $\updelta_W\omega=0$. On the other hand, we have: 
\be
e_\ba^{(\lambda)}\big((\updelta_W v) \omega\big)=e_\ba^{(\lambda)}\big(\updelta_W (v \omega)\big)=(-1)^d\updelta_\ba e_\ba^{(\lambda)}(v\omega)~~,
\ee
where we once again used relation \eqref{eclosed}. Thus: 
\be
\frac{\dd}{\dd \lambda} e_\ba^{(\lambda)}(\omega)=\updelta_\ba \left[v_\ba e_\ba^{(\lambda)}(\omega)-(-1)^d e_\ba^{(\lambda)}(v \omega)\right]~~,
\ee
which implies that $e_\ba^{(\lambda)}$ is independent of $\lambda$
modulo $\updelta_\ba$-exact terms. This implies relation \eqref{ee0}
upon recalling that $e_\ba^{(0)}=e_\ba^B$. \qed
\end{proof}

\subsection{Independence of metric data}

\

\

\begin{Proposition}
The cohomological bulk-boundary and boundary-bulk maps $e_\ba$ and
$f_\ba$ of a Hermitian holomorphic factorization $\ba=(E,D,h)$ do not
depend on the choice of the admissible Hermitian metric $h$ on $E$.
Moreover, when the $\pd\bpd$-lemma holds for $(1,1)$-forms on $X$ then
$e_\ba$ and $f_\ba$ depend only on $\Omega$, on the K\"{a}hler class
$[\omega_G]\in \rH^{1,1}(X)$ of $G$ and on the Atiyah class $A(E)\in
\rH^{1,1}(X,End(E))$ of the holomorphic vector bundle $E$.
\end{Proposition}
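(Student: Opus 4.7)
The plan is to reduce the proposition to the statement, already essentially contained in the remark following Proposition \ref{prop:At}, that the $\Delta_a$-cohomology class $[\Pi_\ba^{\Omega,G}]_{\Delta_a}\in \HPV(X,a)$ of the disk kernel depends only on $\Omega$, on the K\"ahler class $[\omega_G]$ and on the Atiyah class $A(E)$ (under the $\pd\bpd$-hypothesis), and is always independent of the choice of admissible Hermitian metric $h$. Granted this, the task is to show that the cohomological maps $e_\ba$ and $f_\ba$ depend on $\Pi_\ba$ only through its $\Delta_a$-cohomology class.

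To establish the latter, I would take $\Pi,\Pi'\in \PV(X,End(E))$ that are $\Delta_a$-closed with $\Pi'-\Pi=\Delta_a C$ for some $C\in \PV(X,End(E))$, and show directly at cochain level that the two pairs of maps
\[
\alpha\mapsto \str(\Pi\alpha),\ \omega\mapsto \Omega\lrcorner_0(\omega\Pi)
\quad\text{and}\quad
\alpha\mapsto \str(\Pi'\alpha),\ \omega\mapsto \Omega\lrcorner_0(\omega\Pi')
\]
differ by $\updelta_W$- and $\updelta_a$-exact expressions when evaluated on cocycles. For the first map, let $\alpha\in \End_{\DF(X,W)}(a)$ with $\updelta_a\alpha=0$; since $\Delta_a$ restricts to $\updelta_a$ on this subspace, $\Delta_a\alpha=0$, and the graded Leibniz rule together with \eqref{strDelta} yields
\[
\str\bigl((\Delta_aC)\alpha\bigr)=\str\bigl(\Delta_a(C\alpha)\bigr)=\updelta_W\str(C\alpha).
\]
For the second map, let $\omega\in \PV(X)$ with $\updelta_W\omega=0$; since $\Delta_a$ restricts to $\updelta_W$ on $\PV(X)\subset \PV(X,End(E))$, $\Delta_a\omega=0$, so $\omega(\Delta_aC)=(-1)^{\deg\omega}\Delta_a(\omega C)$, and Lemma \ref{lemma:DeltaContr} gives
\[
\Omega\lrcorner_0\!\bigl(\omega\,\Delta_aC\bigr)=(-1)^{\deg\omega}\Omega\lrcorner_0\bigl(\Delta_a(\omega C)\bigr)=(-1)^{\deg\omega+d}\updelta_a\bigl(\Omega\lrcorner_0(\omega C)\bigr).
\]
Thus the induced cohomology maps depend on $\Pi_\ba$ only through $[\Pi_\ba]_{\Delta_a}$.

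Having established this factorization, the first assertion (independence of $h$) follows from the first half of Proposition \ref{prop:At}, which is unconditional. The second assertion (dependence only on $\Omega$, $[\omega_G]$ and $A(E)$) follows by applying the full strength of Proposition \ref{prop:At} together with the remark that records the resulting dependence of $[\Pi_\ba]_{\Delta_a}$; no extra input beyond the $\pd\bpd$-hypothesis is required.

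I do not expect a serious obstacle here: every ingredient is already in place, and the proof is essentially a sign-and-degree bookkeeping exercise combining the graded Leibniz rule with Lemma \ref{lemma:DeltaContr}. The only minor subtlety is to verify that the homotopies constructed above live in the correct subalgebras (namely $\PV(X,End(E))$ rather than the larger $\PV_e(X,End(E))$), which is automatic from the fact that $C\in \PV(X,End(E))$ is chosen in the same space where $\Pi_\ba$ lives.
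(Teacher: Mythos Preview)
Your proposal is correct and follows essentially the same route as the paper: both arguments reduce to showing that the cohomology maps depend on $\Pi_\ba$ only through $[\Pi_\ba]_{\Delta_a}$, and then invoke Proposition \ref{prop:At} (and the remark following Proposition \ref{prop:Aclosed}) to control that class. Your cochain-level computations for $\str((\Delta_aC)\alpha)$ and $\Omega\lrcorner_0(\omega\,\Delta_aC)$ coincide with the paper's relations \eqref{r1} and \eqref{r2}, and your closing remark about $C$ living in $\PV(X,End(E))$ rather than $\PV_e(X,End(E))$ is well placed---it is indeed justified, since $\Delta_a$ on $\PV_e$ preserves the $\rOmega^{\ast,0}$-grading, so exactness of $\Pi'-\Pi$ in $\PV_e$ forces exactness already in the degree-zero summand $\PV(X,End(E))$ (this is what Proposition \ref{prop:Aclosed} records when it places $[\Pi_\ba]_{\Delta_a}$ in $\HPV(X,a)$).
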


\

\noindent  Accordingly, we denote $f_\ba$ and $e_\ba$ by $f_a$ and $e_a$ (since they are independent of $h$). 

\

\begin{proof}
Let $G$ and $G'$ be two K\"{a}hler metrics on $X$ having the same
K\"{a}hler class and $h$ and $h'$ be two admissible Hermitian metrics
on $a=(E,D)$. Let $\ba=(E,h,D)$ and $\ba'=(E,h',D)$.  Let
$f_B:=f_{\ba}^{B,\Omega, G}$, $f'_B:=f_{\ba'}^{B,\Omega, G'}$ and
$e_B:=e_\ba^{B,\Omega, G}$, $e'_B:=e_{\ba'}^{B,\Omega, G'}$. Let
$\Pi:=\Pi_\ba^{\Omega, G}$ and $\Pi':=\Pi_{\ba'}^{\Omega, G'}$. When
$G'=G$ or when $G'\neq G$ but the $\pd\bpd$-lemma holds for
$(1,1)$-forms on $X$, Proposition \ref{prop:At} implies:
\be
\Pi'=\Pi+\Delta_a(T)~~
\ee
for some $T\in \rOmega(X,\wedge TX \otimes End(E))$.  This implies
that the following relations hold for any $\alpha\in \ker \updelta_a$
and any $\omega\in \ker \updelta_W$ of pure $\Z_2$-degree:
\ben
\label{r1}
f'_B(\alpha)=\str(\Pi'\alpha)=f_B(\alpha)+\str[(\Delta_\ba T)\alpha]=f_B(\alpha)+\str[\Delta_{\ba}(T\alpha)]=f_B(\alpha)+\updelta_W\str(T\alpha)
\een
and: 
\beqan
\label{r2}
e'_B(\omega)=\Omega\lrcorner_0 (\omega\Pi')&=& e_B(\omega)+\Omega\lrcorner_0 [\omega\Delta_a\Pi]=e_B(\omega)+(-1)^{\deg\omega}  \Omega\lrcorner_0\Delta_a(\omega\Pi)=\nn\\
&=& e_B(\omega)+(-1)^{d+\deg\omega} \updelta_a[\Omega\lrcorner_0 (\omega\Pi)]~~,
\eeqan
where in the last equalities we used relation \eqref{strDelta} and
Lemma \ref{lemma:DeltaContr}. Relations \eqref{r1} and \eqref{r2}
imply that $f'_B$ and $e'_B$ induce the same maps on cohomology as
$f_B$ and $e_B$, respectively. \qed
\end{proof}

\subsection{Adjointness relations}

Let $G$ be a fixed K\"{a}hler metric on $X$ and $\ba=(E,h,D)$ be a
Hermitian holomorphic factorization of $W$ with underlying holomorphic
factorization $a=(E,D)$. Let $f_\ba^B:=f_\ba^{B,\Omega,G}$ and
$e_\ba^B:=e_\ba^{B,\Omega, G}$. For any $\omega\in \PV(X)$ and any
$\alpha\in \End_{\DF_c(X,W)}(a)=\cA(X,End(E))$, we have: 
\be
\Omega\lrcorner_0 [\omega f_\ba^B (\alpha)]=\str[e_\ba^B (\omega)\alpha]=\lambda_\Omega(\omega \Pi_\ba \alpha)~~,
\ee
where $\Pi_\ba:=\Pi_\ba^{\Omega, G}$ and $\lambda_\Omega$ was defined
in \eqref{lambdaDef}. This identity can be viewed as a local
adjointness relation between $e_\ba^B$ and $f_\ba^B$. When either
$\omega$ or $\alpha$ has compact support, applying $\int_\Omega$ to
the relation above gives the global adjointness relation:
\be
\Tr_B[\omega f_\ba^B (\alpha)]=\tr_a^B[e_\ba^B (\omega)\alpha]=\int_\Omega \lambda_\Omega(\omega \Pi_\ba \alpha)~~.
\ee
The following result shows that similar adjointness relations are
satisfied by the tempered traces and tempered bulk-boundary and
boundary-bulk maps.

\

\begin{Proposition}
Let $\lambda\geq 0$. For any $\omega\in \PV(X)$ and any
$\alpha\in \End_{\DF(X,W)}(a)$, we have:
\ben
\label{adj}
\Omega\lrcorner_0 \big[e^{-\lambda L} \omega f_\ba^B(\alpha)\big]=\str\big[e^{-\lambda L_\ba} e_\ba^{(\lambda)} (\omega)\alpha\big]=\lambda_\Omega\big(e^{-\lambda L}\omega\Pi_\ba \alpha\big)~~.
\een
\end{Proposition}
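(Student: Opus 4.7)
The plan is to unwind all three expressions to a common form involving $\lambda_\Omega\bigl(e^{-\lambda L}\omega\Pi_\ba\alpha\bigr)$, using only three structural facts: (i) the definitions of $f_\ba^B$, $e_\ba^B$ and $e_\ba^{(\lambda)}$; (ii) the left $\PV(X)$-linearity of the disk extended supertrace $\str:\PV(X,End(E))\to\PV(X)$; and (iii) the compatibility
\be
\Omega\lrcorner_0(\fz\alpha)=(\Omega\lrcorner_0\fz)\,\alpha~~,~~\forall \fz\in\PV(X,End(E))~,~\forall \alpha\in\cA(X,End(E))~~,
\ee
together with the two equivalent presentations of $\lambda_\Omega=\Omega\lrcorner_0\circ\str=\str\circ(\Omega\lrcorner_0)$.

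First I would handle the leftmost side. Expanding $f_\ba^B(\alpha)=\str(\Pi_\ba\alpha)\in\PV(X)$ and moving the factor $e^{-\lambda L}\omega\in\PV(X)$ inside the supertrace via its left $\PV(X)$-linearity yields
\be
\Omega\lrcorner_0\bigl[e^{-\lambda L}\omega\,f_\ba^B(\alpha)\bigr]=\Omega\lrcorner_0\,\str\bigl(e^{-\lambda L}\omega\,\Pi_\ba\,\alpha\bigr)=\lambda_\Omega\bigl(e^{-\lambda L}\omega\,\Pi_\ba\,\alpha\bigr)~~,
\ee
which matches the last expression in the statement.

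Next I would handle the middle expression. By the very definition of $e_\ba^{(\lambda)}=U_\ba^G(-\lambda)\circ e_\ba^B\circ U_G(\lambda)$, the factor $e^{-\lambda L_\ba}$ cancels the prefactor $e^{\lambda L_\ba}$ in $e_\ba^{(\lambda)}(\omega)$, leaving
\be
e^{-\lambda L_\ba}\,e_\ba^{(\lambda)}(\omega)=\Omega\lrcorner_0\bigl(e^{-\lambda L}\omega\,\Pi_\ba\bigr)~~.
\ee
Applying fact (iii) with $\fz=e^{-\lambda L}\omega\,\Pi_\ba$ and then the alternative presentation $\lambda_\Omega=\str\circ(\Omega\lrcorner_0)$ gives
\be
\str\bigl[e^{-\lambda L_\ba}\,e_\ba^{(\lambda)}(\omega)\,\alpha\bigr]=\str\,\Omega\lrcorner_0\bigl(e^{-\lambda L}\omega\,\Pi_\ba\,\alpha\bigr)=\lambda_\Omega\bigl(e^{-\lambda L}\omega\,\Pi_\ba\,\alpha\bigr)~~,
\ee
which is the same common form, completing the proof.

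The only step that requires genuine verification rather than bookkeeping is the compatibility (iii). This reduces to checking that the operator $\Omega\lrcorner_0$, which projects onto the maximal polyvector rank component $\PV^{-d,\ast}(X,End(E))$ and then contracts with $\Omega$, is right-linear over the subalgebra $\cA(X,End(E))=\PV^{0,\ast}(X,End(E))$: right multiplication by $\alpha$ preserves polyvector rank, while only the rank $(-d,\ast)$ piece of $\fz$ survives the projection on either side. The main care point (and the place sign issues could bite) is tracking the graded commutativity signs when multiplying in $\PV(X,End(E))\simeq\PV(X)\hat\otimes_{\cA(X)}\cA(X,End(E))$; these are handled uniformly by choosing $\alpha$ and $\fz$ of pure bidegree and verifying the identity on a decomposable $\fz=\omega'\otimes s$ with $\omega'\in\cB^d(X)\otimes\cA(X)$, where both sides collapse to the same expression $(\Omega\lrcorner\omega'_{\text{top}})\cdot\alpha\cdot s$ after reorganizing factors. \qed
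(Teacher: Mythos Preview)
Your proof is correct and follows essentially the same approach as the paper's own proof: both reduce the two outer expressions to $\lambda_\Omega(e^{-\lambda L}\omega\,\Pi_\ba\,\alpha)$ by unwinding the definitions of $f_\ba^B$ and $e_\ba^{(\lambda)}$, using left $\PV(X)$-linearity of the disk supertrace, and the identity $\lambda_\Omega=\Omega\lrcorner_0\circ\str=\str\circ(\Omega\lrcorner_0)$. Your explicit isolation of the right $\cA(X,End(E))$-linearity of $\Omega\lrcorner_0$ (your fact (iii)) makes transparent the one step the paper leaves implicit in its second chain of equalities.
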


\begin{proof}
Using the definitions of $f_\ba$ and $e_\ba^{(\lambda)}$, we compute: 
\be
\Omega\lrcorner_0 \big[e^{-\lambda L} \omega f_\ba^B(\alpha)\big]=\Omega\lrcorner_0 \big[ e^{-\lambda L}\omega~\str(\Pi_\ba\alpha)\big]=\lambda_\Omega(e^{-\lambda L}\omega\Pi_\ba \alpha)
\ee
and:
\be
\str\big[e^{-\lambda L_\ba} e_\ba^{(\lambda)} (\omega)\alpha\big]=\str\big(\Omega\lrcorner_0 \big[e^{-\lambda L} \omega \Pi_\ba\big]\alpha \big)=
\Omega\lrcorner_0 \big[e^{-\lambda L} \omega~\str(\Pi_\ba \alpha)\big]=\lambda_\Omega(e^{-\lambda L}\omega\Pi_\ba \alpha)~~,
\ee
showing that \eqref{adj} holds. \qed
\end{proof}

\

\begin{Proposition}
\label{prop:adj}
Let $\lambda\geq 0$. For any $\omega\in \PV_c(X)$ and any
$\alpha\in \End_{\DF_c(X,W)}(a)$, we have:
\ben
\label{adjointness}
\Tr^{(\lambda)}(\omega f_\ba^B(\alpha))=\tr_\ba^{(\lambda)} (e_\ba^{(\lambda)} (\omega)\alpha)=\int_\Omega \lambda_\Omega(\omega \Pi_\ba \alpha)~~.
\een
\end{Proposition}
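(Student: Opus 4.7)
The plan is to integrate the three-way local identity \eqref{adj} against $\Omega$ over $X$. Compact support of $\omega$ (or $\alpha$) guarantees the integrands have compact support, so all integrals are well-defined. Applying $\int_X \Omega \wedge (\cdot)$ to the leftmost member of \eqref{adj}, the definition \eqref{TrRedCont} of $\Tr_B$ together with $\Tr^{(\lambda)} = \Tr_B\circ U_G(\lambda)$ identifies the result with $\Tr^{(\lambda)}(\omega f_\ba^B(\alpha))$. The same operation on the middle member yields $\tr_a^B(e^{-\lambda L_\ba}e_\ba^{(\lambda)}(\omega)\alpha)$ by \eqref{trdef}, which equals $\tr_\ba^{(\lambda)}(e_\ba^{(\lambda)}(\omega)\alpha)$ via $\tr_\ba^{(\lambda)} = \tr_a^B \circ U_\ba^G(\lambda)$. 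This already establishes the first equality in \eqref{adjointness}.

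Integrating the rightmost member of \eqref{adj} yields $\int_\Omega\lambda_\Omega(e^{-\lambda L}\omega\Pi_\ba\alpha)$, so the remaining equality reduces to the $\lambda$-independence identity
\be
\int_\Omega\lambda_\Omega(e^{-\lambda L}\omega\Pi_\ba\alpha) = \int_\Omega\lambda_\Omega(\omega\Pi_\ba\alpha)~~.
\ee
For this step I would use Proposition \ref{prop:expL} to write $e^{-\lambda L} - 1 = -\Delta_a S_G(\lambda)$ inside $\PV(X,End(E))$ (tensoring $S_G(\lambda)$ with $\id_E$, and noting that $\Delta_a$ restricts to $\updelta_W$ on the pure $\PV(X)$ summand). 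The graded Leibniz rule for $\Delta_a$ combined with the closure $\Delta_a\Pi_\ba=0$ (Proposition \ref{prop:Aclosed}) decomposes the integrand difference into a $\Delta_a$-exact term plus a residual $-S_G(\lambda)\Delta_a(\omega\Pi_\ba\alpha)$. Lemma \ref{lemma:DeltaContr}, extended to $\PV(X,End(E))$, tells us $\lambda_\Omega$ is a chain map of $\Z_2$-degree $\mu$ from $\Delta_a$ to $\bpd$; combined with Stokes' theorem on the compactly supported output, this annihilates the first piece upon integration.

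The hard part will be showing that the residual $\int_\Omega\lambda_\Omega\bigl(S_G(\lambda)\Delta_a(\omega\Pi_\ba\alpha)\bigr)$ also vanishes. Using $\Delta_a(\omega\Pi_\ba\alpha) = \updelta_W(\omega)\Pi_\ba\alpha + (-1)^{\deg\omega+\mu}\omega\Pi_\ba\updelta_a(\alpha)$, this residual splits into two pieces neither of which is obviously zero. One expects to handle them inductively by iterated integration-by-parts, leveraging that $S_G(\lambda) \in \PV^{-1}(X)$ lowers canonical $\Z$-degree at each step and that products with $S_G(\lambda)$ are nilpotent in the polyvector direction, eventually bottoming out when the remaining contributions land in bidegrees on which $\lambda_\Omega$ vanishes for type reasons. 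This inductive cancellation, with careful bidegree bookkeeping, is the technical heart of the argument.
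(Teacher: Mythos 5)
Your first paragraph is exactly the paper's proof: the paper disposes of the proposition by applying $\int_\Omega$ to the three members of \eqref{adj} and invoking \eqref{TrRedCont}, \eqref{Tr_Tr0}, \eqref{trdef} and the definition of $\tr_\ba^{(\lambda)}$, which gives the first equality in \eqref{adjointness} and identifies both traces with $\int_\Omega \lambda_\Omega\big(e^{-\lambda L}\omega \Pi_\ba \alpha\big)$; the paper stops there.

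The remainder of your argument contains a genuine gap, and it cannot be closed along the lines you propose, because the identity you are chasing, $\int_\Omega \lambda_\Omega\big(e^{-\lambda L}\omega \Pi_\ba \alpha\big)=\int_\Omega \lambda_\Omega(\omega \Pi_\ba \alpha)$ for \emph{arbitrary} compactly supported $\omega$ and $\alpha$, is false for $\lambda>0$. The residual term $S_G(\lambda)\,\Delta_a(\omega\Pi_\ba\alpha)$ that you isolate genuinely survives: take $\omega\in\PV_c^{0,0}(X)$ a non-negative bump function supported away from $Z_W$ and $\alpha=\rho\otimes P^\0$ with $\rho\in\cA^d_c(X)$ and $P^\0$ the projector onto $E^\0$. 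A bidegree count shows that once $\alpha$ carries form rank $d$ and $\omega$ is a function, only the $(H_G)^0$ term of $e^{-\lambda L}$ and the all-$\theta$ component of $\Pi_\ba$ can contribute to the $\PV^{-d,d}$ part seen by $\int_\Omega\circ\,\Omega\lrcorner_0$, so the two sides reduce to $\pm\,\rk E^\0\int_X e^{-\lambda ||\pd W||_G^2}\,\omega\,\Omega\wedge\rho$ and $\pm\,\rk E^\0\int_X \omega\,\Omega\wedge\rho$ with the same sign; these differ for suitable $\rho$ since $||\pd W||_G^2>0$ on $\supp\omega$. The correct reading is that integrating \eqref{adj} produces the third member \emph{with} the tempering factor $e^{-\lambda L}$ inserted (the printed formula drops it), and that the undressed formula $\int_\Omega\lambda_\Omega(\omega\Pi_\ba\alpha)$ is recovered precisely when $\omega$ is $\updelta_W$-closed and $\alpha$ is $\updelta_a$-closed: then $\omega f_\ba^B(\alpha)$ and $e_\ba^{(\lambda)}(\omega)\alpha$ are closed, Propositions \ref{prop:OnShellTr} and \ref{prop:onshellbdrytr} let you drop the temperings, and the canonical ($\lambda=0$) adjointness relation gives the stated right-hand side; this closed case is all that is used in Corollary \ref{cor:adj}. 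So keep your first paragraph, and replace the proposed induction by this observation (or by the insertion of $e^{-\lambda L}$ in the last member), rather than attempting further bidegree bookkeeping.
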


\begin{proof}
Follows immediately by applying $\int_\Omega$ to both sides of relation \eqref{adj}.~\qed
\end{proof}

\

\begin{Corollary}
\label{cor:adj}
Let $\ba=(E,h, D)$ be a Hermitian holomorphic factorization of
$W$. For any $\omega\in \HPV_c(X,W)$ and any
$\alpha\in \End_{\HDF_c(X,W)}(a)$, we have:
\be
\Tr_c(\omega f_a (\alpha))=\tr_a^c(e_a(\omega)\alpha)~~.
\ee
When the critical set $Z_W$ is compact, we have: 
\be
\Tr(\omega f_a (\alpha))=\tr_a(e_a(\omega)\alpha)~~,~~\forall \omega\in \HPV(X,W)~~,
~~\forall \alpha\in \End_{\HDF(X,W)}(a)~~.
\ee
\end{Corollary}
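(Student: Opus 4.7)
The plan is to deduce the corollary directly from Proposition \ref{prop:adj} by passing to cohomology. Fix cocycle representatives $\omega \in \PV_c(X)$ with $\updelta_W \omega = 0$ and $\alpha \in \End_{\DF_c(X,W)}(a)$ with $\updelta_a \alpha = 0$, together with an admissible Hermitian metric $h$ on $E$ so that $\ba = (E,h,D)$ is a Hermitian holomorphic factorization with underlying $a$. Apply Proposition \ref{prop:adj} at $\lambda = 0$ (any $\lambda \geq 0$ would do) to obtain the cochain level identity
\[
\Tr^{(0)}(\omega f_\ba^B(\alpha)) \;=\; \tr_\ba^{(0)}(e_\ba^{(0)}(\omega)\alpha)~~.
\]

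The key observation is that both sides descend to cohomology. First, $f_\ba^B(\alpha)$ is $\updelta_W$-closed since $\updelta_W \circ f_\ba^B = (-1)^d f_\ba^B \circ \updelta_a$ and $\updelta_a \alpha = 0$; as $\omega$ is $\updelta_W$-closed and $\updelta_W$ is a derivation of $\PV_c(X)$, the product $\omega f_\ba^B(\alpha)$ lies in $\ker \updelta_W \cap \PV_c(X)$. Similarly, $e_\ba^B(\omega) \in \End_{\DF_c(X,W)}(a)$ is $\updelta_a$-closed by Proposition \ref{prop:eBclosed}, and hence $e_\ba^B(\omega)\alpha$ is $\updelta_a$-closed. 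By Proposition \ref{prop:OnShellTr}, the value $\Tr^{(0)}(\omega f_\ba^B(\alpha))$ equals $\Tr_B(\omega f_\ba^B(\alpha))$ and by construction depends only on the class $[\omega f_\ba^B(\alpha)]$ in $\HPV_c(X,W)$; this class is precisely $[\omega] \cdot f_a([\alpha])$. Similarly, by Proposition \ref{prop:onshellbdrytr}, the right-hand side equals $\tr_a^B(e_\ba^B(\omega)\alpha)$ and depends only on the class $[e_\ba^B(\omega)\alpha] = e_a([\omega]) \cdot [\alpha]$ in $\End_{\HDF_c(X,W)}(a)$, yielding
\[
\Tr_c\big([\omega] f_a([\alpha])\big) \;=\; \tr_a^c\big(e_a([\omega])[\alpha]\big)~~.
\]

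The only subtlety in this first step is confirming that the tempered evaluations reduce correctly to the cohomological traces and that $f_a, e_a$ are indeed the maps induced by $f_\ba^B, e_\ba^B$; both facts are explicitly recorded in the definitions and in the metric-independence proposition preceding the corollary (which in particular allows us to drop the subscript $\ba$ in favor of $a$).

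For the second assertion, assume $Z_W$ is compact. Then Proposition \ref{prop:i} gives an isomorphism $i_\ast : \HPV_c(X,W) \xrightarrow{\sim} \HPV(X,W)$ and Proposition \ref{prop:j} gives an isomorphism $j_{\ast,a} : \End_{\HDF_c(X,W)}(a) \xrightarrow{\sim} \End_{\HDF(X,W)}(a)$. By definition $\Tr = \Tr_c \circ i_\ast^{-1}$ and $\tr_a = \tr_a^c \circ j_{\ast,a}^{-1}$, while $f_a$ and $e_a$ on the full (non-compactly supported) cohomology are obtained from $f_\ba^B$ and $e_\ba^B$. Given arbitrary classes $[\omega] \in \HPV(X,W)$ and $[\alpha] \in \End_{\HDF(X,W)}(a)$, choose preimages $[\omega_c] \in \HPV_c(X,W)$ and $[\alpha_c] \in \End_{\HDF_c(X,W)}(a)$ under $i_\ast$ and $j_{\ast,a}$; the compatibility of $f_a$ and $e_a$ with these isomorphisms, together with the first part applied to $\omega_c, \alpha_c$, then yields
\[
\Tr\big([\omega] f_a([\alpha])\big) \;=\; \tr_a\big(e_a([\omega])[\alpha]\big)~~,
\]
completing the proof. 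The main (already-dispatched) obstacle is verifying that the multiplicative structures and adjointness survive the transfer via $i_\ast$ and $j_{\ast,a}$; this is immediate once one notes that $i$ and $j$ are morphisms of (dg-)algebras/categories and that the cochain level maps $f_\ba^B, e_\ba^B$ preserve the compact-support subspaces when applied to compactly supported inputs.
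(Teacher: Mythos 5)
Your proposal is correct and follows essentially the same route as the paper: the paper also deduces the first identity directly from Proposition \ref{prop:adj} (evaluated on closed representatives, using that the tempered traces and tempered maps induce the cohomological ones), and obtains the second identity by transporting it through the isomorphisms of Propositions \ref{prop:i} and \ref{prop:j} together with the definitions of $\Tr$ and $\tr_a$. Your write-up merely makes explicit the closure and compact-support checks that the paper leaves implicit.
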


\begin{proof}
The first statement follows immediately from Proposition
\ref{prop:adj}. This implies the second statement upon using the
definitions of $\Tr$ and $\tr_a$ and Propositions \ref{prop:i} and
\ref{prop:j}.~\qed
\end{proof}

\section{Proof of the main theorem}
\label{sec:Datum}

 Assume that $W$ has compact critical set. Let $f:=
(f_a)_{a\in \Ob \HDF(X)}$ and $e :=(e_a)_{a\in \Ob\HDF(X)}$. In
this case, Propositions \ref{prop:i} and \ref{prop:j} allow us to
transport traces $\Tr_c$ and $\tr^c\!:=\!(\tr_a^c)_{a\in \Ob\HDF(\!X,\!W\!)}$
into traces $\Tr$ and $\tr:= (\tr_a)_{a\in \Ob\HDF(X,W)}$ defined on
$\HPV(X,W)$ and $\HDF(X,W)$ respectively. Proposition
\ref{prop:bulkfd} shows that the algebra $\HPV(X,W)$ is
finite-dimensional over $\C$ while Proposition \ref{prop:boundaryfd}
shows that the category $\HDF(X,W)$ is Hom-finite over $\C$. On the
other hand, Proposition \ref{prop:cancyclic} implies that
$(\HDF_c(X,W),\tr^c)$ (and hence also $(\HDF(X,W),\tr)$) is a
pre-Calabi-Yau category. Combining these results gives Theorem
\ref{thm}. Moreover, Corollary \ref{cor:adj} implies that the
cohomological bulk-boundary maps are adjoint to the cohomological
boundary-bulk maps with respect to $\Tr$ and $\tr$.

In view of Theorem \ref{thm}, proving Conjecture \ref{conj} amounts to
showing that the cohomological bulk and boundary traces are
non-degenerate and that the topological Cardy constraint is satisfied
when the critical set of $W$ is compact. The first of these problems
is addressed in \cite{tserre}. When $X$ is Stein and the critical set
of $W$ is compact, the conjecture follows by adapting the results of
\cite{DM,PV,D}, as we sketch in the next section.

\section{Simplifications when $X$ is Stein}
\label{sec:Stein}

Suppose that $X$ is a Stein Calabi-Yau manifold\footnote{Notice that
any Stein manifold $X$ is K\"ahlerian since it can be embedded
holomorphically as a complex submanifold of some affine space $\C^N$;
a K\"ahler form on $X$ is then obtained by restricting the K\"ahler
form of $\C^N$.}  and that the critical locus $Z_W$ is compact. Then
$Z_W$ is necessarily finite since any compact analytic subset of a
Stein manifold is finite (see \cite[Chapter V.4, Theorem 3]{GR}). This
situation is studied in \cite{nlg2}, where it is shown that the
following statements hold for such {\em Stein LG pairs} $(X,W)$ with
finite critical set:
\begin{enumerate} \itemsep 0.0em
\item $\HPV(X,W)$ is concentrated in degree zero and isomorphic as an
$\O(X)$-algebra with the {\em Jacobi algebra}
$\Jac(X,W)=\O(X)/\J(X,W)$, where
$\J(X,W)=\ioda_W(\rGamma(X,TX))\subset \O(X)$ is the critical ideal of
$W$ (see \cite[Proposition 4.3]{nlg2}).
\item $\DF(X,W)$ is equivalent with the $\O(X)$-linear dg-category
$\F(X,W)$ whose objects are the holomorphic factorizations of $W$ but
whose morphism spaces from $a_1=(E_1,D_1)$ to $a_2=(E_2,D_2)$ are
given by the $\Z_2$-graded $\O(X)$-module $\rGamma(X,Hom(E_1,E_2))$ of
{\em holomorphic} sections of the bundle $Hom(E_1,E_2)=E_1^\vee\otimes
E_2$, endowed with the defect differential $\fd_{a_1,a_2}$ of Section
\ref{sec:DF} (see \cite[Theorem 5.4]{nlg2}). In particular,
$\HDF(X,W)$ is equivalent with the total cohomology category
$\HF(X,W)$ of $\F(X,W)$.
\item For any holomorphic factorization $a$ of $W$, we have a natural
isomorphism $\HPV(X,a)\!\simeq\!
\Jac(X,W)\otimes_{\O(X)} \End_{\HF(X,W)}(a)$ (see \cite[Proposition
6.4]{nlg2}).
\end{enumerate}
As shown in \cite{nlg2}, these statements follow by combining Cartan's
theorem B (see \cite[page 124]{GR}) with certain spectral sequence
arguments. The class of Stein manifolds contains that of non-singular
affine varieties over $\C$ since the analytic space associated to an
affine variety is Stein. However, most Stein manifolds are not
analyticizations of affine varieties. Thus models associated to Stein
LG pairs provide a wide extension of the class of affine
Landau-Ginzburg theories; see reference \cite{nlg2} for examples.

\subsection{Germs of holomorphic functions and of holomorphic sections}

For every point $p\in X$ and any holomorphic function $f\in \O(X)$,
let ${\hat f}_p\in \cO_{X,p}$ denote the germ of $f$ at $p$. Since we
assume that $X$ is connected, the morphism of $\C$-algebras
$\germ_p:\O(X)\rightarrow \cO_{X,p}$ defined through $\germ_p(f)\eqdef
{\hat f}_p$ is injective\footnote{This follows from the open mapping
theorem, which implies that the zero set of a holomorphic function
which is not identically zero on a connected complex manifold $X$ is a
closed subset of $X$ which has empty interior. Hence a holomorphic
function which vanishes on an open subset of $X$ must vanish
identically on $X$.}. This morphism is far from being surjective since
a holomorphic function defined on a small neighborhood of $p$ need not
extend holomorphically to all of $X$. Using local holomorphic charts
$(U_p,z^1_p,\ldots, z^d_p)$ of $X$ such that $p\in U_p$ and
$z^1_p(p)=\ldots=z^d_p(p)=0$ allows us to identify $\cO_{X,p}$ with
the ring $\C\{z^1,\ldots, z^d\}$ of power series in $d$ complex
variables which converge in a neighborhood of the origin of
$\C^d$. Recall that $\cO_{X,p}\simeq \C\{z^1,\ldots, z^d\}$ is a
Noetherian local ring which is a unique factorization domain (see
\cite[Chapter 2]{GRs}). Given a holomorphic vector bundle $E$ defined
over $X$ and a holomorphic section $s\in \rGamma(X,E)$, let ${\hat
s}_p\in \cO_{X,p}\otimes_\C E_p$ denote the germ of $s$ at $p$, where
$E_p$ denotes the fiber of $E$ at $p$.

\begin{remark} Notice that $\cO_{X,p}$ is much larger than the
localization of the ring $\O(X)$ at the multiplicative system given by
the complement of the maximal ideal $m_p(X)$ consisting of those
elements of $\O(X)$ which vanish at $p$. For example, the complex
plane $\C$ is Stein and Calabi-Yau, but $\C\{z\}$ is larger than the
localization of $\O(\C)$ at $m_0(\C)$ (consider the germ at the origin
of the analytic function $\log(1-z)$). This is quite different from
the case of affine algebraic varieties, whose sheaf of regular
functions has the property that its stalk at every closed point is the
localization of the coordinate ring at the multiplicative system given
by the complement of the corresponding maximal ideal.
\end{remark}

\subsection{Analytic Milnor algebras at the critical points}

Choosing local holomorphic coordinates $(z^1_p,\ldots, z^d_p)$ of $X$
centered at a critical point $p\in Z_W$ allows us to view the germ
${\hW}_p$ as an element ${\tW}_p$ of the $\C$-algebra $\C\{z^1,\ldots,
z^d\}$. Denoting $\frac{\pd {\hW}_p}{\pd z^i_p}\in \cO_{X,p}$ by
$\pd_i {\hW}_p$, let:
\be
\rM(\hW_p)\eqdef \frac{\cO_{X,p}}{\langle \pd_1 {\hW}_p,\ldots, \pd_d
{\hW}_p\rangle}\simeq \frac{\C\{z^1,\ldots, z^d\} }{\langle
\frac{\partial {\tW}_p}{\partial z^1}, \ldots, \frac{\partial
{\tW}_p}{\partial z^d}\rangle}~~
\ee
denote the analytic Milnor algebra of the analytic function germ
${\hW}_p$. Since each critical point $p$ of $W$ is isolated, its
Milnor number $\nu_p=\dim_\C \rM(\hW_p)$ is finite (see
\cite{Greuel,Arnold}). By Tougeron's finite determinacy theorem (see
op. cit.), the germ $\hW_p$ is right-equivalent with its jet
$j_{\nu_p+1}(\hW_p)$ of order $\nu_p+1$, which can be identified with
the polynomial ${\tw}_p\eqdef j_{\nu_p+1}({\tW}_p)\in
\C[z^1,\ldots, z^d]$. Equivalently, one can choose local holomorphic
coordinates $z^1_p,\ldots, z^d_p$ centered at $p$ such that ${\tW}_p$
is a polynomial of total degree $\nu_p+1$. This gives isomorphisms of
$\C\{z^1,\ldots, z^d\}$-algebras:
\be 
\rM({\tW}_p)\simeq \rM(j_{\nu_p+1}({\tW}_p))=\frac{\C\{z^1,\ldots,
z^d\}}{\langle \frac{\partial {\tw}_p}{\partial z^1}, \ldots,
\frac{\partial {\tw}_p}{\partial z^d}\rangle}~~,~~\forall p\in
Z_W~~.
\ee
The particular case $\nu_p=1$ arises when $p$ is a non-degenerate
critical point, i.e. when the Hessian of $W$ at $p$ is non-degenerate
(in this case, one also says that $W$ is ``holomorphic Morse'' at
$p$). Then $\tilde {w}_p$ is a quadratic polynomial and Tougeron's
theorem reduces to the holomorphic Morse lemma.

\begin{remark}
For any critical point $p\in Z_W$, let $\rM_a({\tw})\eqdef
\C[z^1,\ldots, z^d]/\langle \pd_1 {\tw}_p,\ldots, \pd_d {\tw}_p\rangle$ 
be the {\em algebraic} Milnor algebra of the polynomial
${\tw}_p\in \C[z^1,\ldots, z^d]$. The inclusion $\C[z^1,\ldots,
z^d]\subset \C\{z^1,\ldots, z^d\}$ induces a well-defined morphism of
$\C$-algebras $\pi_p:\rM_a({\tw}_p)\rightarrow \rM({\tw}_p)$
which is surjective. This follows from the fact that the ideal
$\langle \pd_1{\tw}_p,\ldots, \pd_d{\tw}_p\rangle$ has
finite codimension inside $\C\{z^1,\ldots, z^d\}$ (since the Milnor
number $\nu_p$ is finite), which implies that $\rM({\tw}_p)$
admits a monomial basis (see \cite[p. 111]{Dimca}). As shown in
loc. cit., this morphism is bijective when the polynomial ${\tilde
w}_p$ happens to be quasi-homogeneous of some positive integral degree
with respect to some choice of positive integral weights for the
variables $z^1,\ldots, z^p$. In this special situation, one can
identify the analytic Milnor algebra $\rM(\hW_p)\simeq \rM({\tw}_p)$ 
with the algebraic Milnor algebra $\rM_a({\tw}_p)$.  It is
known (see \cite{Saitoq} or \cite[page 120]{Dimca}) that ${\tw}_p$
 is quasi-homogeneous iff $\hW_p$ is a quasi-homogeneous germ in
the sense that it satisfies $\hW_p\in \langle \pd_1 \hW_p,\ldots,
\pd_d \hW_p\rangle$, which in turn happens iff the Milnor and Tyurina
numbers of $\hW_p$ coincide. Notice that much of the literature on
B-type Landau-Ginzburg models with a single isolated critical point
considers only quasi-homogeneous germs. This is appropriate for those
special Landau-Ginzburg models which are scale-invariant --- unlike
the more general models of the present paper, which are not subject to
any scale-invariance condition.
\end{remark}

\subsection{Presentation of the Jacobi algebra as a direct sum of analytic Milnor algebras}

For every critical point $p\in Z_W$, the map $\germ_p:\O(X)\rightarrow
\cO_{X,p}$ induces a well-defined morphism of $\O(X)$-algebras:
\be
\Lambda_p:\Jac(X,W)\rightarrow \rM({\hW}_p)~~.
\ee
Here the right hand side is viewed as an algebra over $\O(X)$ using the
obvious scalar multiplication:
\be
f q_p \eqdef {\hat f}_p q_p ~~,~~\forall f\in \O(X)~,~\forall q_p\in \rM({\hW}_p)~~.
\ee
Consider the morphism of $\O(X)$-algebras defined through:
\be
\Lambda\eqdef \oplus_{p\in Z_W}\Lambda_p:\Jac(X,W)\rightarrow \oplus_{p\in Z_W}\rM({\hW}_p)~~.
\ee
The following result shows that, for Stein Landau-Ginzburg models with finite
critical set, the cohomological bulk algebra $\HPV(X,W)$ constructed
in this paper reduces to a direct sum of analytic Milnor algebras.

\

\begin{Proposition}
\label{prop:JacMilnor}
The map $\Lambda$ is an isomorphism of $\O(X)$-algebras. 
\end{Proposition}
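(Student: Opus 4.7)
The plan is to recast $\Lambda$ in sheaf-theoretic terms and then exploit Cartan's Theorem B together with the fact that $Z_W$ is finite. Define the coherent ideal sheaf $\cJ_W \subset \cO_X$ as the image of the intrinsic $\cO_X$-linear morphism $\ioda_W : \cT_X \to \cO_X$ (contraction with $-\i\pd W$); in any holomorphic chart with coordinates $(z^1,\ldots,z^d)$ this image is the ideal locally generated by $\pd_i W$, so its stalks are $(\cJ_W)_p = \langle \pd_1 \hat{W}_p,\ldots,\pd_d \hat{W}_p\rangle$ and the quotient sheaf $\cQ_W := \cO_X/\cJ_W$ is coherent with stalks $(\cQ_W)_p = \rM(\hat{W}_p)$ for $p \in Z_W$ and $(\cQ_W)_p = 0$ for $p \notin Z_W$.

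Next I would apply Cartan's Theorem B twice, using that $X$ is Stein. First, from the exact sequence $0 \to \ker(\ioda_W) \to \cT_X \xrightarrow{\ioda_W} \cJ_W \to 0$ of coherent sheaves, the vanishing $\rH^1(X, \ker(\ioda_W)) = 0$ shows that $\ioda_W : \rGamma(X,TX) \to \rH^0(X,\cJ_W)$ is surjective, so $\J(X,W) = \rH^0(X,\cJ_W)$. Second, from $0 \to \cJ_W \to \cO_X \to \cQ_W \to 0$ and $\rH^1(X,\cJ_W) = 0$, we get an $\O(X)$-algebra isomorphism
\be
\Jac(X,W) = \O(X)/\J(X,W) \;\xrightarrow{\;\sim\;}\; \rH^0(X,\cQ_W)~~.
\ee

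Now comes the key structural observation: since each $p \in Z_W$ is isolated (the Milnor numbers $\nu_p$ are finite by assumption), $\cQ_W$ is a coherent sheaf with finite support $Z_W$ and with finite-dimensional stalks $\rM(\hat{W}_p)$. Any such sheaf decomposes as a direct sum of skyscraper sheaves $\cQ_W \simeq \bigoplus_{p \in Z_W} (\iota_p)_\ast \rM(\hat{W}_p)$, where $\iota_p : \{p\} \hookrightarrow X$: one simply chooses pairwise disjoint open neighborhoods $U_p$ of the points of $Z_W$, outside of which $\cQ_W$ vanishes, and on each $U_p$ the sheaf is supported at $p$ with stalk $\rM(\hat{W}_p)$, hence equal to the skyscraper. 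Taking global sections yields $\rH^0(X,\cQ_W) \simeq \bigoplus_{p\in Z_W} \rM(\hat{W}_p)$ as $\O(X)$-algebras.

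Finally I would check that the composite isomorphism $\Jac(X,W) \xrightarrow{\sim} \rH^0(X,\cQ_W) \xrightarrow{\sim} \bigoplus_{p} \rM(\hat{W}_p)$ coincides with $\Lambda$: this is essentially tautological, since evaluating a class $[f]\in \Jac(X,W)$ on the skyscraper component at $p$ amounts to taking the germ $\hat{f}_p$ modulo $\langle\pd_1\hat{W}_p,\ldots,\pd_d\hat{W}_p\rangle$, which is exactly $\Lambda_p([f])$. The main obstacle is really the first Cartan B step, where one must verify that $\ker(\ioda_W)$ is coherent (it is, as a kernel of a morphism of coherent sheaves) so that Theorem B applies; everything else is a routine consequence of the Stein hypothesis and the finiteness of $Z_W$.
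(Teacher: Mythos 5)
Your argument is correct and follows essentially the same route as the paper: the short exact sequence $0\to\cJ_W\to\cO_X\to\cO_X/\cJ_W\to 0$ of coherent sheaves, Cartan's Theorem B on the Stein manifold $X$, and the decomposition of the quotient sheaf (supported on the finite set $Z_W$) into skyscrapers with stalks the analytic Milnor algebras $\rM(\hW_p)$, followed by the tautological check that the resulting isomorphism is $\Lambda$. Your additional application of Theorem B to $0\to\ker(\ioda_W)\to \cT X\to\cJ_W\to 0$, identifying $\J(X,W)$ with $\rH^0(X,\cJ_W)$, makes explicit a step that the paper's long exact sequence uses implicitly, and is a correct refinement rather than a different approach.
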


\begin{proof}
The critical sheaf $\cJ_W=\im(\ioda_W:TX\rightarrow \cO_X)\subset
\cO_X$ is the ideal sheaf of the analytic set $Z_W$ defined inside $X$
by the condition $\pd W=0$. We thus have a short exact sequence of
sheaves of $\cO_X$-modules:
\ben
\label{cJseq}
0\longrightarrow \cJ_W\longrightarrow \cO_X\longrightarrow \cO_{Z_W}\longrightarrow 0~~
\een  
which induces the long exact sequence: 
\be
0\longrightarrow \J(X,W)\longrightarrow \O(X)\longrightarrow \rH^0(X,\cO_{Z_W})\longrightarrow \rH^1(X,\cJ_W)\longrightarrow \ldots ~~.
\ee
The ideal sheaf $\cJ_W$ is coherent by Cartan's coherence theorem (see
\cite[Section II.4.4]{Demailly}). Since $X$ is Stein, this implies
$\rH^1(X,\cJ_W)=0$ by Cartan's theorem B (see \cite[page 124]{GR}),
thus giving an isomorphism $\Jac(X,W)\stackrel{\sim}{\rightarrow}
\rH^0(X,\cO_{Z_W})$.  Since $Z_W$ consists of a finite set of points,
its structure sheaf is a direct sum of skyscraper sheaves whose
nontrivial stalks are given by the analytic Milnor algebras $\rM({\hW}_p)$.  In
particular, we have $\cO_{Z_W,p}=\rM({\hW}_p)$ for all $p\in Z_W$ and
$\rH^0(X,\cO_{Z_W})=\oplus_{p\in Z_W} \rM({\hW}_p)$.  The map
$\Lambda$ defined above is the resulting isomorphism
$\Jac(X,W)\stackrel{\sim}{\rightarrow} \oplus_{p\in Z_W}
\rM({\hW}_p)$, which gives the conclusion. \qed
\end{proof}

\

\begin{remark}
\label{rem:ac} Since we assume that $X$ is connected, the germ ${\hW}_{p_0}$
 at any fixed critical point $p_0\in Z_W$ determines $W$
globally on $X$ and in particular it determines the germs
 ${\hW}_p$ at all other critical points $p\in Z_W$. Also note that
Tougeron's theorem does {\em not} hold for general non-isolated
singularities. Because of this, the local theories of analytic and
algebraic non-isolated singularities (which are relevant for the
models considered in this paper when $X$ is not Stein and $Z_W$ does
not consist of isolated points) are already rather different; this
difference is of course more pronounced when one considers global
situations. We refer the reader to \cite[page 122]{Arnold} for a
classical example (due to Whitney) of a non-isolated local analytic
singularity which cannot be reduced to algebraic form.
\end{remark}

\subsection{Matrix factorizations of the critical germs of $W$}

For any $p\in Z_W$, let $\MF(\cO_{X,p}, {\hW}_p)$ denote the
$\Z_2$-graded $\cO_{X,p}$-linear differential category of finite rank
matrix factorizations of ${\hW}_p$ over the commutative local ring
$\cO_{X,p}$. Its objects are pairs ${\hat a}=(M,Q)$, where
$M=M^\0\oplus M^\1$ is a $\Z_2$-graded free $\cO_{X,p}$-module whose
even and odd components $M^\0$ and $M^\1$ have finite rank and $Q$ is
an odd endomorphism of $M$ such that $Q^2={\hW}_p\otimes \id_M$. Given
two matrix factorizations ${\hat a}_1=(M_1,Q_1)$ and ${\hat
a}_2=(M_2,Q_2)$ of $\hW_p$, the dg-module
$\Hom_{\MF(\cO_{X,p},{\hW}_p)}({\hat a}_1,{\hat a}_2)$ is the
$\Z_2$-graded $\cO_{X,p}$-module
$\underline{\Hom}_{\cO_{X,p}}(M_1,M_2)$, endowed with the obvious odd
differential induced by $Q_1$ and $Q_2$. Let $\HMF(\cO_{X,p},{\hW}_p)$
be the total cohomology category of $\MF(\cO_{X,p},{\hW}_p)$. The
relations $Q_i^2={\hW}_p\id_{M_i}$ imply that the ideal $\langle
\pd_1{\hW}_p,\ldots, \pd_d{\hW}_p\rangle$ annihilates the
$\Z_2$-graded module $\Hom_{\HMF(\cO_{X,p},{\hW}_p)}({\hat a}_1,{\hat
a}_2)$. The latter can therefore be viewed as a graded module over
the analytic Milnor algebra $\rM({\hW}_p)$. Accordingly,
$\HMF(\cO_{X,p},{\hW}_p)$ can be viewed as an $\rM({\hW}_p)$-linear
$\Z_2$-graded category. Notice that $\MF(\cO_{X,p},{\hW}_p)$ and
$\HMF(\cO_{X,p},{\hW}_p)$ can also be viewed as $\O(X)$-linear
categories using the morphism $\germ_p$ and that
$\HMF(\cO_{X,p},{\hW}_p)$ can also be viewed as a $\Jac(X,W)$-linear
category using the morphism $\Lambda_p$.

\subsection{Presentation of $\HF(X,W)$ through matrix factorizations of the critical germs}

\

\

\begin{Definition}
Let $a=(E,D)$ be a holomorphic factorization of $W$. The {\em germ of
$a$ at the critical point $p\in Z_W$} is the matrix factorization 
${\hat a}_p$ of $\hW_p$ defined through:
\be
{\hat a}_p\eqdef (\cO_{X,p}\otimes_\C E_p, {\hD}_p)~~,
\ee
where $\cO_{X,p}\otimes_\C \!E_p$ is viewed as $\Z_2$-graded free
$\cO_{X,p}$-module of finite rank and ${\hD}_p\in\!
\cO_{X,p}\otimes \!\End_\C(\!E_p)\!\simeq_{\cO_{X,p}} \!\!\End_{\cO_{X,p}\!}(\cO_{X,p}\otimes_\C
E_p)$ denotes the germ of the holomorphic section $D\!\in\!
\rGamma(\!X\!,\!End(\!E)\!)$ at the point $p$.
\end{Definition}

\

\noindent Viewing $\MF(\cO_{X,p},\hW_p)$ as an $\O(X)$-linear
dg-category, consider the dg-functor $\Xi_p:\F(X,W)\rightarrow
\MF(\cO_{X_p},{\hW}_p)$ defined as follows:
\begin{enumerate} \itemsep 0.0em
\item For any $a\in \Ob\F(X,W)$, let $\Xi_p(a)\eqdef {\hat a}_p$\,.
\item For any $a_1=(E_1,D_1),~a_2=(E_2,D_2)\in \Ob\F(X,W)$ and any
$s\in \Hom_{\F(X,W)}(a_1,a_2)=\rGamma(X,Hom(E_1,E_2))$, let
$\Xi_p(s)\eqdef {\hat s}_p$\,.
\end{enumerate}

\noindent Also consider the dg-functor\footnote{Here and below $\vee$ denotes
the coproduct of dg-categories or additive categories, depending on
the context.}  $\Xi:\F(X,W)\rightarrow \vee_{p\in Z_W}
\MF(\cO_{X,p},{\hW}_p)$ given by:
\be
\Xi(a)\eqdef \oplus_{p\in Z_W} \Xi_p(a)~~,~~\Xi(s)\eqdef\oplus_{p\in Z_W}\Xi_p(s)~~.
\ee
We denote by $\Xi_\ast$ the functor induced by $\Xi$ on
cohomology. The relation $D^2=W\id_E$ implies that the critical ideal
of $W$ annihilates the $\O(X)$-modules
$\Hom_{\HF(X,W)}(a_1,a_2)$. This allows us to view $\HF(X,W)$ as
$\Jac(X,W)$-linear category and $\Xi_\ast$ as a $\Jac(X,W)$-linear
functor.

\

\begin{Proposition}
The $\Jac(X,W)$-linear functor: 
\be
\Xi_\ast:\HF(X,W)\rightarrow \vee_{p\in Z_W} \HMF(\cO_{X,p},{\hW}_p)~~.
\ee
is full and faithful. 
\end{Proposition}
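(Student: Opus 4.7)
My plan is to identify the morphism spaces on each side with the same object, namely the global sections of the cohomology sheaf of a sheafified defect complex, and then to use the finiteness of $Z_W$ together with Cartan's Theorem B to split this object as a direct sum of stalks giving exactly the matrix factorization hom-spaces.

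First I would sheafify the defect complex. Given $a_1=(E_1,D_1)$ and $a_2=(E_2,D_2)$, consider the locally-free coherent $\cO_X$-module $\cHom(\cE_1,\cE_2)=\cE_1^\vee\otimes_{\cO_X}\cE_2$, regarded as a $\Z_2$-graded sheaf with the parity induced by the superbundle gradings, and equip it with the $\cO_X$-linear odd differential $\fd$ defined on stalks by $\fd(s)=D_2\circ s-(-1)^{|s|}s\circ D_1$. Let $\cZ^\kappa$, $\cB^\kappa$, $\cH^\kappa$ denote the kernel, image and cohomology sheaves of this $\Z_2$-periodic complex of coherent sheaves; all are coherent, and on global sections they recover $\Hom_{\HF(X,W)}(a_1,a_2)$ by construction.

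Next I would apply Cartan's Theorem B. Since $X$ is Stein and all sheaves above are coherent, $H^i(X,\cF)=0$ for $i\geq 1$. Taking the global sections of the two short exact sequences $0\to\cZ^\kappa\to\cHom^\kappa\to\cB^{\kappa+1}\to 0$ and $0\to\cB^\kappa\to\cZ^\kappa\to\cH^\kappa\to 0$ gives, in each case, a short exact sequence of global sections. A formal diagram chase using these then yields a canonical $\Jac(X,W)$-linear isomorphism
\be
\Hom_{\HF(X,W)}(a_1,a_2)\;\cong\;\rGamma(X,\cH^\ast)\;=\;\bigoplus_{\kappa\in\Z_2}\rGamma(X,\cH^\kappa)~~.
\ee
I would then show that $\cH^\ast$ is supported on $Z_W$. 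For any $x\in X\setminus Z_W$, choose local holomorphic coordinates $z^1,\ldots,z^d$ near $x$ and an index $k$ with $\pd_kW(x)\neq 0$; the element $f\eqdef \frac{1}{\pd_k W}\pd_k D_2$ is a local holomorphic section of $End^\1(E_2)$ satisfying $\fd(f)=\id_{E_2}$, and left multiplication by $f$ gives a local contracting homotopy for $\fd$ on $\cHom(\cE_1,\cE_2)$. This is exactly the argument used in Proposition \ref{prop:boundaryfd} but now at the level of holomorphic sections, and it shows $\cH^\ast|_{X\setminus Z_W}=0$.

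Finally I would exploit that $Z_W$ is a finite set of points in the Stein manifold $X$. Since $\cH^\ast$ is coherent with support contained in the finite set $Z_W$, it is a finite direct sum of skyscraper sheaves, so
\be
\rGamma(X,\cH^\ast)\;\cong\;\bigoplus_{p\in Z_W}\cH^\ast_p~~,
\ee
where each stalk $\cH^\ast_p$ is the cohomology of the stalk complex $(\cO_{X,p}\otimes_\C End(E_p),\widehat{\fd}_p)$. Under the identification $\cO_{X,p}\otimes_\C End(E_p)\simeq \End_{\cO_{X,p}}(\cO_{X,p}\otimes_\C E_p)$, the stalk differential $\widehat{\fd}_p$ becomes the defect differential of the matrix factorization ${\hat a}_p=\Xi_p(a)$, so $\cH^\ast_p\cong \Hom_{\HMF(\cO_{X,p},\hW_p)}({\hat a}_{1,p},{\hat a}_{2,p})$. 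Tracing through the construction, the composite isomorphism $\Hom_{\HF(X,W)}(a_1,a_2)\xrightarrow{\sim}\bigoplus_{p\in Z_W}\Hom_{\HMF(\cO_{X,p},\hW_p)}({\hat a}_{1,p},{\hat a}_{2,p})$ coincides with $\Xi_\ast$ on hom-spaces, whence $\Xi_\ast$ is full and faithful. The only delicate point will be verifying that the various canonical maps fit together and that the stalkwise identification is indeed implemented by $\Xi_\ast$; this is essentially bookkeeping, relying on the fact that $\germ_p$ commutes with the formation of kernels, images and cohomology for finitely generated complexes of coherent sheaves on a Stein manifold.
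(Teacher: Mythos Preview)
Your proof is correct and takes a genuinely different route from the paper's. The paper tensors the short exact sequence $0\to\cJ_W\to\cO_X\to\cO_{Z_W}\to 0$ with $\cHom(\cE_1,\cE_2)$ to obtain a short exact sequence of $2$-periodic complexes of coherent sheaves $0\to C_1^\bullet\to C_2^\bullet\to C_3^\bullet\to 0$, asserts that $C_1^\bullet$ is acyclic, and then uses Cartan~B to identify the periodic hypercohomologies $\H(C_2^\bullet)\simeq\Hom_{\HF(X,W)}(a_1,a_2)$ and $\H(C_3^\bullet)\simeq\bigoplus_p\Hom_{\HMF(\cO_{X,p},\hW_p)}(\hat a_{1,p},\hat a_{2,p})$. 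You instead work directly with the cohomology sheaves $\cH^\kappa$ of the defect complex $(\cHom(\cE_1,\cE_2),\fd)$: Cartan~B applied to the two short exact sequences for $\cZ^\kappa,\cB^\kappa,\cH^\kappa$ gives $\rGamma(X,\cH^\ast)\cong\Hom_{\HF(X,W)}(a_1,a_2)$, the contracting homotopy shows $\supp\cH^\ast\subset Z_W$, and exactness of stalks gives $\cH^\ast_p=\Hom_{\HMF(\cO_{X,p},\hW_p)}(\hat a_{1,p},\hat a_{2,p})$ on the nose.

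Your approach is more elementary in that it avoids hypercohomology and, more importantly, sidesteps the acyclicity of $C_1^\bullet$, which the paper states without proof. That acyclicity amounts to showing $H^\ast(J_pM_p,\fd)=0$ at each $p\in Z_W$ (equivalently, that the reduction map $H^\ast(M_p,\fd)\to H^\ast(M_p/J_pM_p,\bar\fd)$ is an isomorphism); this is true but requires an inductive argument using that $\pd_1\hW_p,\ldots,\pd_d\hW_p$ is a regular sequence, and is not as immediate as the paper suggests. By identifying the stalk of $\cH^\ast$ directly with the cohomology of the \emph{full} stalk complex $M_p$ rather than of its quotient $M_p/J_pM_p$, you bypass this entirely. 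The paper's route, on the other hand, makes the passage through $\cO_{Z_W}$ (and hence the $\Jac(X,W)$-module structure) more visibly parallel to the bulk computation in Proposition~\ref{prop:JacMilnor}. One small correction: in your step~6 you wrote $End(E_p)$ where you meant $\Hom_\C((E_1)_p,(E_2)_p)$.
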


\begin{proof} 
Let $a_1=(E_1,D_1)$ and $a_2=(E_2,D_2)$ be two holomorphic
factorizations of $W$. Since the sheaf ${\cal H}om(E_1,E_2)$ of local
holomorphic sections of the bundle $Hom(E_1,E_2)$ is locally free (and
hence flat), the short exact sequence \eqref{cJseq} induces a short
exact sequence of sheaves of $\cO_X$-modules:
\be
0\longrightarrow \cJ_W\otimes_{\cO_X} {\cal H}om(E_1,E_2)\longrightarrow {\cal H}om(E_1,E_2)\longrightarrow \cO_{Z_W}\otimes_{\cO_X} {\cal H}om(E_1,E_2)\longrightarrow 0~~.
\ee
In turn, this induces a short exact sequence in the Abelian category
of unbounded cochain complexes of sheaves of $\cO_X$-modules:
\ben
\label{Cseq}
0\longrightarrow C_1^\bullet \longrightarrow C_2^\bullet\longrightarrow C_3^\bullet \longrightarrow 0~~,
\een
where $C_i^\bullet$ are the 2-periodic complexes with differential $\fd_{a_1,a_2}$ and components given by:
\ben
\label{Cdef}
\!\!C_1^k\eqdef \cJ_W\otimes_{\cO_X} \!{\cal H}om(E_1,E_2)^{\hat k}~,~~C_2^k\eqdef\! {\cal H}om(E_1,E_2)^{\hat k}~,~~C_3^k\eqdef \cO_{Z_W}\otimes_{\cO_X} \!{\cal H}om(E_1,E_2)^{\hat k}~.
\een
Here $k\in \Z$ and ${\hat k}\eqdef k \, \mod \, 2\in \Z_2$. Since the
complex $(C_1^\bullet,\fd_{a_1,a_2})$ is acyclic, the short exact
sequence \eqref{Cseq} gives isomorphisms $\H(C_2^\bullet)\!\stackrel{\sim}{\rightarrow}\!
\H(C_3^\bullet)$, where $\H$ denotes the ``periodic hypercohomology''
of the corresponding complex. Since $X$ is Stein and the sheaves in
\eqref{Cdef} are coherent, Cartan's theorem B gives
$\H(C_2^\bullet)\!\simeq\! \Hom_{\HF(X,W)}(a_1,\!a_2\!)$ and
$\H(C_3^\bullet)\!\simeq\! \oplus_{p\in Z_W}\Hom_{\HMF(\cO_{X,p},\hW_p)}({\hat a}_1,{\hat
a}_2)$. Thus we have an isomorphism of $\Z_2$-graded $\Jac(X,W)$-modules:
\be
\Hom_{\HF(X,W)}(a_1,a_2) \stackrel{\sim}{\rightarrow} \bigoplus_{p\in Z_W}\Hom_{\HMF(\cO_{X,p},\hW_p)}({\hat a}_1,{\hat a}_2)
\ee 
which corresponds to the action of the functor $\Xi$ on
morphisms. This shows that $\Xi_\ast$ is full and faithful. 
\qed
\end{proof}

\subsection{Presentation of the bulk and boundary maps}

Let $\Omega$ be a holomorphic volume form on $X$. The 
germ of $\Omega$ at any critical point $p\in Z_W$ can be written as: 
\be
{\hOmega}_p={\hat \varphi}_p\dd z_p^1\wedge \ldots \wedge \dd z_p^d~~,
\ee
where $\hat\varphi_p\in \cO_{X,p}$ is an invertible holomorphic function germ at $p$. 
For any admissible Hermitian metric $h$ on $E$, the
cohomological disk kernel of the Hermitian holomorphic factorization $\ba=(E,h,D)$
reduces to the following quantity modulo the identifications discussed above:
\be
\Pi_\ba\equiv\frac{\i^d}{d!} \bigoplus_{p\in Z_W}{\det}_{{
\hOmega}_p} (\pd {\hD}_p)= \frac{\i^d}{d!{\hat
\varphi}_p}\epsilon^{i_1\ldots i_d}\bigoplus_{p\in Z_W} \pd_{i_1}{
\hD}_p\ldots \pd_{i_d} {\hD}_p \in \bigoplus_{p\in
Z_W}\End_{\HMF(\cO_{X,p},\hW_p)}({\hat a}_p)~~.
\ee
Moreover, the cohomological boundary-bulk and bulk-boundary
maps of $\ba$ reduce to\footnote{The sign factor in the second
equality above arises from the equality ${\hOmega}_p\lrcorner_0
\det_{{\hOmega}_p} \theta=d!  (-1)^{\frac{d(d-1)}{2}}$, which
itself follows from the relation $(\dd z^1_p\wedge \ldots \wedge \dd
z^d_p)\lrcorner (\pd_1\wedge \ldots \pd_d)=
(-1)^{\frac{d(d-1)}{2}}$.}:
\beqa
&& f_\ba(s)\equiv \frac{\i^d}{d!}\bigoplus_{p\in Z_W}\str\left({\det}_{{\hOmega}_p}(\pd {\hD}_p) \, {\hat s}_p\right)~~\\
&& e_\ba(f)\equiv \i^d (-1)^{\frac{d(d-1)}{2}}\bigoplus_{p\in Z_W} {\hat f}_p \id_{E_p}~~.
\eeqa
Here $f\in \Jac(X,W)=\O(X)/\J(X,W)$ identifies with the family of germs
$({\hat f}_p)_{p\in Z_W}$, while $s\in \End_{\HF(X,W)}(a)$ identifies
with the family of germs $({\hat s}_p)_{p\in Z_W}$. As explained in
the previous sections, the cohomological traces are equal on-shell
with their tempered versions, which are obtained by inserting
appropriate $\lambda$-dependent quantities in the expressions
above. In the limit $\lambda\rightarrow +\infty$, one finds as in
\cite{LG2} that the cohomological bulk trace can be written as:
\ben
\label{Trconj}
\Tr(f)=\sum_{p\in Z_W} A_p\, \Res_p\left[\frac{{\hat f}_p{\hOmega}_p}{{\det}_{{\hOmega}_p}(\pd W)}\right]~~,
\een
while the cohomological boundary trace of the holomorphic
factorization $a=(E,D)$ of $W$ can be expressed as a sum of traces of
generalized Kapustin-Li type (see \cite{LG2,KL}):
\ben
\label{trconj}
\tr_a(s)=\frac{(-1)^{\frac{d(d-1)}{2}}}{d!}\sum_{p\in Z_W} 
A_p\, \Res_p\left[\frac{\str\left({\det}_{{\hOmega}_p}(\pd {\hD}_p) {\hat s}_p\right){\hOmega}_p}
{{\det}_{{\hOmega}_p}(\pd \hW_p)}\right]~~.
\een
Here $A_p$ are normalization constants and $\Res_p$ denotes the
Grothendieck residue on $\cO_{X,p}$. 

\begin{remark}
The quantity ${\det}_{{\hOmega}_p}(\pd {\hD}_p)$ is odd since $D$
is odd. Hence $\str\left({\det}_{{\hOmega}_p}(\pd {\hD}_p) {\hat
s}_p\right)$ vanishes unless ${\hat s}_p$ has $\Z_2$-degree equal to
$\mu(X,W)=d\, \mod \, 2$, in which case
$\str\left({\det}_{{\hOmega}_p}(\pd {\hD}_p) {\hat
s}_p\right)=(-1)^d \str\left({\hat s}_p {\det}_{{\hOmega}_p}(\pd
{\hD}_p)\right)$, where we noticed that $d^2=d \, \mod \, 2$. Thus
\eqref{trconj} can also be written as:
\be
\tr_a(s)=\frac{(-1)^{\frac{d(d+1)}{2}}}{d!}\sum_{p\in Z_W} 
A_p\, \Res_p\left[\frac{\str\left({\hat s}_p {\det}_{{\hOmega}_p}(\pd {\hD}_p)\right){\hOmega}_p}
{{\det}_{{\hOmega}_p}(\pd \hW_p)}\right]~~.
\ee
\end{remark}

\

\noindent The relations above show that, for Stein Landau-Ginzburg
models with finite critical set, non-degeneracy of the cohomological
bulk trace corresponds to that of the Grothendieck trace (which is
well-known), while that of the cohomological boundary traces follows
as in \cite{DM,PV}. Moreover, the topological Cardy
constraint \eqref{Cardy} corresponds to the ``Hirzebruch Riemann-Roch
formula for matrix factorizations'' which was discussed rigorously in
\cite{PV}. The special case $X=\C^d$ with $\Omega=\dd z^1\wedge \ldots \wedge
\dd z^d$ and $W$ a holomorphic function with a single critical point
recovers the formula proposed in \cite{KL} for the cohomological boundary trace.

We note that our point of view on non-degeneracy of traces is rather
different from that adopted in \cite{DM,PV}. In \cite{tserre}, we
prove non-degeneracy of the cohomological bulk and boundary traces in
full generality by starting with expressions \eqref{intOmega} and
\eqref{trdef} of the present paper and adapting the argument used in
\cite{Serre}; this approach also applies to situations when $X$ is not
Stein and the critical locus of $W$ does not consist of isolated
points. The study of bulk and boundary traces is thereby replaced by
the problem of extracting residue formulas generalizing \eqref{Trconj}
and \eqref{trconj} to this much wider setting. This reflects the point
of view (already advocated in \cite{LG2} and also pursued in
\cite{DS3}) that the degenerate limit when the localization parameter
$\lambda$ is taken to infinity obscures the geometry of the problem,
hence this limit should be taken with the sole purpose of extracting
residue descriptions for the worldsheet correlators of the topological
model. In this approach, residue expressions are not taken as the
starting point for defining the model, but as an equivalent
description of its on-shell (i.e., cohomological) data in the
``infrared limit'', a limit in which the worldsheet of the model
``localizes'' on the critical set.

\section{Conclusions and further directions}
\label{sec:conclusions}

We gave a rigorous construction of candidates for off-shell models
of the defining datum of the open-closed topological field theory
defined by a general Landau-Ginzburg pair $(X,W)$, where $X$ is a
non-compact K\"ahlerian manifold and $W$ is any non-constant
holomorphic function defined on $X$, and proved that the axioms of an
open-closed TFT datum defined are satisfied by this datum at the
cohomology level, except for non-degeneracy of the bulk and boundary
traces and for the topological Cardy constraint.  As we show in
\cite{tserre}, non-degeneracy of the bulk and boundary traces can be
proved by appropriately adapting Serre's argument from \cite{Serre} and
combining it with certain spectral sequence arguments. On the other
hand, the topological Cardy constraint can be viewed as a ``deformed''
variant of the Hirzebruch-Riemann-Roch theorem, a subject on which we
hope to report in the future. 

While the framework presented in this paper is both extremely general
and physically fully motivated by the path integral arguments of
\cite{LG1,LG2}, our differential-geometric formulation of the TFT
datum is not fully amenable to explicit computation except in special
situations such as when $X$ is Stein \cite{nlg2}. In the general case
when $X$ is not Stein and the critical locus $Z_W$ is compact but not
reduced to a finite set of points, we expect that the distributional
limit of the tamed bulk and boundary traces introduced in this paper
when the real parameter $\lambda$ is taken to $+\infty$ produces a
current which is closely related to those considered in \cite{AW}. It
would be interesting to fully elucidate this connection.

Another direction in which our approach could be extended concerns the
theory of Landau-Ginzburg models with defects (see \cite{CR,CM}), which
is currently understood globally only when $X$ is an affine
space. Based on the general structure of two-dimensional topological
field theories with defects, one expects the existence of a bicategory
whose objects are K\"ahlerian Landau-Ginzburg pairs $(X,W)$ with
compact critical set and whose category of 1-morphisms from $(X_1,W_1)$ to
$(X_2,W_2)$ is the twisted Dolbeault category of factorizations
$\HDF(X_{12}, W_{12})$ of the LG pair $(X_{12},W_{12})$ defined
through $X_{12}\eqdef X_1\times X_2$, and $W_{12}\eqdef W_2\circ
\pi_2-W_1\circ \pi_1$, where $\pi_i:X_{12}\rightarrow X_i$ are the
canonical projections. The horizontal composition $a_{23}\odot
a_{12}\in \Ob\HDF(X_{13},W_{13})$ of two 1-morphisms $a_{12}\in \Ob
\HDF(X_{12},W_{12})$ and $a_{23}\in \Ob \HDF(X_{23},W_{23})$ should
describe the ``fusion product'' of defects; this could be constructed
using a ``twisted'' analogue of the Fourier-Mukai transform.  The
axiomatic description of open-closed topological field theory with
defects requires the existence of non-degenerate pairings (which
generalize the boundary traces considered in this paper) endowing this
bicategory with a pivotal structure generalizing that found in
\cite{CR,CM}. Such pairings can be constructed starting from the traces of
the present paper and their non-degeneracy could be proved by adapting
the argument of \cite{tserre}. The special sub-bicategory consisting of Stein
Landau-Ginzburg pairs is already quite rich. As simple but already interesting
objects, it includes Landau-Ginzburg pairs for which $X$ is an arbitrary
non-compact Riemann surface, a case which we consider in detail in
\cite{bezout,edd} from the perspective of non-Noetherian
commutative algebra.

Another interesting research direction concerns the problem of
determining the ``D-brane superpotential'' on the category $\HDF(X,W)$
when $(X,W)$ is a general LG pair in the sense of this paper. Since
$\DF(X,W)$ admits the traces $\tr_a$ (which are cohomologically
non-degenerate by the results of \cite{tserre}), the ``non-commutative
Darboux theorem'' of \cite{KS} ensures the existence of a strictly
cyclic minimal $A_\infty$ structure on $\HDF(X,W)$ which makes the
latter into a quasi-equivalent model of the dg-category $\DF(X,W)$.
When equipped with this $A_\infty$ structure, $\HDF(X,W)$ can be
interpreted as an open topological string field theory as in
\cite{HLL}, whose string field action plays the role of the ``D-brane
superpotential''. The D-brane
superpotential encodes the crucial information relevant for
homological mirror symmetry. More precisely, we expect that the mirror
of a general LG pair $(X,W)$ should be a (generally non-toric)
manifold $\breve{X}$, which has the property that the strictly cyclic
minimal model of its Fukaya category is quasi-equivalent with the
strictly cyclic minimal model of $\HDF(X,W)$. As in \cite{CIL}, this
amounts to existence of an equivalence of BV systems between the
corresponding topological open string field theories. Notice, however,
that the explicit computation of a strictly cyclic minimal model of
$\HDF(X,W)$ (and hence of the D-brane superpotential) is a non-trivial
problem already in the case when $X$ is Stein, given that the direct
construction of such a minimal model requires the choice of a special
cohomological splitting when applying the homological perturbation
lemma\footnote{See \cite{Nils} for some explicit results of this
type.}. While this lies well outside the scope of the present paper,
we mention that a different approach (which views the model
constructed in the present paper as a deformation of the open-closed
datum of the B-type topological sigma model with non-compact target
$X$ obtained by ``turning on'' the Landau-Ginzburg superpotential $W$)
may ultimately prove to be more fruitful computationally.

We also mention that the models considered in this paper (which, when $X$ is not Stein,
allow the critical set of $W$ to be non-discrete) may allow one to
give an effective description to at least some of the ``hybrid
models'' of \cite{Witten}. Should this indeed be the case, they 
could help clarify certain aspects of the dynamics of linear sigma
models which have remained rather mysterious until now.

Finally, we mention that the compactness condition on the critical
locus of $W$ is motivated by our requirement that the on-shell bulk
and boundary state spaces of the topological field theory be
finite-dimensional. Relaxing this condition requires that one views
the latter as topological vector spaces spaces, for which a version of
non-degeneracy of the bulk and boundary traces can still be proved
upon appropriately generalizing the argument of \cite{tserre}. In
order to pursue such an extension, one must however first address
the question of the proper mathematical definition of open-closed TFT
data in the infinite-dimensional context, a problem which has not yet
been addressed in the literature.


\appendix

\section{Some expressions in local holomorphic coordinates}
\label{app:coord}

\noindent Choosing local holomorphic coordinates $z=(z_1,\ldots, z_d)$
defined on an open subset $U\subset X$, set $\pd_i:=\frac{\pd}{\pd
  z_i}$ and $\bpd_i:=\frac{\pd}{\pd \bar z_i}$. We have:
\beqa 
&& T X|_{U}~={\rm Span}_\C \Big\{ \pd_1,\ldots, \pd_d\Big\}
 ~~~,~~~ \overline{T}X|_{U}~={\rm Span}_\C \Big\{\bpd_1,\ldots, \bpd_d\Big\}~~\\
&& T^\ast X|_{U}={\rm Span}_\C \{  \dd z_1,\ldots,  \dd z_d\}  ~~,
~~ \overline{T}^\ast X|_{U}={\rm Span}_\C \{  \dd \bar z_1,\ldots,  \dd \bar z_d\}~~. 
\eeqa
For any increasingly ordered subset $\{t_1,\ldots,t_i\}\subset
\{1,\ldots, d\}$ of size $|I|=i$, define:
\be
\dd z_I := \dd z_{t_1}\wedge \dd z_{t_2}\wedge\dots \wedge \dd  z_{t_i}~~,
~~~\pd_I:= \pd_{t_1}\wedge \ldots \wedge\pd_{t_i}
\ee
and their complex conjugates: 
\be
\dd\bar z_I := \dd\bar z_{t_1}\wedge \dd\bar z_{t_2}\wedge\dots \wedge \dd\bar z_{t_i}~~,
~~~\bpd_I:= \bpd_{t_1}\wedge \ldots \wedge\bpd_{t_i}~~.
\ee
For $\omega\in \PV^{-i,j}(X)$ and $\eta\in \PV^{-k,l}(X)$, we can expand:
\be
\omega =_U \sum_{|I|=i,|J|=j} \omega^I_{~J} \dd \bar z_J\otimes \pd_I~~,
~~~\eta =_U \sum_{|K|=k,|L|=l} \eta^K_{~L} \dd \bar z_L\otimes \pd_K
\ee
with $\omega^I_{~J} ,\eta^K_{~L} \in C^\infty(X)$. Then:
\be
\omega\wedge\eta=_U\sum_{I,J,K,L}(-1)^{il}\omega^I_{~J} \eta^K_{~L} 
(\dd \bar z_J\wedge\dd \bar z_L)\otimes(\pd_I\wedge\pd_K)~~,
\ee
where $|I|=i$, $|J|=j$, $|K|=k$ and $|L|=l$. We also have:
\be
\bpd \omega=_U\sum_{|I|=i,|J|=j}[(\bpd \omega^I_J)\wedge \dd \bar z_J] \otimes\pd_I
=\sum_{|I|=i,|J|=j}\sum_{r=1}^d(\bpd_r \omega^I_J)(\dd \bar z_r\wedge\dd\bar z_J)\otimes \pd_I\in \PV^{-i,j+1}(X)~~.
\ee
Let $\Omega$ be a holomorphic volume form on $X$. Then:
\be
\Omega=_U \varphi(z)\dd z^{1}\wedge \ldots \wedge \dd z^{d}~~
\ee
for some $\varphi\in \cO_X(U)$. The following local expansion holds
for any $\omega\in \PV^{-i,j}(X)$ (see \cite{LLS}):
\ben
\label{pdOmega}
\pd_\Omega\omega=_U\frac{1}{\varphi}\sum_{|I|=i,|J|=j}~\sum_{r=1}^d [\pd_r (\varphi \omega^I_J)](\dd z^r)\lrcorner 
(\dd \bar z_J \otimes \pd_I)~\in \PV^{-i+1,j}(X)~~.\\
\een
Let us prove relation \eqref{ioda_bracket}. Using \eqref{pdOmega} in
\eqref{OmegaBracket} and noticing that $\pd_\Omega W=0$ due to degree reasons, we compute:
\beqa
\{W,\omega\}_{_\Omega}&=_U& \pd_{\Omega}(W \omega)-(\pd_{\Omega}W)\wedge
\omega-(-1)^{|W|}W \pd_{\Omega}\omega=\pd_{\Omega}(W \omega)-W \pd_\Omega\omega= \\
&=&\frac{1}{\varphi}\sum_{|I|=i,|J|=j}\sum_{r=1}^d \Big[ \pd_r (\varphi W\omega^I_J)(\dd z^r) \lrcorner
(\dd \bar z_J \otimes\pd_I)- W \pd_r (\varphi \omega^I_J) (\dd z^r)\lrcorner (\dd \bar z_J \otimes\pd_I)  \Big] \\
&=&\frac{1}{\varphi} \sum_{|I|=i,|J|=j}\sum_{r=1}^d(\varphi (\pd_r W)\omega^I_J)(\dd z^r)\lrcorner  (\dd \bar z_J \otimes\pd_I) 
= \sum_{|I|=i,|J|=j}\sum_{r=1}^d(\pd_r W)(\dd z^r)\lrcorner \omega~~.
\eeqa
Recall that:
\be
\ioda_W=-\i\,\iota_{\pd W}=_U-\i\sum_{r=1}^d (\partial_r W)\dd z^r\lrcorner~~,
\ee
where we used the expansion $\pd W=_U\sum_{r=1}^d (\partial_r W)\dd z^r$~.
Comparing with the expression above gives \eqref{ioda_bracket}.

\section{Superconnection formulation of Landau-Ginzburg TFT data}
\label{app:sc}

\noindent The path integral argument of references \cite{LG1,LG2} is formulated
using a certain class of superconnections, leading to a
description of the category of topological D-branes which is
equivalent with that given in Section \ref{sec:DF}. The
equivalence of the two descriptions follows from the
Koszul-Malgrange correspondence, as we explain in this appendix.

\subsection{The Koszul-Malgrange correspondence}
\label{subsec:KM}
Recall that $\VB_\sm(X)$ denotes the $\cinf$-linear category of
finite rank locally-free sheaves of $\cC^\infty$-modules (whose
objects we identify with smooth vector bundles defined on $X$), while
$\VB(X)$ denotes the $\O(X)$-linear category of finite rank
locally-free sheaves of $\cO_X$-modules (whose objects we identify
with holomorphic vector bundles defined on $X$). The latter category
has a well-known description in terms of smooth complex vector bundles
endowed with integrable $(0,1)$-connections (see \cite{KM,Pali}), as we
recall below.

Given a complex vector bundle $S$ on $X$, a {\em $(0,1)$-connection}
on $S$ is a $\C$-linear map $\cD:\rGamma_\sm(X,S)=\cA^0(X,S)\rightarrow
\cA^1(X,S)$ which satisfies the Leibniz rule:
\be
\cD(f s)=(\bpd f)\otimes s+f (\cD s)~~,~~\forall f\in \cinf~,~\forall s\in \rGamma_\sm( X,S)~~.
\ee
A {\em Dolbeault derivation} is a $\C$-linear map
$\bbpd:\cA(X,S)\rightarrow \cA(X,S)$ which is homogeneous of degree
one with respect to the rank grading of $\cA(X,S)$ and satisfies:
\be
\bbpd(\rho \otimes s)=(\bpd \rho)\wedge s+(-1)^k\rho \otimes (\bbpd s)~~,
~~\forall \rho \in \cA^k(X)~,~\forall s\in \rGamma_\sm(X,S)~~.
\ee
Any Dolbeault derivation on $S$ restricts to a $(0,1)$-connection.
Conversely, any $(0,1)$-connection on $S$ extends uniquely to a Dolbeault
derivation. This correspondence gives a bijection between $(0,1)$-connections 
on $S$ and Dolbeault derivations on $S$.

A $(0,1)$-connection on $S$ is called {\em integrable} if its
Dolbeault derivation $\bbpd$ is a differential (i.e. if it satisfies $\bbpd^2=0$). 
A {\em Dolbeault pair} defined on $X$ is a pair $(S,\bbpd)$, where $S$ is a complex vector
bundle on $X$ and $\bbpd$ is an integrable Dolbeault derivation
on $S$.  Given two such pairs $(S_1,\bbpd_1)$ and $(S_2,\bbpd_2)$, a
{\em morphism of Dolbeault pairs} from $(S_1,\bbpd_1)$ to
$(S_2,\bbpd_2)$ is a morphism 
$f:S_1\rightarrow S_2$ of $\VB_\sm(X)$ such that $\bbpd_2\circ (\id_{\cA(X)}\otimes
f)=(\id_{\cA(X)}\otimes f)\circ \bbpd_1$. With these definitions,
Dolbeault pairs over $X$ and morphisms of such form a category which
we denote by $\cP(X)$.

Given a Dolbeault pair $(S,\bbpd)$, the locally-free sheaf of
$\cO_X$-modules $\cO_X(S,\bbpd)$ defined by $\cO_X(S,\bbpd)(U)\eqdef
\ker \bbpd|_{\rGamma_\sm(U,\cS)}$ for any open subset $U\subset X$
is called the sheaf of $\bbpd$-holomorphic sections of $S$. This
determines a holomorphic vector bundle $\cK(S,\bbpd)$ whose underlying
complex vector bundle equals $S$. A morphism of Dolbeault pairs
$f:(S_1,\bbpd_1)\rightarrow (S_2,\bbpd_2)$ induces a morphism between
the corresponding sheaves of $\bbpd$-holomorphic sections
$\cK(f):\cO(S_1,\bbpd_1)\rightarrow \cO(S_2,\bbpd_2)$, i.e. a morphism
$\cK(f)\in \Hom_{\VB(X)}(\cK(S_1,\bpd_1),\cK(S_2,\bpd_2))$ in the
category $\VB(X)$. This correspondence gives a functor
$\cK:\cP(X)\rightarrow \VB(X)$. Let $\Phi:\VB(X)\rightarrow
\VB_\sm(X)$ be the functor which forgets the holomorphic structure
and $\Psi:\cP(X)\rightarrow \VB_\sm(X)$ be the functor which
forgets the Dolbeault derivation.  We also have a functor
$\cM:\VB(X)\rightarrow \cP(X)$ which sends a holomorphic vector bundle
$E$ into the Dolbeault pair $\cM(E)\eqdef (\Phi(E), \bbpd_E)$ (where
$\bbpd_E$ is the Dolbeault derivation defined on $\Phi(E)$ by the
holomorphic structure of $E$) and sends a morphism $f$ of $\VB(X)$
into the underlying morphism $\cM(f)\eqdef \Phi(f)$ of complex vector
bundles. This functor satisfies $\Psi=\Phi\circ \cK$.  The
Koszul-Malgrange theorem (see \cite{KM,Pali}) states that $\cK$ and $\cM$
are mutually quasi-inverse equivalences of categories between $\cP(X)$
and $\VB(X)$.  In particular, the fiber of $\Phi$ at a complex vector
bundle $S$ can be identified with the set of all integrable $(0,1)$
connections on $S$.

\subsection{Complex vector superbundles}

\

\

\begin{Definition}
A {\em complex vector superbundle} on $X$ is a complex $\Z_2$-graded
vector bundle, i.e. a complex vector bundle $S$ endowed with a direct
sum decomposition $S=S^\0\oplus S^\1$, where $S^\0$ and $S^\1$ are complex
linear sub-bundles of $S$. 
\end{Definition}

\

\begin{Definition}
Let $\VB_\sm^s(X)$ denote the $\Z_2$-graded $\cinf$-linear category
defined as follows:
\begin{itemize}
\item The objects are the complex vector superbundles defined on $X$.
\item Given two complex vector superbundles $S$ and $T$ over $X$, the
  space of morphisms from $S$ to $T$ is the $\cinf$-module
  $\Hom_\sm(S,T)\eqdef \rGamma_\sm(X,Hom(S,T))$, endowed with
  the $\Z_2$-grading with homogeneous components:
\beqa
&& \Hom^\0_\sm(S,T)\eqdef \rGamma_\sm(X, Hom(S^\0,T^\0))\oplus \rGamma_\sm(X,Hom(S^\1,T^\1))~~\nn\\
&& \Hom^\1_\sm(S,T)\eqdef \rGamma_\sm(X, Hom(S^\0,T^\1))\oplus \rGamma_\sm(X, Hom(S^\1,T^\0))~~.
\eeqa
\item The composition of morphisms is induced by that of
  $\VB_\sm(X)$.
\end{itemize}
\end{Definition}

\

\noindent Let $\VB^{s,\0}_\sm(X)$ be the non-full subcategory of
$\VB_\sm^s(X)$ obtained by restricting to even morphisms. We
have an obvious equivalence of categories:
\be
\VB_\sm^{s,\0}(X)\simeq \VB_\sm(X)\times \VB_\sm(X)~~.
\ee
The {\em graded direct sum} of $S$ with $T$ is the direct sum $S\oplus
T$ of the underlying complex vector bundles, endowed with the
$\Z_2$-grading given by:
\be
(S\oplus T)^\kappa=S^\kappa\oplus T^\kappa ~~,~~\forall \kappa\in \Z_2~~. 
\ee
The {\em graded tensor product} of $S$ with $T$ is the ordinary tensor
product $S\otimes T$ of the underlying complex vector bundles, endowed
with the $\Z_2$-grading given by:
\beqa
& & (S\otimes T)^\0\eqdef (S^\0\otimes T^\0)\oplus (S^\1\otimes T^\1)~~\\
& & (S\otimes T)^\1\eqdef (S^\0\otimes T^\1)\oplus (S^\1\otimes T^\0)~~.
\eeqa
The {\em graded dual} of $S$ is the
complex vector superbundle whose underlying complex vector bundle is
the ordinary dual $S^\vee$ of $S$, endowed with the
$\Z_2$-grading given by:
\be
(S^\vee)^\kappa\eqdef (S^\kappa)^\vee ~~,~~\forall \kappa\in \Z_2~~. 
\ee
The {\em graded bundle of morphisms} from $S$ to $T$ is the bundle
$Hom(S,T)\eqdef S^\vee\otimes T$, where $S^\vee$ is the graded dual
and $\otimes$ is the graded tensor product. Thus $Hom(S,T)$ is the
usual bundle of morphisms, endowed with the $\Z_2$-grading given by:
\beqa
&& Hom^\0(S,T)\eqdef Hom(S^\0,T^\0)\oplus Hom(S^\1,T^\1)~~\nn\\
&& Hom^\1(S,T)\eqdef Hom(S^\0,T^\1)\oplus Hom(S^\1,T^\0)~~.
\eeqa
We have $\Hom^\kappa_\sm(S,T)=\rGamma_\sm(X, Hom^\kappa(S,T))$.
When $T=S$, we set $End(S)\eqdef Hom(S,S)$ and $End^\kappa(S)\eqdef
Hom^\kappa(S,S)$ etc.

\paragraph{The module of $S$-valued $(0,\ast)$-forms}

For any complex vector superbundle $S=S^\0\oplus S^\1$ on $X$, the
$\cinf$-module of smooth sections $\rGamma_\sm(X,S)$ is
$\Z_2$-graded with homogeneous components $\rGamma_\sm^\kappa(M,S)\eqdef
\rGamma_\sm(M,S^\kappa)$. Accordingly, the $\cinf$-module
$\cA(X,S)\eqdef \cA(X)\otimes_{\cinf} \rGamma_\sm(X,S)$ has an
induced $\Z\times \Z_2$-grading. The $\Z$-grading is called the {\em
  rank grading} and corresponds to the decomposition:
\be
\cA(X,S)=\bigoplus_{k=0}^d \cA^k(X,S)~~.
\ee
The $\Z_2$-grading is called the {\em bundle grading} and 
corresponds to the decomposition:
\be
\cA(X,S)=\cA(X,S^\0)\oplus \cA(X,S^\1)~~.
\ee
For any $\alpha\in \cA^k(X,S)$, let $\rk\alpha\eqdef k$. For 
any $\alpha\in \cA(X,S^\kappa)$, let $\sigma(\alpha)\eqdef \kappa\in \Z_2$. 
The {\em total grading} of $\cA(X,S)$ is the $\Z_2$-grading given by
the decomposition $\cA(X,S)=\cA(X,S)^\0\oplus \cA(X,S)^\1$, where: 
\beqa
& &\cA(X,S)^\0\eqdef \bigoplus_{i=\ev} \cA^i(X,S^\0)\oplus \bigoplus_{i=\odd} \cA^i(X,S^\1)~~\\
& &\cA(X,S)^\1\eqdef \bigoplus_{i=\ev} \cA^i(X,S^\1)\oplus \bigoplus_{i=\odd} \cA^i(X,S^\0)~~.
\eeqa
For any $\alpha\in \cA(X,S)^\kappa$, we set $\deg\alpha=\kappa\in
\Z_2$. For all $\rho\in \cA^k(X)$ and all $s \in \rGamma(M,S^\kappa)$,
we have:
\begin{equation}
\deg(\rho\otimes s) = {\hat k}+ \kappa \in \Z_2~~.
\end{equation}

\subsection{$\Z_2$-graded Dolbeault pairs}

\

\

\begin{Definition}
Let $S$ be a complex vector superbundle. A Dolbeault derivation
$\bbpd$ on $S$ is called {\em compatible} with the $\Z_2$-grading of
$S$ if it preserves the bundle grading of $\cA(X,S)$, which
means that it satisfies:
\be
\bbpd(\cA(X,S^\kappa))\subset \cA(X,S^\kappa)~~,~~\forall \kappa\in \Z_2~~.
\ee
\end{Definition}

\

\noindent A compatible Dolbeault derivation decomposes as
$\bbpd=\bbpd^\0\oplus \bbpd^\1$, where $\bbpd^\kappa$ are Dolbeault
derivations on $S^\kappa$ for each $\kappa\in \Z_2$. Conversely, given
Dolbeault derivations $\bbpd^\kappa$ on $S^\kappa$, their direct sum
is a compatible Dolbeault derivation on $S$.

\

\begin{Definition}
A {\em $\Z_2$-graded Dolbeault pair} on $X$ is a pair $(S,\bbpd)$, where $S$
is a complex vector superbundle on $X$ and $\bbpd$ is an integrable
Dolbeault derivation on $S$ which is compatible with the
$\Z_2$-grading of $S$.  
\end{Definition}

\

\begin{Definition}
The {\em category $\cP^s(X)$ of $\Z_2$-graded Dolbeault pairs} on $X$ is
the  $\O(X)$-linear category defined as follows:
\begin{itemize}
\itemsep 0.0em
\item The objects are the $\Z_2$-graded Dolbeault pairs defined on $X$.
\item Given two $\Z_2$-graded Dolbeault pairs $(S_1,\bbpd_1)$ and
  $(S_2,\bbpd_2)$ on $X$, the corresponding Hom space is the
  $\cinf$-module:
\be
\Hom_{\cP^s(X)}((S_1,\bbpd_1),\!(S_2,\bbpd_2))\!\eqdef 
\!\!\big\{f\!\in \rGamma_\sm(X,Hom^\0(S_1,S_2))\big|\bbpd_2\circ (\id_{\cA(X)}\otimes f)=(\id_{\cA(X)}\otimes f)\circ \bbpd_1\big\}~~
\ee
\item The composition of morphisms is inherited from $\VB_\sm^s(X)$.
\end{itemize}
\end{Definition}

\

\noindent Recall the Koszul-Malgrange functor $\cK$ defined in
Subsection \ref{subsec:KM}. Let $\cK^s:\cP^s(X)\rightarrow
\VB^{s,\0}(X)$ be the functor
which:
\begin{itemize}
\itemsep 0.0em
\item Sends a $\Z_2$-graded Dolbeault pair $(S,\bbpd)$ into the
  holomorphic vector bundle $\cK(S,\bbpd)$, viewed as a holomorphic
  vector superbundle by endowing it with the grading
  $\cK(S,\bpd)^\kappa\eqdef \cK(S^\kappa,\bpd^\kappa)$, where
  $\kappa\in \Z_2$
\item Sends a degree zero morphism $f:(S_1,\bbpd_1)\rightarrow
  (S_2,\bbpd_2)$ of $\Z_2$-graded Dolbeault pairs which intertwines
  $\bbpd_1$ and $\bbpd_2$ into $\cK(f)\eqdef f$, where $f$ is viewed
  as a degree zero morphism of locally-free sheaves of
  $\cO_X$-supermodules.
\end{itemize}
The Koszul-Malgrange theorem implies that $\cK^s$ is an equivalence of
$\O(X)$-linear categories between $\cP^s(X)$ and $\VB^{s,\0}(X)$. This
gives equivalences of $\O(X)$-linear categories:
\ben
\label{sKM}
\cP^s(X)\simeq \VB^{s,\0}(X)\simeq \VB(X)\times \VB(X)~~.
\een

\subsection{Dolbeault superconnections}

The following definition is a variant of the concept of 
superconnection due to Quillen (see \cite{Quillen}).

\

\begin{Definition} 
Let $S$ be a complex vector superbundle on $X$. A \emph{Dolbeault
  superconnection} on $S$ is a $\C$-linear map
$\fD:\cA(X,S)\rightarrow \cA(X,S)$ such that:
\begin{enumerate}[1.]
\itemsep 0.0em
\item $\fD$ is homogeneous of odd degree with respect to the total
  $\Z_2$-grading of $\cA(X,S)$, i.e.:
\be
\fD(\cA(X,S)^\kappa)\subset \cA(X,S)^{\kappa+1}~,~~\forall \kappa\in \Z_2~~.
\ee
\item The following condition is satisfied for all $k=0,\ldots, d$, all
  $\rho\in \cA^k(X)$ and all $\alpha \in \cA(X,S)$:
\be 
\fD(\rho\wedge \alpha) = (\bpd \rho)\wedge \alpha + (-1)^k \rho \wedge (\fD \alpha)~~.
\ee
\end{enumerate}
\end{Definition}

\noindent The difference of two Dolbeault superconnections defined on
the same complex vector superbundle $S$ is an endomorphism of the
$\cA(X)$-supermodule $\cA(X,S)$ which is odd with respect to the total
grading of the latter, i.e. an element of $\cA(X,End(S))^\1$. Thus:

\

\begin{Proposition}
The space of Dolbeault superconnections on a complex vector
superbundle $S$ is an affine space modeled on the vector space
$\cA(X,End(S))^\1$.
\end{Proposition}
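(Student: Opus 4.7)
The plan is to verify the two things needed for affineness: that the difference of any two Dolbeault superconnections is naturally an element of $\cA(X,End(S))^\1$, and that the space is non-empty so the affine structure is genuine.

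First I would take two Dolbeault superconnections $\fD_1,\fD_2$ on $S$ and set $A\eqdef \fD_1-\fD_2$. Condition 1 in the definition immediately gives that $A$ is odd with respect to the total $\Z_2$-grading. Applying condition 2 to both $\fD_1$ and $\fD_2$ and subtracting, the $(\bpd\rho)\wedge\alpha$ contributions cancel, yielding
\be
A(\rho\wedge\alpha)=(-1)^k\rho\wedge A(\alpha)~~,~~\forall \rho\in \cA^k(X),~\forall \alpha\in \cA(X,S)~~,
\ee
which says precisely that $A$ is a graded $\cA(X)$-linear endomorphism of the $\cA(X)$-supermodule $\cA(X,S)$.

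Next I would invoke the standard identification of graded $\cA(X)$-linear endomorphisms of $\cA(X,S)=\cA(X)\otimes_{\cinf}\rGamma_\sm(X,S)$ with sections of $\cA(X)\otimes_\cinf \rGamma_\sm(X,End(S))=\cA(X,End(S))$, under which an endomorphism acts by left multiplication (using the exterior product on form parts and composition on bundle parts, with the usual Koszul signs \eqref{Dscomp}). Under this identification, odd $\cA(X)$-linear endomorphisms correspond exactly to elements of $\cA(X,End(S))^\1$, so $A$ defines and is determined by an element of $\cA(X,End(S))^\1$. Conversely, given any $A\in \cA(X,End(S))^\1$ and any Dolbeault superconnection $\fD$, the sum $\fD+A$ (with $A$ acting by left multiplication) satisfies both conditions 1 and 2 because $A$ is odd and $\cA(X)$-linear, so it is again a Dolbeault superconnection. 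This establishes the affine action and its simple transitivity.

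Finally, to ensure the affine space is non-empty, I would exhibit at least one Dolbeault superconnection. Choosing any holomorphic structure on the underlying bundles $S^\0$ and $S^\1$ (which exist trivially, e.g. by putting $S^\kappa$ in Koszul-Malgrange correspondence via any integrable $(0,1)$-connection, which can be constructed by patching using a partition of unity) produces compatible Dolbeault derivations $\bbpd^\kappa$ on $S^\kappa$ whose direct sum $\bbpd=\bbpd^\0\oplus \bbpd^\1$ is a Dolbeault derivation on $S$ preserving the bundle grading and of rank degree $+1$. Since an element of $\cA^k(X,S^\kappa)$ has total $\Z_2$-degree $\hat{k}+\kappa$ and $\bbpd$ sends it to $\cA^{k+1}(X,S^\kappa)$ of total degree $\hat{k}+1+\kappa$, the operator $\bbpd$ is odd with respect to the total grading and manifestly satisfies the Leibniz rule, hence is a Dolbeault superconnection. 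The only (mild) subtlety is the bookkeeping identifying $\cA(X)$-linear odd endomorphisms with $\cA(X,End(S))^\1$ using the Koszul sign in \eqref{Dscomp}; but this is a routine check once the total grading conventions are fixed.
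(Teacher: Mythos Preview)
Your approach is essentially the same as the paper's, which simply records (in the sentence preceding the proposition) that the difference of two Dolbeault superconnections is an odd $\cA(X)$-linear endomorphism of $\cA(X,S)$, hence an element of $\cA(X,End(S))^\1$; you have merely filled in the routine details.

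One small slip in your non-emptiness argument: you assert that an \emph{integrable} $(0,1)$-connection on each $S^\kappa$ ``can be constructed by patching using a partition of unity''. This is false in general --- a partition-of-unity patching produces a $(0,1)$-connection, but integrability ($\bbpd^2=0$) is a genuine condition and not every smooth complex vector bundle on a complex manifold admits a holomorphic structure. Fortunately, integrability is irrelevant here: the definition of a Dolbeault superconnection requires only oddness and the Leibniz rule, not $\fD^2=0$. So take any (not necessarily integrable) $(0,1)$-connection on each $S^\kappa$, extend to a Dolbeault derivation, and take the direct sum; this already gives a Dolbeault superconnection and establishes non-emptiness.
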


\

\begin{Definition}
Let $S$ be a complex vector superbundle on $X$. A Dolbeault
superconnection on $S$ is called {\em flat} if it satisfies the
condition $\fD^2=0$. A {\em Dolbeault superpair} is a pair $(S,\fD)$,
where $S$ is a complex vector superbundle. A Dolbeault superpair is called 
flat if $\fD$ is a flat Dolbeault superconnection on $S$.
\end{Definition}

\subsection{Diagonal, flat and special Dolbeault superconnections}

\

\

\begin{Definition}
A Dolbeault superconnection on a complex vector superbundle $S$
defined on $X$ is called {\em diagonal} if it is homogeneous of degree
one with respect to the rank grading of $\cA(X,S)$. A Dolbeault
superpair $(S,\fD)$ is called {\em diagonal} if $\fD$ is a diagonal
Dolbeault superconnection.
\end{Definition}

\

\noindent Notice that a diagonal Dolbeault superconnection on $S$ is
the same as a Dolbeault derivation on $S$ which is compatible with the
$\Z_2$-grading of $S$. Diagonal and flat Dolbeault superpairs can be identified
with $\Z_2$-graded Dolbeault pairs as follows.  Any flat and diagonal
Dolbeault superconnection on $S$ decomposes as
$\fD=\bbpd^{\0}\oplus\bbpd^{\1}$, where $\bbpd^\kappa$ are integrable
Dolbeault derivations on the complex vector sub-bundles
$S^\kappa$. Thus $\fD$ is a compatible and integrable Dolbeault
derivation on $S$ and $(\cS,\fD)$ can be viewed as a $\Z_2$-graded
Dolbeault pair. Conversely, any compatible and integrable Dolbeault
derivation on $S$ can be viewed as a flat and diagonal Dolbeault
superconnection. Accordingly, flat and diagonal Dolbeault superpairs can be
identified with holomorphic vector superbundles.

\subsection{Special Dolbeault factorizations}

Let $(X,W)$ be a Landau-Ginzburg pair.

\

\begin{Definition}
A Dolbeault superconnection $\fD$ on a complex vector superbundle $S$
defined on $X$ is called {\em special} if it can be written in the
form:
\ben
\label{cBdec}
\fD=\fD_0 + \id_{\cA(X)}\otimes D~~,
\een
where $\fD_0$ is a diagonal Dolbeault superconnection on $S$, $D\in
\rGamma_\sm(X,End^\1(S))$ is a globally-defined smooth odd
endomorphism of $S$ and $\otimes$ is the tensor product 
taken over $\cinf$. In this case, $\fD_0$ and $D$ are uniquely
determined by $\fD$ and are called respectively the {\em diagonal} and
{\em off-diagonal} parts of $\fD$, while the decomposition
\eqref{cBdec} is called the {\em canonical decomposition} of $\fD$.  A
Dolbeault superpair $(S,\fD)$ is called {\em special} if $\fD$ is a
special Dolbeault superconnection on $S$.
\end{Definition}

\

\begin{Definition}
A {\em special Dolbeault factorization} of $W$ is a special Dolbeault
superpair $(S,\fD)$ such that $\fD^2=W\id_{\cA(X,S)}$.
\end{Definition}

\

\noindent Special Dolbeault factorizations can be identified with
holomorphic factorizations as follows. Let $(S,\fD)$ be a special
Dolbeault factorization of $W$ with diagonal and off-diagonal parts
$\fD_0$ and $D$. Separating ranks in the condition
$\fD^2=W\id_{\cA(X,S)}$ gives the system:
\ben
\left\{
\begin{aligned}
&  \fD_0^2=0\\
&  \fD_0\circ (\id_{\cA(X)}\otimes D)+(\id_{\cA(X)}\otimes D)\circ \fD_0=0~~\\
&  D^2=W\id_S~~.
\end{aligned}
\right.
\een
In particular, $\fD_0$ is a flat diagonal Dolbeault superconnection on
$S$ which we view as a compatible and integrable Dolbeault derivation as explained
above. Let $E\eqdef \cK^s(S,\fD_0)$ be the holomorphic vector
superbundle defined by $(S,\fD_0)$. Then $\fD_0$ coincides with the
(compatible) Dolbeault operator $\bbpd_E$ of $E$ and the second
condition above amounts to:
\ben
\label{Dholo}
\bbpd_E^\ad(D)=0~,
\een
where $\bbpd_E^\ad=\bbpd_{End(E)}$ is the Dolbeault derivation of the
holomorphic vector bundle $End(E)$. The condition above means that
$D$ is a holomorphic section of $E$. Thus $(E,D)$ is a holomorphic
factorization of $W$. By the Koszul-Malgrange correspondence, any
holomorphic factorization of $W$ can be obtained in this manner.

\subsection{The twisted category of special Dolbeault factorizations}

Let $\fD_\sm^s(X)$ denote the $\cinf$-linear $\Z\times \Z_2$-graded
category whose objects are the complex vector superbundles defined on
$X$ and whose morphism spaces are given by:
\be
\Hom_{\fD^s_\sm}(S_1,S_2)=\cA(X,Hom(S_1,S_2))~~,
\ee
where the $\cinf$-bilinear composition of morphisms
$\circ\!:\!\cA(X,\!Hom(S_2,\!S_3))\times \cA(X,\!Hom(S_1,\!S_2))\!\rightarrow
\cA(X,Hom(S_1,S_3))$ is determined uniquely through the condition:
\be
(\rho\otimes f)\circ (\eta\otimes g)=(-1)^{\sigma(f)\rk \eta}(\rho\wedge \eta) \otimes (f\circ g)~~
\ee
for all pure rank forms $\rho,\eta\in \cA(X)$ and all pure degree
elements $f\in \rGamma(X,Hom(S_2,S_3))$ and $g\in \rGamma(X,
Hom(S_1,S_2))$. Given two special Dolbeault factorizations
$(S_i,\fD_i)$ ($i=1,2$) of $W$ (where $\fD_i$ have diagonal and off-diagonal parts 
denoted respectively $\fD_{i0}$ and $D_i$), the space $\cA(X,Hom(S_1,S_2))$
carries two natural differentials:
\begin{itemize}
\itemsep 0.0em
\item The {\em Dolbeault differential}
  $\bbpd:=\bbpd_{(S_1,\fD_1),(S_2,\fD_2)}$ is the unique $\O(X)$-linear
  map which satisfies:
\be
\bbpd \alpha =\fD_{20}\circ \alpha-(-1)^{\deg \alpha}\alpha\circ \fD_{10}~~
\ee
for elements $\alpha\in \cA(X,Hom(S_1,S_2))$ of pure total degree. 
This differential preserves the bundle grading and has
degree $+1$ with respect to the rank grading. 
\item The {\em defect differential } is the $C^\infty\!(\!X\!)$-linear
endomorphism $\md\!:=\!\md_{(\!S_1,\fD_1\!),(\!S_2,\fD_2\!)}$ of $\cA(\!X\!,\!Hom(\!S_1,\!S_2\!)\!)$
which is determined uniquely by the condition:
\be
\md (\rho\otimes f)=(-1)^{\rk \rho} \rho \otimes (D_2\circ f)-(-1)^{\rk \rho +\sigma(f)} \rho\otimes (f\circ D_1)
\ee
for all pure rank forms $\rho \in \cA(X)$ and all pure degree elements
$f\in \rGamma_\sm(X,Hom(S_1,S_2))$. This endomorphism squares to zero
because $D_i$ squares to $W_i\id_{S_i}$. Moreover, this differential
preserves the rank grading and is odd with respect to the bundle
grading. 
\end{itemize}
Since $D_1$ and $D_2$ are holomorphic, these two differentials anticommute. Thus:
\be
\bbpd^2=\md^2=\bbpd\circ\md+\md \circ \bbpd=0~~.
\ee
The {\em twisted differential} $\updelta:=\updelta_{(S_1,\fD_1),(S_2,\fD_2)}$ is the 
total differential of this bicomplex: 
\be
\updelta\eqdef \bbpd+\md~~.
\ee
The twisted differential is odd with respect to the total $\Z_2$-grading. 

\

\begin{Definition}
The {\em twisted category $\DF_\sm(X,W)$ of special
  Dolbeault factorizations} is the $\Z_2$-graded $\cinf$-linear
  dg-category defined as follows:
\begin{itemize}
\itemsep 0.0em
\item The objects are the special Dolbeault factorizations of $W$.
\item For any two special Dolbeault factorizations $(S_1,\fD_1)$ and
  $(S_2,\fD_2)$ of $W$, the space of morphisms from $(S_1,\fD_1)$ to
  $(\cS_2,\fD_2)$ is the $\cinf$-module: \be
  \Hom_{\DF_\sm(X,W)}((S_1,\fD_1),(S_2,\fD_2))\eqdef
  \cA(X,Hom(S_1,S_2))~~, \ee endowed with the total grading and with
  the twisted differential $\updelta_{(S_1,\fD_1),(S_2,\fD_2)}$.
\item The composition of morphisms is that of $\fD^s_\sm(X)$.
\end{itemize}
\end{Definition}

\subsection{Equivalence of $\DF_\sm(\!X,\!W\!)$ with $\DF(\!X,\!W\!)$}

\noindent Recall the twisted Dolbeault category $\DF(X,W\!)$ of
holomorphic factorizations introduced in Subsection \ref{sec:DF}.
Let $\K:\DF_\sm(X,W)\rightarrow \DF(X,W)$ be the functor defined as
follows:
\begin{itemize}
\itemsep 0.0em
\item Given a special Dolbeault factorization $(S,\fD)$ of $W$, let
  $\K(S,\fD)\eqdef (\cK^s(S,\fD_0),D)$, where $\fD_0$ and $D$ are the diagonal and off-diagonal parts of $\fD$.
\item Given $\alpha\in \Hom_{\DF_\sm(X,W)}((S_1,\fD_1),
  (S_2,\fD_2))\!=\!\cA(X,Hom(S_1,S_2))$, let $\K(\alpha)\!\eqdef\! \alpha$,
  viewed as an element of
  $\Hom_{\DF(X,W)}(\cK^s(S_1,\fD_{10}),\cK^s(S_2,\fD_{20}))\!=\!\cA\big(X,
  Hom(\cK^s(S_1,\fD_{10}), \cK^s(S_2,\fD_{20}))\big)$.
\end{itemize}

\noindent The proof of the following statement is now obvious: 

\

\begin{Proposition}
\label{prop:DFDFinf}
The dg-functor $\K$ is an equivalence of dg-categories between the
twisted category $\DF_\sm(X,W)$ of special Dolbeault factorizations
and the twisted Dolbeault category $\DF(X,W)$ of holomorphic
factorizations.
\end{Proposition}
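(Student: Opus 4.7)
The plan is to verify three things in order: that $\K$ is well-defined on objects and is a strict dg-functor, that it is bijective on Hom-complexes (hence fully faithful as a dg-functor), and that it is essentially surjective. The backbone of all three verifications is the Koszul-Malgrange equivalence $\cK^s:\cP^s(X)\stackrel{\sim}{\to}\VB^{s,\0}(X)$ from \eqref{sKM} applied to the diagonal parts of the superconnections involved, together with the observation that the off-diagonal pieces carry no additional smooth/holomorphic data to match.

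First I would check well-definedness on objects. Given $(S,\fD)$ with canonical decomposition $\fD=\fD_0+\id_{\cA(X)}\otimes D$, separating ranks in $\fD^2=W\id_{\cA(X,S)}$ yields the three relations already displayed in the paper: $\fD_0^2=0$, $\fD_0\circ(\id\otimes D)+(\id\otimes D)\circ\fD_0=0$, and $D^2=W\id_S$. Thus $\fD_0$ is a flat, diagonal, compatible Dolbeault derivation on $S$; by Koszul-Malgrange it corresponds to a holomorphic vector superbundle $E=\cK^s(S,\fD_0)$ whose Dolbeault operator equals $\fD_0$. The second relation is precisely $\bbpd_E^{\ad}(D)=0$, so $D$ is a global holomorphic section of $End^\1(E)$, and together with $D^2=W\id_E$ it defines a holomorphic factorization $(E,D)=\K(S,\fD)$.

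Next I would verify that $\K$ is a strict dg-functor. On morphisms it is the identity map
\[
\Hom_{\DF_\sm(X,W)}((S_1,\fD_1),(S_2,\fD_2))=\cA(X,Hom(S_1,S_2))=\cA(X,Hom(E_1,E_2))=\Hom_{\DF(X,W)}(\K(S_1,\fD_1),\K(S_2,\fD_2)),
\]
because the Koszul-Malgrange correspondence leaves the underlying smooth bundles untouched. Four structures must agree: (i) the total $\Z_2$-grading, which depends only on the smooth bundles; (ii) the composition rule \eqref{Dscomp}, which is the same on both sides; (iii) the defect differential $\md$, which depends only on $D_1,D_2$ and is therefore identical in both categories; (iv) the Dolbeault piece of $\updelta$, which on the $\DF_\sm$ side is built from $\fD_{10}$ and $\fD_{20}$ while on the $\DF$ side it is $\bbpd_{Hom(E_1,E_2)}$. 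Since $\cK^s$ is compatible with duals and tensor products, the induced Dolbeault operator on $Hom(E_1,E_2)$ agrees with the one determined by $(\fD_{10},\fD_{20})$, so (iv) holds. Consequently $\K$ is a strict dg-functor and is a bijection on Hom-complexes, i.e.\ strictly fully faithful.

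For essential surjectivity, given any holomorphic factorization $(E,D)$, set $S:=\Phi(E)$ with its $\Z_2$-grading and $\fD:=\bbpd_E+\id_{\cA(X)}\otimes D$. Then $\fD$ is special with canonical decomposition $(\bbpd_E,D)$, and $\fD^2=W\id_{\cA(X,S)}$ follows from $\bbpd_E^2=0$, $D^2=W\id$, and holomorphicity $\bbpd_E^{\ad}(D)=0$; the Koszul-Malgrange equivalence provides a canonical isomorphism $\cK^s(S,\bbpd_E)\simeq E$, showing $\K(S,\fD)\simeq(E,D)$. Combining all three items gives the required dg-equivalence. The only place that requires any real input beyond bookkeeping is step (iv) of the faithfulness check, which is the standard compatibility of the Koszul-Malgrange correspondence with the inner $\Hom$ bifunctor; this is the sole ``non-trivial'' ingredient and is what makes the statement reduce to an observation.
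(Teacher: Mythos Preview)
Your proposal is correct and follows precisely the line the paper intends: the paper itself offers no proof beyond declaring the statement ``obvious'' after having set up the Koszul--Malgrange equivalence $\cK^s$ and the identification of special Dolbeault factorizations with holomorphic factorizations. Your three-step verification (well-definedness via the rank-separation of $\fD^2=W\id$, strict full faithfulness via the identity on Hom-complexes together with the compatibility of $\cK^s$ with inner Homs, and essential surjectivity via $\fD:=\bbpd_E+\id\otimes D$) simply spells out what the paper leaves implicit.
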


\subsection{Extension of the boundary coupling to compact oriented surfaces with corners}

For completeness, we outline briefly how the Lagrangian formulation given in
\cite{LG2} extends from compact oriented surfaces with boundary to
compact oriented surfaces with corners. Let $\Sigma$ be a compact
oriented surface with corners.  Let $\Int \Sigma$ denote the interior
of $\Sigma$ and $C\eqdef \Sigma\setminus \Int\Sigma$ denote the
topological frontier. Let $(C_s)_{s=1,\ldots, n}$ denote the connected
components of $C$, where $n\eqdef \Card \pi_0(C)$. Notice that each
$C_s$ is homeomorphic with a circle. Let
$V_s\subset C_s$ denote the set of those corners of $\Sigma$ which lie
on $C_s$. Giving $C_s$ the orientation induced from that of $\Sigma$,
we pick an element $v_s^0 \in V_s$ and enumerate $V_s$ starting from
$v_s^0$ increasingly with respect to the orientation of $C_s$. This
gives an enumeration $V_s={v_s^0,\ldots, v_s^{m_s-1}}$, where
$m_s\eqdef \Card V_s$. For any $k\in \Z$, let $[k]_s\eqdef [k\, \mod
  \, m_s]\in \{0,\ldots, m_s-1\}$; thus $[-1]_s=m_s-1$ and
$[m_s]_s=0$. For any $i\in \{0,\ldots, m_s-1\}$, let $I_s^i$ denote
that connected component (an open segment) of the set $C_s\setminus
V_s$ which starts at $v_s^i$ and ends at $v_s^{[i+1]_s}$ with respect
to the orientation of $C_s$. For every $i\in \{0,\ldots, m_s-1\}$,
pick a special Dolbeault factorization $(S_s^i,\fD_s^i)$ of $W$ and an
element $\omega_s^i\in
\Hom_{\DF_\sm(X,W)}((S_s^{[i-1]_s},\fD_s^{[i-1]_s}),(S_s^{i},\fD_s^{i}))=\cA(X,Hom(S_s^{[i-1]_s},S_s^{i}))$.
Now pick admissible Hermitian metrics $h_s^i$ on the complex vector
superbundles $S_s^i$. Then the boundary coupling of the B-type
topological Landau-Ginzburg model defined on $\Sigma$ is constructed
as in Subsection 4.2. of \cite{LG2}, except that the superconnection
factor $\cU_s$ given in \cite[eq. (4.16)]{LG2} is replaced by:
\ben
\label{cU}
\cU_s\! \eqdef \! \str \left[\left(\! P e^{-\int_{I_s^{m_s-1}}\dd \tau M_s^{m_s-1}(\tau)}\right)\omega_s^{m_s-1}\!\!
 \ldots \left(P e^{-\int_{I_s^1}\dd\tau M_s^1(\tau)}\right)\omega_s^1 \left(P e^{-\int_{I_s^0}\dd\tau M_s^0(\tau)}\right)\omega_s^0\right]~,
\een
where $P$ is the path ordering symbol and $M_s^i(\tau)$ is given by
equation \cite[eq. (4.17)]{LG2} in terms of the special Dolbeault
factorization $(S_s^i,\fD_s^i)$ and of the metrics $h_s^i$ (the dagger
in that formula becomes the Hermitian conjugate taken with respect to
$h_s^i$). Of course, the bundle-valued forms $\omega_s^i$ must be
interpreted super-geometrically using the first identification given in
\cite[eq. (3.22)]{LG2}. Notice that the quantities denoted in
loc. cit. by $F$ and $G$ are denoted in this paper by $v$ and $u$
(cf. \cite[eq. (4.13)]{LG2} and equation \eqref{Dmatrix}). An example
is drawn in Figure \ref{fig:corners}.

\begin{figure}[H]
\begin{center}
\scalebox{0.6}{\input{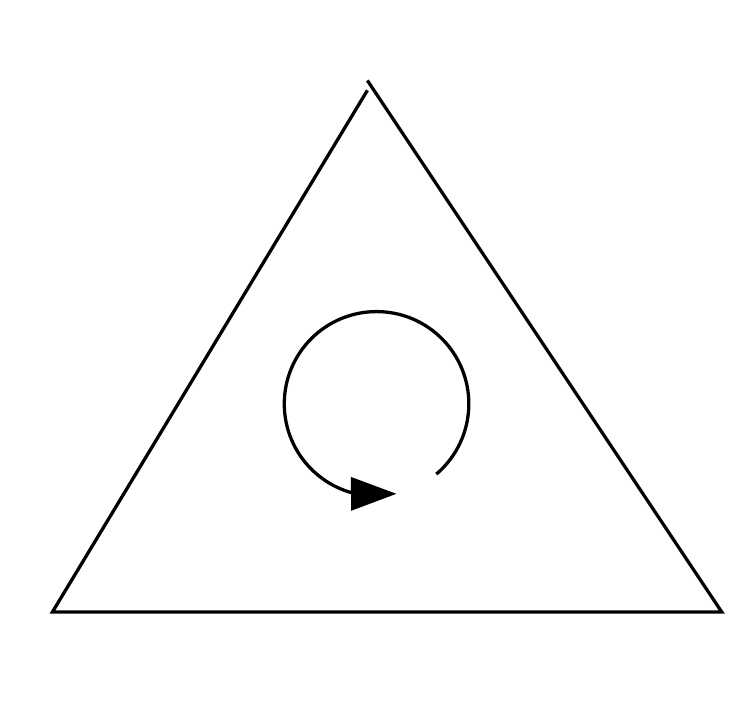_t}}
\caption{Example of a compact surface with three corners and connected
  frontier. In this example, we have $n=1$ and we do not indicate the
  index $s$.}
\label{fig:corners}
\end{center}
\end{figure}

\noindent Using \eqref{cU}, it is not hard to see that the path integral
arguments of \cite{LG2} extend to the case of compact oriented
surfaces with corners, thereby leading to the conclusion that the
total cohomology category $\HDF_\sm(X,W)$ provides a mathematical
model of the category of topological D-branes of the B-type
topological Landau-Ginzburg theory defined by the LG pair $(X,W)$
(where we assume that $X$ is Calabi-Yau). Proposition
\ref{prop:DFDFinf} allows us to identify the latter with the total
cohomology category $\HDF(X,W)$ of the Dolbeault category of
holomorphic factorizations. This identification justifies the
statement made in Section \ref{sec:DF} that $\HDF(X,W)$ provides a
mathematical model for the category of topological D-branes which
agrees with the construction of \cite{LG2}.

\begin{acknowledgements}
This work was supported by the research grant IBS-R003-S1, ``Constructive string
field theory of open-closed topological B-type strings''. 
\end{acknowledgements}

\end{document}